\definecolor{forestgreen}{rgb}{0.0, 0.27, 0.13}
\newtheorem{theorem}{Theorem}[section]
\newtheorem{theoremalph}{Theorem}
\newtheorem{coroalph}[theoremalph]{Corollary}
\newtheorem{lemma}[theorem]{Lemma}
\newtheorem{claim}[theorem]{Claim}
\newtheorem{subclaim}[theorem]{Subclaim}
\newtheorem*{claim*}{Claim}
\newtheorem{corollary}[theorem]{Corollary}
\newtheorem{proposition}[theorem]{Proposition}
\newtheorem{assumption}[theorem]{Assumption}
\theoremstyle{definition}
\newtheorem{definition}[theorem]{Definition}
\newtheorem{notation}[theorem]{Notation}
\newtheorem{fact}[theorem]{Fact}
\newtheorem{remark}[theorem]{Remark}
\newtheorem*{maintheorem*}{Theorem}
\newcommand{\eqdef}{\stackrel{\scriptscriptstyle\rm def}{=}}
\DeclareMathOperator{\diam}{diam}
\DeclareMathOperator{\dist}{dist}
\DeclareMathOperator{\card}{card}
\DeclareMathOperator{\interior}{int}
\DeclareMathOperator{\Lip}{Lip}
\DeclareMathOperator{\Leb}{Leb}
\DeclareMathOperator{\length}{length}
\DeclareMathOperator{\Diff}{Diff}
\def\vr{{\mathbf r}}
\def\Sub{\mathcal{L}}
\def\cW{\EuScript{B}}
\def\cS{\EuScript{S}}
\def\cD{\mathcal{D}}
\def\cR{\mathcal{R}}
\def\cL{\EuScript{L}}
\def\fm{\mathfrak{m}}
\def\fB{\mathfrak{B}}
\DeclareMathOperator{\Mod}{Mod}
\def\ua{\underline{a}}
\def\ub{\underline{b}}
\def\ux{\underline{x}}
\def\uy{\underline{y}}
\def\ba{\mathbf{a}}
\def\bt{\mathbf{t}}
\def\bC{\mathbf{C}}
\def\bfK{\mathbf{K}}
\def\bfC{\mathbf{C}}
\def\bfH{\mathbf{H}}
\def\rmb{\mathrm{b}}
\def\BM{{\rm BM}}
\def\PH{{\rm PH}}
\def\c{{\rm c}}
\def\csu{{\rm scu}}
\def\s{{\rm s}}
\def\ss{{\rm ss}}
\def\uu{{\rm uu}}
\def\u{{\rm u}}
\def\cs{{\rm cs}}
\def\cu{{\rm cu}}
\def\bfC{\mathbf{C}}
\def\bH{\mathbb{H}}
\def\bN{\mathbb{N}}
\def\bZ{\mathbb{Z}}
\def\bR{\mathbb{R}}
\def\cA{\EuScript{A}}
\def\cC{\EuScript{C}}
\def\cW{\mathcal{W}}
\def\cM{\EuScript{M}}
\def\c{{\rm c}}
\def\s{{\rm s}}
\def\u{{\rm u}}
\def\cs{{\rm cs}}
\def\cu{{\rm cu}}
\def\FK{{\rm FK}}
\def\bfa{\mathbf a}
\numberwithin{equation}{section}
\DeclareMathSymbol{\varnothing}{\mathord}{AMSb}{"3F}
\renewcommand{\emptyset}{\varnothing}
\title[Full flexibility of entropies]{Full flexibility of entropies among ergodic measures\\ for partially hyperbolic diffeomorphisms}
\author[L.~J.~D\'iaz]{Lorenzo J. D\'\i az}
\address{Departamento de Matem\'atica PUC-Rio, Marqu\^es de S\~ao Vicente 225, G\'avea, Rio de Janeiro 22451-900, Brazil}
\email{lodiaz@puc-rio.br}
\author[K.~Gelfert]{Katrin~Gelfert}
\address{Instituto de Matem\'atica Universidade Federal do Rio de Janeiro, Av. Athos da Silveira Ramos 149, Cidade Universit\'aria - Ilha do Fund\~ao, Rio de Janeiro 21945-909,  Brazil}\email{gelfert@im.ufrj.br}
\author[M.~Rams]{Micha\l~Rams}
\address{Institute of Mathematics, Polish Academy of Sciences, ul. \'Sniadeckich 8, 00-656 Warsaw, Poland.} 
\email{rams@impan.pl}
\author[J.~Zhang]{Jinhua~Zhang}
\address{School of Mathematical Sciences, Beihang University\\ 100191, Beijing, P.R. China}
\email{jinhua$\_$zhang@buaa.edu.cn}
\thanks{J.Zhang was partially supported by National Key R\&D Program of China (2022YFA1005801),  NSFC 12471179, the Fundamental Research Funds for the Central Universities, and E-16/2014 INCT/FAPERJ (Brazil).  
M.Rams was supported by National Science Centre grant 2019/33/B/ST1/0027 (Poland).
L.J.D\'iaz and K.Gelfert have been supported [in part] by 
CAPES -- Finance Code 001, by 
CNPq-grants  310069/2020-3 and 
305327/2022-4,  
CNPq Projeto Universal 430154/2018-6  and 
404943/2023-3. 
  This study was funded by FAPERJ – Carlos Chagas Filho Foundation for Research Support of the State of Rio de Janeiro, Processes SEI E-26/200.371/2023 and 
    E-16/2014 INCT/FAPERJ.  
The authors thank their home institutions for their hospitality while preparing this paper. 
} 
\begin{document}

\begin{abstract}
We study nonhyperbolic and transitive partially hyperbolic diffeomorphisms having a one-dimensional center. We prove joint flexibility with respect to entropy and center Lyapunov exponent for a broad class of these systems. Flexibility means that for any given value of the center Lyapunov exponent and any value of entropy less than the supremum of entropies of ergodic measures with that exponent, there is an ergodic measure with exactly this entropy and exponent. Our hypotheses involve minimal foliations and blender-horseshoes, they formalize the interplay between two regions of the ambient space, one of center expanding and the other of center contracting type. The list of examples our results apply is rather long,  a non-exhaustive list includes fibered by circles, flow-type, some Derived from Anosov diffeomorphisms,  and some anomalous (non-dynamically coherent) diffeomorphisms.
\end{abstract}

\keywords{blender-horseshoes, entropy, ergodic measures, Lyapunov exponents, minimal foliations, partial hyperbolicity}
\subjclass[2000]{%
37D25, 
37D35, 
37D30, 
28D20, 
28D99
}  
\maketitle

\section{Introduction}

In the context of nonhyperbolic partially hyperbolic diffeomorphisms of dimension three or higher, we prove
flexibility with respect to the pair [entropy, center Lyapunov exponent] for a broad class of systems. Flexibility, in this case, means that for any given value of the center Lyapunov exponent and any value of entropy less than the supremum of entropies of ergodic measures with that exponent, there exists an ergodic measure with exactly this entropy and exponent. Our setting involves one-dimensional center bundle which allows to consider the associated Lyapunov exponent, minimal strong foliations, and blender-horseshoes. This formalizes the interplay between two regions of the ambient space, one of center expanding and one of center contracting type. 
A short, non-exhaustive, list of cases to which our results apply is as follows:
  \begin{itemize}[leftmargin=0.6cm ]
  \item fibered by circles, \cite{BonDia:96}, 
  \item  flow-type,  \cite{BonDia:96, BuzFisTah:22},
  \item some Derived from Anosov diffeomorphisms, \cite{RodUreYan:22}, and
  \item  some anomalous diffeomorphisms in \cite{BonGogHamPot:20}.
  \end{itemize}
We now go into the details.

\subsection{Flexibility program}
The ``flexibility program'' in dynamics was initiated by Katok (for the original formulation and further discussion see \cite{BocKatRod:22,ErcKat:19}). It refers to verifying if
\begin{itemize}[leftmargin=0.6cm ]
\item[] {\em{within a certain realm of dynamical systems, a given dynamical quantity takes arbitrary values.}}
\end{itemize}  
This program has been implemented in various contexts%
\footnote{The flexibility program in smooth dynamics with special focus on Lyapunov exponents was outlined and discussed, for example, in \cite{BocKatRod:22} , where they prove results for volume-preserving $C^2$ Anosov diffeomorphisms admitting dominated splitting into one-dimensional bundles.}. Given a dynamical system, one can ask whether its ergodic measures realize the full spectrum of, for example, entropies. For a continuous map $f$ on a compact metric space, the variational principle for topological entropy states that 
\[
	h_{\rm top}(f)=\sup_\mu h_\mu(f),
\]	 
where  $h_\mu(f)$ denotes the entropy of an $f$-invariant probability measure. This leads to the question of whether, for every $h\in[0,h_{\rm top}(f))$ there exists an ergodic measure with entropy $h$. In the context of smooth dynamics, this property has often been referred to as a conjecture of Katok. In fact, it follows from his results in \cite{Kat:80}, which imply that for $C^{1+\alpha}$-surface diffeomorphisms with positive entropy,  any ergodic measure with entropy arbitrarily close to $h_{\rm top}(f)$ can be approximated (simultaneously in entropy and in the weak$\ast$ topology) by measures supported on horseshoes.

This ``intermediate entropy property''  can be also formulated within a given subclass of ergodic measures. For example, given a continuous function $\varphi\colon X\to\bR$ and $\alpha$ a number within the range of averages $\{\int\varphi\,d\mu\colon\mu\text{ $f$-invariant probability}\}$, can every value
\[
	h\in[0,h(\alpha)],\quad\text{ where }\quad
	h(\alpha)\eqdef \sup\Big\{h_\mu(f)\colon\int\varphi\,d\mu=\alpha\Big\},
\]
be attained by some ergodic measure? This question can be seen as a mixture of flexibility program with multifractal analysis. It was verified in \cite{HouTia:24} for basic sets which, in particular, satisfy the specification property%
\footnote{The specification property allows, roughly speaking, an arbitrary concatenation of orbit pieces. After contributions by Sigmund \cite{Sig:74}, this property allows to describe many properties of the space of invariant measures.}. 
We are interested in this conjecture in settings where specification properties do not hold, and where ergodic measures with ``different types of hyperbolicity'' coexist and interact within the same piece of transitive dynamics. 
Indeed this gives rise to ergodic measures with some exponent equal to zero 
(called {\em{nonhyperbolic}})
and having positive entropy. Observe that there are no general tools to deal with such measures (such as Pesin theory)
and those measures can have very rich dynamics.
Theorem~\ref{thm.main} proves Katok's conjecture for a large class of partially hyperbolic systems.
 We also show  that the maximal complexity of the nonhyperbolic part can be expressed in terms of entropy of nonhyperbolic measures, see Corollary~\ref{c.vp}. 
 
 We now proceed to describe the systems we consider.

\subsection{Setting}
Let $M$ be a compact Riemannian manifold without boundary, and denote by $\Diff^1(M)$ the space of $C^1$ diffeomorphisms on $M$ endowed with the $C^1$ topology. A diffeomorphism $f\in\Diff^1(M)$ is \emph{partially hyperbolic}, if there exists a $Df$-invariant dominated splitting 
\begin{equation}\label{defsplitt} 
	TM=E^\ss \oplus E^\c\oplus E^\uu 
\end{equation}	 
such that $E^\ss$ (resp. $E^\uu$) is uniformly contracting (resp. expanding), see Section \ref{ss.prescribed} for details. Denote by $\PH^1_{\c =1}(M)$ the (open) set of $C^1$ partially hyperbolic diffeomorphisms on $M$ with a one-dimensional \emph{center bundle} $E^\c$. Due to  \cite{HirPugShu:77}, the \emph{strong stable bundle} $E^\ss$  and the \emph{strong unstable bundle} $E^\uu$ are uniquely integrable. We denote the strong stable and unstable foliations 
that integrate these bundles by $\cW^\ss$ and $\cW^\uu$,   respectively.  As the center bundle is one-dimensional, it (locally) defines a continuous vector field, by Peano's theorem, at each point, there are $C^1$ curves tangent to $E^\c$, which we simply call \emph{center curves}. In general, these curves may not form a foliation.

In what follows, we consider diffeomorphisms $f\in \PH^1_{\c =1}(M)$ and denote by $\cM_{\rm erg}(f)$ and $\cM(f)$ the sets of $f$-ergodic and $f$-invariant measures, respectively. For any ergodic measure $\mu$ of $f\in\PH_{\c=1}^1(M)$, its \emph{center Lyapunov exponent} is defined by 
\begin{equation}\label{defchic}
	\chi^\c(\mu)
	\eqdef \int\log\|D^\c f\|\,d\mu,
	\quad\text{ where }\quad
	D^\c f(x)\eqdef Df|_{E^\c(x)}.
\end{equation}
Note that, as $E^\c$ is a continuous bundle,  the latter is a continuous function. We only focus on those exponents. Define 
\begin{equation}\label{defminmax}
	\chi_{\rm max}
	\eqdef \max\big\{\chi^\c(\mu)\colon\mu\in\cM_{\rm erg}(f) \big\}
	\quad\text{ and }\quad
	\chi_{\rm min}
	\eqdef\min\big\{\chi^\c(\mu)\colon\mu\in\cM_{\rm erg}(f) \big\},
\end{equation}
omitting the dependence on $f$ in this notation, and for each  $\chi\in[\chi_{\rm min},\chi_{\rm max}]$, consider the level set of ergodic measures
$$
\cM_{\rm erg,\chi}(f)
	\eqdef \big\{\mu\colon \mu\in\cM_{\rm erg}(f),\chi^\c(\mu)=\chi\big\}.
$$	
This allows to split the set of ergodic measures of $f$ into three subsets according to the sign of the exponent:   {\em{nonhyperbolic measures}} if $\chi=0$, {\em{center contracting measures}}
if $\chi<0$, and {\em{center expanding measures}} if $\chi>0$. The last two classes are called
{\em{hyperbolic.}}
Under our assumptions, each of these sets is nonempty, and there exists a genuine interaction between them that we explore. 

Let us introduce the class of \emph{robustly transitive and nonhyperbolic diffeomorphisms}%
\footnote{Both properties, transitivity and nonhyperbolicity, hold simultaneously robustly.}, focusing  on those in $\mathrm{PH}^1_{\c=1}(M)$, denoting this set by $\mathrm{RNT}^1_{\c=1}(M)$. This class includes those satisfying our hypotheses. A diffeomorphism is {\em{transitive}} if it has a dense orbit, and \emph{$C^1$-robustly transitive} if it has a $C^1$-neighbor\-hood consisting of transitive diffeomorphisms. The first examples of $C^1$-robustly transitive diffeomorphisms were constructed by Anosov \cite{Ano:67}, and carry now his name. While Anosov diffeomorphisms are uniformly hyperbolic, the class of robustly transitive ones is broader and includes nonhyperbolic examples, as shown in the pioneering works of Shub and Ma\~n\'e \cite{Shu:71, Man:78}. This leads to the subcategory of {\em{robustly transitive and nonhyperbolic diffeomorphisms,}} where diffeomorphisms have a neighborhood of maps that are not hyperbolic. 
A detailed discussion of  $\mathrm{RNT}^1_{\c=1}(M)$ is beyond the scope of this paper, for further details on this extensively studied class see \cite[Section 2.1.1]{DiaGelZha:}.

Let us mention here that as our techniques are semi-local, our results could be formulated simply assuming that the dynamics is confined to a bounded invariant region of a manifold. This kind of formulation would, however, force us to define some assumptions (like minimality) in a more complicated way, and we are not going to go this way.
  
\subsection{Main results}

To state our main results, the first ingredient is {\em{blender-horseshoes,}} a special type of hyperbolic sets that appear in dimensions three or higher. For a heuristic explanation of a blender, we refer to \cite{BonCroDiaWil:16}. 
Blenders were introduced to study robust transitivity in  \cite{BonDia:96}. Thereafter
they have been formulated in various ways, adapted to specific settings and goals, yet all formulations preserve the same essential characteristics%
\footnote{These variations include the
blender-horseshoes \cite{BonDia:12} used in this paper,
symbolic blenders \cite{BarKiRai:12},
symplectic blenders \cite{NasPuj:12}, 
dynamical blenders \cite{BocBonDia:16},
parablenders \cite{Ber:16}, 
almost blenders \cite{Bie:17},
superblenders \cite{AviCroWil:21},
blender machines \cite{Asa:22}, 
conformal blenders \cite{FakNasRaj:25},
and adaptation of blenders to the complex setting  \cite{Duj:17}.
They have proven to be  useful in many contexts beyond the study of transitivity.
The applications  include, besides applications to study entropy properties of measures and the space of ergodic measures, robust cycles  and tangencies, fast growth of periodic orbits, historical behavior,
stable ergodicity (including actions of groups of diffeomorphisms),  instability problems (Arnold diffusion) in symplectic dynamics, and properties of the study of complex automorphisms  and polynomial and holomorphic maps. These ample list of applications and the fact that our arguments are semi-local suggest that the techniques involved in this paper could be ``translated'' to other settings,
 see the comment
at the end of Section~\ref{seBM1}.}.
We call a blender-horseshoe {\em{unstable}} if its center bundle  is uniformly expanding and {\em{stable}} if its center bundle is uniformly contracting. See Section~\ref{ss.blenderhorseshoes} for details. 

The second ingredient is the strong stable and unstable foliations and their minimality. Recall that a foliation is \emph{minimal} if every leaf of it is dense in the ambience. 

Finally, we consider, for every $\chi\in[\chi_{\rm min},\chi_{\rm max}],$  the level set of Lyapunov exponents
\begin{equation}\label{defLchi}
	\cL(\chi)
	\eqdef\Big\{x\in M\colon \lim_{|n|\rightarrow+\infty}\frac{1}{n}\log\|Df^n|_{E^\c(x)}\|=\chi\Big\}.
\end{equation}
Note that this set is, in general, a noncompact ``fractal'' set. To measure its ``complexity'', we rely on the concept of topological entropy $h_{\rm top}(f,\cdot)$ as defined in \cite{Bow:73}.

\begin{theoremalph}\label{thm.main}
	Let  $f\in \mathrm{PH}^1_{\c=1}(M)$ such that 
\begin{itemize}[leftmargin=0.6cm ]
\item[(1)] $f$ has an unstable and a stable blender-horseshoe,
\item[(2)] the foliations $\cW^\ss$ and $\cW^\uu$ are both  minimal.
\end{itemize}
Then for every $\chi\in(\chi_{\rm min},\chi_{\rm max})$ and every $\varepsilon>0$ small, there exists a continuous path 
\[\{\mu_t\}_{t\in[0,1]}\subset  \cM_{\rm erg,\chi}(f)\]
such that 
\begin{itemize}[leftmargin=0.6cm ]
\item $t\mapsto h_{\mu_t}(f)$ is continuous;
\item $h_{\mu_0}(f)=0$ and $h_{\mu_1}(f)>h_{\rm top}(f,\cL(\chi))-\varepsilon.$
\end{itemize}
\end{theoremalph}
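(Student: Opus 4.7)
The plan is to exploit the coexistence of the two blender-horseshoes $\Lambda^{\uu}$ (with positive center exponent $\chi_{\uu}$) and $\Lambda^{\ss}$ (with negative center exponent $\chi_{\ss}$), together with the minimality of $\cW^\ss$ and $\cW^\uu$. Minimality guarantees that any strong unstable (resp.\ stable) disc meets the activating strip of $\Lambda^{\ss}$ (resp.\ $\Lambda^{\uu}$), producing a robust library of heteroclinic connections between the two blenders. With these connections in hand, any prescribed itinerary alternating between $\Lambda^{\uu}$-words and $\Lambda^{\ss}$-words can be shadowed by a true orbit whose average center exponent is, up to a small transition error, a convex combination $\alpha\chi_{\uu}+(1-\alpha)\chi_{\ss}$. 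Since $\chi\in(\chi_{\rm min},\chi_{\rm max})$, after refining to sub-blenders whose exponents bracket $\chi$ we can tune $\alpha$ so that this average equals $\chi$ \emph{exactly}.

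For the high-entropy endpoint $\mu_1$, fix $\nu\in\cM_{\rm erg,\chi}(f)$ with $h_\nu(f)$ within $\varepsilon/2$ of $h_{\rm top}(f,\cL(\chi))$, which exists by the standard level-set variational principle applied to the continuous observable $\log\|D^\c f\|$. Concatenating long $\Lambda^{\uu}$-- and $\Lambda^{\ss}$--words in the weighted proportion prescribed by $\alpha$, a Katok-type construction on the resulting symbolic sub-shift yields a locally maximal ``mixed'' hyperbolic set. Its unique measure of maximal entropy $\mu_1$ lies in $\cM_{\rm erg,\chi}(f)$ by construction, and has entropy at least $h_\nu(f)-\varepsilon/2$. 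The point is that each branch of this horseshoe is genuinely hyperbolic (center-expanding while inside $\Lambda^{\uu}$, center-contracting while inside $\Lambda^{\ss}$) so no Pesin theory is needed for $\nu$ itself, even when $\chi=0$.

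For the zero-entropy endpoint $\mu_0$, carry out a GIKN-type inductive scheme producing periodic orbits $p_n$ of rapidly increasing periods $\pi_n$, each satisfying $\chi^\c(p_n)=\chi$, such that $p_{n+1}$ shadows $p_n$ for all but a $1/n$-fraction of its period. Provided $\pi_{n+1}/\pi_n$ grows fast enough, the weak-$*$ limit $\mu_0$ of the periodic measures has entropy zero, exponent exactly $\chi$ (from the inductive control of the Birkhoff sums of $\log\|D^\c f\|$), and is ergodic via the standard argument using one-dimensionality of $E^\c$ and the shadowing estimates.

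To obtain the path $\{\mu_t\}_{t\in[0,1]}$, interpolate by running the inductive construction in a $t$-parametrized fashion: at each scale $n$, replace a $t$-proportion of the rigid GIKN skeleton by a choice among exponentially many blender words with the same per-block exponent budget $\chi$, keeping a $(1-t)$-proportion of rigid repetition. The resulting family satisfies $\chi^\c(\mu_t)=\chi$ for all $t$, its entropy grows continuously and monotonically from $0$ at $t=0$ to $h_{\mu_1}(f)$ at $t=1$, and weak-$*$ continuity in $t$ follows because nearby parameters share almost all orbit segments at each scale. The principal obstacle, and where the blender hypothesis together with the minimal foliations do the heavy lifting, is maintaining $\chi^\c(\mu_t)=\chi$ \emph{exactly} throughout the path while varying entropy and keeping each $\mu_t$ ergodic: the small per-scale corrections to the exponent, introduced by the choice of which blender words to concatenate, must be absorbable into the transition segments provided by the robust heteroclinic web, and the inductive bookkeeping has to be tight enough that these corrections do not accumulate in the limit.
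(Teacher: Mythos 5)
Your proposal has a genuine gap at its core, and it sits exactly where the real difficulty of the theorem lies. You claim that concatenating long $\Lambda^{\uu}$-- and $\Lambda^{\ss}$--words in a tuned proportion $\alpha$ yields a locally maximal ``mixed'' hyperbolic set whose measure of maximal entropy has center exponent \emph{exactly} $\chi$ ``by construction.'' Neither half of this is justified. First, a compact invariant set containing branches with center expansion and branches with center contraction, together with orbits transiting between them, is not a hyperbolic set: the center direction is neither uniformly expanded nor uniformly contracted along all its orbits, so you cannot invoke shadowing/specification for the union of the two blenders (the failure of specification in this class is precisely why blenders are needed at all; the paper only ever uses a one-sided mechanism, sending a \emph{contracting} $\csu$-horseshoe through the \emph{unstable} blender and back, via the covering property of disks in-between). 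Second, even granting shadowing, the finite-time exponents along blocks fluctuate and the transitions contribute errors, so for any fixed finite subshift the exponent of the MME is only approximately the convex combination; ``tuning $\alpha$'' inside a fixed alphabet cannot hit $\chi$ exactly, and adjusting word lengths forces an infinite limiting procedure in which you must then prove that the limit measure is ergodic, has exponent exactly $\chi$, and loses almost no entropy. That limiting step is the actual content of the paper: an inductively defined cascade of subordinated horseshoes in which the deviation of the exponent from $\chi$ contracts by a fixed factor $\rho\in(0,1)$ at each stage (Theorem~\ref{THEOp.key-estimate-center-Lyapunov-exponent}), followed by the Feldman--Katok $\bar F_\FK$-Cauchy argument (Sections~\ref{sedbarFK}--\ref{sec:existence}) which yields ergodicity of the limit, weak$\ast$ convergence, and — via entropy expansiveness — convergence of entropies. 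Your closing paragraph names this obstacle for the path $\{\mu_t\}$ but does not resolve it, and it already arises for $\mu_1$ alone.

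Two further points. Your starting measure $\nu\in\cM_{\rm erg,\chi}(f)$ with $h_\nu$ near $h_{\rm top}(f,\cL(\chi))$ does not come from any ``standard level-set variational principle'': in this nonhyperbolic setting the restricted variational principle is a nontrivial input (Remark~\ref{r.DGZ}, from the companion paper, and only for $\chi\neq0$), and at $\chi=0$ it is Corollary~\ref{c.vp}, i.e.\ a \emph{consequence} of the theorem you are proving — so your argument is circular precisely in the most interesting case; moreover, a Katok-type horseshoe approximating $\nu$ in entropy requires $\nu$ hyperbolic, which fails at $\chi=0$. The paper avoids both problems by approximating with ergodic measures of exponent $\chi+\widehat\chi$ strictly on the contracting side and then recovering the exact exponent $\chi$ only in the cascade limit, with entropy loss factor $(1+c_0\widehat\chi)\to1$. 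Finally, for $\mu_0$ and the path, ergodicity, exactness of the exponent, and continuity of $t\mapsto h_{\mu_t}$ are asserted via ``standard arguments,'' but making a whole continuum of GIKN-type limits ergodic with continuously varying entropy is exactly what the uniform continuity of the map $\bH\colon(\cM_{\rm TE}(\sigma_\cA),\bar d)\to(\cM_{\rm erg,\chi}(f),\bar F_\FK)$ and Corollary~\ref{c.continuity-of-entropy-under-FK-for PH-1D} are for; without an analogous uniform-across-scales estimate your interpolation scheme does not close.
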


\begin{definition}[The class $\mathrm{BM}^1(M)$]
We denote by $\mathrm{BM}^1(M)$ the set of diffeomorphisms satisfying hypotheses (1) and (2) of Theorem~\ref{thm.main}. 
\end{definition}

In Section \ref{seBM1} we further discuss this large class of diffeomorphisms. This set was introduced independently in \cite{BonZha:19, DiaGelSan:20},   motivated by \cite{DiaGelRam:17}. For diffeomorphisms in that class we have now a quite complete description of the space of ergodic measures in terms of weak$\ast$ and entropy approximation.

\begin{remark}\label{r.BBDlevels}
For every $f\in \mathrm{BM}^1(M)$ and every $\chi\in[\chi_{\rm min},\chi_{\rm max}]$, one has $\cL(\chi)\neq \emptyset$, see for example \cite[Section 6.1]{BocBonDia:16}. Moreover, $h_{\rm top}(f,\cL(\chi))>0$, for every $\chi\in(\chi_{\rm min},\chi_{\rm max})$. The challenging part is to prove the inequality for $\chi=0$, which  was proved in \cite{BocBonDia:16}. To investigate the entropy at the extremes $\chi=\chi_{\rm min}$ and $\chi=\chi_{\rm max}$ remains an open problem related to ergodic optimization. 
\end{remark}

\begin{remark}\label{r.DGZ}
Given any $f\in \mathrm{BM}^1(M)$ it is shown in \cite{DiaGelZha:} that	
\begin{itemize}[leftmargin=0.6cm ]
\item the map $\chi\mapsto h_{\rm top}(f,\cL(\chi))$ varies continuously with $\chi\in [\chi_{\rm min},\chi_{\rm max}]$,
\item  for every $\chi\in[\chi_{\rm min},\chi_{\rm max}]\setminus\{0\}$, one has the \emph{restricted variational principle}
\[
	h_{\rm top}(f,\cL(\chi))=\sup\big\{h_\mu(f)\colon \mu\in\cM_{\rm erg,\chi}(f)\big\} .
\]
\end{itemize} 
\end{remark}

From Theorem~\ref{thm.main}, we derive the following. 

\begin{coroalph}[Restricted variational principle for $\chi=0$]\label{c.vp}
Let $f\in\mathrm{BM}^1(M)$. Then 
\[
	h_{\rm top}(f,\cL(0))=\sup\big\{h_\mu(f)\colon\mu\in\cM_{\rm erg,0}(f)\big\}.
\]
\end{coroalph}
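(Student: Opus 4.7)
The plan is to derive Corollary~\ref{c.vp} directly from Theorem~\ref{thm.main} at $\chi=0$, combined with classical facts about entropy. First, since $f\in\mathrm{BM}^1(M)$ possesses both a stable and an unstable blender-horseshoe, there exist periodic (hence ergodic) measures supported on these sets with center Lyapunov exponents of opposite signs. Therefore $\chi_{\rm min}<0<\chi_{\rm max}$ and, in particular, $0\in(\chi_{\rm min},\chi_{\rm max})$, so Theorem~\ref{thm.main} is applicable at $\chi=0$.

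For the easy inequality $\sup\{h_\mu(f)\colon\mu\in\cM_{\rm erg,0}(f)\}\leq h_{\rm top}(f,\cL(0))$, I would fix $\mu\in\cM_{\rm erg,0}(f)$ and observe that, because $E^\c$ is one-dimensional, the cocycle identity
\[
\frac{1}{n}\log\|Df^n|_{E^\c(x)}\|=\frac{1}{n}\sum_{k=0}^{n-1}\log\|D^\c f(f^k x)\|
\]
holds pointwise. Applying Birkhoff's ergodic theorem to the continuous observable $\log\|D^\c f\|$, used in both forward and backward time via the $f$-invariance of $\mu$, together with $\chi^\c(\mu)=0$, one obtains $\mu(\cL(0))=1$. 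Bowen's inequality from \cite{Bow:73}, namely $h_\mu(f)\leq h_{\rm top}(f,Z)$ for any Borel $Z$ of full $\mu$-measure, then yields $h_\mu(f)\leq h_{\rm top}(f,\cL(0))$. Taking the supremum over $\mu$ gives this direction.

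For the nontrivial inequality ``$\geq$'', apply Theorem~\ref{thm.main} with $\chi=0$: for every $\varepsilon>0$ small the endpoint $\mu_1$ of the continuous path provided by that theorem lies in $\cM_{\rm erg,0}(f)$ and satisfies $h_{\mu_1}(f)>h_{\rm top}(f,\cL(0))-\varepsilon$. Letting $\varepsilon\to 0$ gives $\sup\{h_\mu(f)\colon\mu\in\cM_{\rm erg,0}(f)\}\geq h_{\rm top}(f,\cL(0))$, completing the proof.

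All of the genuine difficulty has been packaged into Theorem~\ref{thm.main}; once that theorem is in hand, the corollary is purely formal. The only point that actually needs checking is that $0$ lies strictly between $\chi_{\rm min}$ and $\chi_{\rm max}$, and this is precisely what the coexistence of a stable and an unstable blender-horseshoe provides. It is worth emphasizing (as already noted in Remark~\ref{r.BBDlevels}) that even the positivity $h_{\rm top}(f,\cL(0))>0$ is a nontrivial input from \cite{BocBonDia:16}, so the content of the corollary lies entirely in upgrading a topological entropy statement on the noncompact fractal set $\cL(0)$ to one realized by genuine nonhyperbolic \emph{ergodic} measures.
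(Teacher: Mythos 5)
Your proof is correct and follows exactly the derivation the paper intends: the lower bound comes from applying Theorem~\ref{thm.main} at $\chi=0$ (legitimate since the two blender-horseshoes force $\chi_{\rm min}<0<\chi_{\rm max}$), and the upper bound is the standard Birkhoff-plus-Bowen argument showing every $\mu\in\cM_{\rm erg,0}(f)$ gives full measure to $\cL(0)$, hence $h_\mu(f)\le h_{\rm top}(f,\cL(0))$. The paper treats the corollary as an immediate consequence of Theorem~\ref{thm.main}, so your write-up just makes these routine steps explicit.
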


First, by Corollary \ref{c.vp}, the maximal complexity of the nonhyperbolic part of the dynamics can be expressed  in terms of entropies of ergodic measures. Moreover, Theorem~\ref{thm.main} sheds some more light on the space of nonhyperbolic ergodic measures proving that it contains path connected components varying continuously in entropy. For a dynamical system satisfying the specification property, it follows from \cite{Sig:77,LinOlsSte:78} that the space of its ergodic measures is path connected. Under the hypotheses considered in this paper, this property was shown in \cite{DiaGelSan:20,YanZha:20}. That each of the sets $\cM_{\rm erg,<0}(f)$ and $\cM_{\rm erg,>0}(f)$ is path connected is also a consequence of \cite{GorPes:17}.
  
\subsection{The set $\mathrm{BM}^1(M)$}\label{seBM1}
  
At first glance, the conditions defining this set may seem quite restrictive. However,
this set is actually rather large. Indeed,
 Table~\ref{thetable} highlights key features in the study of partially hyperbolic dynamics: existence of compact (both periodic and non-periodic) center leaves and dynamical coherence%
\footnote{A diffeomorphism $f\in \mathrm{PH}^1_{\c=1}(M)$ is {\em{dynamically coherent}} if there are simultaneously $f$ invariant foliations tangent to $E^\ss\oplus E^\c$ and to $E^\c\oplus E^\uu$, which give rise to a center foliation tangent to $E^\c$.}. 
Some examples in this table are classical (for example, fibered by circles, flow-type, and some derived from Anosov or DA systems) while being dynamically coherent. However, the table also includes examples that are non-dynamically coherent and exhibit non-periodic center leaves. These belong to the recently discovered and developed class of anomalous partially hyperbolic systems, which is vast and likely to see further developments in the near future.

\begin{tiny}
\begin{table}[h]\caption{Properties and examples in $\mathrm{BM}^1(M)$}
\begin{center}
\begin{tabular}{|m{1.5cm}|c|c|c|m{2.5cm}|}
\hline
 & compact center leaves & periodic compact center leaves & dynamically coherent & some references \\
\hline
some DA 	& no & no & yes & \cite{RodUreYan:22} \\
\hline
fibered by circles & yes & yes & yes & \cite{BonDiaUre:02,RodRodUre:07}\\
\hline
flow-type & yes & yes & yes & \cite{BonDiaUre:02,RodRodUre:07,BuzFisTah:22}\\
\hline
anomalous & yes & no & no & \cite{BonGogHamPot:20}\\
\hline
\end{tabular}
\end{center}
\label{thetable}
\end{table}
\end{tiny}
  
\begin{remark}[Occurrence of blender-horseshoes is open and dense in $\mathrm{RNT}^1_{\c=1}(M)$]\label{r.open}
First, blender-horseshoes, like any hyperbolic set, have continuations, making their existence a $C^1$-robust property. We also note that the diffeomorphisms with both a stable and an unstable blender-horseshoe are dense in $\mathrm{RNT}^1_{\c=1}(M)$. Indeed, as a consequence of \cite{Hay:97,BonCro:04}, the diffeomorphisms having a {\em{heterodimensional cycle}}
 (i.e., there are saddles of center contracting and center expanding types whose invariant manifolds intersect 
 cyclically\footnote{The term heterodimensional refers to the fact that the stable bundles of these saddles have different dimensions and thus the sets have different type of hyperbolicity. Note that, due to dimension deficiency, some of these intersections cannot be transverse.}) form a dense subset of $\mathrm{RNT}^1_{\c=1}(M)$. Moreover, such cycles lead, after small $C^1$-perturbations, to blender-horseshoes, that can be chosen of either type, see \cite{BonDia:08}\footnote{The results in \cite{BonDia:08} are
  stated in terms of robust cycles. Blender-horseshoes were later introduced in \cite{BonDia:12} to provide the appropriate framework for studying robust cycles, formalizing \cite{BonDia:08}, where the concept of blender-horseshoes appeared without an explicit name.}.
  \end{remark}  
 
\begin{remark}[Robust minimality of the strong foliations]\label{r.robmin}
Note that the minimality of the foliations $\cW^\ss$ and  $\cW^\uu$ is not an open property. However, if both $\cW^\ss$ and $\cW^\uu$ are minimal and there are a stable blender-horseshoe and an unstable blender-horseshoe, then these foliations are indeed robustly minimal: for every $g$ close to $f$, its associated foliations $\cW^\ss_g$ and $\cW^\uu_g$ remain minimal. This fact is a key argument in \cite{BonDiaUre:02}\footnote{It is worth noting that \cite{BonDiaUre:02} was written before the concept of a blender-horseshoe was formally introduced in \cite{BonDia:12}. However, the object involved in the proofs  in \cite{BonDiaUre:02} is indeed the unstable blender-horseshoe.}.
  \label{r.minimalitystrong}
  \end{remark}
  
For a broader discussion of blender-horseshoes and minimality of the foliations, we refer to \cite[Sections 2.1.2 and 2.1.3]{DiaGelZha:}. As the minimality of the foliation $\cW^\uu$ (or $\cW^\ss$) immediately implies the transitivity of $f$, from Remarks~\ref{r.open} and \ref{r.minimalitystrong} we get the following.
    
\begin{remark}
$\mathrm{BM}^1(M)$ is an open  subset of $\mathrm{RNT}^1_{\c=1}(M)$.
\end{remark}

In view of our techniques here, let us close the discussion of $\mathrm{BM}^1(M)$ with a philosophical observation. On one hand, what is above presents a general framework for the interaction between two types of hyperbolicity. On the other hand, considering the versatility of blenders in various settings, that our constructions are semi-local,  and  that cyclic configurations are in the realm of nonhyperbolic dynamics, we believe that the conditions in this paper have the potential to be adapted to other dynamical contexts.

\subsection{Further results}
\label{ss.further}

To prove Theorem~\ref{thm.main}, we use a more general result that involves the minimality of  only one strong foliation 
and the corresponding type of blender-horseshoe.

\begin{theoremalph}\label{thm.maingeneral}
	Let $f\in \mathrm{PH}^1_{\c=1}(M)$ such that
\begin{itemize}[leftmargin=0.6cm ]
\item[(1)] the foliation $\cW^\uu$ is  minimal,
\item[(2)] $f$ has an unstable blender-horseshoe,
\item[(3)] $f$ has a saddle of contracting type.
\end{itemize}
	 Then, for every $\chi\in(\chi_{\rm min},0]$ and every $\varepsilon>0$, there exists a continuous path $\{\mu_t\}_{t\in[0,1]}\subset \cM_{\rm erg,\chi}(f)$ such that 
\begin{itemize}[leftmargin=0.6cm ]
	\item $t\mapsto h_{\mu_t}(f)$ is continuous,
	\item it holds 
\[ 	
	h_{\mu_0}(f)=0,
	\quad\text{ and }\quad
	h_{\mu_1}(f)\geq \limsup_{\chi'\nearrow\chi}
	\left(
	\sup\Big\{h_\mu(f)\colon\mu\in\cM_{\rm erg,\chi'}(f)\Big\} \right) - \varepsilon.
\]
\end{itemize}
\end{theoremalph}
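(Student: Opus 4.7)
My plan is to build the path $\{\mu_t\}$ as ergodic measures carried on flip-flop type structures that weave together two pieces: the unstable blender-horseshoe $\Lambda$ (where the center is uniformly expanding) and a contracting-center horseshoe $\Gamma'$ that concentrates most of the target entropy.

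First I would fix $\chi\in(\chi_{\min},0]$ and $\varepsilon>0$, and use the definition of the limsup in the statement to choose $\chi'<\chi$ arbitrarily close to $\chi$ together with an ergodic measure $\nu\in\cM_{\rm erg,\chi'}(f)$ whose entropy is within $\varepsilon/2$ of that supremum. Since $\chi'<0$, the measure $\nu$ is hyperbolic (the strong-stable bundle is always contracted, the center is contracted under $\nu$, and the strong-unstable is expanded), so the Katok horseshoe approximation produces a contracting-center horseshoe $\Gamma'$ whose topological entropy and center exponent of the measure of maximal entropy approximate $h_\nu(f)$ and $\chi'$ to any prescribed precision.

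Next, the minimality of $\cW^\uu$ together with an inclination-lemma argument inside the unstable blender-horseshoe lets me realize a robust heterodimensional cycle between $\Lambda$ and $\Gamma'$: given any center-unstable disc of suitable size at a point of $\Lambda$, its forward iterates accumulate on the whole manifold and in particular intersect a local stable manifold of $\Gamma'$ transversally, while the saddle of contracting type together with minimality of $\cW^\uu$ lets strong-unstable leaves through $\Gamma'$ revisit the superposition region of $\Lambda$. This is precisely the cyclic configuration encoded by the flip-flop family machinery as developed in~\cite{DiaGelRam:17,BonZha:19,DiaGelSan:20}. Out of it I build, following that machinery, a countable-alphabet symbolic extension whose admissible words alternate long $\Lambda$-excursions with long $\Gamma'$-excursions in controlled proportions, shadowed by genuine $f$-orbits.

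The core step is the exponent- and entropy-balancing. On such a symbolic system the center Lyapunov exponent of a Birkhoff-typical orbit decomposes as a weighted average of $\chi^\c(\Lambda)>0$ and a center exponent inside $\Gamma'$ close to $\chi'<0$; adjusting the proportion of time spent in each piece lets me pin this average to exactly $\chi$. Within this constraint, I parameterize by $t\in[0,1]$ the number of admissible $\Gamma'$-words used: at $t=0$ only a single periodic concatenation is allowed, producing a periodic measure with $h_{\mu_0}(f)=0$ and exponent $\chi$, while at $t=1$ essentially all $\Gamma'$-words of a large fixed length are admissible, producing a measure whose entropy is at least $h_{\rm top}(\Gamma')-\varepsilon/2\geq h_\nu(f)-\varepsilon$. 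Ergodicity and the \emph{exact} equality $\chi^\c(\mu_t)=\chi$ follow from the Markov structure of the coding and Birkhoff's theorem, since the center exponent is a continuous function on $M$.

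The hardest part will be arranging the three requirements simultaneously: ergodicity, the \emph{exactness} (not just approximate equality) of $\chi^\c(\mu_t)=\chi$ along the whole path, and the \emph{joint} continuity of $t\mapsto\mu_t$ in the weak$^\ast$ topology and of $t\mapsto h_{\mu_t}(f)$. I expect to realize each $\mu_t$ as the projection of an explicit shift-invariant Bernoulli-like measure on the symbolic model, whose word-frequencies depend continuously on $t$; the variational principle on the countable-alphabet subshift then delivers entropy continuity, while continuity of the center exponent function on $M$ transfers constancy of $\chi^\c$ from the symbolic to the smooth side. The degenerate endpoint $t=0$, where the support collapses to a single periodic orbit, will require a careful reparameterization so that both support and entropy depend continuously on $t$ as $t\searrow 0$.
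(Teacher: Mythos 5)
Your overall architecture (Katok horseshoe for a measure $\nu$ with exponent $\chi'$ slightly below $\chi$, then a cyclic/flip-flop configuration between that horseshoe and the unstable blender-horseshoe furnished by minimality of $\cW^\uu$) is indeed the paper's starting point, but the core of your argument has a genuine gap: the mechanism that is supposed to make the center exponent \emph{exactly} $\chi$ along the whole path. Fixing the proportions of $\Gamma'$-time and $\Lambda$-time in a one-step symbolic model does not pin the exponent to $\chi$: the finite-time center derivatives inside the $\Gamma'$-blocks and inside the blender fluctuate in intervals (of width of order $\delta|\widehat\chi|$ and $[\kappa_{\rm min},\kappa_{\rm max}]$ respectively), so any measure on such a model only has exponent in an interval around $\chi$, and making the integral equal to $\chi$ for \emph{every} $t$ simultaneously requires solving a constraint whose solvability, continuity in $t$, and compatibility with the entropy estimates you never address. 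The failure is most visible at your endpoint $t=0$: a periodic orbit of the flip-flop model has a center exponent that is a specific number determined by the derivative along that finite orbit, and there is no reason it equals the arbitrarily prescribed real number $\chi$ (in particular for $\chi=0$); so the proposed zero-entropy endpoint simply does not lie in $\cM_{\rm erg,\chi}(f)$.

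This is precisely why the paper does not use a single flip-flop model but an \emph{infinite cascade} of subordinated horseshoes (Theorem~\ref{THEOp.key-estimate-center-Lyapunov-exponent} applied inductively): at each stage the excursion time into the blender is chosen so that the finite-time exponent error contracts by a fixed factor $\rho\in(0,1)$, so the exponents converge geometrically to $\chi$, while the return times grow only by a factor $1+\xi|\widehat\chi|$, which controls the entropy loss. The limit measures are then obtained as $\bar F_\FK$-limits of ergodic measures on the successive suspensions; ergodicity of the limit, exactness of the exponent, and convergence (hence continuity in $t$) of the entropy come from the Feldman--Katok criterion together with entropy expansiveness (Theorem~\ref{thepro:ergentcon} and Corollary~\ref{c.continuity-of-entropy-under-FK-for PH-1D}), with the path parameterized by a $\bar d$-continuous path of Bernoulli measures on the base shift. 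Your proposal contains no limiting procedure and no ergodicity/entropy-convergence criterion playing this role (invoking ``the Markov structure and Birkhoff's theorem'' does not substitute for it), and also takes for granted a countable-alphabet coding with shadowing and a variational principle whose construction in this partially hyperbolic setting is exactly the content of the paper's $\csu$-cube and blender-successor machinery. Without the cascade-plus-$\bar F_\FK$ step (or an equivalent GIKN-type mechanism), the statement as claimed --- exact exponent $\chi$ along a continuous path reaching entropy zero --- is not proved.
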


\begin{definition}[The classes $\mathrm{BM}^1_\u(M)$ and $\mathrm{BM}^1_\s(M)$]
We denote by $\mathrm{BM}^1_\u(M)$ the set of diffeomorphisms satisfying the hypotheses (1)--(3) of Theorem~\ref{thm.maingeneral}. Analogously, we can define the set  $\mathrm{BM}^1_\s(M)$ considering the foliation $\cW^\ss$,  a stable blender-horse\-shoe, and a saddle of expanding type.
\end{definition}

Applying Theorem \ref{thm.maingeneral} to $f^{-1}$ instead of $f$, we obtain its analogous version for $f\in \mathrm{BM}^1_\s(M)$. 
It follows from Remark \ref{r.robmin} that both subsets $\mathrm{BM}^1_\s(M)$ and $\mathrm{BM}^1_\u(M)$ are open sets. Note also that 
\[
	\mathrm{BM}^1 (M) =   \mathrm{BM}^1_\s(M) \cap \mathrm{BM}^1_\u(M).
\]	 
For $f\in\BM^1(M)$,  the hypothesis (3) on the saddles become superfluous, as they are already provided by the blender-horseshoes.

\begin{remark}
The set $\mathrm{BM}_\u^1(M) \cup \mathrm{BM}_\s^1(M)$ is an open and dense subset of $\mathrm{RNT}^1_{\c=1} (M)$, see \cite{BonDiaUre:02,RodRodUre:07}.
\end{remark}

\subsection{The space of ergodic measures}

We can frame the results of this paper within the study of the space of invariant and ergodic measures in settings beyond uniform hyperbolicity, where specification properties do not hold. In our setting, measures with different types of hyperbolicity (specifically, center contracting and center expanding) coexist and interact. 
Thus, considering convex combinations of ergodic measures with different type of hyperbolicity is natural.  A natural problem is to decide which measures in those combinations are in the closure of the space of ergodic measures. There are some situations where it does not happen, in particular,  \cite[Corollary 3]{DiaGelRam:19}
 provides examples of two ergodic measures (maximal entropy measures with positive and negative center Lyapunov exponents) whose convex combinations are not approximated by ergodic measures in weak$*$-topology and in entropy.
Here we obtain some results in the opposite direction. Theorem~\ref{nonerglims} below, derived from our constructions, shows how sequences of ergodic measures can converge to a nonergodic measure whose ergodic decomposition includes both center expanding and center contracting measures. 

For what is below, we split the set of hyperbolic measures into two parts:
  
\begin{equation}\label{def:Msubspaces}\begin{split}
	\cM_{\rm erg, <0}(f)
	&\eqdef\big\{\mu\colon \mu\in\cM_{\rm erg}(f),\chi^\c(\mu)<0\big\}
	=\bigcup_{\chi\in[\chi_{\rm min},0)}\cM_{\rm erg,\chi}(f),\\
	\cM_{\rm erg, >0}(f)
	&\eqdef\big\{\mu\colon \mu\in\cM_{\rm erg}(f),\chi^\c(\mu)>0\big\}
	=\bigcup_{\chi\in(0,\chi_{\rm max}]}\cM_{\rm erg,\chi}(f).
\end{split}\end{equation}
Define also $\cM_{\rm erg, \leq 0}(f)$ and $\cM_{\rm erg, \geq 0}(f)$ in the obvious ways.

For the following, we say that measures are \emph{close in the weak$\ast$ and entropy topology} if they are close in the Wasserstein distance (see \eqref{eqWasser} for definition) and in entropy.

\begin{theoremalph}\label{nonerglims}
Let $f\in\PH_{\c=1}^1(M)$ such 
\begin{itemize}[leftmargin=0.6cm ]
\item[(1)] $f$ has an unstable and a stable blender-horseshoe,
\item[(2)] the foliations $\cW^\ss$ and $\cW^\uu$ are both  minimal.
\end{itemize}
	Let $\vartheta\in\cM_{\rm erg,\le0}(f)$ and $\Lambda_\fB$ be an unstable blender-horseshoe. Then for every neighborhood $U$ of $\vartheta$ in the weak$\ast$-topology and every $\varepsilon>0$, there is a sequence $(\mu_k)_k\subset\cM_{\rm erg}(f)$ converging to $\mu_\infty\in\cM(f)\cap U$ such that 
\begin{enumerate}[leftmargin=0.6cm ]
	\item  $|h_{\mu_{\infty}}(f)-h_{\vartheta}(f)|<\varepsilon$,
	\item there exist  $\mu_{<0}\in\cM_{\rm erg,<0}(f)\cap U$, $\mu_{>0}\in\cM(f|_{\Lambda_\fB})$, and $\alpha\in(0,1)$ such that 
	\[ \mu_\infty=\alpha\mu_{<0}+(1-\alpha)\mu_{>0}.\]
\end{enumerate}
\end{theoremalph}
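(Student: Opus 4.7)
My plan is to realize $\mu_\infty$ as the convex combination $\alpha\mu_{<0}+(1-\alpha)\mu_{>0}$ for some $\alpha$ close to $1$, and to construct $(\mu_k)_k$ as ergodic measures on ``hybrid'' horseshoes obtained by gluing long orbit blocks of a hyperbolic approximation of $\vartheta$ to long orbit blocks inside $\Lambda_\fB$. If $\chi^\c(\vartheta)<0$ I simply set $\mu_{<0}:=\vartheta$. If $\chi^\c(\vartheta)=0$, I produce $\mu_{<0}\in\cM_{\mathrm{erg},<0}(f)\cap U$ with $|h_{\mu_{<0}}(f)-h_\vartheta(f)|<\varepsilon/2$ using the weak-$\ast$- and entropy-approximation of nonhyperbolic ergodic measures by hyperbolic ones in $\mathrm{BM}^1(M)$; the relevant ingredients are the restricted variational principle at $\chi=0$ (Corollary~\ref{c.vp}), the continuity of $\chi\mapsto h_{\mathrm{top}}(f,\cL(\chi))$ (Remark~\ref{r.DGZ}), and the constructions of \cite{DiaGelZha:}. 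I then pick any $\mu_{>0}\in\cM(f|_{\Lambda_\fB})$ and choose $\alpha\in(0,1)$ close enough to $1$ that, by affinity of entropy, $\mu_\infty:=\alpha\mu_{<0}+(1-\alpha)\mu_{>0}$ lies in $U$ and satisfies $|h_{\mu_\infty}(f)-h_\vartheta(f)|<\varepsilon$. This settles item (1) and the structural identity in item (2).

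To produce the sequence $(\mu_k)_k$, I Katok-approximate $\mu_{<0}$ and $\mu_{>0}$ by ergodic measures supported, respectively, on a center-contracting horseshoe $\Gamma^-_k$ and on a horseshoe $\Gamma^+_k\subset\Lambda_\fB$, in weak-$\ast$ and entropy. Two heterodimensional connections are then needed. The minimality of $\cW^\uu$ provides $W^\u(\Gamma^-_k)\cap W^\s(\Lambda_\fB)\neq\emptyset$ (every strong unstable leaf is dense in $M$), and the superposition property of the unstable blender-horseshoe $\Lambda_\fB$ provides $W^\u(\Lambda_\fB)\cap W^\s(\Gamma^-_k)\neq\emptyset$, provided $\Gamma^-_k$ is arranged so that $W^\s(\Gamma^-_k)$ intersects the activating family of $\Lambda_\fB$. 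With both connections in place I assemble an invariant hyperbolic set $\Sigma_k$ coded by a subshift of finite type on two super-symbols $A,B$ corresponding to orbit blocks of lengths $n_k,m_k\to\infty$ inside $\Gamma^-_k$ and $\Gamma^+_k$ respectively, separated by transition words of bounded length. Let $\mu_k$ be the $f$-ergodic measure on $\Sigma_k$ obtained from the Bernoulli measure on super-symbols with $A$-weight $\alpha$ and $B$-weight $1-\alpha$. By Birkhoff's theorem applied to the super-symbol frequencies together with the asymptotic negligibility of the transition words, $\int\phi\,d\mu_k\to\alpha\int\phi\,d\mu_{<0}+(1-\alpha)\int\phi\,d\mu_{>0}$ for every continuous $\phi$, that is, $\mu_k\to\mu_\infty$ in weak-$\ast$.

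The main obstacle is the construction of the second heterodimensional connection $W^\u(\Lambda_\fB)\cap W^\s(\Gamma^-_k)$: the Katok horseshoe $\Gamma^-_k$ must be placed so that its stable manifold falls in the activating family of the blender, while the corresponding ergodic measure on $\Gamma^-_k$ remains weak-$\ast$- and entropy-close to $\mu_{<0}$ (and hence to $\vartheta$). This is precisely the semi-local blender and minimal-foliation mechanism driving Theorems~\ref{thm.main} and~\ref{thm.maingeneral}, which I would reuse modularly here. The genuinely new feature compared to those theorems is that the ergodic sequence $(\mu_k)_k$ is designed to converge to a \emph{non-ergodic} limit; this is achieved by letting $n_k,m_k\to\infty$, so that while each $\mu_k$ remains ergodic on $\Sigma_k$, its mass distributes asymptotically across the supports of $\mu_{<0}$ and of $\mu_{>0}\subset\Lambda_\fB$, yielding the non-trivial ergodic decomposition of $\mu_\infty$ required in item (2).
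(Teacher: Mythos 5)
Your global skeleton is close to the paper's: the paper also replaces $\vartheta$ by a hyperbolic ergodic $\vartheta'\in\cM_{\rm erg,<0}(f)\cap U$ close in weak$\ast$ and entropy (quoting \cite{DiaGelSan:20,YanZha:20} — this, rather than Corollary~\ref{c.vp} or Remark~\ref{r.DGZ}, is the correct source for that approximation step), builds a contracting $\csu$-horseshoe $\bfH$ mimicking it via Theorem~\ref{t.smarthorseshoes}, and produces ergodic measures whose orbits spend a fixed proportion of time near $\bfH$ and the rest inside the blender. The genuine gap is in your construction of $\Sigma_k$ and $\mu_k$. First, $\Sigma_k$ cannot be a uniformly hyperbolic set: it would contain the all-$A$ sequences (center-contracting measures) and the all-$B$ sequences inside $\Lambda_\fB$ (center-expanding measures), so it carries ergodic measures with center exponents of both signs. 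Without uniform hyperbolicity/expansivity your ``SFT coding'' is not a conjugacy — the cylinder intersections need not be single points — and the ergodicity of the pushed-forward Bernoulli measure, as well as the Birkhoff-average computation during the blocks, is unjustified. Second, and more decisively, free concatenation of super-symbols (which a Bernoulli measure forces: arbitrarily long $B$-runs occur almost surely) is not realizable by the blender mechanism: a whole $\cs$-complete subcube can be kept inside the superposition region only for a number of successor steps bounded in terms of the center contraction accumulated in the immediately preceding contracting block (the cap $\ell(\cdot)$ of Remark~\ref{r.lastsucessor} and Claim~\ref{claimellba}); once that cap is exceeded the in-between/covering structure, and with it the coding and the control of the orbit's location, is lost. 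Individual points can of course stay in the blender forever, but then you retain neither the product structure nor the measure/entropy control you need.

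This is precisely why the paper does not mix blocks freely: for each $m$ it builds \emph{one} horseshoe $\bfH_m$ subordinated to $\bfH$ (following Proposition~\ref{pl.completetour}) in which \emph{every} itinerary consists of $mR$ iterates in $\bfH$ followed by a blender excursion of the prescribed length $T_{\rm b}(m,\varepsilon)=\lfloor\tfrac{\varepsilon}{1-\varepsilon}mR\rfloor$, with the smallness condition on $\varepsilon$ ensuring $T_{\rm b}(m,\varepsilon)<\ell(m)$, i.e.\ the excursion stays below the successor cap. The ergodic approximants are the measures of maximal entropy $\mu_m$ of these horseshoes (so the mixing proportion is hard-wired into the itinerary, not into Bernoulli weights), and the limit decomposes as $(1-\varepsilon)\mu_{\bfH}+\varepsilon\mu^{+}$ by splitting the associated tower; note that $\mu^{+}\in\cM(f|_{\widetilde\Lambda_\fB})$ arises only as a (subsequential) limit and is \emph{not} prescribed in advance, whereas your plan to realize an arbitrary preassigned $\mu_{>0}$ would need additional shadowing inside $\Lambda_\fB$ that you do not provide (fortunately the statement does not require it). Finally, a smaller inaccuracy: $W^\u(\Gamma^-_k)\cap W^\s(\Lambda_\fB)\neq\emptyset$ does not follow from density of strong unstable leaves alone (these manifolds have dimensions $\dim E^\uu$ and $\dim E^\ss$, summing to $\dim M-1$); one needs the iterated $\uu$-disks to contain disks of the superposition family $\mathcal{B}$ (minimality plus openness of the in-between condition) and then the blender property that its stable set meets every disk of $\mathcal{B}$, as in Remark~\ref{r.heterocycle}.
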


An analogous result gives a nonergodic limit measure combining an ergodic measure of expanding type with a stable blender-horseshoe. 

\begin{remark}
The    ergodic decomposition of $\mu_\infty$ consists of measures in both $\cM_{\rm erg,<0}(f)$ and $\cM_{\rm erg,>0}(f)$, but none in $\cM_{\rm erg,0}(f)$.
\end{remark}

A result with the same flavor was obtained \cite[Theorem C]{BonZha:19},  which states that the measures supported on the vicinity of a heterodimensional cycle of a saddle of contracting type and an unstable blender-horseshoe%
\footnote{With the terminology in \cite{BocBonDia:16}, this configuration is called {\em{split flip-flop.}}}  are limit of {\em{periodic measures}} (ergodic measures supported on periodic orbits). We note that in  \cite{BonZha:19} no entropy estimate is obtained, while here we construct measures of every possible entropy. 

\subsection{Techniques}
Let us discuss our techniques. Given a hyperbolic ergodic measure with positive entropy, we construct horseshoes which ``ergodically'' approximate it, following classical ideas of Katok \cite{Kat:80} and using the Brin-Katok theorem \cite{BriKat:83}. Our partially hyperbolic context allows us to construct these horseshoes, for which we introduce the concept of \emph{$\csu$-cubes}. An $\csu$-cube is obtained by, roughly speaking, taking a ``base'' tangent to $E^\cs=E^\ss\oplus E^\c$ and then attaching fibers given by strong-unstable disks. The dynamics on each such horseshoe can be described conveniently in terms of symbolic dynamics and a discrete-time suspension.  

In what follows, let us assume the minimality of $\cW^\uu$, the existence of a horseshoe of contracting type, and an unstable blender-horseshoe. The minimality of the strong foliation implies that the horseshoe ``connects'' to the unstable blender-horseshoe and \emph{vice versa}. This  provides a (heterodimensional) cycle between the contracting horseshoe and the unstable blender-horseshoe%
\footnote{This means that the invariant manifolds of the horseshoe and the blender-horseshoe intersect cyclically.}.
We identify subcubes which first stay in the domain of the contracting horseshoe for a long time, then transit to the blender-horseshoe, and finally return to the horseshoe. A careful control of those transitions allows to construct new horseshoes of contracting type, with slightly altered center Lyapunov exponent and almost the same entropy as the initial one. This initiates a sequence of horseshoes, each horseshoe cyclically related with the blender. Inductively, this defines a cascade of horseshoes, whose exponents gradually approach some target value, while their entropy is controlled.   

The next step is to show that this cascade of horseshoes can be constructed in such a way that the sequence of spaces of ergodic measures associated with each horseshoe weakly converges to some space of ergodic measures with prescribed center Lyapunov exponent. To achieve this, we use an approach pioneered by \cite{GorIlyKleNal:05} and \cite{Wei:00} and further developed by \cite{KwiLac:}. In \cite[Lemma 2]{GorIlyKleNal:05} they provide a criteria guaranteeing that a sequence of ergodic measures  converges to an ergodic measure. In the symbolic setting, there is the $\bar f$-distance defined among ergodic measures, with the property that a $\bar f$-Cauchy sequence of ergodic measures converges weak$\ast$ and in entropy to an ergodic measure (see, for example, \cite{ORW:82,Shi:96} for details and the summary Section~\ref{secdbardistance}). In  \cite{KwiLac:}, this is generalized by introducing the {\em{Feldman-Katok}} quasidistance $\bar F_\FK$, a generalization of $\bar f$ to topological dynamical systems. In \cite{KwiLac:} it is proven that if a sequence of ergodic measures is $\bar F_\FK$-Cauchy then it converges in weak$\ast$ and in entropy, and the limit measure is ergodic. The convergence defined in \cite{GorIlyKleNal:05} turns out to be a special case of the $\bar F_\FK$-convergence.

To prove our main results, we show the $\bar F_\FK$-Cauchy property for properly defined measures on the cascade of horseshoes, which proves ergodicity of the corresponding limit measures and allows to estimate its entropy. Indeed, it turns out that in the class $\BM^1(M)$ the $\bar F_\FK$-convergence is a natural and very versatile tool. 

\subsection{Organization} In Section \ref{ss.prescribed} we develop the language of $\csu$-cubes and study their properties. In Section \ref{sec:tophorse}, we define $\csu$-horseshoes and prove that there exists some horseshoe which ergodically mimic a given ergodic measure of contracting type and with positive entropy. In Section \ref{sectophorssus}, we recall that discrete-time suspensions conveniently describe ergodic measures on a horseshoe. In Section \ref{ss.twolemmas}, we discuss blender-horseshoes and their dynamics. In Section \ref{s.connecting}, we study the interaction between the contracting-type $\csu$-horseshoe and an expanding blender-horseshoe. This initiates a cascade of inductively defined $\csu$-horseshoes. Section \ref{sedbarFK} is dedicated to the $\bar d$- and Feldman-Katok distances. In Section \ref{sec:existence} we  prove the convergence in the Feldman-Katok distance across the cascade of horseshoes, proving the ergodicity of corresponding limit measures. Finally, in Section \ref{secfinal}, we complete the proofs of Theorems \ref{thm.maingeneral} and \ref{thm.main}, and in Section \ref{secnonerglims}, we prove Theorem \ref{nonerglims}.

\section{$\csu$-cubes and their properties}\label{ss.prescribed} 

Throughout this section, let $f\in\PH^1_{\c=1}(M)$. Besides the partially hyperbolic splitting in \eqref{defsplitt}, we consider the subbundles 
\[
 	E^\cs\eqdef E^\ss\oplus E^\c,\quad
	E^\cu\eqdef E^\c \oplus E^\uu. 
\]
We introduce so-called \emph{$\csu$-cubes} which provide one of the key ingredients for our construction of  horseshoes. Roughly speaking, such a cube is obtained from a disk tangent to $E^\cs$ and saturated by strong unstable disks.

To be a bit more precise, recall that for a partially hyperbolic diffeomorphism $f\in\PH^1_{\c=1}(M)$, by definition, there is $N\in\mathbb{N}$ such that for every $x\in M$,   
\begin{itemize}[leftmargin=0.6cm ]
	\item $\|Df^N|_{E^\ss (x)}\|<1$ and $\|Df^{-N}|_{E^\uu (x)}\|<1$;
	\item $\max\big\{\|Df^N|_{E^\ss (x)}\| \|Df^{-N}|_{E^\c(f^N(x))}\|, \|Df^N|_{E^\c(x)}\| \|Df^{-N}|_{E^\uu (f^N(x))}\|\big\}<1.$
\end{itemize}
Up to changing the metric, one can assume that $N=1$ (see~\cite{Gou:07}) and the bundles $E^\ss$, $E^\c$ and $E^\uu$ are orthogonal to each other, and we do so throughout this paper. Recall that the splitting in \eqref{defsplitt} is continuous. See \cite[Chapter B1]{BonDiaVia:05} for details.

In Section \ref{sec:csucubes}, we define $\csu$-cubes, introduce some special classes of such cubes, and study their interaction by $f$. In Section \ref{sec:csucubesquanti}, we introduce some quantifiers of $\csu$-cubes. In Section \ref{sec:csucubedistortion}, we state some distortion results.

\subsection{$\csu$-cubes: covering relations and iterations}\label{sec:csucubes}

Recall that a \emph{center curve} is a $C^1$ curve tangent to $E^\c$. We call a set $D^\cs$ as a \emph{$\cs$-manifold} if 
\begin{itemize}
	\item $D^\cs$  is a $\dim E^\cs$-dimensional  $C^1$ submanifold  that is everywhere tangent ot $E^\cs$;
	\item $D^\cs$ is the image (under the exponential maps) of a $C^1$ map\footnote{There exist $x_0\in D^\cs$, $r>0$ small and a $C^1$ map $\psi\colon E^\cs(x_0)\to E^\uu(x_0)$ such that $\psi(0)=0$, $D\psi(0)=0$, $\|D\psi\|<1$ and $D^\cs=\exp_{x_0}((v,\psi(v)))$ with $\|v\|\le r.$};
	\item $D^\cs$ is   homeomorphic to a closed ball.
\end{itemize} 
The following remark guarantees, in particular, the existence of $\cs$-manifolds.

Given $r>0$ and $\dagger\in\{\ss, \uu\}$, denote by $\cW^\dagger(x,r)$ the ball centered at $x$ and of radius $r$ (relative to the induced distance on $\cW^\dagger(x)$). 

\begin{remark}[Weak integrability]\label{remweaint}
 By \cite{BriBurIva:04}, the bundles $E^\cu$ and $E^\cs$ are \emph{weakly integrable}, in the sense that for every $x\in M$ there exist two immersed complete $C^1$ submanifolds $\cW_{\rm loc}^\cu(x)$ and $\cW_{\rm loc}^\cs(x)$ which contain $x$ in their interior and are tangent to $E^\cu$ and $E^\cs$, respectively. Moreover, there exists $\varepsilon_1>0$ small such that for every center curve $\gamma$ with length at most $\varepsilon_1$, the sets
\[
	\cW^\uu(\gamma,\varepsilon_1)\eqdef\bigcup_{z\in\gamma}\cW^\uu(z,\varepsilon_1) 
	\quad\textrm{~and~}  \quad
	\cW^\ss(\gamma,\varepsilon_1)\eqdef\bigcup_{z\in\gamma}\cW^\ss(z,\varepsilon_1) 
\]
are $C^1$ submanifolds tangent to $E^\cu$ and $E^\cs$, respectively (\cite[proof of Proposition 3.4]{BriBurIva:04}). 
Indeed, these manifolds are images (under the exponential maps) of  graphs of  $C^1$ maps. 
\end{remark}

\begin{definition}[$\ss$- and $\uu$-disks]
	A \emph{$\uu$-disk} is a closed disk of dimension $\dim E^\uu$ contained some leave of the foliation $\cW^\uu$; analogously for \emph{$\ss$-disk}.
\end{definition}

\begin{definition}[$\csu$-cube, base, and fibers]
An {\em{$\csu$-cube}} (for short, a {\em{cube}}) is a set of the form 
\begin{equation}\label{csucube}
	C = \bigcup_{z\in D^\cs} D^\uu (z),
\end{equation}
where $D^\cs=D^\cs(C)$ is a $\cs$-manifold and $\{D^\uu (z)\colon z\in D^\cs\}$ is a family of pairwise disjoint $\uu$-disks depending continuously on $z$ (the point $z$ may not belong to the disk $D^\uu(z)$ and hence $D^\cs$ may be not contained in $C$). The set $D^\cs$ is the \emph{base} of the cube and
\begin{equation}\label{csucubebis}
	\cD(C)
	\eqdef 
	\big\{D^\uu(z)\colon z\in D^\cs\big\}
\end{equation}
 its family of \emph{fibers}.
 \end{definition}

\begin{remark}[Bases of a cube]\label{r.csucubebis}
A cube can have several bases. A base is not necessarily a subset of the cube. Indeed, there are examples of cubes whose bases are not contained in it, although the cubes considered in our constructions always contain a base. 

Given a cube $C$  with a base $D^\cs$, any $\cs$-manifold $\widetilde D^\cs$ such that the projection from $\widetilde D^\cs$ to $D^\cs$ along the strong unstable foliation is a homeomorphism also provides a base for $C$. On the other hand, if $D^\cs$ and $\widetilde D^\cs$ are bases of the same cube, then they are homeomorphic by the projection along the strong unstable foliation.
 Note that the fibers (considered as sets) do not depend on the ``parametrization'' given by a base.
\end{remark}

\begin{definition}[Subcubes]\label{defsubcubes}
A cube $\widetilde C$ is a \emph{$\csu$-subcube} (or, for short, a \emph{subcube}) of a cube $C$ if there exists a choice of bases $\widetilde D^\cs$ and $D^\cs$ for $\widetilde C$ and $C$, respectively, such that $\widetilde D^\cs\subset D^\cs$ and $\widetilde D^\uu (z)\subset D^\uu (z)$ for every $z\in\widetilde D^\cs$.
We consider two particular types. A subcube $\widetilde C$ is \emph{$\cs$-complete} if $\widetilde D^{\cs}= D^\cs$ and \emph{$\uu$-complete} if $\widetilde D^\uu (z) =D^\uu (z)$ for every $z\in \widetilde D^{\cs}$.

A cube $C_1$ \emph{$\uu$-covers} a cube $C_2$ if there are bases $D^\cs_1$ and $D^\cs_2$ for $C_1$ and $C_2$, respectively, such that there is a connected component $C_{12}$ of $ C_1\cap  C_2$ which is a $\uu$-complete subcube of $C_2$ with base $D^\cs_{12}$ and fibers $\{D^\uu_{12}(z)\colon z\in D^\cs_{12}\}$ satisfying:
\begin{itemize}[leftmargin=0.6cm ]
	\item $D^\cs_{12}\subset\interior D^\cs_2$ and $D^\cs_{12}=D^\cs_1$,
	\item $D^\uu_{12}(z)=D^\uu_2(z)\subset\interior D^\uu_1(z)$ for every $z\in D^\cs_{12}$.
\end{itemize}
\end{definition}

\begin{remark}\label{r.uucompcover}
Consider cubes $C_1, C_2, C_3$ such that
\begin{itemize}[leftmargin=0.6cm ]
\item $C_2$ is a $\uu$-complete subcube of $C_1$,
\item $C_3$ is a $\cs$-complete subcube of $C_1$.
\end{itemize}
Then every $\uu$-complete subcube of
$C_2$ is $\uu$-complete for $C_1$ and $\uu$-covers $C_3$.
\end{remark}

The following is an immediate consequence of partial hyperbolicity, see Remark \ref{remweaint}.
 
\begin{lemma}\label{lemcubeit}
	For every cube $C$, its image   $f(C)$  and pre-image $f^{-1}(C)$ are cubes. Moreover, if $C'$ is a $\cs$-subcube of $C$, then $f(C')$ and $f^{-1}(C')$ are $\cs$-subcubes of $f(C)$  and $f^{-1}(C)$, respectively.
\end{lemma}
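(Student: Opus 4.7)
The idea is to check that each of the three defining ingredients of an $\csu$-cube (namely the $\cs$-manifold base, the family of $\uu$-disk fibers, and their compatibility) is preserved under $f^{\pm 1}$. Since $f$ is a diffeomorphism and both the splitting $E^\ss\oplus E^\c\oplus E^\uu$ and the strong unstable foliation $\cW^\uu$ are $f$-invariant, each of these checks reduces to a short observation. I will spell out the image case; the preimage case is entirely symmetric.

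\textbf{Bases.} Fix a base $D^\cs$ of $C$. Since $E^\cs=E^\ss\oplus E^\c$ is $Df$-invariant, $f(D^\cs)$ is a $\dim E^\cs$-dimensional $C^1$ submanifold everywhere tangent to $E^\cs$, and it is homeomorphic to a closed ball because $f$ is a homeomorphism. For the graph representation around $f(x_0)$ over $E^\cs(f(x_0))$ (with derivative vanishing at the origin and norm strictly less than $1$), one uses that $Df$ preserves the $E^\cs$-cone field provided by partial hyperbolicity, so a sufficiently small graph over $E^\cs(x_0)$ with bounded slope is mapped to a graph over $E^\cs(f(x_0))$ with bounded slope. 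Thus $f(D^\cs)$ is a $\cs$-manifold.

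\textbf{Fibers and assembly.} Each $D^\uu(z)$ lies inside a leaf of $\cW^\uu$; by $f$-invariance of $\cW^\uu$, the set $f(D^\uu(z))$ is a $\uu$-disk contained in $\cW^\uu(f(z))$. Injectivity of $f$ preserves pairwise disjointness of the family $\{f(D^\uu(z))\}_{z\in D^\cs}$, and continuity of $f$ together with the continuity of $z\mapsto D^\uu(z)$ gives continuous dependence on the reparametrized base point $w=f(z)\in f(D^\cs)$. Since $f$ is a bijection,
\[
	f(C)\;=\;\bigcup_{z\in D^\cs}f\bigl(D^\uu(z)\bigr)\;=\;\bigcup_{w\in f(D^\cs)}f\bigl(D^\uu(f^{-1}(w))\bigr),
\]
displaying $f(C)$ as a cube with base $f(D^\cs)$. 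The argument for $f^{-1}(C)$ is identical, using that $E^\cs$ is also $Df^{-1}$-invariant and that $\cW^\uu$ is $f^{-1}$-invariant (even though $f^{-1}$ contracts along $E^\uu$, the image of a $\uu$-disk is still a $\uu$-disk in the corresponding leaf).

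\textbf{Subcube clause.} Let $C'$ be a subcube of $C$ with bases $\widetilde D^\cs\subset D^\cs$ and fibers $\widetilde D^\uu(z)\subset D^\uu(z)$ for $z\in\widetilde D^\cs$. Applying $f$ (resp.\ $f^{-1}$) preserves both inclusions and yields corresponding bases and fibers for $f(C')$ inside $f(C)$ (resp.\ $f^{-1}(C')$ inside $f^{-1}(C)$). In particular, if $\widetilde D^\cs=D^\cs$ (the $\cs$-complete case), then $f(\widetilde D^\cs)=f(D^\cs)$, so this property is transported by the dynamics.

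\textbf{Main obstacle.} The only nontrivial point is ensuring that $f(D^\cs)$ meets the graph condition $\|D\psi\|<1$ required in the definition of a $\cs$-manifold; this is a routine consequence of the $Df$-invariance of the $E^\cs$-cone field (after passing to the adapted metric recalled at the start of the section), but it is the step one must not skip, since without it the $\cs$-manifolds would not be closed under the dynamics.
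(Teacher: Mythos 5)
Your write-up is a correct fleshing-out of what the paper leaves as an assertion: the paper gives no proof of this lemma, simply declaring it an immediate consequence of partial hyperbolicity (via the weak integrability discussed in Remark~\ref{remweaint}), and your argument follows exactly the intended route — $Df^{\pm1}$-invariance of the bundle $E^\cs$ for the base, $f$-invariance of the foliation $\cW^\uu$ for the fibers, and bijectivity for disjointness, continuity, and the reparametrized assembly. One justification is misstated, though it does not damage the conclusion: the cone field around $E^\cs$ provided by the dominated splitting is invariant under $Df^{-1}$, not under $Df$ (a vector in the $\cs$-cone with a nonzero $E^\uu$-component drifts toward $E^\uu$ under forward iteration), so "one uses that $Df$ preserves the $E^\cs$-cone field" is false as written and, taken literally, that step would not go through for the image case. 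What saves you is that $D^\cs$ is tangent to $E^\cs$ \emph{exactly}, not merely to a cone: since $E^\cs$ is a $Df$-invariant bundle, $T_{f(x)}f(D^\cs)=Df\big(E^\cs(x)\big)=E^\cs(f(x))$, and the graph representation over $E^\cs(f(x_0))$ with $\|D\psi\|<1$ then follows from the (uniform) continuity of the bundle on a manifold of controlled size; the cone-field argument as you phrased it is only valid for the preimage direction $f^{-1}$. With that justification repaired, the proof is complete and in line with the paper.
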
	
 
The following is an immediate consequence of the above definitions.

\begin{lemma}\label{lr.uucovering}
Given cubes $C_1,C_2$ such that $f(C_1)$ $\uu$-covers $C_2$,  there is a connected component $C_3$ of $C_1\cap f^{-1}(C_2)$ such that 
\begin{itemize}[leftmargin=0.6cm ]
	\item $C_3$ is a $\cs$-complete subcube of $C_1$ 
	\item  $f(C_3)$ is a $\uu$-complete subcube of $C_2$.
\end{itemize}

\end{lemma}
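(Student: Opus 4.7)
The plan is to define $C_3 := f^{-1}(C_{12})$, where $C_{12}$ is precisely the connected component of $f(C_1)\cap C_2$ produced by the $\uu$-covering hypothesis, and then to verify the two bulleted conditions by unwinding the definitions. Since $f$ is a homeomorphism, $C_3$ is automatically a connected component of $f^{-1}(f(C_1)\cap C_2)=C_1\cap f^{-1}(C_2)$, so only the subcube properties need checking.

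Unpacking the $\uu$-covering hypothesis, there exist bases $D^\cs_{f(C_1)}$ and $D^\cs_2$ of $f(C_1)$ and $C_2$, respectively, such that $C_{12}$ is a $\uu$-complete subcube of $C_2$ with base $D^\cs_{12}=D^\cs_{f(C_1)}\subset\interior D^\cs_2$ and fibers $D^\uu_{12}(z)=D^\uu_2(z)\subset\interior D^\uu_{f(C_1)}(z)$ for every $z\in D^\cs_{12}$. By Lemma \ref{lemcubeit}, $f^{-1}$ sends cubes to cubes and $\cs$-subcubes to $\cs$-subcubes; in particular $f^{-1}(D^\cs_{f(C_1)})$ is a legitimate base of $C_1$, which I will call $D^\cs_1$. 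Then $C_3=f^{-1}(C_{12})$ has base $f^{-1}(D^\cs_{12})=D^\cs_1$, and for each $w=f^{-1}(z)\in D^\cs_1$ its fiber is $f^{-1}(D^\uu_{12}(z))\subset f^{-1}(D^\uu_{f(C_1)}(z))$, the right-hand side being precisely the fiber of $C_1$ over $w$. This yields simultaneously that $C_3$ is a subcube of $C_1$ and that its base agrees with the whole $D^\cs_1$, that is, $C_3$ is $\cs$-complete in $C_1$. The second bullet is immediate: by construction $f(C_3)=C_{12}$, which the hypothesis already guarantees is a $\uu$-complete subcube of $C_2$.

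There is no genuine obstacle in the argument; its entire content is the compatibility of the $\csu$-cube structure with $f$ provided by Lemma \ref{lemcubeit}, combined with the freedom to choose bases recorded in Remark \ref{r.csucubebis}. The only step meriting a moment's care is the identification of $f^{-1}(D^\cs_{12})$ with a base of $C_1$ witnessing $\cs$-completeness, but both ingredients just mentioned handle this transparently.
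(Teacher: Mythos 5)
Your proof is correct: taking $C_3=f^{-1}(C_{12})$ and unwinding the definitions of $\uu$-covering and subcube, using that $f^{\pm1}$ preserves the $\csu$-cube structure (Lemma~\ref{lemcubeit}) and the base flexibility of Remark~\ref{r.csucubebis}, is exactly the definitional argument the paper has in mind — it states the lemma as an immediate consequence of the definitions and gives no further proof. Nothing is missing.
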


We now introduce a special type of $\csu$-cubes.

\begin{definition}[Center curves and $\csu$-cubes centered at a center curve]\label{defGammas}
Given $x\in M$ and $r>0$, denote by $\gamma^\c_r(x)$ a center curve centered at $x$ and of radius $r$. Denote by $\Gamma^\c_r(x)$ the collection of all such curves. Let $\Gamma^\c_r \eqdef \{ \Gamma^\c_r(x)\colon x\in M\}$. Given $\gamma\in \Gamma^\c_r$, let
\begin{equation}\label{e.cubecenteredatx}
	C^\csu(\gamma, r)\eqdef  \bigcup_{x\in \cW^{\ss}(\gamma,r)} \cW^\uu(x,r),
	\quad\text{ where }\quad
	\cW^{\ss}(\gamma,r)\eqdef \bigcup_{x\in \gamma} \cW^\ss(x,r).
\end{equation}
Note that, if $r\in(0,\varepsilon_1]$ for $\varepsilon_1$ as in Remark \ref{remweaint}, $C^\csu(\gamma, r)$ is indeed an $\csu$-cube with base $D^\cs=\cW^{\ss}(\gamma,r)$ and family of fibers $D^\uu(\cdot)=\cW^\uu(\cdot,r)$, as defined in \eqref{csucube}. In this case, we say that this cube is \emph{centered at $\gamma$ and has size $r$}. Compare Figure \ref{fig.csucube}. 
\begin{figure}[h] 
 \begin{overpic}[scale=.5]{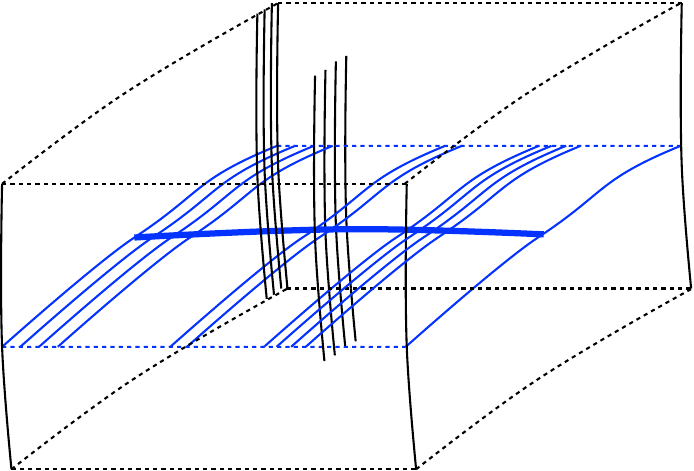}
 	\put(80,32.5){$\gamma$}
	\put(51,56){$\cW^\uu(x,r)$}
 	\put(99,43){\textcolor{blue}{$\cW^{\ss}(\gamma,r)=D^\cs$}}
	\put(65,0){$C^\csu(\gamma, r)$}
\end{overpic}
  \caption{A $\csu$-cube centered at some center curve $\gamma$ of size $r$}
 \label{fig.csucube}
\end{figure}
\end{definition}

\subsection{Inner sizes and distances}\label{sec:csucubesquanti}

\begin{remark}\label{remdeftauc}
	Given a cube $C$ with base $D^\cs$, there is $\tau_{\rm c}>0$ such that for every $z\in D^\cs$ the corresponding fiber $D^\uu(z)$ contains some $\uu$-disk $\cW^\uu(x_z,\tau_{\rm c})$ 
	for some $x_z\in C$. We call such number $\tau_{\rm c}$ an \emph{inner $\uu$-radius} of $C$. 
\end{remark}

\begin{definition}[Distance and diameter]\label{d.safedisk-biss}
Given $\tau>0$, two $\uu$-disks $\Delta,\Delta'$ are \emph{$\tau$-close} if
\[
	\dist_{\rm H}(T\Delta,T\Delta')<\tau,
\]
where $\dist_{\rm H}$ denotes the Hausdorff distance on the corresponding Grassmannian manifold, and $T\Delta$ denotes the tangent bundle of $\Delta$. Consider the ``ball''
\[
	B(\Delta,\tau) 
	\eqdef \bigcup\big\{ \Delta'\colon \Delta'\text{ is a $\uu$-disk and } 
	 \dist_{\rm H} (T\Delta, T\Delta') <\tau\big\}.
\]
Given a collection $\cD$ of $\uu$-disks, we define its \emph{diameter}
\[
	\diam_{\rm H}(\cD)
	\eqdef \sup\big\{\tau>0\colon \dist_{\rm H} (T\Delta, T\Delta')<\tau\text{ for every }\Delta,\Delta'\in\cD\big\}.
\]
\end{definition}

\begin{definition}[Cube-safe disks] \label{def.Kisafe}
Given a cube $C$ and $\tau>0$, a $\uu$-disk $\Delta$ is \emph{$\tau$-$C$-safe} if the interior of every $\uu$-disk which is $\tau$-close to $\Delta$ contains a disk in the collection $\cD(C)$ of fibers of $C$. A family of $\uu$-disks is \emph{$\tau$-$C$-safe} if it is open and only contains $\uu$-disks with this property.
\end{definition}

\begin{remark}\label{r.safeness-of-cube}
	For every cube $C$, there are $\tau=\tau(C)>0$ and a $\tau$-$C$-safe family $\cD_{\rm c}$ of $\uu$-disks.
\end{remark}

\begin{remark}\label{r.uniform-cs-bounds-for-csu-cube}
The following is a consequence of the uniform continuity of the center stable bundle. Consider $\varepsilon_1$ from Remark \ref{remweaint}. Given $r\leq \varepsilon_1$,  there exists $\rho(r)>0$ such that for any $\csu$-cube $C$ with $\diam(C)\le r$,   every $\cs$-disc $\widetilde D^\cs\subset C$  has diameter (under its intrinsic metric) bounded from above by $\rho(r)$. Notice that $\rho(r)\leq \rho(r^\prime)$ for $0<r\leq r^\prime\leq \varepsilon_1.$
\end{remark}

The assertions in the following remark are consequences of the uniform continuity and transversality of the bundles of the partially hyperbolic splitting. 

\begin{remark}[``Local product structure'']\label{r.zeta}
For every $r\in(0,\varepsilon_1]$, with $\varepsilon_1$ as in Remark \ref{remweaint}, there is $\delta(r)>0$ such that for every $\delta\in(0,\delta(r)]$ there is $\zeta=\zeta(\delta,r)\in(0,r)$ such that the following holds. For every $x$ and $z$  satisfying $d(x,z)<\zeta$, 
\[
	z'\eqdef 	\cW^\ss(\gamma^\c_r(x),r)\pitchfork\cW^\uu(z,r)
	\quad\text{is well defined and $d(z',x)<\delta.$ }	
\]
Moreover, 
\[
	\cW^\uu(z,r)\subset\cW^\uu(z',2r),\quad
	\cW^\uu(z',r)\subset\cW^\uu(z,2r).
\]
Furthermore, for every $x$ and $y$ satisfying $d(y,x)<\zeta$, the set
\[
	\widetilde D^\cs
	\eqdef
	\cW^\ss(\gamma^\c_r(y),r)\cap C^\csu(\gamma^\c_\zeta(x),\zeta)
\]
is a base for the cube $C^\csu(\gamma^\c_\zeta(x),\zeta)$.
\end{remark}

\subsection{Cubes and distortion}\label{sec:csucubedistortion}

Recall notation $D^\c f$ in \eqref{defchic} and that $x\mapsto\lVert D^\c f(x)\rVert$ is a continuous function. Given $\varepsilon>0$, consider the following ``center-modulus of continuity''
\[
	\Mod_f(\varepsilon)
	\eqdef \max\Big\{\big|\log\|D^\c f(x)\|-\log\|D^\c f(y)\|\big|\colon d(x,y)\le\varepsilon\Big\}.
\]
Note that, by uniform continuity of $\log\|D^\c f\|$, $\Mod_f(\varepsilon)\to0$ as $\varepsilon\to0$.

\begin{lemma}[Center distortion]\label{l.centerdistortion}
For every $\chi<0$ and $\varepsilon_{\rm L} \in (0, |\chi|/2)$ there is $\varepsilon>0$ such that the following holds. Suppose that for $x\in M$ there are some $K_{\rm L}\in\bN$ and $m\in\bN$ such that for every $n=1,\ldots, m$,
	\[
	\lvert D^\c f^n(x)\rvert
	\le K_{\rm L} e^{n\chi} .
	\]
	Then for every center curve $\gamma$ centered at $x$ of length bounded by $\varepsilon/K_{\rm L}$, 
	it holds
	\[
	\length(f^n(\gamma)) \leq K_{\rm L}\cdot e^{n(\chi+\varepsilon_{\rm L})}\length(\gamma)
	\quad\text{for every $n=0,\ldots, m$.}
	\]
\end{lemma}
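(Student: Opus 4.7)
The plan is to prove the bound by induction on $n\in\{0,1,\ldots,m\}$, relying on bounded distortion of the center derivative along orbit segments that stay in a small tube around the orbit of $x$. The key input is the uniform continuity of $\log\lVert D^\c f\rVert$ on the compact manifold $M$, which gives $\Mod_f(\delta)\to 0$ as $\delta\to 0$. Accordingly I would choose $\varepsilon\in(0,\varepsilon_1]$ small enough that $\Mod_f(\varepsilon)\le\varepsilon_{\rm L}$; this choice depends only on $\chi$ and $\varepsilon_{\rm L}$.

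The case $n=0$ is trivial since $K_{\rm L}\ge 1$. For the inductive step, assume the inequality holds for all $k\in\{0,\ldots,n-1\}$ with $n\le m$. Using $\length(\gamma)\le\varepsilon/K_{\rm L}$ together with $\chi+\varepsilon_{\rm L}<\chi/2<0$, the inductive hypothesis gives
\[
    \length(f^k(\gamma))
    \le K_{\rm L}\,e^{k(\chi+\varepsilon_{\rm L})}\length(\gamma)
    \le \varepsilon\,e^{k(\chi+\varepsilon_{\rm L})}\le \varepsilon
    \quad\text{for every } k<n.
\]
Since $x\in\gamma$ and $f^k(\gamma)$ is a connected center curve containing $f^k(x)$, this yields $d(f^k(y),f^k(x))\le\varepsilon$ for every $y\in\gamma$ and every $k<n$.

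Now I apply the chain rule in the (one-dimensional) center direction. For any $y\in\gamma$,
\[
    \Big\lvert\log\lvert D^\c f^n(y)\rvert-\log\lvert D^\c f^n(x)\rvert\Big\rvert
    \le\sum_{k=0}^{n-1}\Big\lvert\log\lvert D^\c f(f^k(y))\rvert-\log\lvert D^\c f(f^k(x))\rvert\Big\rvert
    \le n\,\Mod_f(\varepsilon)\le n\varepsilon_{\rm L}.
\]
Combined with the hypothesis $\lvert D^\c f^n(x)\rvert\le K_{\rm L}\,e^{n\chi}$, this gives $\lvert D^\c f^n(y)\rvert\le K_{\rm L}\,e^{n(\chi+\varepsilon_{\rm L})}$ uniformly for $y\in\gamma$. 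Integrating along $\gamma$ (using that $f^n(\gamma)$ is tangent to $E^\c$ so its length equals $\int_\gamma\lvert D^\c f^n(y)\rvert\,dy$) yields $\length(f^n(\gamma))\le K_{\rm L}\,e^{n(\chi+\varepsilon_{\rm L})}\length(\gamma)$, closing the induction.

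The only subtle point is the apparent circularity of using the inductive bound to control the diameters of the iterates $f^k(\gamma)$: this is resolved precisely by the geometric decrease $\varepsilon\,e^{k(\chi+\varepsilon_{\rm L})}$ forced by the sign condition $\chi+\varepsilon_{\rm L}<0$, which keeps $\length(f^k(\gamma))$ below $\varepsilon$ at every intermediate step. No other obstacle is expected; the argument is a standard bounded distortion estimate specialized to the one-dimensional center direction.
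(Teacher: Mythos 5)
Your proof is correct and follows essentially the same route as the paper's: an induction in which the already-established exponential decay keeps $\length(f^k(\gamma))\le\varepsilon$ at every earlier step, so the modulus of continuity $\Mod_f(\varepsilon)\le\varepsilon_{\rm L}$ controls the distortion of $D^\c f$ along $\gamma$, and integrating the pointwise bound $\lvert D^\c f^n(y)\rvert\le K_{\rm L}e^{n(\chi+\varepsilon_{\rm L})}$ closes the induction. No gaps.
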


\begin{proof}
Given $\chi<0$ and $\varepsilon_{\rm L} \in (0, |\chi|/2)$, choose $\varepsilon>0$ such that $\Mod_f(\varepsilon)<\varepsilon_{\rm L}$. Let $x$,  $m$, and $\gamma$ be as in the assumption. The proof is by induction. First note that
\[
	\length(f(\gamma))
	< \|D^\c f(x)\|\cdot e^{\varepsilon_{\rm L}}\cdot \length(\gamma)
	\le K_{\rm L} e^{\chi+\varepsilon_{\rm L}}\cdot \varepsilon/K_{\rm L}
	<  \varepsilon. 
\]
Assume now that for some $0\leq i\leq m-1$ and  for every $j\leq i$, one has  
\[
	\length(f^j(\gamma))\leq K_{\rm L} e^{j(\chi+\varepsilon_{\rm L})}\length(\gamma)
	<K_{\rm L}e^{j(\chi+\varepsilon_{\rm L})}\frac{\varepsilon}{K_{\rm L}}
	<\varepsilon.
\]	 
Using $\Mod_f(\varepsilon)<\varepsilon_{\rm L}$ again, we get that the length of $f^{i+1}(\gamma)$ is bounded  by 
\[ 
	\mathrm{length}(f^{i+1}(\gamma))
	\le \|D^\c f^{i+1}(x)\|\cdot e^{(i+1)\varepsilon_{\rm L}}\cdot\length(\gamma)
	\leq K_{\rm L} e^{(i+1)(\chi+\varepsilon_{\rm L})}\cdot\length(\gamma)
,\]
	proving the lemma.
\end{proof}

\begin{corollary}\label{c.distortion-reference-cube}
Let $\varepsilon_1>0$ be as in Remark~\ref{remweaint}. For every  $K\geq 1$, $\chi<0$, and $\varepsilon>0$,  there exists $L=L(K,\chi,\varepsilon)\in\bN$ with the following properties. 	Let $C$ be an $\csu$-cube having the following properties:
\begin{itemize}[leftmargin=0.6cm ]
\item[(1)] there exists $n\in\bN$ such that   every $x\in C$ and every $k=1,\ldots,n$,
\[
	\lVert D^\c f^k(x)\rVert 
	\le Ke^{k\chi};
\]	
\item[(2)] ${\rm max}\big\{\diam(C), \diam(f^n(C))\big\}\leq \varepsilon_1$.
\item[(3)] there is a base $D^{\cs}$ of $C$ contained in $C$.
\end{itemize} 
Then 
\[
	\diam (f^k(C)) < \varepsilon
	\quad\text{ for every }\quad
	k=L,\ldots,n-L.
\]
\end{corollary}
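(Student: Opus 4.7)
The plan is to bound $\diam(f^k(C))$ separately along the three directions of the partially hyperbolic splitting, exploiting that $f^k(C)$ is again an $\csu$-cube (Lemma~\ref{lemcubeit}). The $\uu$-fibers shrink by iterating \emph{backward} from time $n$ using uniform $E^\uu$-expansion; the $\ss$-direction of the base $f^k(D^\cs)$ shrinks by iterating \emph{forward} using uniform $E^\ss$-contraction; and the one-dimensional center direction is controlled via Lemma~\ref{l.centerdistortion} together with a subdivision argument, using that hypothesis~(1) applies pointwise on $D^\cs\subset C$, which is precisely where hypothesis~(3) enters.

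Concretely, let $\lambda,\mu\in(0,1)$ be uniform rates with $\|Df|_{E^\ss(x)}\|\le\lambda$ and $\|Df^{-1}|_{E^\uu(x)}\|\le\mu$ in the adapted metric of Section~\ref{ss.prescribed}, fix $\varepsilon_{\rm L}\in(0,|\chi|/2)$, and let $\varepsilon^*>0$ be the constant produced by Lemma~\ref{l.centerdistortion} for $\chi$ and $\varepsilon_{\rm L}$. For the \emph{fibers}, $f^k(D^\uu(z))=f^{-(n-k)}(f^n(D^\uu(z)))$ and hypothesis~(2) gives $\diam(f^n(D^\uu(z)))\le\varepsilon_1$, so $\diam(f^k(D^\uu(z)))\le\mu^{n-k}\varepsilon_1$. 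For the \emph{base}: the intrinsic diameter of $D^\cs$ is at most $\rho(\varepsilon_1)$ by Remark~\ref{r.uniform-cs-bounds-for-csu-cube}, and any two points of $D^\cs$ can be joined by the concatenation of an $\ss$-arc and a center arc inside $D^\cs$ (using that $E^\ss$ is uniquely integrable, so $D^\cs$ is foliated by $\cW^\ss$-slices, and that $E^\c\subset TD^\cs$ admits an integral curve through every point by Peano's theorem). The $\ss$-arc has length $\le\rho(\varepsilon_1)$ and is contracted to length $\le\lambda^k\rho(\varepsilon_1)$. The center arc $\sigma$ has length $\le\rho(\varepsilon_1)$ and is subdivided into at most $\lceil K\rho(\varepsilon_1)/\varepsilon^*\rceil$ subcurves of length $\le\varepsilon^*/K$, each centered at a point of $D^\cs\subset C$ where hypothesis~(1) supplies the bound $\|D^\c f^j\|\le Ke^{j\chi}$ for $j=1,\dots,n$; Lemma~\ref{l.centerdistortion} then sends each subcurve to length $\le e^{k(\chi+\varepsilon_{\rm L})}\varepsilon^*$, and summing yields $\length(f^k(\sigma))\le (K\rho(\varepsilon_1)+\varepsilon^*)\,e^{k(\chi+\varepsilon_{\rm L})}$.

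Combining the three contributions via the triangle inequality gives
\[
	\diam(f^k(C))\ \le\ 2\mu^{n-k}\varepsilon_1+\lambda^k\rho(\varepsilon_1)+(K\rho(\varepsilon_1)+\varepsilon^*)\,e^{k(\chi+\varepsilon_{\rm L})},
\]
and since $\chi+\varepsilon_{\rm L}<0$ all three summands decay exponentially in $\min(k,n-k)$; choosing $L=L(K,\chi,\varepsilon)$ large enough that each is below $\varepsilon/3$ whenever $L\le k\le n-L$ yields the desired inequality. The main technical point is the path decomposition inside the base: $E^\ss$ provides a genuine foliation of $D^\cs$, and Peano's theorem supplies (possibly non-unique) center integral curves transverse to it, which is exactly what is needed to run the subdivision argument; once this is set up, the remainder of the proof reduces to bookkeeping among three exponentially decaying terms.
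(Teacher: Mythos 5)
Your overall skeleton coincides with the paper's: shrink the $\uu$-fibers by iterating backward from time $n$ using hypothesis~(2) and uniform $E^\uu$-expansion, shrink the base $f^k(D^\cs)$ by iterating forward, combine via the triangle inequality, and take $L$ to be the maximum of the two thresholds. The place where your argument has a genuine gap is the treatment of the base. You assert that any two points of $D^\cs$ can be joined \emph{inside} $D^\cs$ by one $\ss$-arc plus one center arc, each of length at most $\rho(\varepsilon_1)$. For the model bases $\cW^\ss(\gamma,r)$ this product structure holds by construction, but the corollary is stated for an arbitrary $\csu$-cube and an arbitrary base: a $\cs$-manifold is only required to be tangent to $E^\cs$. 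Unique integrability of $E^\ss$ does give short $\ss$-arcs through each point of $D^\cs$ inside $D^\cs$, but it does not give the claimed two-arc connectivity (when $\dim E^\ss\ge 2$ a single $\ss$-arc and a single center arc need not suffice), and the bound $\rho(\varepsilon_1)$ controls the \emph{infimal} path length between two points, not the length of the particular foliation arcs you select. A related unjustified step is hidden in your triangle inequality: bounding $d(f^k(x_i),f^k(z_i))$ by the fiber diameter presumes $z_i\in D^\uu(z_i)$, which the definition of a cube explicitly does not guarantee; the paper sidesteps this by choosing, for each $x$, a point $z\in D^\cs$ with $f^n(x)\in\cW^\uu(f^n(z),\varepsilon_1)$ at time $n$, where hypothesis~(2) applies.

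The gap is easily repaired, and the repair is exactly the paper's (much shorter) argument: hypothesis~(1) holds at \emph{every} point of $C$, hence at every point of $D^\cs$ by hypothesis~(3), and in the adapted metric the bundles are orthogonal with $E^\ss$ dominated by $E^\c$, so $\lVert Df^k|_{T_yD^\cs}\rVert\le\lVert D^\c f^k(y)\rVert\le Ke^{k\chi}$ for all $y\in D^\cs$. Therefore any path in $D^\cs$ (of length at most $\rho(\varepsilon_1)$, by Remark~\ref{r.uniform-cs-bounds-for-csu-cube}) is contracted at the center rate, giving $\diam(f^k(D^\cs))\le Ke^{k\chi}\rho(\varepsilon_1)$ directly, as in \eqref{e.byhypotheses}; no decomposition into $\ss$- and center arcs is needed. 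For the same reason your appeal to Lemma~\ref{l.centerdistortion} with a subdivision is superfluous (and adds the harmless loss $\varepsilon_{\rm L}$): that lemma is designed for the situation where the derivative bound is known only at the center point of a curve, whereas here it is assumed pointwise on all of $C$. With the base estimate replaced by this direct domination bound, the rest of your bookkeeping (backward contraction of the fibers and the choice of $L$) goes through and reproduces the paper's proof.
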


\proof  Given $\varepsilon>0$, let $L_1=L_1(\varepsilon)\in\bN$ be the smallest integer such that 
\begin{equation}\label{choicesL1}
	-L_1\inf_{x\in M}\log\|Df|_{E^\uu(x)}\| +\log\varepsilon_1
	<\log(\varepsilon/3).
\end{equation}	
By Remark \ref{r.uniform-cs-bounds-for-csu-cube}, one has $\diam(D^\cs)\le\rho(\varepsilon_1).$
As $\chi<0$, there exists $L_2=L_2(K,\chi,\varepsilon)\in\bN$ such that for every $k\geq L_2$, one has 
\[ 
	K\cdot e^{k\chi}
	< \frac{\varepsilon}{3\rho(\varepsilon_1)}.
\]  
By item (1) together with the domination between $E^\ss$ and $E^\c$,  for $k\geq L_2$, one has 
\begin{equation}\label{e.byhypotheses}
	\diam(f^k(D^\cs))
	\leq \max_{x\in D^\cs}\|D^\c f^k(x)\|\cdot \diam(D^\cs)
	<\varepsilon/3.
\end{equation}
 Let $L\eqdef\max\{L_1,L_2\}$. 

Let $C$ be an $\csu$-cube and $D^\cs\subset C$ be a base of $C$ as in the assumption. Note that $f^n(D^\cs)$ is also a base of $f^n(C)$. As, by item (2), $\diam(f^n(C))\leq \varepsilon_1$, for every  $x\in C$, there exists $z\in D^\cs$ such that $f^n(x)\in\cW^\uu(f^n(z),\varepsilon_1)$. The choice \eqref{choicesL1} implies  
\begin{equation}\label{e.desdes}
	d(f^k(x),f^k(z))\le \varepsilon \quad \mbox{for every $k=0,\ldots, n-L$.}
\end{equation}
Take any pair of points $x_1,x_2\in C$ and consider the corresponding points $z_1,z_2\in D^\cs$ as above. 
Since $\diam(f^k(D^\cs))<\varepsilon/3$ for $k=L,\ldots, n$ due to  Equation \eqref{e.byhypotheses},   we get \[d(f^k(z_1),f^k(z_2))<\varepsilon/3\quad \textrm{ for $k=L,\ldots, n$.}\] 
Now, for $k=L,\ldots, n-L$, from \eqref{e.desdes} one has 
\[
d(f^k(x_1),f^k(x_2))
\leq d(f^k(x_1),f^k(z_1))+d(f^k(z_1),f^k(z_2))+d(f^k(z_2),f^k(x_2))
<\varepsilon.
\]
This proves the corollary.
\endproof 

\begin{remark}
Let us briefly explain how Corollary \ref{c.distortion-reference-cube} is going to be implemented. The assertion here is that for any $\csu$-cube with a long block of trajectory presenting a uniformly hyperbolic dynamics,  the size of the cube becomes very small after a few iterations and stays small almost to the very end of the block. The number of iterations  for which the size is large   is uniformly small, it does not depend on the length of the block, only on the hyperbolicity properties (in particular, Lyapunov exponents).

In the following sections we are going to construct horseshoe-like invariant hyperbolic sets, the domains of which are going to  live in certain $\csu$-cubes. Our construction will allow us to construct those horseshoes with arbitrarily large return times, while having uniform hyperbolicity. Applying Corollary \ref{c.distortion-reference-cube}, we get that the trajectories inside one such $\csu$-cube  
 stay together all the time except for a small number of iterations, and this  number  can be arbitrarily small compared with the return time of the horseshoe.  
\end{remark}

\section{$\csu$-horseshoes}\label{sec:tophorse}

In this section, given $f\in \PH_{\c=1}^1 (M)$ and $\nu\in \cM_{\rm erg,<0}(f)$ (recall notation \eqref{def:Msubspaces}), we construct a ``horseshoe'' $\Lambda$ such that there is $n\in\bN$ so that $f^n|_\Lambda$ is topologically conjugate to the full shift in $N$ symbols, where $\log N\approx nh_\nu(f)$, and the center Lyapunov exponents on $\Lambda$ are close to $\chi^\c(\nu)$; see Theorem \ref{t.smarthorseshoes}. For that, we follow the ideas in \cite{Kat:80} and put the results \cite{Cro:11,Gel:16} into the framework of $\csu$-cubes. To make this more precise, let us introduce some notations.

\begin{definition}\label{defhorseshoe}
Given an $\csu$-cube $C$, a triple $\bfH=(C,\{\bfK_i\}_{i=1}^N,f^n)$ is a \emph{$\csu$-horseshoe (relative to $C$)} if the following holds:
\begin{itemize}[leftmargin=0.6cm ]
\item $\bfK_1, \ldots , \bfK_N$ are pairwise disjoint connected components of $C\cap f^{-n} (C)$;
\item each $\bfK_i$, $i=1,\ldots,N$, is a $\cs$-complete subcube of $C$;
\item   each  $f^n(\bfK_i)$ , $i=1,\ldots,N$, is a $\uu$-complete subcube of $C$.
\end{itemize} 	
We call $\bfK_1,\ldots,\bfK_N$ the \emph{rectangles} and $C$ the \emph{reference cube}  of $\bfH$, respectively. Associated to $\bfH$, we define the set
\begin{equation}\label{defLambdaH}
	\Lambda_\bfH \eqdef \bigcap_{k\in\bZ}f^{nk}(\bfK_1\cup\ldots\cup\bfK_N).
\end{equation}
By a slight abuse of notation, we also call $\Lambda_\bfH$ a \emph{$\csu$-horseshoe}. Note that $\Lambda_\bfH$ is $f^n$-invariant.
\end{definition}

Note that  in Definition \ref{defhorseshoe} the set $C\cap f^{-n}(C)$ can, in general, have more connected components.

\begin{definition}\label{defhorseshoecontype}
A $\csu$-horseshoe $\bfH=(C,\{\bfK_i\}_{i=1}^N,f^n)$ is of \emph{contracting type} if there exist $\chi<0$ and $\varepsilon\in(0,|\chi|)$ such that for every $i=1,\ldots,N$ and every $x\in\bfK_i$, 
\[
	e^{n(\chi-\varepsilon)}< \|D^\c f^n(x)\|<e^{n(\chi+\varepsilon)}.
\]
\end{definition}

\begin{theorem}\label{t.smarthorseshoes}
Let $f\in \PH_{\c=1}^1 (M)$ and $\nu\in \cM_{\rm erg,<0}(f)$ be a measure with positive entropy.
Let $\varphi_1,\ldots,\varphi_\ell\colon M\to\bR$ be a collection of continuous functions. For every $\varepsilon>0$ and $r>0$ sufficiently small, there are  an integer $n\in \bN$ and a center curve $\gamma$, such that there exist 
\[
	N>\exp(n(h_\nu(f)-\varepsilon))
\]	 
and a $\csu$-horseshoe $\bfH=(C,\{\bfK_i\}_{i=1}^N,f^n)$ relative to the $\csu$-cube $C=C^\csu(\gamma, r)$ such that 
\begin{itemize}[leftmargin=0.6cm ]
\item[(1)] $\bfH$ is of contracting type: for every $i=1,\ldots,N$ and every $x\in\bfK_i$, 
\[
	e^{n(\chi^\c(\nu)-\varepsilon)}< \|D^\c f^n(x)\|<e^{n(\chi^\c(\nu)+\varepsilon)},
\]
\item[(2)] $\Lambda_\bfH$ is a hyperbolic basic set of $f^n$ and  $f^n|_{\Lambda_\bfH}$ is conjugate to the full shift on $N$ symbols.
\end{itemize} 
Moreover, there is $K>1$ such that for every $x\in \Lambda_\bfH$ and $k\in\bN$,
\begin{equation}\label{e.lastestimate}
	K^{-1}e^{k (\chi^\c(\nu) - \varepsilon)} 
	< \rVert D^\c f^k(x)\lVert
	< Ke^{k (\chi^\c(\nu) + \varepsilon)} 
\end{equation}
and for every $i=1,\ldots,\ell$ and $\mu'\in\cM(f|_{\Lambda_\bfH})$,
\begin{equation}\label{e.lastestimatephis}
	\Big| \int\varphi_i\,d\mu'-\int\varphi_i\,d\mu\Big|
	<\varepsilon .
\end{equation}		
\end{theorem}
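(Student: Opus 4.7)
My plan is to adapt Katok's horseshoe construction \cite{Kat:80} to the partially hyperbolic setting using $\csu$-cubes, in the spirit of \cite{Cro:11,Gel:16}. Since $\chi^{\c}(\nu)<0$ and $\nu$ is ergodic with $h_{\nu}(f)>0$, the Oseledets splitting of $\nu$ refines \eqref{defsplitt} with all exponents along $E^{\cs}=E^{\ss}\oplus E^{\c}$ strictly negative and those along $E^{\uu}$ strictly positive; in particular $\nu$ is hyperbolic with stable bundle $E^{\cs}$. First I would fix $\delta>0$ small and invoke the Brin--Katok entropy formula together with Birkhoff's ergodic theorem and Egorov's theorem to produce a compact set $G$ with $\nu(G)>1-\varepsilon_{0}$ (where $\varepsilon_{0}\ll\varepsilon$) and a threshold $n_{0}$ such that, uniformly for all $x\in G$ and $n\ge n_{0}$,
\begin{equation*}
\big|\tfrac{1}{n}\log\|D^{\c}f^{n}(x)\|-\chi^{\c}(\nu)\big|<\tfrac{\varepsilon}{8},\quad \nu(B(x,n,\delta))\le e^{-n(h_{\nu}(f)-\varepsilon/8)},
\end{equation*}
together with $\big|\tfrac{1}{n}\sum_{j=0}^{n-1}\varphi_{i}(f^{j}(x))-\int\varphi_{i}\,d\nu\big|<\varepsilon/2$ for every $i=1,\dots,\ell$. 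After intersecting $G$ with a small ball around a $\nu$-density point $y_{0}\in\supp\nu$, I may further assume $G$ has diameter less than some $\zeta\ll r$, chosen as in Remark~\ref{r.zeta}.

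\textbf{Reference cube and separated returns.} Next I would pick a center curve $\gamma\in\Gamma^{\c}_{r}(y_{0})$ and set $C\eqdef C^{\csu}(\gamma,r)$; Remark~\ref{r.zeta} ensures $G\subset\interior C$. For all $n$ sufficiently large, Poincar\'e recurrence yields $\nu(G\cap f^{-n}(G))\ge 1-2\varepsilon_{0}$, and the Bowen-ball upper bound from Step~1 combined with the standard volume-separation argument \cite{Kat:80} produces an $(n,\delta)$-separated set $\{p_{1},\dots,p_{N}\}\subset G\cap f^{-n}(G)$ with $N\ge \exp(n(h_{\nu}(f)-\varepsilon))$. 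For each $i$ I let $\bfK_{i}$ be the connected component of $C\cap f^{-n}(C)$ that contains $p_{i}$; the $(n,\delta)$-separation, together with $r<\delta/3$, forces the $\bfK_{i}$'s to be pairwise disjoint.

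\textbf{Markov property.} The heart of the argument is to check that each $\bfK_{i}$ is $\cs$-complete and each $f^{n}(\bfK_{i})$ is $\uu$-complete in $C$; this identifies $\bfH\eqdef(C,\{\bfK_{i}\}_{i=1}^{N},f^{n})$ as a $\csu$-horseshoe in the sense of Definition~\ref{defhorseshoe} and, by Remark~\ref{r.uucompcover}, automatically yields the full Markov property ``every $f^{n}(\bfK_{i})$ $\uu$-covers every $\bfK_{j}$''. The $\uu$-completeness uses the uniform expansion of $f$ along $E^{\uu}$ plus a graph transform in the $E^{\uu}$-cone field: the image $f^{n}(D^{\uu}(p_{i}))$ is exponentially longer than $D^{\uu}(f^{n}(p_{i}))$ and, being a $C^{1}$-graph over the $E^{\uu}$ direction, fully traverses every $\uu$-fiber of $C$ it meets. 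The $\cs$-completeness uses uniform contraction of $f^{n}$ along $E^{\cs}$ on $G$, obtained by combining uniform contraction on $E^{\ss}$ with the Pesin bound $\|D^{\c}f^{n}(x)\|\le e^{n(\chi^{\c}(\nu)+\varepsilon/8)}$ from Step~1: a backward graph transform for $f^{-n}$ expands the base $D^{\cs}$ across itself, yielding a $\cs$-graph of $\bfK_{i}$ over all of $D^{\cs}$. Combined with disjointness, this gives~(2): $\Lambda_{\bfH}$ is a hyperbolic basic set and $f^{n}|_{\Lambda_{\bfH}}$ is conjugate to the full $N$-shift.

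\textbf{Quantitative estimates and main obstacle.} Assertion~(1) follows by transporting the Step~1 bound at $p_{i}$ to all of $\bfK_{i}$ via Lemma~\ref{l.centerdistortion}: for $r$ small enough, the distortion of $\log\|D^{\c}f^{n}\|$ on $\bfK_{i}$ is below $\varepsilon/2$. For \eqref{e.lastestimate} at an arbitrary $k\in\bN$, write $k=qn+s$ with $0\le s<n$ and multiply the $n$-step bounds along the symbolic coding of the $\Lambda_{\bfH}$-orbit, absorbing the bounded $s$-step factor $\|D^{\c}f^{\pm s}\|$ into the constant $K$; Corollary~\ref{c.distortion-reference-cube} ensures displacements inside each rectangle stay controlled. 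Finally \eqref{e.lastestimatephis} follows by approximating $\mu'\in\cM(f|_{\Lambda_{\bfH}})$ by periodic measures supported on orbits coded by finite cylinders: each such orbit shadows some concatenation of $(n,\delta)$-segments near the $p_{i}$, so the Step~1 Birkhoff estimates combined with uniform continuity of $\varphi_{i}$ on $C$ (choosing $r$ so small that the oscillation of $\varphi_{i}$ on $C$ is below $\varepsilon/2$) deliver the desired bound. The main obstacle is the simultaneous $\cs$- and $\uu$-completeness inside the fixed cube $C$: the dominated splitting is uniformly hyperbolic only in $E^{\ss}\oplus E^{\uu}$, while $E^{\c}$ is merely dominated, so the graph-transform along $E^{\c}$ must use point-dependent Pesin control yet deliver completeness at the fixed scale $r$. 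The correct parameter hierarchy --- $\varepsilon_{0}\ll\varepsilon$ first, then $\delta,\zeta\ll r$, and finally $n\ge n_{0}$ large enough that uniform hyperbolicity dominates the non-uniform Pesin slack --- is essential.
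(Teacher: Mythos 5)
Your proposal is correct in outline and follows essentially the same route as the paper's proof: a Katok-type skeleton from Brin--Katok, Birkhoff and Egorov, returns into a small $\csu$-cube around a density point, rectangles taken as connected components of $C\cap f^{-n}(C)$ through the separated points, $\cs$/$\uu$-completeness via uniform $E^{\uu}$-expansion and the center-distortion Lemma~\ref{l.centerdistortion}, disjointness from separation, and the same transport of exponent and Birkhoff estimates to all of $\Lambda_\bfH$. One slip to fix: your closing parameter hierarchy ``$\delta,\zeta\ll r$'' contradicts your earlier (correct) requirement $r<\delta/3$ --- the separation scale must dominate the in-rectangle orbit spread ($\approx 6r$), which is why the paper separates at the fixed entropy-expansiveness scale $\varepsilon_0$ and takes $r<\varepsilon_0/8$, and also upgrades the a.e.\ bounds valid for $n\ge n_0$ to all $n$ with constants $K_{\rm L},K_{\rm B}$ before applying Lemma~\ref{l.centerdistortion}.
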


We postpone the proof of Theorem~\ref{t.smarthorseshoes} to Section \ref{ss.applyingKatok}. In Section \ref{secKatok}, we prove the existence of a ``skeleton'' of orbit segments which ergodically approximate the measure $\nu$. Finally, in Section \ref{secHorseshoefurther}, we define cylinders of higher order and explore their properties.

\subsection{Approximating and generating basic sets}\label{secKatok}

Given a hyperbolic ergodic measure of contracting type $\nu\in\cM_{\rm erg,<0}(f)$, in Proposition \ref{proKatok} below, we construct ``skeletons'' which ``approximate'' this measure in entropy and in terms of finite-time Lyapunov exponents. Our proof follows the classical ideas in \cite[Supplement S]{KatHas:95}. 

In what follows, we consider \emph{Bowen balls}, defined by
\[
	B_n(x,\varepsilon_0)
	\eqdef \bigcap_{k=0}^{n-1}f^{-k}\big(B(f^k(x)),\varepsilon_0\big),
	\quad\text{ where }\quad
	B(y,\varepsilon_0)
	\eqdef \{z\in M\colon d(z,y)<\varepsilon_0\}.
\]

\begin{remark}[Entropy expansiveness]\label{remexpansive}
By \cite{LiViYa:13,DiFiPaVi:12}, every $f\in\PH_{\c=1}^1(M)$ is \emph{$\varepsilon_0$-entropy expansive} for some $\varepsilon_0>0$, that is, for every $x\in M$,%
\[\begin{split}
	h_{\rm top}(f,B_\infty(x,\varepsilon_0))=0, 
	\quad\text{ where }\quad
	B_\infty(x,\varepsilon_0)
	&\eqdef\bigcap_{n\in\bN}
	B_n(x,\varepsilon_0).
\end{split}\]
\end{remark}
Given $n\in\mathbb{N}$ and $\varepsilon>0$, a subset $E\subset M$ is called an \emph{$(n,\varepsilon)$-separated set} if for any two different points $x,y\in E$, there exists $i\in\{0,\ldots,n-1 \}$ such that $d(f^i(x),f^i(y))>\varepsilon$.

\begin{proposition}[Approximating ``skeleton'']\label{proKatok}
Let $f\in\PH_{\c=1}^1(M)$. Let $\varphi_1,\ldots,\varphi_\ell\colon M\to\bR$ be a collection of continuous functions. Let $\varepsilon_0$ be as in Remark \ref{remexpansive}. For every $\nu\in\cM_{\rm erg,<0}(f)$ with positive entropy $h_\nu(f)$ and every $\delta>0$, there exist $K_{\rm L},K_{\rm B}>0$ such that for any $\rho>0$, there exist $n_0\in\bN$ arbitrarily large and an $(n_0,\varepsilon_0)$-separated set $E$ such that 
\begin{enumerate}[leftmargin=0.6cm ]
\item[(i)] $\card E> e^{n_0(h_\nu(f)-\delta)}$;
\item[(ii)] $\diam(E\cup f^{n_0}(E))\leq \rho;$
\item[(iii)] for every $x\in E$ and every $n\in\mathbb{N}$, one has 
		\[K_{\rm L}^{-1}\cdot e^{n(\chi^\c(\nu)-\delta)}\leq \|D^\c f^n(x)\|\leq K_{\rm L}\cdot e^{n(\chi^\c(\nu)+\delta)}.\]
\item[(iv)] for every $x\in E$, every $n\in\mathbb{N}$, and every $i=1,\ldots,\ell$, one has 
		\[-K_{\rm B}+n\big(\int\varphi_i\,d\mu-\delta\big)
		\le \sum_{k=0}^{n-1}\varphi_i(f^k(x))
		\le K_{\rm B}+n\big(\int\varphi_i\,d\mu+\delta\big)
		.\] 		
\end{enumerate}
\end{proposition}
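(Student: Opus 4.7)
My plan is to implement a Katok-type skeleton construction, combining the Brin--Katok entropy formula, Birkhoff's ergodic theorem, Egorov's theorem, and a mean-ergodic-theorem recurrence argument.

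First, I would combine Brin--Katok with the $\varepsilon_0$-entropy expansiveness of $f$ (Remark~\ref{remexpansive}), which ensures $\lim_n -\frac{1}{n}\log\nu(B_n(x,\varepsilon_0)) = h_\nu(f)$ for $\nu$-a.e. $x$, together with Birkhoff applied to $\log\|D^\c f\|$ and to each $\varphi_i$. Applying Egorov jointly to these convergences, I obtain, for any $\eta>0$, a measurable set $A_1$ with $\nu(A_1) > 1-\eta$ and an integer $n_1$ such that for every $x \in A_1$ and every $n \geq n_1$ one has $\nu(B_n(x,\varepsilon_0)) \leq e^{-n(h_\nu(f)-\delta/2)}$, together with the $\delta/2$-versions of (iii) and (iv) without any prefactor. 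I then exploit the uniform boundedness of $\log\|D^\c f\|$ and of each $\varphi_i$ on $M$ to define $K_{\rm L}$ and $K_{\rm B}$ depending only on $n_1$ and these sup-norms, so that (iii), (iv) hold trivially for $n < n_1$ as well. Crucially, $K_{\rm L}, K_{\rm B}$ depend only on $\nu$, $\delta$, and $\{\varphi_i\}$, not on $\rho$ or the eventual recurrence time $n_0$---this is what will later permit Lemma~\ref{l.centerdistortion} to be invoked uniformly.

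Next, to force $E$ and $f^{n_0}(E)$ to lie in a common ball of diameter $\rho$, I fix $p \in \supp\nu$ and set $B := B(p,\rho/2)$, so $\alpha := \nu(B) > 0$. Applying the mean ergodic theorem to $\mathbf{1}_B$ gives $\frac{1}{N}\sum_{n=0}^{N-1}\nu(B \cap f^{-n}B) \to \alpha^2$; hence there are infinitely many (arbitrarily large) integers $n_0$ with $\nu(B \cap f^{-n_0}B) \geq \alpha^2/2$. Choosing $\eta < \alpha^2/4$ in the previous step, the set $A_2 := A_1 \cap B \cap f^{-n_0}B$ satisfies $\nu(A_2) \geq \alpha^2/4 > 0$, and by construction $A_2 \cup f^{n_0}(A_2) \subset B$ has diameter at most $\rho$.

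Finally, I let $E$ be a maximal $(n_0,\varepsilon_0)$-separated subset of $A_2$. Maximality yields the covering $A_2 \subset \bigcup_{x \in E} B_{n_0}(x,\varepsilon_0)$; combined with the Brin--Katok bound available on $A_1$ this gives $\nu(A_2) \leq |E| \cdot e^{-n_0(h_\nu(f)-\delta/2)}$, so $|E| \geq (\alpha^2/4)\, e^{n_0(h_\nu(f)-\delta/2)}$, which exceeds $e^{n_0(h_\nu(f)-\delta)}$ once $n_0$ is large enough, giving (i). Property (ii) is automatic since $E \cup f^{n_0}(E) \subset B$, while (iii), (iv) hold because $E \subset A_1$. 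The main obstacle is keeping $K_{\rm L}, K_{\rm B}$ independent of $\rho$ and $n_0$---essential for the distortion arguments that this proposition feeds into---which forces the regime split at the fixed threshold $n_1$ depending only on $\nu,\delta,\{\varphi_i\}$, rather than on data derived from the recurrence construction. A secondary delicate calibration is picking $\eta$, $p \in \supp\nu$, and the recurrence times $n_0$ compatibly so that $\nu(A_2)$ stays uniformly bounded below and the separated-set count still dominates $e^{n_0(h_\nu(f)-\delta)}$.
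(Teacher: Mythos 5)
Your strategy is essentially the paper's (Brin--Katok plus entropy expansiveness, Birkhoff, Egorov, an ergodic-theorem recurrence to bring a small-diameter positive-measure set back to itself, and a Bowen-ball counting of separated points), but as written there is a genuine quantifier problem in the order of your choices. In Proposition~\ref{proKatok} the constants $K_{\rm L},K_{\rm B}$ must be fixed \emph{before} $\rho$: ``there exist $K_{\rm L},K_{\rm B}>0$ such that for any $\rho>0$ \dots''. In your construction these constants are built from the sup-norms together with the Egorov threshold $n_1$, and $n_1$ depends on $\eta$ (a smaller $\eta$ forces a larger Egorov set, on which the convergence is in general slower). But you only pin down $\eta$ at the step ``choosing $\eta<\alpha^2/4$'', where $\alpha=\nu(B(p,\rho/2))$ depends on $\rho$. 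So $\eta$, hence $n_1$, hence $K_{\rm L}$ and $K_{\rm B}$, all depend on $\rho$, contradicting your own claim of independence and not proving the statement with its stated quantifier order; this order is not cosmetic, since in Section~\ref{ss.applyingKatok} the constant $K_{\rm L}$ is used (via Lemma~\ref{l.centerdistortion} and the choice of $r<\varepsilon/(4K_{\rm L})$) before $\rho$ is selected.

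The repair is to reverse the order, which is exactly what the paper does: fix $\eta$ once and for all (say each Egorov set has measure $>3/4$, so their intersection $\Gamma$ has measure $>1/4$), define $K_{\rm L},K_{\rm B}$ from these fixed sets, and only then, given $\rho$, choose a set $A$ of diameter at most $\rho$ with $\nu(A\cap\Gamma)>0$ (possible since $\Gamma$ has positive measure; e.g.\ take a density point of $\Gamma$) and run the recurrence argument for $B\eqdef A\cap\Gamma$ itself, obtaining arbitrarily large $n_0$ with $\nu(f^{n_0}(B)\cap B)>\nu(B)^2(1-\delta)$ --- rather than running it for the ball $B(p,\rho/2)$ and afterwards intersecting with a nearly full-measure Egorov set. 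With that reordering the rest of your argument goes through and essentially reproduces the paper's proof: your maximal $(n_0,\varepsilon_0)$-separated subset of $B\cap f^{-n_0}(f^{n_0}(B)\cap B)$ with the covering estimate $\card E\ge \nu(B)^2(1-\delta)\,e^{n_0(h_\nu(f)-\delta/2)}>e^{n_0(h_\nu(f)-\delta)}$ for $n_0$ large is an inessential variation of the paper's greedy selection of points outside previously used Bowen balls.
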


\begin{proof}
Let $\nu\in\cM_{\rm erg,<0}(f)$ and $\delta\in\big(0,\min\{h_\nu(f),|\chi^\c(\nu)|\}\big)$. We start by selecting sets $\Gamma_{\rm E}$ and $\Gamma_{\rm L}$ with convenient properties.

By Brin-Katok's theorem \cite{BriKat:83} together with Egorov's theorem,  there are a set $\Gamma_{\rm E}\subset M$ and a number $n_{\rm E}\in\bN$ such that $\nu(\Gamma_{\rm E})>3/4$ and for every $x\in\Gamma_{\rm E}$ and $n\ge n_{\rm E}$,
\begin{equation}\label{eqBriKat}
	e^{-n(h_\nu(f)+\delta/2)}
	\le 
	\nu(B_n(x,\varepsilon_0))
	\le
	 e^{-n(h_\nu(f)-\delta/2)}.
\end{equation}

By Birkhoff's ergodic theorem together with Egorov's theorem, there are a set $\Gamma_{\rm L}\subset M$ and a number $n_{\rm L}\in\bN$ such that $\nu(\Gamma_{\rm L})>3/4$ and for every $x\in\Gamma_{\rm L}$ and $n\ge n_{\rm L}$,
\[
	e^{n(\chi^\c(\nu)-\delta)}
	\le
	 \lVert D^\c f^n(x)\rVert
	\le 
	e^{n(\chi^\c(\nu)+\delta)}.
\]
Hence, there is a constant  $K_{\rm L}>1$ such that for all $n\in\bN$ and $x\in\Gamma_{L}$
\begin{equation} \label{eqn:dist}
	K_{\rm L}^{-1}e^{n(\chi^\c(\nu)-\delta)}
	\le
	 \lVert D^\c f^n(x)\rVert
	\le
	 K_{\rm L}e^{n(\chi^\c(\nu)+\delta)}.
\end{equation}
Analogously, applying Birkhoff's ergodic theorem to the continuous functions $\varphi_1,\ldots,\varphi_\ell$, respectively, one gets a set $\Gamma_{\rm B}\subset M$, a number $n_{\rm B}\in\bN$, and a constant $K_{\rm B}>1$ such that $\nu(\Gamma_{\rm B})>3/4$ and for every $x\in\Gamma_{\rm B}$ and $n\ge n_{\rm B}$,
\begin{equation} \label{eqn:distbirk}
	-K_{\rm B}+n\big(\int\varphi_i\,d\mu-\delta\big)
		\le \sum_{k=0}^{n-1}\varphi_i(f^k(x))
		\le K_{\rm B}+n\big(\int\varphi_i\,d\mu+\delta\big).
\end{equation}
Note that 
\[
	\nu(\Gamma_{\rm E}\cap\Gamma_{\rm L}\cap\Gamma_{\rm B})
	>\frac14.
\]

Take now any $\rho>0$. Take $A\subset M$ of diameter less than $\rho$ such that $B\eqdef A\cap \Gamma_{\rm E}\cap \Gamma_{\rm L}$ satisfies $\nu(B)>0$. Note that the ergodicity of $\nu$ implies that 
\[
	\lim_{n\to\infty}\frac1n\sum_{k=0}^n\nu(f^{k}(B)\cap B)=\nu(B)^2.
\] 
The following fact is a straightforward consequence of the above equality.

\begin{claim}\label{clergo}
There exists an increasing sequence $(k_i)_i\subset\bN$ so that for every $i\in\bN$, 
\[\nu(f^{k_i}(B)\cap B)>\nu(B)^2(1-\delta).\] 
\end{claim}

Fix some $i\in\bN$ sufficiently large, such that
\begin{equation}\label{eqchoini}
	n_{\rm E}<k_i
	\quad \textrm{~and~}	\quad
	 \frac{1}{k_i}\big|\log\big(\nu(B)^2(1-\delta)\big)\big|< \frac \delta2.
\end{equation}
Take $n_0\eqdef k_i$. Let 
\[
	B'
	\eqdef
	 f^{-n_0}\big(f^{n_0}(B)\cap B\big).
\]
Claim \ref{clergo} together with $f$-invariance of $\nu$ implies
\begin{equation}\label{eqlargemeasu}
	\nu(B')>\nu(B)^2(1-\delta).
\end{equation}

We now construct the set $E$ of points which are $(n_0,\varepsilon_0)$-separated, as claimed in the proposition. Let $N$ be the smallest integer such that
\begin{equation}\label{eqchoiceN}
	N
	\ge \nu(B)^2(1-\delta)e^{n_0(h_\nu(f)-\delta/2)}
	> e^{n_0(h_\nu(f)-\delta)},
\end{equation}
where the second inequality follows from \eqref{eqchoini}. Let $x_1\in B'$. From  \eqref{eqlargemeasu}, \eqref{eqBriKat}, and \eqref{eqchoini} we get
\[
	\nu\big(B'\setminus B_{n_0}(x_1,\varepsilon_0)\big)
	\ge 
	\nu(B') - \nu(B_{n_0}(x_1,\varepsilon_0))
	> 
	\nu(B)^2(1-\delta) - e^{-n_0(h_\nu(f)-\delta/2)}
	>
	 0. 
\]
Hence, $B'\setminus B_{n_0}(x_1,\varepsilon_{0})$ has positive measure and we can proceed by (finite) induction on $n$ to choose further points in $B'$. Assume that $n\in\{1,\ldots,N-1\}$ is such that the points $x_1,\ldots,x_n$ are already chosen. Note that 
\[\begin{split}
	\nu\Big(B'\setminus\bigcup_{k=1}^{n}B_{n_0}(x_k,\varepsilon_0)\Big)
	&\ge
	 \nu(B') - n\max_{k=1,\ldots,n}\nu(B_{n_0}(x_k,\varepsilon_0))
	 \\
	\text{\tiny{by \eqref{eqlargemeasu} and \eqref{eqBriKat}}}\quad
	&>
	\nu(B)^2(1-\delta)-ne^{-n_0(h_\nu(f)-\delta/2)}\\
	\text{\tiny{by \eqref{eqchoiceN}}}\quad
	&>0.
\end{split}\]
Hence, we can pick some point 
\[
	x_{n+1}
	\in B'\setminus\bigcup_{k=1}^{n}B_{n_0}(x_k,\varepsilon_0).
\]
Note that, by construction, each pair of points $x_{n+1}$, $x_k$, for $k=1,\ldots,n$, is $(n_0,\varepsilon_0)$-se\-pa\-rated. This finishes the induction step, which can be repeated $N$ times, and defines the set of points $E\eqdef\{x_1,\ldots,x_N\}$. Together with \eqref{eqchoiceN}, it follows that $E$ satisfies item (i).

By construction, we have $E\cup f^{n_0}(E)\subset A$, proving item (ii). As $E\subset B'\subset \Gamma_{\rm L}\cap\Gamma_{\rm B}$, items (iii)  and (iv) follow from \eqref{eqn:dist} and \eqref{eqn:distbirk}, respectively. This proves Proposition \ref{proKatok}.
\end{proof}

\subsection{Proof of Theorem~\ref{t.smarthorseshoes}}\label{ss.applyingKatok}

We first fix some auxiliary numbers in Section \ref{ssec:quantif}. In Section \ref{ssec:rectangles} we construct the horseshoe rectangles and finish the proof in Section \ref{ssec:finish}. 

\subsubsection{Choice of quantifiers}\label{ssec:quantif}
Let 
\begin{equation}\label{deflambdauu}
	\tau^\ss
	= \tau^\ss(f)
	\eqdef \max_{x\in M}\log\|Df|_{E^\ss(x)}\|
	<0<
	\tau^\uu
	= \tau^\uu(f)
	\eqdef \min_{x\in M}\log\|Df|_{E^\uu(x)}\|.
\end{equation}
Let $\varepsilon_0$ and $\varepsilon_1$ be as in Remarks \ref{remexpansive} and \ref{remweaint}, respectively. Fix
\[
 	\varepsilon_{\rm L}\in\big(0,\min\big\{h_\nu(f),|\chi^\c(\nu)|/2\big\}\big).
\]
Apply Lemma \ref{l.centerdistortion} to $\chi^\c(\nu)+\varepsilon_{\rm L}/4$ and $\varepsilon_{\rm L}/4$,  and consider the corresponding $\varepsilon$. 

Apply Proposition~\ref{proKatok} to $\varepsilon_0$, the measure $\nu$, and $\varepsilon_{\rm L}/4$ and consider the corresponding constants $K_{\rm L}\ge1$ and $K_{\rm B}\ge1$. 

In what follows, we shrink $\varepsilon>0$ and fix $r>0$ sufficiently small such that
\begin{equation}\label{e.threeconditions1}\begin{split}
	\varepsilon
	&<\frac{\varepsilon_0}{4},\\
	\Mod_f(6r+\varepsilon)
	&< \frac{\varepsilon_{\rm L}}{4},\\
	r 
        &<\min\Big\{\frac{\varepsilon_1}{2}, \frac{\varepsilon_0}{8},\frac{\varepsilon}{4K_{\rm L}}\Big\}.
\end{split}\end{equation}

Recall the sets of $\Gamma^\c_r(x)$ and $C^\csu(\gamma,r)$ in Definition \ref{defGammas}. 

\begin{remark}[Choice of $\zeta$]\label{remquantifier}
By the uniform continuity of the bundle $E^\cs$ and compactness of $M$, there exists $\zeta>0$ such that 
\begin{equation}\label{eqchoicezeta}
	B(x,4\zeta)\subset C^\csu(\gamma,r)
	\quad\text{ for every $	x\in M$, $\gamma\in \Gamma^\c_r(x)$. }
\end{equation}
Moreover, for every  point  $y\in M$ with $d(x,y)<\zeta$ and every $\gamma_1\in\Gamma^\c_{2r}(y)$, every fiber of $C^\csu(\gamma,r)$ has nonempty transverse intersection with the interior of $\cW^\ss(\gamma_1,2r)$.
\end{remark}

Fix now 
\begin{equation}\label{e.choice-of-rho}
	\rho\in\big(0, \frac{1}{2}\Leb (\zeta) \big),
\end{equation}
where $\Leb(\zeta)$ is the Lebesgue number of the open covering $\{ B(x,\zeta)\colon x\in M\}$.

\begin{remark}[Choices of $n$ and $E$]\label{rem-n-E}
Given $\rho$, by Proposition~\ref{proKatok}, there exist $n\in\bN$ (which can be chosen arbitrarily large) and   an $(n, \varepsilon_0)$-separated set $E=\{x_1, \ldots , x_N\}$ such that: 
\begin{enumerate}[leftmargin=0.6cm ]
	\item[(i)] $\card E\geq e^{n(h_\nu(f)-\varepsilon_{\rm L}/4)}$;
	\item[(ii)] for every $x\in E$ and every $m\in\mathbb{N}$, one has 
	\begin{equation}\label{itemii}
		K_{\rm L}^{-1}\cdot e^{m(\chi^\c(\nu)-\varepsilon_{\rm L}/4)}
		\leq \|D^\c f^m(x)\|\leq K_{\rm L}\cdot e^{m(\chi^\c(\nu)+\varepsilon_{\rm L}/4)} 
	\end{equation}	
	and for every $x\in E$, every $m\in\mathbb{N}$, and every $i=1,\ldots,\ell$, one has
	\[
	-K_{\rm B}+n\big(\int\varphi_i\,d\mu-\varepsilon_{\rm L}/4\big)
		\le \sum_{k=0}^{n-1}\varphi_i(f^k(x))
		\le K_{\rm B}+n\big(\int\varphi_i\,d\mu+\varepsilon_{\rm L}/4\big);	
	\]
	\item[(iii)] $\diam(E\cup f^{n}(E))\leq \rho.$
\end{enumerate}
Without loss of generality, we can assume that $n$ satisfies
\begin{equation}\label{e.largen}
	K_{\rm L} \le e^{n\varepsilon_{\rm L}/2},\quad\quad 
	\varepsilon\cdot e^{n(\chi^\c(\nu)+\varepsilon_{\rm L})}+4r\cdot e^{n\tau^\ss}<\zeta.
\end{equation}
\end{remark}

This finishes the choice of auxiliary quantifiers.

\subsubsection{Construction of the rectangles of the horseshoe.}\label{ssec:rectangles}

By property (iii) in Remark \ref{rem-n-E} and our choice of $\rho$ in \eqref{e.choice-of-rho}, there  exists $x_0\in M$ such that 
\begin{equation}\label{eqpropE}
	E\cup f^n(E)\subset B(x_0,\zeta).
\end{equation}
Fix a center curve $\gamma$ centered at $x_0$ and of radius $r$, and consider the corresponding cube $C\eqdef C^\csu(\gamma,r)$ with base $D^\cs\eqdef\cW^\ss(\gamma,r)$ as in \eqref{e.cubecenteredatx}. This will provide the cube in the assertion of the theorem. It remains to check its properties. 
 
Given $E=\{x_1,\ldots,x_N\}$ chosen above, for $ i=1, \ldots, N$ let
\[
	\bfK_i \eqdef
	\text{ the connected component of }f^{-n}(C) \cap C\text{ which contains the point }x_i.
\]
Note that, by our choice of $\zeta$ in \eqref{eqchoicezeta} together with \eqref{eqpropE}, it holds $x_i,f^n(x_i)\in C$ for every  $ i=1, \ldots, N$ and hence $\bfK_i$ is nonempty. 

Let $\bfH\eqdef(C,\{\bfK_i\}_{i=1}^N,f^n)$. Claims \ref{c.completeness-of-each-Ki} and \ref{lemcl.disjoint} below imply that $\bfH$ indeed satisfies all properties of a $\csu$-horseshoe (relative to $C$), recall Definition \ref{defhorseshoe}. Note that item (i) in Remark \ref{rem-n-E} gives already
\[
	N
	=\card E\geq e^{n(h_\nu(f)-\varepsilon_{\rm L}/4)}
	\geq e^{n(h_\nu(f)-\varepsilon_{\rm L})}.
\]

\begin{claim}\label{c.completeness-of-each-Ki}
For every $i=1,\ldots, N$, the set $\bfK_i$ is a $\cs$-complete subcube of $C$ and $f^n(\bfK_i)$ is a $\uu$-complete subcube of $C$.
\end{claim}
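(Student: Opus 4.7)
I would prove the claim in two stages, first establishing $\uu$-completeness of $f^n(\bfK_i)$ and then $\cs$-completeness of $\bfK_i$. Both rely on hyperbolic expansion of $E^\uu$, contraction of $E^\ss$, and the controlled behavior of $E^\c$ at points near $x_i$ via Lemma~\ref{l.centerdistortion} combined with item~(ii) of Remark~\ref{rem-n-E}, glued together by the local product structure of Remark~\ref{r.zeta}.

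For the $\uu$-completeness step, consider the $\uu$-disk $\cW^\uu(x_i,r)$: its image $f^n(\cW^\uu(x_i,r))$ has intrinsic radius at least $re^{n\tau^\uu}$, which by \eqref{e.largen} exceeds $3r$. Since $d(f^n(x_i),x_0)<\zeta$, Remark~\ref{r.zeta} produces $z'_i:=\cW^\ss(\gamma,r)\pitchfork\cW^\uu(f^n(x_i),r)\in D^\cs$ with $\cW^\uu(z'_i,r)\subset\cW^\uu(f^n(x_i),2r)$. Hence $f^n(\cW^\uu(x_i,r))\supset\cW^\uu(z'_i,r)$, and pulling back yields a $\uu$-sub-disk $\widetilde D^\uu(x_i)\subset\cW^\uu(x_i,r)$, contained in $\bfK_i$ by the definition of $\bfK_i$, whose $f^n$-image is the full fiber $\cW^\uu(z'_i,r)$ of $C$.

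For $\cs$-completeness, I would replicate this argument at every $z\in D^\cs$. The base $D^\cs=\cW^\ss(\gamma,r)$ is connected with intrinsic diameter at most $2r<\varepsilon/K_{\rm L}$ by \eqref{e.threeconditions1}; so for any $z\in D^\cs$ there is a $\cs$-curve $\alpha\subset D^\cs$ of length at most $2r$ joining $z_i:=\cW^\ss(\gamma,r)\pitchfork\cW^\uu(x_i,r)$ to $z$. Applying Lemma~\ref{l.centerdistortion} to the $\c$-part of $\alpha$ (with $\chi=\chi^\c(\nu)+\varepsilon_{\rm L}/4$ and error $\varepsilon_{\rm L}/4$), using $K_{\rm L}\le e^{n\varepsilon_{\rm L}/2}$ and the $\ss$-contraction bound $e^{n\tau^\ss}$, and invoking \eqref{e.largen}, I obtain $d(f^n(z),f^n(x_i))<\zeta$, which places $f^n(z)$ within the range of Remark~\ref{r.zeta} relative to $x_0$. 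Repeating the first step at $z$ yields a $\uu$-sub-disk $\widetilde D^\uu(z)\subset\cW^\uu(z,r)$ with $f^n(\widetilde D^\uu(z))=\cW^\uu(z'',r)$ for some $z''\in D^\cs$. The family $\{\widetilde D^\uu(z)\colon z\in D^\cs\}$ assembles into a subcube of $C$ with base $D^\cs$ whose $f^n$-image is a subcube of $C$ whose fibers coincide with full fibers of $C$.

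The main obstacle I anticipate is identifying this assembled subcube with $\bfK_i$, the connected component of $C\cap f^{-n}(C)$ containing $x_i$. Continuous dependence of $z\mapsto\widetilde D^\uu(z)$ on $z$ follows from continuity of the strong foliations and uniformity of the local product structure in Remark~\ref{r.zeta}, so the assembly is connected and contains $x_i$; maximality, i.e.\ that $\bfK_i$ does not strictly contain it, follows from the strict inequality in \eqref{e.largen}, which forces the $f^n$-image of every fiber of $\bfK_i$ to lie in $\interior C$, preventing any continuous extension of the family beyond what we constructed. Together these give the two completeness statements.
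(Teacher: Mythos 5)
Your Stage 1 is fine, but Stage 2 — which is where the whole content of the claim lies — has a genuine gap. The only center control you have is Remark~\ref{rem-n-E}(ii), i.e.\ \eqref{itemii}, and it holds along the orbit of $x_i$ itself. Your anchor on the base, $z_i=\cW^\ss(\gamma,r)\pitchfork\cW^\uu(x_i,r)$, differs from $x_i$ by a displacement \emph{inside the strong unstable leaf}, and under $f^n$ that displacement is expanded by at least $e^{n\tau^\uu}$ (see \eqref{deflambdauu}); so $d(f^n(z_i),f^n(x_i))$ does not stay below $\zeta$ — it blows up. Moreover, Lemma~\ref{l.centerdistortion} cannot be applied along your curve $\alpha\subset D^\cs$: its hypothesis is the bound $\lvert D^\c f^k(\cdot)\rvert\le K_{\rm L}e^{k\chi}$, $k\le n$, \emph{at the point where the center curve is centered}, and no such bound is available at $z_i$ or at any other point of $D^\cs$, because forward orbits of points on $\cW^\uu(x_i)$ separate from the orbit of $x_i$, so the control at $x_i$ does not transfer. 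Hence your key assertion ``$d(f^n(z),f^n(x_i))<\zeta$ for every $z\in D^\cs$'' is unsupported (and there is no reason for it to hold: nothing constrains $f^n(D^\cs)$, the base through $x_0$, since $x_0\notin E$), so the family $\{\widetilde D^\uu(z)\}_{z\in D^\cs}$ cannot be assembled this way. (Minor: \eqref{e.largen} does not give $re^{n\tau^\uu}>3r$; that only follows from $n$ being large.)

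The paper's proof fixes exactly this point by changing the base instead of working with $D^\cs=\cW^\ss(\gamma,r)$: take a center curve $\gamma_i\in\Gamma^\c_{2r}(x_i)$ centered at $x_i$ and set $D^\cs_i\eqdef\cW^\ss(\gamma_i,2r)\cap C$, which by the choice of $\zeta$ in Remark~\ref{remquantifier} meets every fiber of $C$ and is therefore again a base of $C$ (the unstable-holonomy image of $D^\cs$). With the anchor at $x_i$, \eqref{itemii} verifies the hypothesis of Lemma~\ref{l.centerdistortion}, so $\length(f^k(\gamma_i))<\varepsilon e^{k(\chi^\c(\nu)+\varepsilon_{\rm L}/2)}$, and the strong stable saturation only contracts, giving $\diam(f^k(D^\cs_i))<\varepsilon e^{k(\chi^\c(\nu)+\varepsilon_{\rm L}/2)}+4re^{k\tau^\ss}$ for all $k\le n$; by \eqref{e.largen} and \eqref{eqchoicezeta} this places $f^n(D^\cs_i)$ inside $B(x_0,2\zeta)\subset C$. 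Uniform expansion along $E^\uu$ then forces the $f^n$-image of every fiber over $D^\cs_i$ to cross $C$ completely, which yields at once that the component of $f^n(C)\cap C$ containing $f^n(x_i)$ (namely $f^n(\bfK_i)$) is a $\uu$-complete subcube with base $f^n(D^\cs_i)$, and that $\bfK_i$ is a $\cs$-complete subcube with base $D^\cs_i$. If you replace your base $D^\cs$ by such a base through $x_i$, the rest of your scheme (including the identification with the connected component) goes through essentially as in the paper.
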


\proof 
Fix $x_i$ and consider a center curve $\gamma_i\in\Gamma^\c_{2r}(x_i)$. By the choice of $\zeta$ in Remark \ref{remquantifier}, we get that 
\[
	D^\cs_i\eqdef\cW^\ss(\gamma_i,2r)\cap C
\]	  
intersects every fiber of the cube $C$. In fact, $D^\cs_i$ is the image of $D^\cs$ under the strong unstable holonomies between $D^\cs$ and $\cW^\ss(\gamma_i,2r)$. Therefore $D^\cs_i$ is also a base of $C$. 

By  \eqref{itemii}, for every $k\in\bN$, one has 
\[ 
	\|D^\c f^k(x_i)\|\leq K_{\rm L}\cdot e^{k(\chi^\c(\nu)+\varepsilon_{\rm L}/4)}.
\]
From \eqref{e.threeconditions1}, we get $\length(\gamma_i)= 4r<\varepsilon/K_{\rm L}$. By Lemma \ref{l.centerdistortion} applied to  $\chi^\c(\nu)+\varepsilon_{\rm L}/4$ and $\varepsilon_{\rm L}/4$, for every $k\in\bN$ one has 
\[
	\length(f^k(\gamma_i))
	\leq K_{\rm L}\cdot e^{k(\chi^\c(\nu)+\varepsilon_{\rm L}/2)}\cdot \length(\gamma_i)
	< \varepsilon\cdot e^{k(\chi^\c(\nu)+\varepsilon_{\rm L}/2)}.
\]
As $D^\cs_i$ is tangent to $E^\cs$, by the domination between $E^\ss$ and $E^\c$, for each $k\in\bN$,
\begin{equation}\label{eq:diameter-of-base-Dcsi}
	\diam(f^k(D^\cs_i))
	<\varepsilon\cdot e^{k(\chi^\c(\nu)+\varepsilon_{\rm L}/2)}+4r\cdot e^{k\tau^\ss}.
\end{equation}
Recalling that $f^n(x_i)\in B(x_0,\zeta)$, and using \eqref{eq:diameter-of-base-Dcsi},  \eqref{e.largen} and \eqref{eqchoicezeta}, one has 
\[
	\diam(f^n(D^\cs_i))
	\subset B(x_0,2\zeta)\subset B(x_0,4\zeta)\subset C.
\]
Since $f$ is uniformly expanding along the strong unstable bundle, one deduces that the connected component $f^n(\bfK_i)$ of $f^n(C)\cap C$ containing $f^n(x_i)$ is a $\uu$-complete subcube of $C$ and $f^n(D^\cs_i)\subset f^n(\bfK_i)$ is its base. Once again using the uniform expansion of $f$ along the strong unstable bundle, $\bfK_i$ is a cube with base $D^\cs_i$, and thus is a $\cs$-complete subcube of $C$. This ends the proof  of Claim~\ref{c.completeness-of-each-Ki}.
\endproof 

\begin{claim}\label{lemcl.disjoint}
The sets $\bfK_i$, $i=1,\ldots,N$, are pairwise disjoint. Moreover,  
\begin{equation}\label{eqformula}
	d(f^k(x),f^k(x_i))<6r+\varepsilon
	\quad\text{ for every }x\in \bfK_i\text{ and }k=0,\ldots,n.
\end{equation}
\end{claim}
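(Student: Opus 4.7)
My plan is to prove the distance estimate \eqref{eqformula} first and then deduce pairwise disjointness from it, using the $(n,\varepsilon_0)$-separation of $E$. The underlying idea is to decompose the orbit of any $x\in \bfK_i$ relative to the base $D^\cs_i$ introduced in the proof of Claim~\ref{c.completeness-of-each-Ki}: track the center-stable part along $D^\cs_i$ and the strong unstable part along the $\uu$-fiber. That proof shows in particular $D^\cs_i\subset \bfK_i$ (apply $f^{-n}$ to $f^n(D^\cs_i)\subset f^n(\bfK_i)$), so the $\uu$-fiber of $\bfK_i$ through $x$ meets $D^\cs_i$ in a unique point $y$. Both $y$ and $x_i$ then lie in $D^\cs_i$, while $x$ and $y$ lie on a common strong unstable leaf.

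For the center-stable contribution, the estimate \eqref{eq:diameter-of-base-Dcsi} gives
\[
d(f^k(y),f^k(x_i))\le \diam(f^k(D^\cs_i))\le \varepsilon\,e^{k(\chi^\c(\nu)+\varepsilon_{\rm L}/2)}+4r\,e^{k\tau^\ss}\le \varepsilon+4r,
\]
where the final inequality follows because both exponents are negative: the first by the choice $\varepsilon_{\rm L}<|\chi^\c(\nu)|/2$, the second by \eqref{deflambdauu}. For the strong unstable contribution, at $k=n$ the $\uu$-completeness of $f^n(\bfK_i)$ in $C$ from Claim~\ref{c.completeness-of-each-Ki} places $f^n(x)$ and $f^n(y)$ in a common full $\uu$-fiber of $C$, whose $\uu$-diameter is at most $2r$. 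Pulling back with the uniform expansion rate $\tau^\uu>0$ from \eqref{deflambdauu} then yields $d(f^k(x),f^k(y))\le 2r\,e^{-(n-k)\tau^\uu}\le 2r$ for every $k=0,\ldots,n$. Adding the two bounds via the triangle inequality gives \eqref{eqformula}.

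For pairwise disjointness, suppose $x\in \bfK_i\cap \bfK_j$ with $i\ne j$. Two applications of \eqref{eqformula} give $d(f^k(x_i),f^k(x_j))\le 2(6r+\varepsilon)$ for every $k=0,\ldots,n$. The quantifiers in \eqref{e.threeconditions1} are tailored so that $2(6r+\varepsilon)<\varepsilon_0$ (the condition $r<\varepsilon/(4K_{\rm L})$ provides the extra room, since $K_{\rm L}$ is comfortably larger than one in the applications of Katok's theorem), which contradicts the $(n,\varepsilon_0)$-separation of $E$ recorded in Remark~\ref{rem-n-E}. The main subtlety I expect is verifying that the ``strong unstable projection'' $x\mapsto y$ is well defined, which is exactly the inclusion $D^\cs_i\subset \bfK_i$ already noted in the proof of Claim~\ref{c.completeness-of-each-Ki}; the remaining estimates are a standard combination of the center distortion control and the forward/backward contraction of the respective strong foliations over an orbit segment of length $n$.
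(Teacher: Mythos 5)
Your proof of the estimate \eqref{eqformula} is essentially the paper's argument: you project $x$ along its unstable fiber to the point $y\in D^\cs_i$ (the paper works with $w_n=f^n(y)$, the intersection of $\cW^\uu(f^n(x),2r)$ with $f^n(D^\cs_i)$), bound the unstable contribution by $2r$ using backward contraction and the $\uu$-completeness of $f^n(\bfK_i)$, and bound the center-stable contribution by $\varepsilon+4r$ via \eqref{eq:diameter-of-base-Dcsi}; this part is correct and matches the paper.

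The disjointness step, however, has a genuine gap. Your triangle-inequality doubling requires $2(6r+\varepsilon)<\varepsilon_0$, and this is \emph{not} what the quantifiers in \eqref{e.threeconditions1} provide: they give $\varepsilon<\varepsilon_0/4$ and $r<\varepsilon_0/8$, hence only $6r+\varepsilon<\varepsilon_0$, while $2(6r+\varepsilon)$ can be as large as (almost) $2\varepsilon_0$. Your attempted rescue via $r<\varepsilon/(4K_{\rm L})$ does not work either: the construction only guarantees $K_{\rm L}\ge 1$ (Proposition \ref{proKatok} produces \emph{some} constant, with no lower bound away from $1$), and with $K_{\rm L}\ge 1$ one only gets $r<\varepsilon/4$, hence $2(6r+\varepsilon)<5\varepsilon<\tfrac54\varepsilon_0$, which yields no contradiction with $(n,\varepsilon_0)$-separation; one would need something like $K_{\rm L}\ge 3/2$, which is nowhere ensured. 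The paper avoids the factor $2$ altogether: since $\bfK_i$ and $\bfK_j$ are \emph{connected components} of the same set $f^{-n}(C)\cap C$, a nonempty intersection forces $\bfK_i=\bfK_j$, so $x_j\in\bfK_i$ and a \emph{single} application of \eqref{eqformula} (with $x=x_j$) gives $d(f^k(x_i),f^k(x_j))<6r+\varepsilon<\varepsilon_0$ for all $k=0,\ldots,n$, contradicting separation. Replace your doubling argument by this observation (or, alternatively, strengthen the choice of $\varepsilon$ and $r$ so that $2(6r+\varepsilon)<\varepsilon_0$, but that means changing the paper's quantifiers rather than using them as stated).
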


\begin{proof}
By Claim \ref{c.completeness-of-each-Ki}, each $f^n(\bfK_i)$ is $\uu$-complete subcube of $C$. Hence, for every $x\in\bfK_i\subset C$ the set $\cW^\uu(f^n(x),2r)$ intersects $f^n(D^\cs_i)$ in a unique point $w_n$. By the uniform  contraction of $f^{-1}$ along the  bundle $E^\uu$,  for each $0\le k\leq n$,
 \[\begin{split}
 	d(f^{-k}(f^n(x)),f^{-k}(f^n(x_i)))
 	&\le  d(f^{-k}(f^n(x)),f^{-k}(w_n))+ d(f^{-k}(w_n),f^{-k}(f^n(x_i)))\\
	\text{\tiny by \eqref{deflambdauu} and \eqref{eq:diameter-of-base-Dcsi}}\quad
 	&< 2r\cdot e^{-k\tau^\uu}+ \varepsilon\cdot e^{(n-k)(\chi^\c(\nu)+\varepsilon_{\rm L})}+4r\cdot e^{(n-k)\tau^\ss}\\
	\text{\tiny by  \eqref{e.threeconditions1}}\quad
	&< 6r+\varepsilon <\varepsilon_0.
\end{split}\]
In particular, this proves  \eqref{eqformula}.
 
The proof of disjointness is now by contradiction. Suppose that $\bfK_i\cap \bfK_j\neq\emptyset$ for some $i\neq j$. Hence, we get $\bfK_i=\bfK_j$ since both sets are connected components of $f^{-n}(C)\cap C$. This implies that $f^n(x_j)\in f^n(\bfK_i)$. By the above, we get that for each $0\le k\le n$, one has 
\[ 
	d(f^k(x_i),f^k(x_j))
	<\varepsilon_0.
\]
This contradicts the fact that $x_j,x_j$ are $(n,\varepsilon_0)$-separated.  
\end{proof}

\subsubsection{Verifying assertions (1) and (2) of Theorem \ref{t.smarthorseshoes}}\label{ssec:finish}

The following proves (1).

\begin{claim}\label{cl.finite-time-Lyapunov-exponent-for-Ki}
	For each $i=1,\ldots, N$ and for every $x\in\bfK_i$, one has 
\[ 
	e^{n(\chi^\c(\nu)-\varepsilon_{\rm L})}< \|D^\c f^n(x)\|< e^{n(\chi^\c(\nu)+\varepsilon_{\rm L})}. 
\]
\end{claim}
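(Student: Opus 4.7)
\medskip

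\textbf{Plan.} The claim compares the center derivative along an orbit starting at an arbitrary $x\in\bfK_i$ with that along the ``skeleton'' orbit starting at $x_i$, for which \eqref{itemii} already gives the right exponential bounds (up to the factor $K_{\rm L}$ and the slack $\varepsilon_{\rm L}/4$). The whole point of the preceding choices of quantifiers was to make this comparison work over $n$ iterates, so the proof is essentially a three-line bookkeeping argument.

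\medskip

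\textbf{Step 1: Shadowing on the center derivative.} I would start from the conclusion of Claim~\ref{lemcl.disjoint}, namely
\[
d(f^k(x),f^k(x_i))<6r+\varepsilon \qquad \text{for every } k=0,\ldots,n,
\]
valid for all $x\in\bfK_i$. By the second inequality in \eqref{e.threeconditions1}, this distance lies within the range on which the modulus of continuity of $\log\|D^\c f\|$ is controlled: $\Mod_f(6r+\varepsilon)<\varepsilon_{\rm L}/4$. Applied at each iterate $k=0,\ldots,n-1$, this gives
\[
\bigl|\log\|D^\c f(f^k(x))\|-\log\|D^\c f(f^k(x_i))\|\bigr|<\varepsilon_{\rm L}/4.
\]
Summing the chain rule $\log\|D^\c f^n(y)\|=\sum_{k=0}^{n-1}\log\|D^\c f(f^k(y))\|$ over $k$, I obtain
\[
\bigl|\log\|D^\c f^n(x)\|-\log\|D^\c f^n(x_i)\|\bigr|<n\,\varepsilon_{\rm L}/4.
\]

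\medskip

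\textbf{Step 2: Insert the skeleton estimate.} Item (ii) of Remark~\ref{rem-n-E}, with $m=n$, yields
\[
K_{\rm L}^{-1}e^{n(\chi^\c(\nu)-\varepsilon_{\rm L}/4)}\le \|D^\c f^n(x_i)\|\le K_{\rm L}\,e^{n(\chi^\c(\nu)+\varepsilon_{\rm L}/4)}.
\]
Combining with Step 1 and the bound $K_{\rm L}\le e^{n\varepsilon_{\rm L}/2}$ from the first inequality of \eqref{e.largen}, the upper estimate becomes
\[
\|D^\c f^n(x)\|<e^{n\varepsilon_{\rm L}/4}\,K_{\rm L}\,e^{n(\chi^\c(\nu)+\varepsilon_{\rm L}/4)}\le e^{n(\chi^\c(\nu)+\varepsilon_{\rm L})},
\]
since $\tfrac14+\tfrac12+\tfrac14=1$. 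The lower estimate is symmetric.

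\medskip

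\textbf{Main obstacle.} There is no real obstacle here; the difficulty was front-loaded into Claim~\ref{lemcl.disjoint} (establishing the shadowing $d(f^k(x),f^k(x_i))<6r+\varepsilon$ using uniform unstable expansion, $\uu$-completeness of $f^n(\bfK_i)$, and control of $\diam(f^k(D^\cs_i))$) and into the calibration of constants in \eqref{e.threeconditions1} and \eqref{e.largen} (which were chosen precisely so that $\Mod_f(6r+\varepsilon)\le \varepsilon_{\rm L}/4$ and $K_{\rm L}\le e^{n\varepsilon_{\rm L}/2}$). Given those ingredients, only the telescoping estimate above is required, and the budget $\varepsilon_{\rm L}/4+\varepsilon_{\rm L}/4+\varepsilon_{\rm L}/2=\varepsilon_{\rm L}$ accounts exactly for the gap between \eqref{itemii} (which has slack $\varepsilon_{\rm L}/4$) and the target \eqref{e.threeconditions1}–style conclusion (slack $\varepsilon_{\rm L}$). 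This is also why $n$ needed to be chosen large enough in \eqref{e.largen}: the multiplicative constant $K_{\rm L}$ has to be absorbed into a genuine exponential factor over the window of length $n$.
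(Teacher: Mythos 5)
Your proposal is correct and follows essentially the same route as the paper's proof: shadowing from Claim~\ref{lemcl.disjoint} plus the modulus bound $\Mod_f(6r+\varepsilon)<\varepsilon_{\rm L}/4$ to compare $\log\|D^\c f^n(x)\|$ with $\log\|D^\c f^n(x_i)\|$, then the skeleton estimate \eqref{itemii} and absorbing $K_{\rm L}$ via $K_{\rm L}\le e^{n\varepsilon_{\rm L}/2}$ from \eqref{e.largen}. The only cosmetic difference is that the paper records the intermediate bound for all $k=0,\ldots,n$ (which it reuses later for \eqref{e.lastestimate}), whereas you only state it at $k=n$.
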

\proof 
Fix $1\le i\leq N$. Inequality \eqref{eqformula} in Claim \ref{lemcl.disjoint} allows us to use the modulus of continuity in \eqref{e.threeconditions1} getting that for every $x\in \bfK_i$ and $0\le k\le n$,
\[ 
	\varepsilon_{\rm L}/4<\log\|D^\c f(f^k(x))\|-\log\|D^\c f(f^k(x_i))\|<\varepsilon_{\rm L}/4.
\]
Combining with \eqref{itemii}, for every  $x\in \bfK_i$ and $0\le k\le n$ one has
\begin{equation}\label{eq:center-Lyapunov-exponent-for-Ki}
	K_{\rm L}^{-1} e^{k(\chi^\c(\nu)-\varepsilon_{\rm L}/2)}
	< \|D^\c f^k(x)\|
	<K_{\rm L} e^{k(\chi^\c(\nu)+\varepsilon_{\rm L}/2)}.
\end{equation} 
By \eqref{e.largen}, for every $x\in \bfK_i$ one  gets 
\[	  
	e^{n(\chi^\c(\nu)-\varepsilon_{\rm L})} 
	< \|D^\c f^n(x)\|
 	 < e^{n(\chi^\c(\nu)+\varepsilon_{\rm L} )},
\]
proving the claim.
\endproof 
	
As we deal with a partially hyperbolic diffeomorphism with one-dimensional center, Claim \ref{cl.finite-time-Lyapunov-exponent-for-Ki} implies assertion (2): the set $\Lambda=\Lambda_\bfH$ in \eqref{defLambdaH} is a hyperbolic basic set of $f^n$. Moreover, by construction, $f^n|_{\Lambda}$ is topologically conjugate to a full shift with $N$ symbols. 

It remains to prove the estimate of the finite-time Lyapunov exponents in \eqref{e.lastestimate} and the Birkhoff sums in \eqref{e.lastestimatephis}. For every $x\in\Lambda$ and every $k\in\bN$,  write $k=\ell n+j$ with $\ell\in\bN_0$\footnote{Let $\bN_0=\bN\cup\{0\}$.}  and $0\le j<n$. With this notation, $f^{mn}(x)\in\bigcup_{i=1}^N\bfK_i$ for each $m=0,\ldots,\ell$. One gets 
\[\begin{split}
	\|D^\c f^k(x)\|
	&=\left(\prod_{m=0}^{l-1}\log\|D^\c f(f^{mn}(x))\|\right)\cdot \|D^\c f^j(f^{mn}(x))\|\\
	\text{\tiny by Claim~\ref{cl.finite-time-Lyapunov-exponent-for-Ki} and \eqref{eq:center-Lyapunov-exponent-for-Ki}}\quad
	&< e^{mn(\chi^\c(\nu)+\varepsilon_{\rm L} )}\cdot K_{\rm L}\, e^{j(\chi^\c(\nu)+\varepsilon_{\rm L}/2)}
	\le K_{\rm L}\, e^{k(\chi^\c(\nu)+\varepsilon_{\rm L} )}.
\end{split}\]
This lower estimate is analogous. The estimates of the Birkhoff sums are analogous. This completes the proof of Theorem~\ref{t.smarthorseshoes}.
\qed

\subsection{Further properties of the horseshoes in Theorem \ref{t.smarthorseshoes}}\label{secHorseshoefurther}

The properties obtained in this section will be important when studying the interaction between a $\csu$-horseshoe and a blender-horseshoe, see Section \ref{s.connecting}.

\begin{remark}\label{remcover}
	It follows from the definition of the collection of rectangles $\{\bfK_i\}_i$ in Theorem \ref{t.smarthorseshoes} that every $\uu$-complete subcube of $C$ $\uu$-covers $\bfK_\ell$, for every $\ell=1,\ldots,N$.
\end{remark}

\begin{notation}[Cylinders]\label{notCylinders}
	Given a $\csu$-horseshoe $\bfH=(C,\{\bfK_i\}_{i=1}^N,f^n)$, for every $m$-word $\ba=(a_0,\ldots,a_{m-1})\in\{1,\ldots,N\}^m$, let
	\[
	\bfK_\ba
	\eqdef \bigcap_{i=0}^{m-1}f^{-in}(\bfK_{a_i}).
	\]	
\end{notation}

\begin{corollary} \label{cl.cubeshorseshoe}
Let $(C, \{\bfK_i\}_{i=1}^N, f^n)$ be a horseshoe provided by Theorem~\ref{t.smarthorseshoes}. Then for every $m\in\bN$ and every $m$-word $\ba=(a_0, \ldots, a_{m-1})\in\{1,\ldots,N\}^m$, the cylinder  $\bfK_\ba$  is a $\cs$-complete subcube of $\bfK_{a_0}$ such  that
\begin{enumerate}
\item $f^{j n} (\bfK_\ba) \subset \bfK_{a_j}$  \text{for every} $j=0, \ldots, m-1,$
\item $f^{mn}(\bfK_\ba)$ \text{ is a $\uu$-complete subcube of } $C$,  
\item there exists a base $D^\cs_{\bfa}\subset \bfK_\ba$ of $\bfK_\ba$.
\end{enumerate} 
Furthermore, there exists $m_0\in\bN$ such that for every $m\geq m_0$ and every $m$-word $\ba\in\{1,\ldots,N\}^m$,   
 every  base $D^\cs_{\bfa}\subset \bfK_\ba$ of $\bfK_\ba$ satisfies that 
\[
	\diam ( f^{mn}(D^{\cs}_{\bfa})) <e^{mn(\chi^\c(\nu)+2\varepsilon_{\rm L})}.
\]
In particular, $f^{mn}(\bfK_\ba)$ $\uu$-covers every $\bfK_\ell$, $\ell=1,\ldots,N$.
\end{corollary}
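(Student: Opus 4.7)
The plan is to establish items (1)--(3) by induction on the word length $m$, using the calculus of $\cs$-complete and $\uu$-complete subcubes developed in Section~\ref{sec:csucubes}. The base case $m=1$ is immediate from Claim~\ref{c.completeness-of-each-Ki}: $\bfK_{(a_0)}=\bfK_{a_0}$ is trivially $\cs$-complete in itself, $f^n(\bfK_{a_0})$ is $\uu$-complete in $C$, and $D^\cs_{a_0}\subset \bfK_{a_0}$ is a base by the proof of that claim. For the inductive step from a length-$m$ word $\ba^*=(a_0,\ldots,a_{m-1})$ to a length-$(m+1)$ word $\ba=(\ba^*,a_m)$, the inductive hypothesis gives that $f^{mn}(\bfK_{\ba^*})$ is $\uu$-complete in $C$ while $\bfK_{a_m}$ is $\cs$-complete in $C$; by Remark~\ref{r.uucompcover}, $f^{mn}(\bfK_{\ba^*})$ then $\uu$-covers $\bfK_{a_m}$. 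Applying Lemma~\ref{lr.uucovering} with $f$ replaced by $f^{mn}$ to the pair $(\bfK_{\ba^*},\bfK_{a_m})$ yields a connected component of $\bfK_{\ba^*}\cap f^{-mn}(\bfK_{a_m})$ that is $\cs$-complete in $\bfK_{\ba^*}$ (hence in $\bfK_{a_0}$ by transitivity of $\cs$-completeness), and whose image under $f^{mn}$ is $\uu$-complete in $\bfK_{a_m}$. A direct check using the local product structure of $\uu$-complete versus $\cs$-complete subcubes of $C$ shows that this intersection is already connected, so this component equals $\bfK_\ba$. Composing with $f^n$ and using that $f^n(\bfK_{a_m})$ is $\uu$-complete in $C$ (Claim~\ref{c.completeness-of-each-Ki}) then closes items (1) and (2).

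For item (3), a base contained in $\bfK_\ba$ is constructed inductively. Starting from $D^\cs_{\ba^*}\subset \bfK_{\ba^*}$, observe that each $\uu$-fiber $D^\uu_\ba(z)$ of $\bfK_\ba$ sits inside a unique $\uu$-fiber $D^\uu_{\ba^*}(z)$ of $\bfK_{\ba^*}$, which meets $D^\cs_{\ba^*}$ in exactly one point. Sliding this point along its strong unstable leaf until it reaches $D^\uu_\ba(z)$ produces a point in $\bfK_\ba$, and the collection of all such points, as $z$ ranges over the base of $\bfK_\ba$, forms a $\cs$-manifold $D^\cs_\ba\subset\bfK_\ba$ which parametrizes the $\uu$-fibers of $\bfK_\ba$ by the local product structure, hence serves as a base.

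For the distortion estimate, fix $x\in\bfK_\ba$. Since $f^{jn}(x)\in\bfK_{a_j}$ for $j=0,\ldots,m-1$, iterating the intermediate-step bound \eqref{eq:center-Lyapunov-exponent-for-Ki} across the $m$ blocks of length $n$ gives $\|D^\c f^k(x)\|\le K_{\rm L}\,e^{k(\chi^\c(\nu)+\varepsilon_{\rm L})}$ for every $k=0,\ldots,mn$. The base $D^\cs_\ba\subset\bfK_\ba\subset C^\csu(\gamma,r)$ is tangent to $E^\cs$, any center curve in it has length at most $4r<\varepsilon/K_{\rm L}$ by \eqref{e.threeconditions1}, and any $\ss$-disk in it has diameter at most $2r$. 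Applying Lemma~\ref{l.centerdistortion} with the above Lyapunov bound would give $\length(f^{mn}(\sigma))\le K_{\rm L}\cdot 4r\cdot e^{mn(\chi^\c(\nu)+3\varepsilon_{\rm L}/2)}$ for any center curve $\sigma\subset D^\cs_\ba$, while the $\ss$-direction contribution is at most $2r\,e^{mn\tau^\ss}$ and decays faster since $\tau^\ss<\chi^\c(\nu)$ by domination of $E^\ss$ by $E^\c$. Both bounds strictly fall below $e^{mn(\chi^\c(\nu)+2\varepsilon_{\rm L})}$ once $m\ge m_0$ for an $m_0$ depending on $K_{\rm L},r,\varepsilon_{\rm L},\tau^\ss,n$. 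The ``in particular'' assertion then follows at once: $f^{mn}(\bfK_\ba)$ is $\uu$-complete in $C$ by item (2), so Remark~\ref{remcover} gives the $\uu$-covering of every $\bfK_\ell$. The most delicate point is item (3), namely extracting from abstract $\cs$-completeness a base genuinely contained in $\bfK_\ba$; this requires tracking strong unstable holonomies through the entire nested family of cubes carefully.
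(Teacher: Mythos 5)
Your treatment of items (1) and (2) and of the final diameter estimate follows essentially the same route as the paper: induction on $m$, using that $f^{mn}(\bfK_{\ba^*})$ is a $\uu$-complete subcube of $C$ and therefore $\uu$-covers each rectangle (the paper quotes Remark~\ref{remcover} here, you quote Remark~\ref{r.uucompcover}; this is the same point), then pulling back a $\uu$-complete subcube of $\bfK_{a_m}\cap f^{mn}(\bfK_{\ba^*})$ under $f^{-mn}$, and finally bounding $\diam(f^{mn}(D^\cs_\ba))$ by the blockwise center estimate of Claim~\ref{cl.finite-time-Lyapunov-exponent-for-Ki} plus the strong stable contribution. (One imprecision there: ``$\tau^\ss<\chi^\c(\nu)$ by domination'' is not a correct global statement, since domination is a pointwise comparison of derivative norms while $\chi^\c(\nu)$ is an average; the right way is to compare $\lVert Df^{mn}|_{E^\ss(x)}\rVert$ with $\lVert D^\c f^{mn}(x)\rVert$ along the orbits inside $\bigcup_j\bfK_{a_j}$, which gives the same conclusion, so this is harmless.)

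The genuine gap is in item (3). Your construction of a base inside $\bfK_\ba$ — take the point where each fiber $D^\uu_{\ba^*}(z)$ meets $D^\cs_{\ba^*}$ and ``slide it along its strong unstable leaf until it reaches $D^\uu_\ba(z)$'' — does not produce a base in the sense of the paper. First, the target point on the subdisk $D^\uu_\ba(z)$ is not specified, and second (and decisively), even for any choice of targets, moving the points of a $\cs$-manifold along their strong unstable leaves by amounts that vary from fiber to fiber destroys tangency to $E^\cs$: the resulting set is in general not a $C^1$ submanifold tangent to $E^\cs$, hence not a $\cs$-manifold and not a base (recall Remark~\ref{r.csucubebis}: a new base must itself be a $\cs$-manifold projecting homeomorphically along $\cW^\uu$). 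You acknowledge this is ``the most delicate point,'' but the sketch you give does not close it. The paper's argument avoids the problem entirely: since $f^{mn}(\bfK_\ba)$ is a $\uu$-complete subcube of $C$ (item (2)), its fibers are full fibers of $C$, so the intersection $f^{mn}(\bfK_\ba)\cap D^\cs_i$ with a fixed $\cs$-manifold base $D^\cs_i\subset\bfK_i$ is already a base of $f^{mn}(\bfK_\ba)$; pulling it back, $f^{-mn}\big(f^{mn}(\bfK_\ba)\cap D^\cs_i\big)\subset\bfK_\ba$ is again a $\cs$-manifold because $E^\cs$ is $Df$-invariant (Lemma~\ref{lemcubeit}), and it meets each fiber of $\bfK_\ba$ exactly once, so it is the desired base. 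Replacing your sliding construction by this push-forward/intersect/pull-back step repairs the proof; as written, item (3) is not established.
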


\begin{proof}
We argue by induction on $m$. For $m=1$, the assertion is stated in Theorem \ref{t.smarthorseshoes}. Assume that the assertion holds for $m\ge1$. Let $(a_0,\ldots,a_{m-1},a_m)\in\{1,\ldots,N\}^{m+1}$. By induction hypothesis, $\bfK_{(a_0,\ldots,a_{m-1})}$ is a $\cs$-complete subcube of $\bfK_{a_0}$ and $f^{mn}(\bfK_{(a_0,\ldots,a_{m-1})})$ is a $\uu$-complete subcube of $C$. Hence, by Remark \ref{remcover},  $f^{mn}(\bfK_{(a_0,\ldots,a_{m-1})})$ $\uu$-covers $\bfK_{a_m}$. Therefore, we can consider a $\uu$-complete subcube $\bfK'_{a_m}$ of $\bfK_{a_m} \cap f^{mn}(\bfK_{(a_0,\ldots,a_{m-1})})$. We have that 
\[
	f^{-mn} (\bfK'_{a_m})
	= \bfK_{(a_0,\ldots,a_{m-1},a_m)}
\]	 
is a $\cs$-complete cube of $\bfK_{a_0}$ contained in $\bfK_{(a_0,\ldots,a_{m-1})}$ such that $f^{(m+1)n} (\bfK_{(a_0,\ldots,a_{m-1},a_m)})$ is a $\uu$-complete subcube of $C$. This finishes the induction and proves items 1  and 2.
	
	To conclude the proof, we need  to recall some facts from the proof of Theorem~\ref{t.smarthorseshoes}.
For every  $i=1,\ldots, N$, the subcube $\bfK_i$ has a base $D^\cs_i\subset \bfK_i$. Since $f^{mn}(\bfK_\ba)$ is a $\uu$-complete subcube of  $C$, thus for each $i$, $f^{mn}(\bfK_\ba)\cap D^\cs_i$ is a base for  $f^{mn}(\bfK_\ba)$, then 
\[ 
	f^{-mn}\big(f^{mn}(\bfK_\ba)\cap D^\cs_i\big)
	\subset \bfK_\ba
\]
gives a base for $\bfK_\ba$ which is contained in $\bfK_\ba$, proving item 3. 

Note that each base $D^\cs$ of $C$ contained in $C$ can be viewed  as a graph (recall Remark \ref{r.csucubebis}) and thus there exists $r'>0$, independent of the  choice of $D^\cs$, so that  $\diam (D^\cs)<r'$.

By the construction of $\bfK_\ba$, $f^{jn}(\bfK_\ba)\subset \bfK_{a_{j}} ,$ for each $j=0,\ldots,m-1$.  
Take any base $D^\cs_\ba\subset \bfK_\ba$, then $\diam(D^\cs_\ba)<r'$. By Claim~\ref{cl.finite-time-Lyapunov-exponent-for-Ki} and the domination between $E^\c$ and $E^\ss$, one gets 
\[
	\diam(f^{mn}(D^\cs_\ba))
	\leq r' (e^{mn(\chi^\c(\nu)+\varepsilon_{\rm L})}+e^{mn\tau^\ss}).
\]
Take $m_0\in\bN$ such that for every $m\geq m_0$,
\[ 
	r' (e^{mn(\chi^\c(\nu)+\varepsilon_{\rm L})}+e^{mn\tau^\ss})
	<e^{mn(\chi^\c(\nu)+2\varepsilon_{\rm L})}, 
\]
ending the proof of the corollary.
\end{proof}

\begin{notation}\label{notrectangle}
In analogy to the notation of base and fibers of a general $\csu$-cube, for $\bfK_i$, $i\in\{1,\ldots,N\}$, fix
\[
	D^\cs_i \subset \bfK_i
	\quad\text{ and }\quad
	\cD_i\eqdef\cD(\bfK_i)=\{D^\uu_i(x)\colon x\in D^\cs_i\}
\]
a base and its associate family of fibers, respectively. Moreover, consider the associated safe-number $\tau_i\eqdef\tau(\bfK_i)$ in Remark \ref{r.safeness-of-cube}.

Analogously, given $k\in\bN$ and $\bfa=(a_0,\ldots,a_{k-1})\in\{1,\ldots,N\}^k$, for the cube $\bfK_\bfa$, fix
\[
	D^\cs_\bfa
	\subset \bfK_\bfa
	\quad\text{ and }\quad
	\cD_\bfa\eqdef\cD(\bfK_\bfa)=\{D^\uu_\bfa(x)\colon x\in D^\cs_{a_0}\}
\]
a base as provided by Corollary \ref{cl.cubeshorseshoe} and its associate family of fibers, respectively. 
Note that, as $\bfK_\ba$ is a $\cs$-complete subcube of $C$, we have that $D^\cs_\ba$ is also a base of $C$ contained in $C$. Note also that every disk in $\cD_\bfa$ is contained some disk in $\cD_{a_0}$.
\end{notation}

\begin{remark}\label{remtaui-remDh} 
	For every $i=1,\ldots,N$, there is a family of $\tau_i$-$\bfK_i$-safe $\uu$-disks. Moreover, there are $\tau_{\rm h}>0$ and a family $\cD_{\rm h}$ of $\uu$-disks which is $\tau_{\rm h}$-$\bfK_i$-safe simultaneously for every $i=1,\ldots,N$.
\end{remark}

Note that all above constructions only required partial hyperbolicity. 

\begin{remark}\label{remmini}
If the strong unstable foliation is minimal, there is $\nu_{\rm h}>0$ so that $\cW^\uu(x,\nu_{\rm h})$ contains a disk in $\cD_{\rm h}$ for every $x\in M$.
\end{remark}

\section{$\csu$-horseshoes and suspension spaces}\label{sectophorssus}

In order to conveniently define ergodic measures supported on a $\csu$-horseshoe, in this section we introduce discrete-time suspensions of associated shift maps.

\subsection{Suspension of an abstract shift}

We start by fixing some notation. Consider a finite alphabet $\cA$. Given $R\in\bN$,  the \emph{discrete-time suspension system} $(\cS_{\cA,R},\Phi_{\cA,R})$ of the full shift $(\cA^\bZ,\sigma_\cA)$ with constant roof function $R$ is defined as 
\begin{itemize}[leftmargin=0.6cm ]
\item   (Suspension space)
\[
	\cS_{\cA,R}\eqdef (\cA^\bZ \times \bZ)/\sim,
\]
where $\sim$ is the equivalence relation identifying $(\ua,s)$ with $(\sigma_\cA (\ua), s- R)$;
\item (Suspension map)
\[
\Phi_{\cA,R}\colon\cS_{\cA,R}\to\cS_{\cA,R},\quad
\Phi_{\cA,R}(\underline a,s)
\eqdef\begin{cases}
	(\underline a,s+1)&\text{ if }s\in\{0,\ldots, R-2\},\\
	(\sigma_\cA(\underline a),0)&\text{ if }s=  R -1.
\end{cases}
\]
\end{itemize}

Unless stated otherwise, we represent each equivalence class by its \emph{canonical representation} $(\underline a,s)$ with $s\in\{0,\ldots, R-1\}$.
Equip $\cA^{\bZ}$ with the metric 
\[
	\dist_{\cA^{\bZ}}(\underline a,\underline b)\eqdef \exp\big(-\inf\{\lvert k\rvert\colon a_k\ne b_k\}\big).
\] 
We equip the suspension  space with a metric $\rho$ given as follows:  
\[
	\rho\big((\ua,s),(\ub,t)\big)\eqdef
	\begin{cases}\dist_{\cA^\bZ}(\ua,\ub)&\text{ if }s=t,\\
	1&\text{ otherwise}.
	\end{cases} 
\]

Denote by $\fm$ the counting measure on $\bN$. Given a measure $\nu\in\cM_{\rm erg}(\sigma_\cA)$, consider the \emph{suspension} of $\nu$ by $R$ defined by
\begin{equation}\label{eqtildelambda}
	\lambda_{\cA,R,\nu}
	\eqdef \frac{(\nu\times\fm)|_{\cS_{\cA,R}}}{(\nu\times\fm)(\cS_{\cA,R})}
	= \frac{(\nu\times\fm)|_{\cS_{\cA,R}}}{R}.
\end{equation}

\begin{remark}\label{rem:some}
	The measure $\lambda_{\cA,R,\nu}$ is a $\Phi_{\cA,R}$-invariant and ergodic probability measure. Moreover, $\nu\mapsto\lambda_{\cA,R,\nu}$ defines a homeomorphism between the spaces of ergodic measures $\cM_{\rm erg}(\sigma_\cA)$ and $\cM_{\rm erg}(\Phi_{\cA,R})$.
\end{remark}	

\subsection{Factors of suspension spaces associated to $\csu$-horseshoes}\label{ss.symbolic-horseshoe}

Throughout this subsection, let $f\in \PH^1_{\c=1}(M)$. Consider a $\csu$-horseshoe $\bfH=(C, \{\bfK_i\}_{i=1}^N, f^R)$  relative to an $\csu$-cube $C$. Consider the associated maximal $f^R$-invariant set 
\begin{equation}\label{defLambda}
\Lambda_\bfH=\bigcap_{k\in\bZ}f^{kR}\left(\bigcup_{i=1}^N\bfK_i \right).
\end{equation}
Consider also the $f$-invariant set
\begin{equation}\label{defLambdatilde}
	\widetilde\Lambda_\bfH
	\eqdef\bigcup_{k=0}^{R-1}f^k(\Lambda_\bfH).
\end{equation}
Assume that $f^R|_{\Lambda_\bfH}$ is of contracting type. Fix the finite alphabet $\cA\eqdef\{ 1,\ldots, N\}$ and note that $f^R|_{\Lambda_\bfH}$ is topologically conjugate to the full shift $(\cA^\bZ,\sigma_\cA)$. 

Given $\underline{a}=(a_i)_{i\in\bZ}\in\cA^\bZ$, one defines 
\begin{equation}\label{defKai}
	\bfK(\underline{a})\eqdef\bfK_{a_0}. 
\end{equation}
	
Now, we define the projection from suspension space to the manifold $M$.
\begin{definition}[Projection of the suspension space]\label{d.projeciton-of-suspension-space}
Let  $(\cS_{\cA,R},\Phi_{\cA,R})$ be the suspension of the symbolic dynamics $(\cA^\bZ,\sigma_{\cA})$ with constant roof function $R$. The projection of the suspension space to the manifold $M$ is defined as 
\[
 	\Pi \colon \cS_{\cA,R}\to \widetilde\Lambda_\bfH=\bigcup_{i=0}^{R-1}f^i(\Lambda_\bfH),\qquad
 	(\underline{a},s)\mapsto f^s \Big(\bigcap_{i\in\bZ}f^{-iR}(\bfK_{a_i}) \Big),
\]
where $(\underline{a},s)$ is in its canonical representation, and $\underline{a}=(a_i)_{i\in\bZ}$.
 \end{definition}
 
Let us collect some basic properties of the map $\Pi$. 

\begin{lemma}[The factor map $\Pi$]\label{r.factor}
The map $\Pi$ satisfies the following properties.
\begin{enumerate}[leftmargin=0.6cm ]
\item The map $\Pi$ is continuous, surjective, and satisfies
\[
	\Pi\circ \Phi_{\cA,R}=f\circ \Pi. 
\]
\item For every $s=0,\ldots, R-1$, the restricted map 
\[
	\Pi|_{\cA^{\bZ}\times\{s \}}\colon\cA^{\bZ}\times\{s \}\to f^s(\Lambda_\bfH)
\] 
is a homeomorphism and satisfies
\[
	f^R|_{f^s(\Lambda_\bfH)}\circ \Pi|_{\cA^{\bZ}\times\{s \}}
	=\Pi|_{\cA^{\bZ}\times\{s \}}\circ \Phi_{\cA,R}^R|_{\cA^{\bZ}\times\{s \}}.
\] 
\item $\Pi$ preserves the entropy in the sense that
\[
	h_\lambda(\Phi_{\cA,R})=h_{\Pi_\ast\lambda}(f) \quad\text{ for every }\quad
	\lambda\in\cM(\Phi_{\cA,R}).
\]
\end{enumerate} 
\end{lemma}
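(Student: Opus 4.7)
The plan is to handle the three items in sequence. The common foundation is that, under the contracting-type hypothesis (Definition~\ref{defhorseshoecontype}), each nested intersection $\bigcap_{i\in\bZ} f^{-iR}(\bfK_{a_i})$ consists of a single point, so $\Pi$ is well-defined. This follows from Corollary~\ref{cl.cubeshorseshoe}: the finite intersections $\bigcap_{|i|\leq n} f^{-iR}(\bfK_{a_i})$ form a decreasing sequence of $\cs$-complete subcubes of $\bfK_{a_0}$ whose forward iterates under $f^{nR}$ are $\uu$-complete subcubes of $C$; the $\cs$-diameter shrinks geometrically by center contraction, while the $\uu$-diameter of the cube is shrunk down when we look at the corresponding preimage, thanks to uniform expansion along $E^\uu$.

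For \textbf{item (1)}, continuity of $\Pi$ is immediate from continuous dependence of these intersection cubes on $\underline{a}$ together with continuity of $f^s$. Surjectivity follows directly from the definitions of $\Lambda_\bfH$ and $\widetilde\Lambda_\bfH$ combined with the existence of a symbolic coding for every point of $\Lambda_\bfH$. The semiconjugacy $\Pi \circ \Phi_{\cA,R} = f \circ \Pi$ is verified by splitting into two cases: for $s < R-1$ both sides equal $f^{s+1}(\bigcap_i f^{-iR}(\bfK_{a_i}))$, while for $s = R-1$ one uses the identity $\bigcap_i f^{-iR}(\bfK_{a_{i+1}}) = f^R(\bigcap_i f^{-iR}(\bfK_{a_i}))$ followed by $f^{R-1}$ on the outer factor.

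For \textbf{item (2)}, injectivity of $\Pi|_{\cA^\bZ \times \{s\}}$ reduces, after canceling $f^s$, to uniqueness of itineraries: if $z = \bigcap_i f^{-iR}(\bfK_{a_i}) = \bigcap_i f^{-iR}(\bfK_{b_i})$, then $f^{iR}(z) \in \bfK_{a_i} \cap \bfK_{b_i}$ for every $i$, forcing $a_i = b_i$ by pairwise disjointness of the $\bfK_j$'s. Continuity plus compactness of $\cA^\bZ \times \{s\}$ then promotes this continuous bijection to a homeomorphism onto $f^s(\Lambda_\bfH)$. A direct computation shows that $\Phi_{\cA,R}^R$ acts on $\cA^\bZ \times \{s\}$ as $(\underline{a},s) \mapsto (\sigma_\cA \underline{a}, s)$, and the identity $f^R \circ f^s(\cdots\bfK_{a_i}\cdots) = f^s(\cdots\bfK_{a_{i+1}}\cdots)$ intertwines this with $f^R|_{f^s(\Lambda_\bfH)}$.

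For \textbf{item (3)}, I would pass to the $R$-th iterate. By item (2), the restriction of $\Phi_{\cA,R}^R$ to each floor $\cA^\bZ \times \{s\}$ is topologically conjugate via $\Pi$ to $f^R|_{f^s(\Lambda_\bfH)}$. Writing $\lambda = \frac{1}{R}\sum_{s=0}^{R-1}\lambda_s$, where $\lambda_s$ is the $\Phi_{\cA,R}^R$-invariant probability measure on the $s$-floor obtained by normalizing $\lambda|_{\cA^\bZ \times \{s\}}$, the conjugacy gives $h_{\lambda_s}(\Phi_{\cA,R}^R) = h_{\Pi_\ast \lambda_s}(f^R)$, and affinity of entropy on the simplex of invariant probability measures yields both
\[
h_\lambda(\Phi_{\cA,R}^R) = \frac{1}{R}\sum_s h_{\lambda_s}(\Phi_{\cA,R}^R), \qquad h_{\Pi_\ast\lambda}(f^R) = \frac{1}{R}\sum_s h_{\Pi_\ast\lambda_s}(f^R).
\]
Hence $h_\lambda(\Phi_{\cA,R}^R) = h_{\Pi_\ast\lambda}(f^R)$, and dividing by $R$ via the power rule $h_\mu(T^R) = R\,h_\mu(T)$ delivers item (3). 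The main (but mild) subtlety is the second affinity identity when the geometric floors $f^s(\Lambda_\bfH)$ overlap in $M$ and the $\Pi_\ast\lambda_s$ fail to be mutually singular; this is resolved by invoking affinity of measure-theoretic entropy, which holds for arbitrary invariant convex combinations and not only for ergodic ones.
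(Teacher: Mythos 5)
Your proposal is correct and follows essentially the same route as the paper: items (1)--(2) come from the level-zero conjugacy between $(\cA^\bZ,\sigma_\cA)$ and $f^R|_{\Lambda_\bfH}$ together with the definitions of $\Pi$ and the suspension, and item (3) is proved exactly as in the paper by decomposing $\lambda$ into its floor measures, transferring entropy floorwise via the conjugacy with $f^R$, averaging over $s$, and then passing from $f^R$, $\Phi_{\cA,R}^R$ back to $f$, $\Phi_{\cA,R}$ via $h_\mu(T^R)=R\,h_\mu(T)$. Your explicit appeal to affinity of measure-theoretic entropy (to handle possibly overlapping floors $f^s(\Lambda_\bfH)$) simply makes precise the step the paper leaves implicit in ``taking the average over $s$''.
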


\proof
Note that for each $\underline{a}\in\cA^\bZ$, the set $\Pi(\underline{a},0)=\bigcap_{i\in\bZ}f^{-iR}(\bfK_{a_i})$ contains a unique point in $\Lambda_\bfH$, and this correspondence $(\underline{a},0)\mapsto\Pi(\underline{a},0)$ is a topological conjugacy between $(\cA^\bZ,\sigma_\cA)$ and $f^R|_{\Lambda_\bfH}$. Combining this fact with the definitions of $\Pi$ and the suspension space, items 1 and 2 of the assertion follow. 

Given $\lambda\in\cM(\Phi_{\cA,R})$, let $\nu\in\cM(\sigma_\cA)$ be its push forward under the natural projection $(\underline{a},s)\mapsto\underline{a}$.  For each $s\in\{0,\ldots, R-1 \}$, the measure $\nu$ naturally gives a probability measure $\nu_s$ on $\cA^\bZ\times\{s\}$ which is $\Phi_{\cA,R}^R$-invariant. Note that $\lambda=\frac{1}{R}\sum_{s=0}^{R-1}\nu_s$. Consider the measure $\Pi_\ast(\lambda)=\frac{1}{R}\sum_{s=0}^{R-1}\Pi_\ast\nu_s$. Then by item 2, each $\Pi_\ast\nu_s$ is an $f^R$-invariant measure, and 
$ h_{\Pi_\ast\nu_s}(f^R)=h_{\nu_s}(\Phi_{\cA,R}^R)$. Taking the average over $s$, one gets 
\[ 
	h_{\Pi_\ast\lambda}(f^R)=h_{\lambda}(\Phi_{\cA,R}^R).
\]
As $\Pi_\ast\lambda$ and $\lambda$ are $f$-invariant and $\Phi_{\cA,R}$-invariant respectively, one gets  
\[
	h_{\Pi_\ast\lambda}(f)=h_{\lambda}(\Phi_{\cA,R}).
\]
This proves item 3.
\endproof

\section{Dynamics inside blender-horseshoes}\label{ss.twolemmas}

In this section, we introduce blender-horseshoes. They have associated a special family of $\uu$-disks called \emph{in-between}. Using these disks, we introduce a special class of $\csu$-cubes, called \emph{$\mathcal{B}$-cubes}, and study their iterations.

\subsection{Blender-horseshoes: Definition and preliminaries}
\label{ss.blenderhorseshoes}

We briefly recall the basic properties of blender-horseshoes given in \cite{BonDia:12}.

An \emph{unstable blender-horseshoe} is a pair $\fB=\fB(\bfC, g=f^k)$,  for some $k\in\bN$, such that $\bfC$ is a $C^1$-embedded rectangle and the set
\begin{equation}\label{basicset}
	\Lambda_\fB
	\eqdef \bigcap_{n\in\bZ}g^n(\bfC)
	\subset \interior(\bfC)
\end{equation}
is the maximal invariant set of  $g$ in $\bfC$ and is a hyperbolic basic set whose tangent space has a partially hyperbolic splitting $T_{\Lambda_\fB} M=E^{\ss}\oplus E^{\c}\oplus E^{\uu}$, where $E^{\c}$ is one-dimensional and uniformly expanding. Some further conditions (BH1)--(BH9) are required, and we just sketch these properties. See the detailed definition in \cite[Section 3.2]{BonDia:12} and also \cite[Section 2]{DiaGelSan:20}. As in \eqref{defLambdatilde}, we also consider the full $f$-invariant set
 \begin{equation}\label{basicsettilde}
	\widetilde\Lambda_\fB
	\eqdef \bigcup_{n\in\bZ}f^n(\Lambda_\fB).
\end{equation}

The restriction $g|_{\Lambda_\fB}$ is topologically  conjugate to the full shift with two symbols. More precisely, conditions (BH1) and (BH3) provide the existence of a Markov partition with two disjoint ``sub-rectangles'' $\bfC_1$ and $\bfC_2$ of $\bfC$ which are the connected components of $g^{-1}(\bfC)\cap \bfC$. There are exactly two fixed points $P\in \bfC_1$ and $Q\in \bfC_2$ of $g|_{\Lambda_\fB}$. We define $\cW^\dagger_{\rm loc}(P)$ as the connected component of $\cW^\dagger(P)\cap \bfC$ which contains $P$, for $\dagger=\ss,\u,\uu$; analogously for $\cW^\dagger_{\rm loc}(Q)$.

Condition (BH2) requires the existence of continuous cone fields $\cC^{\ss},\cC^\cu$, and $\cC^{\uu}$ around the bundles $E^\ss, E^\cu$, and $E^\uu$, respectively, defined on $\bfC$ such that 
\begin{itemize}[leftmargin=1.2cm ]
\item[(BH2a)] $\cC^{\ss}$ restricted on $g(\bfC_1 \cup  \bfC_2)$ is $Dg^{-1}$-invariant;
\item[(BH2b)]  $\cC^\cu$ and $\cC^{\uu}$ restricted on  $\bfC_1\cup \bfC_2$ are $Dg$-invariant;
\item[(BH2c)] the vectors in the cone fields $\cC^\cu,\cC^{\uu}$ restricted to $\bfC_1\cup \bfC_2$ are uniformly expanded by $Dg$;
\item[(BH2d)] the vectors in $\cC^{\ss}$ restricted to $g(\bfC_1 \cup  \bfC_2)$ are uniformly expanded by $Dg^{-1}$.
\end{itemize} 
 
A disc tangent to the cone field $\cC^{\uu}$ is \emph{$\cC^{\uu}$-complete}, if it crosses $\bfC$ ``completely''. Particular examples are given by $\cW^{\uu}_{\rm loc}(P)$ and $\cW^{\uu}_{\rm loc}(Q)$. 

Condition (BH4) states that every $\cC^{\uu}$-complete disk containing a point of $\cW^{\ss}_{\rm loc}(P)$ and every $\cC^{\uu}$-complete disk containing a point of $\cW^{\ss}_{\rm loc}(Q)$ are disjoint. Condition (BH4) allows us to defined the $\cC^{\uu}$-complete disks which are at the right or the left of $\cW^{\ss}_{\rm loc}(P)$ and of $\cW^{\ss}_{\rm loc}(Q)$, respectively. In particular, this allows us to define a \emph{$\cC^{\uu}$-complete disks in-between} $\cW^{\ss}_{\rm loc}(P)$ and  $\cW^{\ss}_{\rm loc}(Q)$ (or \emph{$\cC^{\uu}$-complete disks in-between}, for short). The set of all $\cC^{\uu}$-complete disks in-between is called the \emph{superposition region} of the blender-horseshoe. 

Conditions (BH5) concerns about the iterations of a $\cC^{\uu}$-complete disk and its positions with respect to $\cW^{\ss}_{\rm loc}(P)$ and $\cW^{\ss}_{\rm loc}(Q)$.
 
Condition (BH6) states that for any $\cC^{\uu}$-complete disk $D$ in-between, $g(D)$ contains a $\cC^{\uu}$-complete in-between. This is indeed one of the key properties of a blender-horseshoe in our setting.

\begin{remark}\label{iteratesB}
Given an unstable blender-horseshoe $\fB=\fB(\bfC, g)$, for any $\ell\in\bN$ the pair $(\bfC,g^\ell)$ also defines an unstable blender-horseshoe. 
\end{remark}

\subsection{Iterations of $\uu$-disks in-between}\label{sss.in-between}

In what follows, we fix an unstable blender-horseshoe $\fB =\fB(\bfC,g)$ and its superposition region. We begin by collecting some properties of this family of disks. Here, we will focus only on $\cC^\uu$-complete disks which are contained in the strong unstable manifolds $\cW^{\uu}_{\rm loc}(\cdot)$ (and not general disks tangent to the cone field $\cC^\uu$). We only require to study these special collection of disks.

\begin{definition}[Strong superposition regions $\mathcal{B}$ and $\mathcal{B}_1,\mathcal{B}_2$]\label{defBinbetween}
	 We denote by $\mathcal{B}$ the set of all $\uu$-disks in the superposition region of the blender-horseshoe $\fB$, that is, of all $\cC^{\uu}$-complete disks in-between which are contained in the strong unstable manifolds $\cW^{\uu}_{\rm loc}(\cdot)$. Let
\[
	\mathcal{B}_i\eqdef \mathcal{B}\cap\bfC_i,
	\quad i=1,2.
\]
\end{definition}

\begin{definition}[Superposition cube]
A \emph{superposition cube} or \emph{$\mathcal{B}$-cube} is an $\csu$-cube whose fibers are in the strong superposition region $\mathcal{B}$. Analogously, define \emph{$\mathcal{B}_i$-cubes}, $i=1,2$.
\end{definition}

\begin{remark}[Uniform size of disks in $\mathcal{B}$]\label{r.uniformsizeblender}
There is $\nu_{\mathrm{b}}>0$ such that every $\uu$-disk in $\mathcal{B}$ contains a disk of the form $\cW^\uu (z,\nu_{\mathrm{b}})$ for some $z\in M$.
\end{remark}

\begin{definition}[Distance and diameter]\label{defdiamH}
Analogously to Definition \ref{d.safedisk-biss}, given $\Delta\in\mathcal{B}$ and $\tau>0$, consider $B(\Delta,\tau)$ and given $\mathcal{B}'\subset\mathcal{B}$, we define its \emph{diameter}
\[
	\diam_{\rm H}(\mathcal{B}')
	\eqdef \sup\big\{\tau>0\colon 
		\dist_{\rm H} (T\Delta, T\Delta')<\tau\text{ for every }\Delta,\Delta'\in\mathcal{B}'\big\}.
\]
By a slight abuse of notation, viewing a $\mathcal{B}$-cube $C$ as a family of disks, we define analogously its diameter $\diam_{\rm H}(C)$.
\end{definition}
 
Lemma \ref{l.safedisk} restates  that the property of a $\uu$-disk to be in-between is a robust property. 

\begin{lemma}[Safe disks in-between]\label{l.safedisk}
There are $\tau_\rmb>0$ and an open  family $\mathcal{B}_\rmb\subset\mathcal{B}$ which is $\tau_\rmb$-$\mathcal{B}$-safe.
\end{lemma}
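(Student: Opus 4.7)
The plan is to exploit the openness of the two defining conditions of the superposition region $\mathcal{B}$, namely being $\cC^\uu$-complete and being in-between $\cW^\ss_{\rm loc}(P)$ and $\cW^\ss_{\rm loc}(Q)$. Both properties are $C^1$-robust by blender conditions (BH2) (cone-invariance) and (BH4) (disjointness of complete disks on either side). The argument is essentially a compactness-plus-continuity argument.

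First, I would define $\mathcal{B}_\rmb$ as the subfamily of $\uu$-disks $\Delta \in \mathcal{B}$ satisfying two uniform margin conditions: (a) $\Delta$ strictly contains in its interior a $\cC^\uu$-complete sub-disk (a ``margin of completeness'', i.e., $\Delta$ extends some uniform distance past the horizontal faces of $\bfC$), and (b) there is a uniform lower bound $\delta>0$ for the distance from $\Delta$ to each of $\cW^\ss_{\rm loc}(P)$ and $\cW^\ss_{\rm loc}(Q)$ inside $\bfC$. Non-emptiness of $\mathcal{B}_\rmb$ follows from the non-empty interior of the superposition region guaranteed by (BH4)--(BH6) together with Remark~\ref{r.uniformsizeblender}, which provides disks of uniform size $\nu_\rmb$ in $\mathcal{B}$. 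Openness of $\mathcal{B}_\rmb$ in the Hausdorff tangent topology is a direct consequence of the margin definition.

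Second, I would establish the existence of a uniform $\tau_\rmb$. Take any $\Delta \in \mathcal{B}_\rmb$ and any $\uu$-disk $\Delta'$ with $\dist_{\rm H}(T\Delta,T\Delta')<\tau$ for some small $\tau>0$. Since $\uu$-disks are pieces of leaves of the continuous foliation $\cW^\uu$ and are tangent to $E^\uu$, Hausdorff-proximity of their tangent bundles translates into $C^1$-proximity of the disks themselves inside a compact neighborhood of $\bfC$; here one uses that leaves of $\cW^\uu$ depend continuously on the base point in the $C^1$ topology over compact sets. By the margin in (b) and the uniform continuity of the ambient distance on $\bfC$, $\Delta'$ stays away from $\cW^\ss_{\rm loc}(P)$ and $\cW^\ss_{\rm loc}(Q)$, hence is in-between. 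By the margin in (a) together with the invariance of cones $\cC^\uu$ on $\bfC_1\cup\bfC_2$, a subdisk of $\Delta'$ remains $\cC^\uu$-complete. Thus the interior of $\Delta'$ contains a $\cC^\uu$-complete in-between $\uu$-disk, i.e., a disk in $\mathcal{B}$. Compactness of $\bfC$ together with the uniform continuity of the cone fields and of the two compact local stable manifolds $\cW^\ss_{\rm loc}(P)$, $\cW^\ss_{\rm loc}(Q)$ allows us to choose $\tau$ independently of $\Delta \in \mathcal{B}_\rmb$; call this $\tau_\rmb$.

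The main obstacle is making the translation between the intrinsic notion of Hausdorff-proximity of tangent bundles of $\uu$-disks (which, in principle, lie on different strong unstable leaves) and ambient $C^0$-proximity of the disks themselves inside $\bfC$; this is what is needed to pull the margin conditions (a) and (b) across the perturbation. This translation is purely local and reduces to the continuity of the strong unstable foliation. Once this step is made uniform via the compactness of $\bfC$, extraction of the uniform $\tau_\rmb$ is routine, and $\mathcal{B}_\rmb$ is open by its very definition.
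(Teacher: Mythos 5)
Your argument is correct and is essentially the argument the paper has in mind: the paper states Lemma~\ref{l.safedisk} without proof, as a restatement of the robustness of the in-between property built into the blender-horseshoe conditions (BH2), (BH4)--(BH6) of \cite{BonDia:12}, and your margin-plus-compactness/continuity argument is the natural way of making that robustness explicit. The only point worth spelling out a bit more carefully is the nonemptiness of your margined family $\mathcal{B}_\rmb$ (one extends an in-between complete disk slightly inside its strong unstable leaf, which neither destroys completeness nor the in-between position), but this is exactly the uniform-margin picture the definition of $\tau$-$\mathcal{B}$-safety is designed for.
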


The following is a consequence of the key property (BH6) of blender-horseshoes in our context.

\begin{lemma}[Images of $\mathcal{B}_i$-cubes]\label{l.imabicubes}
The image of any $\mathcal{B}_i$-cube contains a $\mathcal{B}$-cube.
\end{lemma}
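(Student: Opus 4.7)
The plan is to apply property (BH6) fiberwise over the base of the cube and assemble the resulting sub-disks into a new cube. Let $C=\bigcup_{z\in D^\cs} D^\uu(z)$ be a $\mathcal{B}_i$-cube, so that each $D^\uu(z)$ lies in the strong unstable manifold of some point and is a $\cC^\uu$-complete disk in-between inside $\bfC_i$. By Lemma~\ref{lemcubeit}, $g(C)$ is an $\csu$-cube with base $g(D^\cs)$ and fibers $g(D^\uu(z))$; these fibers are again $\uu$-disks, because $g$ preserves the strong unstable foliation.

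Next, I would use property (BH6) to extract, for each $z\in D^\cs$, a $\cC^\uu$-complete in-between disk $\Delta_z\subset g(D^\uu(z))$. Since $g(D^\uu(z))$ is contained in a strong unstable leaf, any disk inside it is automatically a $\uu$-disk, so $\Delta_z\in\mathcal{B}$. The crucial point is that conditions (BH4)--(BH6) give a canonical way to select $\Delta_z$: the strong unstable leaf carrying $g(D^\uu(z))$ crosses the ``in-between strip'' bounded by $\cW^\ss_{\rm loc}(P)$ and $\cW^\ss_{\rm loc}(Q)$, and the $\uu$-disk inside this leaf delimited by the two local stable manifolds is exactly the desired $\Delta_z$ (and is forced to be contained in $g(D^\uu(z))$, which already spans the strip by (BH6)).

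The main task is to verify that the family $\{\Delta_z\}_{z\in D^\cs}$ is the fiber family of a sub-cube of $g(C)$. Continuity of the selection $z\mapsto\Delta_z$ is a consequence of continuity of the strong unstable foliation, continuity of the local stable manifolds $\cW^\ss_{\rm loc}(P),\cW^\ss_{\rm loc}(Q)$, and the fact that the relevant intersections are transverse (ensured by the cone-field conditions (BH2) and (BH4), hence robust). To obtain an honest base for the new cube, I would choose a continuous section $z\mapsto p_z\in\Delta_z$ (for instance, by fixing a $\cs$-manifold $\widetilde D^\cs$ transverse to the strong unstable foliation and setting $\{p_z\}=\Delta_z\cap \widetilde D^\cs$, using the local product structure from Remark~\ref{r.zeta}); the resulting image set $\{p_z\colon z\in D^\cs\}$ is a $\cs$-manifold, and $\bigcup_z\Delta_z$ is then an $\csu$-cube of the required form, contained in $g(C)$, whose fibers lie in $\mathcal{B}$.

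The principal obstacle is precisely this continuity/coherence step: (BH6) is an existence statement for each individual disk, so one must check that a \emph{continuous} choice is possible. This is where the open and robust nature of the superposition region, encoded in Lemma~\ref{l.safedisk} (existence of a $\tau_\rmb$-$\mathcal{B}$-safe family), is used: small perturbations of $g(D^\uu(z))$ still contain $\mathcal{B}$-disks, so the fiberwise selection can be made continuously in $z$. Everything else (disjointness of fibers, existence of a $\cs$-base) reduces to the basic geometry of $\csu$-cubes developed in Section~\ref{ss.prescribed}.
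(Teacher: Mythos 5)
The paper offers no written proof of this lemma beyond asserting that it is a consequence of (BH6), and your overall strategy -- push the cube forward, apply (BH6) fiberwise to the image fibers $g(D^\uu(z))$, and assemble a continuously varying family of in-between sub-disks $\Delta_z$ into a subcube of $g(C)$ with a base obtained from the local product structure -- is exactly the intended argument; you also correctly isolate continuity of the fiberwise selection as the only real issue.

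However, the specific mechanism you give for the ``canonical'' choice of $\Delta_z$ is wrong as stated. An in-between disk is a $\cC^\uu$-complete $\uu$-disk which is, by definition, disjoint from $\cW^\ss_{\rm loc}(P)\cup\cW^\ss_{\rm loc}(Q)$ and separated from each of them in the one-dimensional center direction; these local manifolds are tangent to $E^\ss$ and cannot cut out, let alone ``delimit'', a $\dim E^\uu$-dimensional sub-disk inside the unstable leaf carrying $g(D^\uu(z))$. What delimits a complete disk is the boundary of the reference rectangle $\bfC$ in the unstable direction, not the stable manifolds of $P$ and $Q$. The natural canonical selection is therefore the connected component of $g(D^\uu(z))\cap\bfC$ containing the in-between disk furnished by (BH6); since $z\mapsto D^\uu(z)$ and $g$ are continuous and being in-between is an open condition on complete disks (this openness is what Lemma~\ref{l.safedisk} encodes), this component, and hence $\Delta_z\in\mathcal{B}$, varies continuously with $z$, and together with a base chosen as in Remark~\ref{r.zeta} it yields the desired $\mathcal{B}$-cube inside $g(C)$. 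Note also that your fallback appeal to Lemma~\ref{l.safedisk} alone does not close the gap: (BH6) produces a disk in $\mathcal{B}$, not necessarily in the safe subfamily $\mathcal{B}_\rmb$, and knowing that nearby fibers also contain $\mathcal{B}$-disks is weaker than having a continuous selection. With the canonical choice corrected as above, the remaining points of your argument (fibers in distinct leaves are automatically disjoint, existence of a $\cs$-base) are fine.
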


Let us now define ``successors'' of $\mathcal{B}$-cubes with sufficiently small diameter. Note that, by condition (BH6) and the very definition of $\mathcal{B}$, every disk $\Delta\in\mathcal{B}$ always has some ``successor'' $\widetilde\Delta\in\mathcal{B}$ in the sense that $\widetilde\Delta\subset g(\Delta)$. 
Given two disks $\Delta,\Theta$ which are ``close'', we want to define their corresponding ``successors'' $\widetilde\Delta$ and $\widetilde\Theta$ in such a way that they are also ``close''. The next lemma provides an algorithm for defining such successors.

\begin{lemma}[Successor of a $\mathcal{B}$-cube]\label{l.safeblender}
There is $\rho_{\mathrm{b}}>0$ such that for every $\mathcal{B}$-cube $\bfK$ with base $D^\cs$ and fibers $\{D(x)\colon x\in D^\cs\}$ satisfying $\diam_{\mathrm{H}} (\bfK) <\rho_{\mathrm{b}}$ the following holds:
\begin{enumerate}[leftmargin=0.6cm ]
\item[(1)] either $g(\bfK)$ contains a $\mathcal{B}$-cube $\bfK_1$ with base $g(D^\cs)$ which itself contains a $\mathcal{B}_1$-subcube with base $g(D^\cs)$, 
\item[(2)] or $g(\bfK)$ contains a $\mathcal{B}$-cube $\bfK_2$ with base $g(D^\cs)$ which itself contains a $\mathcal{B}_2$-subcube with base $g(D^\cs)$,
\end{enumerate}
We let 
$\bfK^{[1]}\eqdef \bfK_1$ if case (1) occurs and 
$\bfK^{[1]}\eqdef \bfK_2$ otherwise (that is, if (1) does not occur and hence (2) does) and we call 
$\bfK^{[1]}$ the \emph{successor} of $\bfK$. 
The resulting cube is of the form
$$
	\bfK^{[1]}
	\eqdef \bigcup_{g(x)\in g(D^\cs)} D^{[1]} (g(x)), 
	\quad \mbox{where} \quad D^{[1]} (g(x)) \subset g( D(x)).
$$
\end{lemma}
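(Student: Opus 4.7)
The plan is to exploit the fact that the superposition region of the blender-horseshoe, via the Markov decomposition $g^{-1}(\bfC)\cap\bfC=\bfC_1\sqcup\bfC_2$, splits into disks whose $g$-successor in $\mathcal{B}$ lies in $\bfC_1$ and those whose successor lies in $\bfC_2$. For a sufficiently small cube, continuity will force all fibers of $\bfK$ to fall into the same class, allowing a coherent choice across the whole cube.

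First I would fix a reference fiber $D(x_0)$ of $\bfK$. By condition (BH6), $g(D(x_0))$ contains some $\cC^{\uu}$-complete in-between disk $\Delta_0\in\mathcal{B}$. By the Markov property (BH3), $\Delta_0$ lies in exactly one of $\bfC_1,\bfC_2$; without loss of generality, $\Delta_0\in\mathcal{B}_1$. Next, using Lemma~\ref{l.safedisk} (and shrinking $\tau_\rmb$ if necessary, exploiting that $\bfC_1,\bfC_2$ are disjoint), I may assume that every $\uu$-disk $\tau_\rmb$-close to $\Delta_0$ is not only in-between but also contained in $\bfC_1$, hence is in $\mathcal{B}_1$.

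The key quantitative step is to choose $\rho_\rmb>0$ so that, for any $\mathcal{B}$-cube $\bfK$ with $\diam_{\rm H}(\bfK)<\rho_\rmb$, every fiber $D(x)$ admits a subdisk $\Delta(x)\subset g(D(x))$ which is $\tau_\rmb$-close to $\Delta_0$. This is obtained via the uniform continuity of $Dg$ on $\bfC$ (nearby $\uu$-disks have nearby images) combined with the bound on the diameter of the base $D^\cs$ supplied by Remark~\ref{r.uniform-cs-bounds-for-csu-cube}: one selects $\Delta(x)$ as the parametric translate inside $g(D(x))$ of $\Delta_0\subset g(D(x_0))$, where the parameterization is by arc-length along the $\uu$-curves. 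By the previous step, each $\Delta(x)\in\mathcal{B}_1$.

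Finally, the family $\{\Delta(x):x\in D^\cs\}$ indexed by $g(D^\cs)$ (a $\cs$-manifold by Lemma~\ref{lemcubeit}) forms a $\mathcal{B}_1$-subcube $\bfK^{[1]}\subset g(\bfK)$: disjointness of fibers is inherited from that of the $D(x)$, and the continuous dependence on $g(x)$ follows from the continuous construction of $\Delta(x)$. Since $\mathcal{B}_1\subset\mathcal{B}$, the cube $\bfK^{[1]}$ is itself a $\mathcal{B}$-cube containing itself as a $\mathcal{B}_1$-subcube, as required. The main obstacle is the uniform construction of the subdisks $\Delta(x)$ with the claimed closeness to $\Delta_0$; establishing it cleanly requires carefully balancing the modulus of continuity of $Dg$, the variation of the strong unstable foliation, and the Markov geometry of the blender, so that a single $\rho_\rmb$ works simultaneously on all fibers.
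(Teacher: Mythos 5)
The paper itself states this lemma without proof, as part of the blender-horseshoe toolbox imported from \cite{BonDia:12}, so the comparison is really with what the conditions (BH1)--(BH9) actually deliver. Your overall strategy (produce a successor disk for one reference fiber, then use smallness of $\diam_{\rm H}(\bfK)$, uniform continuity of $g$, and openness/``safety'' of the superposition region to propagate the choice to all fibers) is the natural route. However, two steps carry the real content and are not justified in your write-up. First, the assertion ``by the Markov property (BH3), $\Delta_0$ lies in exactly one of $\bfC_1,\bfC_2$'' is wrong as stated: condition (BH6) only gives \emph{some} $\cC^{\uu}$-complete disk in-between inside $g(D(x_0))$, and such a disk need not lie in $\bfC_1\cup\bfC_2$ at all --- it can sit over the gap between the two Markov rectangles (or straddle them). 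The either/or dichotomy in the lemma is precisely the nontrivial point: one must show that the successor can be chosen with a definite symbol, i.e.\ inside one of the two legs, and this uses the finer positional conditions (BH4)--(BH5) on $\cC^{\uu}$-complete disks relative to $\cW^{\ss}_{\rm loc}(P)$ and $\cW^{\ss}_{\rm loc}(Q)$, not just (BH6). Your ``without loss of generality $\Delta_0\in\mathcal{B}_1$'' erases exactly the case the dichotomy is designed to handle.

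Second, the uniformity of $\rho_{\rm b}$ is not established. Your argument requires that every $\uu$-disk $\tau_\rmb$-close to $\Delta_0$ is again in-between (and inside the same $\bfC_i$), i.e.\ that $\Delta_0$ has a safety margin that is uniform over all cubes $\bfK$ (the reference fiber changes with $\bfK$). Lemma~\ref{l.safedisk} provides an open $\tau_\rmb$-$\mathcal{B}$-safe subfamily $\mathcal{B}_\rmb\subset\mathcal{B}$, but nothing in your construction places $\Delta_0$ in it: the disk furnished by (BH6) may be arbitrarily close to the boundary of the superposition region or of $\bfC_1$, so no single $\tau_\rmb$, and hence no single $\rho_{\rm b}$, comes out. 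What is needed is a robust form of the successor property (the image of an in-between disk contains a disk that is \emph{safely} in-between and safely inside one leg, with margin independent of the disk), which is again where the blender geometry, not mere continuity, enters; the ``balancing of moduli of continuity'' you flag as the main obstacle is the routine part. Two smaller points: the selection of $\Delta(x)$ as a ``parametric translate by arc-length along $\uu$-curves'' is not well defined when $\dim E^{\uu}>1$ (use the local $\uu$-holonomy or the cube structure of $g(\bfK)$ from Lemma~\ref{lemcubeit} instead), and your construction involves arbitrary choices, whereas the text following the lemma and Definition~\ref{d.c-herentitineraries} require $\bfK^{[1]}$ to be uniquely (canonically) defined.
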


In the above lemma, we emphasize that $\bfK^{[1]}$ is uniquely defined.

We emphasize that the definition of \emph{successor} in Lemma \ref{l.safeblender} requires a $\mathcal{B}$-cube to have diameter less than $\rho_{\mathrm{b}}$ to guarantee that its successor is well defined. The next step is to consider successors of higher order, inductively applying Lemma~\ref{l.safeblender}.
      
\begin{definition}[Higher-order successors]\label{d.c-herentitineraries}
A $\mathcal{B}$-cube $\bfK$ with $\diam_{\mathrm{H}} (\bfK) <\rho_{\mathrm{b}}$ \emph{has a successor of order $j$} if there are $\mathcal{B}$-cubes $\bfK^{[i]}$, $i=0,\ldots, j$, such that 
\begin{itemize}[leftmargin=0.6cm ]
\item $\bfK^{[0]}=\bfK$,
\item $\diam_{\mathrm{H}} (\bfK^{[i]}) < \rho_{\mathrm{b}}$ for every $i=0, \ldots, j-1$,
\item $\bfK^{[i+1]} =  (\bfK^{[i]})^{[1]} $ for every $i=0, \ldots, j-1$.
\end{itemize}
We call $\bfK^{[j]}$ the \emph{$j$th successor of $\bfK$}. 
\end{definition}

\begin{remark}
The $k$th successor of a $\mathcal{B}$-cube (if it exists) is uniquely defined. 
\end{remark}

\begin{remark}[{The number $\ell(\bfK)$}]\label{r.lastsucessor}
Given a  $\mathcal{B}$-cube $\bfK$ with $\diam_{\mathrm{H}} (\bfK) <\rho_{\mathrm{b}}$
there is a number $\ell (\bfK)\in\bN_0$ such that the sucessors  $\bfK^{[j]}$ of $\bfK$ are defined for every
 $j=1, \dots, \ell (\bfK)+1$ and $\diam_{\mathrm{H}} (\bfK^{[\ell (\bfK)+1]}) \ge \rho_{\mathrm{b}}$.
\end{remark}

\section{Extending a horseshoe using a blender-horseshoe}\label{s.connecting}

In this section, we explore the cyclic interaction between horseshoes of contracting type and unstable blender-horseshoes. Extracting the essential requirements of our constructions, we precisely state a hypothesis about the cyclic relations between an enveloping $\csu$-cube and a  blender-horseshoe. This sort of configuration provides a special heterodimensional cycle between a  horseshoe of contracting type and a blender-horseshoe, see Remark~\ref{r.heterocycle}. Studying such cycles in full generality is beyond our goals.%
\footnote{This type of cycle is a variation of  the split flip-flop configuration from \cite[Section 4.1]{BocBonDia:16}, here a contracting saddle is replaced by a contracting horseshoe. Split flip-flop configurations were employed in \cite{BocBonDia:16} to construct non-hyperbolic ergodic measures with positive entropy. They also were analyzed in \cite{BonZha:19} from the perspective of the space of ergodic measures.}

We assume that the reference cube  of the $\csu$-horseshoe of contracting type  and the blender-horseshoe are cyclically related, see Definition~\ref{d.trelation} below. This holds, in particular, if the strong unstable foliation is minimal, see Lemma~\ref{lemr.transitiontime}. It holds also in many more general settings, which are not our focus here. Given a $\csu$-horseshoe of contracting type whose reference cube is cyclically related to a blender-horse\-shoe, we construct another $\csu$-horseshoe of contracting type with ``similar combinatorics''  whose center Lyapunov exponents are scaled by a positive constant (less than one) of the initial exponent.

\subsection{Cyclic relation between an $\csu$-cube and a blender-horseshoe}\label{ss.interactioncubeblender}

For what follows, we fix an $\csu$-cube $C$ and a blender-horseshoe $\fB =\fB(\bfC,f^S)$. We state our hypothesis, which involves some interaction between both.  Specifically, we assume that there exist uniformly bounded transition times from the cube $C$ to $\fB$ and back. 

Recall that the cube $C$ is endowed with its family of unstable disks $\cD(C)$, and the blender-horseshoe $\fB$ is equipped with the family $\mathcal{B}$ of $\uu$-disks in-between. Also, recall the subfamilies $\cD_{\rm c}$ of $\cD(C)$ in Remark \ref{r.safeness-of-cube} and $\mathcal{B}_\rmb$ of $\mathcal{B}$ in Lemma \ref{l.safedisk}.

\begin{definition}[Cyclic relations] \label{d.trelation}
An $\csu$-cube $C$ and an unstable blender-horseshoe $\fB$ are {\em{cyclically  related}} if there is $T_0\in\bN$ such that
\begin{itemize}[leftmargin=0.6cm ]
\item[(a)] for every $\Delta\in \mathcal{B}$ and $T\ge T_0$, the set $f^{T}(\Delta)$ contains a  $\uu$-disk in $\cD_{\rm c}$,
\item[(b)] for every $\Theta\in \cD(C)$ and every $T\ge T_0$ the set $f^{T}(\Theta)$ contains a $\uu$-disk in $\mathcal{B}_\rmb$.
\end{itemize}
In that case, we say that $T_0$ is the {\em{transition time}} and that  $C$ and $\fB$ are {\em{$T_0$-related}}.
\end{definition}

\begin{remark}\label{r.transitiontime}
Assume that the $\csu$-cube $C$ and  the blender-horseshoe $\fB$ are $T_0$-related. It is then a consequence of continuity of the strong unstable foliation that there are positive constants $\theta_{\rm c},\theta_{\rm b}$ such that the following holds:
\begin{itemize}[leftmargin=0.6cm ]
\item[(c)] If $C'$ is a $\uu$-complete subcube of $C$ with $\diam_{\rm H}(C')<\theta_{\rm b}$, then there is a $\cs$-complete subcube $\widetilde C$ of $f^{T_0}(C')$ which is a $\mathcal{B}$-cube fibered by disks in $\mathcal{B}_{\rm b}$ and $\diam_{\rm H}(\widetilde C)<\tau_{\rm b}$, where $\tau_{\rm b}$ is as in Lemma \ref{l.safedisk}.
\item[(d)] If $C''$ is a $\mathcal{B}$-cube with $\diam_{\rm H}(C'')<\theta_{\rm c}$, then there is a $\cs$-complete subcube of $f^{T_0}(C')$ whose fibers are in $\cD_{\rm c}$ and which  $\uu$-covers $C$.
\end{itemize}
Note that, after shrinking $\tau_{\rm b}$ and $\rho_{\rm b}$ defined in Lemma \ref{l.safeblender}, we can assume that $\tau_{\rm b}<\rho_{\rm b}<\min\{\theta_{\rm c},\theta_{\rm b}\}$.
\end{remark}

Using the uniform expansion of $\uu$-disks under $f$ and the minimality of the strong unstable foliation, the following is an immediate consequence of our above choices. 

\begin{lemma}\label{lemr.transitiontime}
	Assume that the strong unstable foliation of $f$ is minimal and that $f$ has an unstable blender-horseshoe $\fB$. Then any $\csu$-cube is cyclically related to any unstable blender-horseshoe.
\end{lemma}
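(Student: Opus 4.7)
The plan is to verify Definition \ref{d.trelation} directly by combining the uniform expansion of $f$ along $E^\uu$ with the minimality of $\cW^\uu$. The key intermediate step is a ``uniform density'' statement that is the direct analogue of Remark \ref{remmini}: for every nonempty open family $\mathcal{F}$ of $\uu$-disks whose members all contain some $\uu$-ball of radius at least $r_0>0$, there exists $R(\mathcal{F})>0$ such that for every $x\in M$ the ball $\cW^\uu(x,R(\mathcal{F}))$ contains a disk belonging to $\mathcal{F}$. To prove this claim, I would fix a reference disk $\Delta_0\in\mathcal{F}$ containing some $\cW^\uu(z_0,r_0)$, and use the openness of $\mathcal{F}$ in the tangent-Hausdorff topology of Definition \ref{d.safedisk-biss}, together with continuity of $E^\uu$ and strong unstable holonomy (Remark \ref{r.zeta}), to produce an open neighborhood $U\subset M$ of $z_0$ such that every $z\in U$ is the center of some $\mathcal{F}$-disk of inner $\uu$-radius at least $r_0/2$. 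Minimality of $\cW^\uu$ then forces every unstable leaf to meet $U$, and compactness of $M$ upgrades this to a uniform entry radius $R(\mathcal{F})$.

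Applying this claim to the $\tau(C)$-$C$-safe family $\cD_{\rm c}$ of Remark \ref{r.safeness-of-cube} and to the $\tau_\rmb$-$\mathcal{B}$-safe family $\mathcal{B}_\rmb$ of Lemma \ref{l.safedisk} yields constants $R_{\rm c},R_\rmb>0$. Since $E^\uu$ is uniformly expanding with rate $\tau^\uu>0$ (see \eqref{deflambdauu}), every $\uu$-disk containing $\cW^\uu(z,r)$ has $f^T$-image containing $\cW^\uu(f^T(z),r\cdot e^{T\tau^\uu})$ for all $T\ge 0$. Using the uniform inner-radius bounds $\nu_\rmb$ (for disks in $\mathcal{B}$, Remark \ref{r.uniformsizeblender}) and some inner $\uu$-radius $\tau_{\rm c}>0$ of $C$ (Remark \ref{remdeftauc}), I would choose $T_0\in\bN$ so that
\[
\nu_\rmb\cdot e^{T_0\tau^\uu}\ge R_{\rm c}
\spac{and}
\tau_{\rm c}\cdot e^{T_0\tau^\uu}\ge R_\rmb.
\]
Then for every $T\ge T_0$ and every $\Delta\in\mathcal{B}$, the image $f^T(\Delta)$ contains a $\uu$-ball of radius at least $R_{\rm c}$ and hence a disk in $\cD_{\rm c}$, establishing (a); the symmetric estimate for $\Theta\in\cD(C)$ gives (b).

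The main obstacle is making the uniform density claim rigorous. Although intuitively clear from minimality, it requires verifying that the ``set of possible centers'' of the open safe family $\mathcal{F}$ is an open subset of $M$---a statement that relies on the local product structure of Remark \ref{r.zeta} and on the uniform continuity of the bundles of the partial hyperbolic splitting. Once this step is in place, the standard compactness-plus-minimality argument produces the uniform entry radius, and the remainder is a straightforward bookkeeping of expansion rates.
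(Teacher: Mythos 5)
Your proposal is correct and follows essentially the same route the paper intends: the paper dispatches Lemma~\ref{lemr.transitiontime} in one line as an ``immediate consequence'' of uniform expansion along $E^\uu$ plus minimality, and your uniform-density claim is exactly the statement the paper records (without proof) in Remark~\ref{remmini} for the family $\cD_{\rm h}$, here applied to the safe families $\cD_{\rm c}$ and $\mathcal{B}_\rmb$ together with the inner radii $\tau_{\rm c}$ and $\nu_\rmb$ and the expansion rate $\tau^\uu$. Your write-up simply makes explicit the compactness-plus-openness argument that the paper leaves implicit, so no new idea is needed beyond what you give.
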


\begin{remark}[Heterodimensional cycles]\label{r.heterocycle}
Consider an unstable blender-horseshoe $\fB$ and a $\csu$-horseshoe $\bfH$ of contracting type relative to an $\csu$-cube $C$. Suppose that $C$ and $\fB$ are cyclically related. Then the sets $\widetilde\Lambda_\bfH$ in \eqref{defLambdatilde} and $\widetilde\Lambda_{\fB}$ in \eqref{basicsettilde} form a heterodimensional cycle. Indeed, the fact that the unstable set of $\widetilde\Lambda_{\fB}$ and the stable set of $\widetilde\Lambda_\bfH$ intersect follows from item (a) in Definition~\ref{d.trelation}. The definition of a blender-horseshoe implies that its stable set intersects any disk in $\mathcal{B}$.  Since item (b) in Definition~\ref{d.trelation} implies that the unstable set of $\widetilde\Lambda_\bfH$ contains disks in $\mathcal{B}$,  the assertion follows.
\end{remark}

\subsection{Subordinated horseshoes with controlled exponents and entropy}
\label{ss.subordinatedhorseshoes}

We begin by introducing subordinated horseshoes. For that, recall Definition \ref{defhorseshoe} and Notation \ref{notCylinders} of cylinders.

\begin{definition}[Subordinated horseshoe]\label{defsubhorse}
	Consider a $\csu$-horseshoe relative to an $\csu$-cube $C$, $\bfH=(C,\{\bfK_i\}_{i=1}^N,f^n)$, and $m\in\bN$. We say that a $\csu$-horseshoe relative to $C$, $\bfH'=(C,\{\bfK_\ba'\}_{\ba\in\{1,\ldots,N\}^m},f^{n'})$,  is \emph{subordinated to $\bfH$} if
\begin{itemize}[leftmargin=0.6cm ]
\item[(1)] $n'\ge mn$, 
\item[(2)] for every $\ba\in\{1,\ldots,N\}^m$, it holds
\begin{itemize}[leftmargin=0.6cm ]
\item[(2a)] $\bfK_\ba'\subset\bfK_\ba$,
\item[(2b)] $\bfK_\ba'$ is a $\cs$-complete subcube of $C$,
\item[(2c)] $f^{n'}(\bfK_\ba')$ is a $\uu$-complete subcube of $C$.
\end{itemize}
\end{itemize}
\end{definition}

In the above definition, items (2b)--(2c) are redundant (they follow already from the fact that $\bfH'$ is  a $\csu$-horseshoe relative to $C$), but stated for clarity. 

For that recall $\chi_{\rm min}(f),\chi_{\rm max}(f)$ in \eqref{defminmax}, an $\csu$-cube $C^\csu(\gamma,r)$ in Definition~\ref{defGammas}, and $\varepsilon_1$ in Remark \ref{remweaint}.

\begin{theorem}\label{THEOp.key-estimate-center-Lyapunov-exponent}
	Let $f\in\PH_{\c=1}^1(M)$. Consider an $\csu$-cube  $C=C^\csu(\gamma,r)$, $r<\varepsilon_1/3$,
	and an unstable blender-horseshoe $\fB$ which are cyclically related. Then there exist numbers $\xi>0$, $\delta_0\in(0,1)$, $\chi_0<0$, and $\rho\in(0,1)$ such that the following holds. 
	
	Let $\delta\in(0,\delta_0)$,  $\widehat\chi\in(\chi_0,0)$, and $\chi\in(\chi_{\rm min}(f), 0]$. Let $\bfH=\big(C,\{\bfK_i\}_{i=1}^N,f^R\big)$ be a $\csu$-horseshoe relative to $C$ so that 
\begin{equation}\label{finitetimeLyapexp}
	-|\widehat\chi|\delta
	\le \frac1R\log\,\lVert D^\c f^R(x)\rVert -(\chi+\widehat\chi)
	\le |\widehat\chi|\delta,
	\quad\text{ for every } x\in\bigcup_{i=1}^N\bfK_i. 
\end{equation}
Then there exist $K=K(\bfH)>0$ and $m_0=m_0(\bfH)\in\bN$ such that the following holds. For every $m\ge m_0$, there exist $R'\in\bN$ satisfying
\begin{equation}\label{eqchecktimetheorem}
	mR< R' <\big(1+\xi|\widehat\chi|\big) mR
\end{equation}
and a $\csu$-horseshoe  $\bfH'= \big(C,\{\bfK_\ba'\}_{\ba\in\{1,\ldots,N\}^m},f^{R'}\big)$ subordinated to $\bfH$ such that 
\[
	-\rho|\widehat\chi|\delta
	\le \frac{1}{R'}\log\,\lVert D^\c f^{R'}(x)\rVert-(\chi+\rho\widehat\chi)
	\le \rho|\widehat\chi|\delta, \quad \text{for every $x\in\bigcup_{\ba\in\{1,\ldots,N\}^m} \bfK_\ba'$.}
\]
and
\[
	\log\|D^\c f^k(x)\|\le Ke^{k\delta\widehat\chi/2}
	\quad\text{ for every $	x\in\bfK_\ba', k=1,\ldots,R'$ }.
\]
\end{theorem}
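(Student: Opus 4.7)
The plan is to construct each rectangle $\bfK_\ba'$ by interleaving the depth-$m$ cylinder $\bfK_\ba$ of $\bfH$ with a controlled excursion through the blender-horseshoe $\fB$, so that the resulting orbit averages the contracting exponent of $\bfH$ against the expanding exponent of $\fB$ in the right proportion. Writing $\fB=\fB(\bfC,f^S)$ and $T_0$ for the transit time of Definition~\ref{d.trelation}, I take
\[
 R' \eqdef mR + 2T_0 + \ell S
\]
and define $\bfK_\ba'$ as the set of $x\in\bfK_\ba$ whose orbit (i) stays in $\bfK_\ba$ for the first $mR$ iterates, (ii) transits from $C$ into the superposition region of $\fB$ during the next $T_0$ iterates, (iii) executes $\ell$ successive successor steps inside $\fB$ during the next $\ell S$ iterates, and (iv) transits back to $C$ in the final $T_0$ iterates, landing so that $f^{R'}(\bfK_\ba')$ is a $\uu$-complete subcube of $C$. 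Pairwise disjointness of $\{\bfK_\ba'\}_\ba$ is inherited from $\{\bfK_\ba\}_\ba$, so $\bfH'=(C,\{\bfK_\ba'\}_\ba,f^{R'})$ is a $\csu$-horseshoe subordinated to $\bfH$ in the sense of Definition~\ref{defsubhorse}.

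\textbf{Executing the construction and accounting for the exponent.} For $m\ge m_0$, Corollary~\ref{cl.cubeshorseshoe} (adapted to $\bfH$ via hypothesis \eqref{finitetimeLyapexp}) ensures that $\bfK_\ba$ is a $\cs$-complete subcube of $C$, that $f^{mR}(\bfK_\ba)$ is a $\uu$-complete subcube of $C$, and that $\diam_{\mathrm H}(f^{mR}(\bfK_\ba))$ is exponentially small in $m$ (with rate controlled by $\chi+\widehat\chi$). Item (c) of Remark~\ref{r.transitiontime} then delivers, after $T_0$ further iterates, a $\cs$-complete $\mathcal{B}$-subcube with fibers in $\mathcal{B}_\rmb$ and $\mathrm H$-diameter below $\tau_\rmb<\rho_\rmb$. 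I iterate this $\mathcal{B}$-cube $\ell$ times under the successor operation of Lemma~\ref{l.safeblender} (using Definition~\ref{d.c-herentitineraries}), then transit back via item (d) of Remark~\ref{r.transitiontime} to obtain a $\cs$-complete subcube of $f^{R'}(\bfK_\ba)$ with fibers in $\cD_{\mathrm c}$ that $\uu$-covers $C$; iterating Lemma~\ref{lr.uucovering} pulls this back to the desired $\bfK_\ba'\subset\bfK_\ba$. To choose $\ell$, let $\chi^\uu>0$ denote the blender's center expansion rate. Summing $\log\|D^\c f\|$ along the four orbit segments yields approximately $(\chi+\widehat\chi)mR+\ell S\chi^\uu$, with bounded transit corrections and $O(\delta|\widehat\chi|mR)$ error from \eqref{finitetimeLyapexp}. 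Dividing by $R'$ and demanding the average equal $\chi+\rho\widehat\chi$ fixes
\[
 \alpha \eqdef \frac{\ell S}{mR} \approx \frac{(1-\rho)|\widehat\chi|}{\chi^\uu-\chi-\rho\widehat\chi},
\]
which is positive, proportional to $|\widehat\chi|$, with a coefficient depending only on $\rho,\chi^\uu$, and $\chi_{\min}(f)$ (using $\chi^\uu-\chi$ bounded below). This yields $R'<(1+\xi|\widehat\chi|)mR$ for $\xi$ of the right form, and allows $\ell$ to be chosen as an integer with the $O(1/m)$ rounding absorbed into the $\rho|\widehat\chi|\delta$ tolerance. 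Uniformity of the exponent across $\bfK_\ba'$ is enforced segment by segment by the distortion bound of Corollary~\ref{c.distortion-reference-cube} together with the modulus of continuity of $\log\|D^\c f\|$.

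\textbf{Main obstacle.} The delicate step is ensuring all $\ell$ successor steps remain defined, i.e., that the $\mathrm H$-diameter of every intermediate $\mathcal{B}$-cube stays below $\rho_\rmb$ (Remark~\ref{r.lastsucessor}). Here the decisive fact is the competition between the \emph{super-exponential} smallness of $\diam_{\mathrm H}(f^{mR+T_0}(\bfK_\ba))$ in $m$ (with rate essentially $|\chi+\widehat\chi|$) and the uniform per-step upper bound on the $\mathrm H$-diameter growth under the successor operation: since $\ell\sim |\widehat\chi|\,mR/S$ grows only linearly in $m$, choosing $\rho,\chi_0,\delta_0$ so that the contraction rate of the initial diameter dominates the successor's growth rate (uniformly in the allowed $\widehat\chi$) guarantees all $\ell$ successors exist for $m\ge m_0(\bfH)$. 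Finally, the pointwise finite-time estimate on $\|D^\c f^k\|$ for $x\in\bfK_\ba'$ and $k\le R'$ is assembled piecewise: the hypothesis \eqref{finitetimeLyapexp} governs $k\le mR$; uniform boundedness of $\log\|D^\c f\|$ contributes $O(1)$ on each transit; and uniform expansion inside $\fB$ controls the middle segment, with the constant $K=K(\bfH)$ absorbing the maximal transient amplitude accumulated during the blender excursion.
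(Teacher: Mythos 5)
Your overall architecture (run $m$ cycles of $\bfH$, transit into $\fB$, spend time in the blender, transit back so the image $\uu$-covers $C$) is the paper's, and your diameter bookkeeping guaranteeing that all successor steps stay defined is essentially Proposition~\ref{pl.completetour} together with Claim~\ref{claimellba}. The gap is in how you hit the exponent window. You fix a single, word-independent blender time $\ell$ and set $R'=mR+2T_0+\ell S$, relying on \eqref{finitetimeLyapexp} for the contribution of the $mR$-block. But \eqref{finitetimeLyapexp} only pins that contribution inside a window of half-width $\delta|\widehat\chi|\,mR$, and different words $\ba$ (indeed different points) may sit at opposite ends of it. A uniform $\ell$ shifts the center of this window but cannot shrink its width: after dividing by $R'$ the inherited spread is $\delta|\widehat\chi|\,mR/R'$, and since $\ell S/(mR)=O(|\widehat\chi|)$ one has $mR/R'>\rho$ as soon as $|\widehat\chi|$ is small --- exactly the regime of the recursive application in Remark~\ref{remCascade}, where $\widehat\chi$ is replaced by $\rho^k\widehat\chi$. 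So this spread alone already exceeds the required half-width $\rho|\widehat\chi|\delta$, before adding the further spread caused by the fact that the blender's expansion rate is only pinched between $\kappa_{\rm min}$ and $\kappa_{\rm max}$ (there is no single $\chi^\uu$), which contributes an extra error of order $(\kappa_{\rm max}-\kappa_{\rm min})|\widehat\chi|$ that $\delta$ does not control. The contraction of the tolerance by the factor $\rho$ is not cosmetic: it is what allows the theorem to be reapplied to $\bfH'$ with the same $\delta$, so losing it breaks the cascade.

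The paper avoids this by (i) fixing $R'$ first, independently of $\ba$, by \eqref{fixm}; (ii) choosing the number of blender cycles $T_{\rm b}(\ba)$ separately for each word through a discrete intermediate-value argument on the quantities $\omega^{\pm}(j)$ computed along an auxiliary point $y_\ba\in\bfK_\ba^{(\ell_0)}$ (Claim~\ref{clc.find-time-in-blender}), padding with a final, word-dependent segment of length $R'-mR-2T_0-ST_{\rm b}(\ba)$ back in the horseshoe so that the total return time is still the word-independent $R'$; and (iii) transferring the exponent estimate from $y_\ba$ to every $x\in\bfK'_\ba$ not through \eqref{finitetimeLyapexp} but through the distortion statement of Corollary~\ref{c.distortion-reference-cube} and the modulus of continuity of $\log\|D^\c f\|$ (Claims~\ref{cl:etaclose} and~\ref{clsubclaim-allx}), which bounds the within-rectangle variation by $\tfrac{\delta}{3}\rho|\widehat\chi|R'$ plus additive constants. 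To close your argument you would need both the per-word adaptive blender time (with the filler horseshoe segment keeping $R'$ uniform) and this shadowing/distortion step.
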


In proving Theorem \ref{THEOp.key-estimate-center-Lyapunov-exponent}, we need to simultaneously control both the size of the images of the cubes of the subordinated horseshoes and also the Lyapunov exponents. These tasks are interconnected, as Lyapunov exponents depend on the itineraries.

\begin{remark}[Cascades of subordinated horseshoes and underlying symbolic spaces]\label{remCascade}
Theorem \ref{THEOp.key-estimate-center-Lyapunov-exponent} provides the horseshoe $\bfH'$ by ``$m$ times repeating'' the initial horseshoe $\bfH_0=\bfH$ and ``tailing in a blender-horseshoe''. Fix a sufficiently fast growing sequence $(m_k)_k$ of natural numbers such that 
\[
	m_k\ge m_0(\bfH_k).
\]
Applying this theorem inductively, we get a cascade of $\csu$-horseshoes $(\bfH_k)_{k\in\bN_0}$, where for every $k$ the horseshoe
\[
	\bfH_{k+1}
	\eqdef \bfH_k'.
\]
is obtained from the previous one $\bfH_k$ by ``repeating it'' $m_k$ times. 	
Given the ``initial alphabet'' $\cA_0=\cA\eqdef\{1,\ldots,N\}$, the underlying symbolic symbolic space of $\bfH_1$ is $\cA_1\eqdef(\cA_0)^{m_1}$. Accordingly, denoting by $\cA_k$ the underlying symbolic space of $\bfH_k$, we have the corresponding cascade of symbolic spaces
\[
	\cA_{k+1}
	\eqdef (\cA_k)^{m_k}.
\]
Finally, we have the associated repeating times $(R_k)_k$ and ``tailing times'' 
\[
	\bt_k
	\eqdef R_k-m_kR_{k-1}.
\]
The latter time is essentially the time the orbits spend in the blender-horseshoe (not counting the transition times ``connecting $\bfH_k$ to the blender-horseshoe and back'').
\end{remark}

The following is an intermediate result towards the proof of the above theorem: given any $\csu$-horseshoe $\bfH$ of contracting type, we construct a new subordinated horseshoe $\bfH'$ by ``connecting $\bfH$ to the blender-horseshoe $\fB$''. 

\begin{proposition}[Connecting $\bfH$ to itself through $\fB$]\label{pl.completetour}
Consider an $\csu$-cube  $C=C^\csu(\gamma,r)$, $r<\varepsilon_1/3$, and an unstable blender-horseshoe $\fB=\fB(\bC, f^S)$ which are cyclically related with transition time $T_0$. Then the following holds.

For every $\csu$-horseshoe relative to $C$ of contracting type, $\bfH=(C,\{\bfK_i\}_{i=1}^N,f^R)$,  
there is $m_1=m_1(\bfH)\in\bN$ such that for every $m\ge m_1$ and every $m$-word $\ba = (a_0,\ldots,a_{m-1})\in \{1, \ldots, N\}^m$ there are $\ell (\ba)\in\bN$, where $\ell(\ba)\to \infty$ as $|\ba|=m\to \infty$, and a nested family of $\cs$-complete subcubes $\{\bfK_{\ba}^{(j)}\}_{j=0}^{\ell (\ba)}$ of $\bfK_\ba$ such that: for every $j=0,\ldots,\ell(\ba)$
\begin{enumerate}[leftmargin=0.6cm ]
\item (iterates in the horseshoe) $f^{kR}  (\bfK_{\ba}^{(j)}) \subset \bfK_{a_k}$ for every $k=0, \ldots, m-1$,
\item (transition to blender and iterates there) $f^{mR+T_0+kS}(\bfK_{\ba}^{(j)})$ is contained
 in some $\mathcal{B}$-cube for every $k< j$ and $f^{mR+T_0+jS}(\bfK_{\ba}^{(j)})$ is a $\mathcal{B}$-cube,
\item (return to the horseshoe) $f^{mR+T_0+jS +T_0} (\bfK_{\ba}^{(j)})$ is a $\uu$-complete subcube of $C$, and hence $\uu$-covers $\bfK_i$ for every $i=1, \ldots, N$.
\end{enumerate}
\end{proposition}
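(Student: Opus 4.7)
The plan is to concatenate four orbit segments: an $mR$-step passage through the cylinder $\bfK_\ba$, a length-$T_0$ transition from $C$ into the superposition region of $\fB$, $j$ iterations of the blender via the successor operation, and a length-$T_0$ return transition back to $C$. Corollary~\ref{cl.cubeshorseshoe} supplies the first segment: for every $m$ above the threshold $m_0(\bfH)$ provided there, $\bfK_\ba$ is a $\cs$-complete subcube of $\bfK_{a_0}$ with $f^{mR}(\bfK_\ba)$ a $\uu$-complete subcube of $C$, and the image of its base has diameter bounded by $\exp(mR(\chi^\c(\nu)+2\varepsilon_{\rm L}))$. Enlarging the threshold to some $m_1(\bfH)$, one additionally obtains $\diam_{\rm H}(f^{mR}(\bfK_\ba))<\theta_\rmb$.

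For the middle transition I would use the cyclic relation. Remark~\ref{r.transitiontime}(c) applied to the $\uu$-complete subcube $f^{mR}(\bfK_\ba)$ of $C$ produces a $\cs$-complete subcube $\mathbf{L}_\ba^{(0)}$ of $f^{mR+T_0}(\bfK_\ba)$ which is a $\mathcal{B}$-cube fibered by disks in $\mathcal{B}_\rmb$, with $\diam_{\rm H}<\tau_\rmb<\rho_\rmb$. Iterating the successor operation of Definition~\ref{d.c-herentitineraries} then yields a sequence of $\mathcal{B}$-cubes $\mathbf{L}_\ba^{(j)}\subset g(\mathbf{L}_\ba^{(j-1)})$ for as long as the diameter bound $\diam_{\rm H}<\rho_\rmb$ persists. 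I would let $\ell(\ba)$ be the largest $j$ for which $\diam_{\rm H}(\mathbf{L}_\ba^{(k)})<\min\{\rho_\rmb,\theta_\c\}$ holds for all $k\le j$. The return transition is delivered by Remark~\ref{r.transitiontime}(d) applied to each $\mathbf{L}_\ba^{(j)}$: it provides a $\cs$-complete subcube of $f^{T_0}(\mathbf{L}_\ba^{(j)})$ whose fibers lie in $\cD_\c$ and which $\uu$-covers $C$.

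The cube $\bfK_\ba^{(j)}$ is then defined by pulling back. Since $f^{mR+2T_0+jS}(\bfK_\ba)$ contains a cube that $\uu$-covers $C$, it itself $\uu$-covers $C$, and Lemma~\ref{lr.uucovering} applied to the pair $(\bfK_\ba,C)$ and the iterate $f^{mR+2T_0+jS}$ delivers a $\cs$-complete subcube $\bfK_\ba^{(j)}$ of $\bfK_\ba$ whose image under this iterate is a $\uu$-complete subcube of $C$. Property~(1) then follows from $\bfK_\ba^{(j)}\subset\bfK_\ba$; Property~(2) from the tracking $f^{mR+T_0+kS}(\bfK_\ba^{(j)})\subset \mathbf{L}_\ba^{(k)}$ for $k\le j$; Property~(3) is the output of Lemma~\ref{lr.uucovering}, with the additional $\uu$-covering of every $\bfK_i$ coming from Remark~\ref{remcover}. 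The nesting $\bfK_\ba^{(j+1)}\subset\bfK_\ba^{(j)}$ propagates through the pull-back from the nesting $\mathbf{L}_\ba^{(j+1)}\subset g(\mathbf{L}_\ba^{(j)})$ forced by the canonical definition of successors.

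The main obstacle is to establish $\ell(\ba)\to\infty$ as $m\to\infty$. The input is that $\diam_{\rm H}(\mathbf{L}_\ba^{(0)})$ is controlled by $\diam_{\rm H}(f^{mR+T_0}(\bfK_\ba))$, which decays exponentially in $m$ thanks to the contracting-type hypothesis on $\bfH$ and to the alignment of iterates of $\uu$-disks with $E^\uu$. Combined with the (uniformly bounded) growth rate of $\diam_{\rm H}$ under the successor operation, this exponentially small starting diameter forces strictly more successors to fit below the threshold $\min\{\rho_\rmb,\theta_\c\}$ as $m$ grows; since the estimates are uniform across all words $\ba$ of length $m$, the resulting $m_1(\bfH)$ is independent of $\ba$.
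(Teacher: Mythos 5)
Your overall route mirrors the paper's proof almost step for step: Corollary~\ref{cl.cubeshorseshoe} plus a threshold $m_1$ making $\diam_{\rm H}(f^{mR}(\bfK_\ba))<\theta_\rmb$, Remark~\ref{r.transitiontime}(c) to enter the superposition region, the successor mechanism of Definition~\ref{d.c-herentitineraries}/Remark~\ref{r.lastsucessor} inside the blender, Remark~\ref{r.transitiontime}(d) to return, and the count ``exponentially small entry diameter versus expansion at rate at most $\kappa_{\rm max}$ per blender step'' giving $\ell(\ba)\to\infty$ uniformly in $\ba$, exactly as in Claim~\ref{claimellba}.

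The one genuine defect is the way you define $\bfK_\ba^{(j)}$. Applying Lemma~\ref{lr.uucovering} to the pair $(\bfK_\ba,C)$ and the iterate $f^{mR+2T_0+jS}$ only returns \emph{some} connected component of $\bfK_\ba\cap f^{-(mR+2T_0+jS)}(C)$ mapping onto a $\uu$-complete subcube of $C$; nothing forces that component to be the one whose orbit actually passes through your cubes $\mathbf{L}_\ba^{(0)},\ldots,\mathbf{L}_\ba^{(j)}$ at the prescribed times, since $f^{mR+2T_0+jS}(\bfK_\ba)$ may meet $C$ in many $\uu$-complete pieces coming from different itineraries. Hence the tracking claim $f^{mR+T_0+kS}(\bfK_\ba^{(j)})\subset\mathbf{L}_\ba^{(k)}$ and the nesting $\bfK_\ba^{(j+1)}\subset\bfK_\ba^{(j)}$ are asserted but not justified by your construction. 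Moreover, even after fixing the component (say by intersecting with $f^{-(mR+T_0+jS)}(\mathbf{L}_\ba^{(j)})$), the set $f^{mR+T_0+jS}(\bfK_\ba^{(j)})$ is then only a subset of the $\mathcal{B}$-cube $\mathbf{L}_\ba^{(j)}$, fibered by proper subdisks of in-between disks, so the final clause of item~(2) --- that this image \emph{is} a $\mathcal{B}$-cube --- is not obtained (your version instead makes item~(3) literal, whereas the paper settles there for the $\uu$-covering of $C$, which is what is used later). The paper sidesteps both issues by defining $\bfK_\ba^{(j)}\eqdef f^{-(mR+T_0+jS)}(\widetilde\bfK_\ba^{[j]})$, the pullback of the $j$th successor itself: this makes item~(2) exact, makes the nesting immediate from $\widetilde\bfK_\ba^{[j+1]}\subset f^S(\widetilde\bfK_\ba^{[j]})$, and yields item~(3) by applying Remark~\ref{r.transitiontime}(d) to $\widetilde\bfK_\ba^{[j]}$. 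Restructure your pullback this way and the rest of your argument goes through.
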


In the remainder of this section, simultaneously for the proofs of Proposition \ref{pl.completetour} and Theorem \ref{THEOp.key-estimate-center-Lyapunov-exponent}, we fix an unstable blender-horseshoe $\fB=\fB(\bC, f^S)$ and an $\csu$-cube $C=C^\csu(\gamma,r)$ as in the Theorem \ref{THEOp.key-estimate-center-Lyapunov-exponent}. In Section \ref{ss.prelim}, we first collect some quantifiers associated to  $C$ and $\fB$. In Section \ref{ss.connecting}, we prove Proposition \ref{pl.completetour}. Note that the choice of the number of times $m$ the horseshoe $\bfH$ is ``repeated'' is still flexible. In Section \ref{sec:choicequanti}, we fix some quantifiers. In Section \ref{ss.topologicalandcontrol}, we specify $m$ to construct $\bfH'$ and to finalize the proof of Theorem \ref{THEOp.key-estimate-center-Lyapunov-exponent}.

\subsection{Transitions from $C$ to $\fB$ and \emph{vice versa}}\label{ss.prelim}

We fix some quantifiers associated with an $\csu$-cube and a blender-horseshoe.

\begin{remark}[Choice of quantifiers for $\fB$]\label{remquantiCBB}
Recall that $\fB$ comes with a family $\mathcal{B}$ of $\uu$-disks in-between, see Definition \ref{defBinbetween}. Let
\begin{itemize}[leftmargin=0.6cm]
\item  $\rho_{\rm b}$ be as in Lemma \ref{l.safeblender}, 
\item  $\tau_{\rmb}>0$ and $\mathcal{B}_\rmb\subset\mathcal{B}$ be a family of $\tau_{\rmb}$-$\mathcal{B}$-safe disks as in Lemma \ref{l.safedisk}, 
\item  $\nu_{\mathrm{b}}>0$ be as in Remark \ref{r.uniformsizeblender}, bounding the size of the disks of  $\mathcal{B}_\rmb$ from below, 
\item without loss of generality, we can assume $\tau_{\rm b}<\rho_{\rm b}$.
\end{itemize}
Consider numbers $\kappa_{\rm max}>\kappa_{\rm min}>0$ such that 
\begin{equation}\label{eq3}
	\kappa_{\rm min}
	\le \frac 1S \log\,\lVert D^\c f^S(x)\rVert
	\le \kappa_{\rm max}
	\quad\text{ for every }\quad
	x\in \bC_1\cup\bC_2.
\end{equation}
\end{remark}

\begin{remark}[Choice of quantifiers for $C$]\label{remquantiCBC}
Recall that $C$ comes with a family $\cD(C)$ of $\uu$-disks,  see \eqref{csucubebis}. Let
\begin{itemize}[leftmargin=0.6cm]
\item $\tau_{\rm c}>0$ and $\cD_{\rm c}\subset\cD(C)$ be the family of $\tau_{\rm c}$-$C$-safe $\uu$-disks, as in Remark \ref{r.safeness-of-cube},
\item $\nu_{\rm c}>0$ be as in Remark \ref{remdeftauc}, bounding the size of the $\uu$-disks of $C$ and hence $\cD_{\rm c}$ from below.
\end{itemize}
\end{remark}
\subsection{Proof of Proposition \ref{pl.completetour}}\label{ss.connecting}

Let $m\in\bN$. Note that for every $m$-word $\ba=(a_0, \ldots, a_{m-1}) \in \{1, \ldots, N\}^m$ the set $f^{mR}(\bfK_\ba)$ is a $\uu$-complete subcube of $C$. As $\bfH$ is of contracting type, there exists $m_1=m_1(\bfH)\in\bN$ such that 
\begin{equation}\label{smalldiam}
	\diam_{\rm H}\big(f^{mR}(\bfK_\ba)\big)<\theta_{\rm b}
	\quad\text{ for every }\quad
	m\ge m_1\text{ and }
	\ba \in \{1, \ldots, N\}^m.
\end{equation}

By Remark~\ref{r.transitiontime} (c), it follows from \eqref{smalldiam} that there exists a $\cs$-complete subcube $\widetilde\bfK_\ba$ of $f^{mR+T_0}(\bfK_\ba)$ which is a $\mathcal{B}$-cube fibered by disks in $\mathcal{B}_{\rm b}$. By Remark~\ref{r.transitiontime} (c), we have that 
\begin{equation}\label{eqsmall}
	\diam_{\rm H}(\widetilde \bfK_\ba)<\tau_{\rm b}<\rho_{\rm b}. 
\end{equation}	
For further reference, let us observe that indeed we have
\[
	\diam_{\rm H}(\widetilde \bfK_\ba)
	\le e^{mR(\chi+\widehat\chi(1-\delta))+T_0\tau},
\]
where $\tau=\max_{x\in M}\big|\log\,\lVert D^\c f(x)\rVert\big|.$
Recalling Notation \ref{notrectangle} and Lemma \ref{lemcubeit}, it follows that 
\[
	f^{-(mR+T_0)}(\widetilde\bfK_\ba)
	= \bigcup_{x \in D^\cs_{a_0}}  \widehat D^\uu_{\ba} (x),\qquad
		\widehat D^\uu_{\ba}(x) \subset D^\uu_{\ba} (x)
\]
is a $\cs$-complete subcube of $\bfK_{\ba}$ and for each $x \in D^\cs_{a_0}$, $f^{mR+T_0} (\widehat D^\uu_{\ba}(x))$ is a $\uu$-disk in $\mathcal{B}_\rmb$. 

Recall Definition \ref{d.c-herentitineraries} of higher-order successors of $\mathcal{B}$-cubes. Hence, by \eqref{eqsmall} and Remark \ref{r.lastsucessor}, there exists the associated successor of $\widetilde\bfK_\ba$ (with respect to the map $g=f^S$) as well as the maximal order of succession $\ell(\ba)\eqdef\ell(\widetilde\bfK_\ba)$. 

The following claim is a consequence of the definition of $\ell(\ba)$.

\begin{claim}\label{claimellba}
	For every $m\in\bN$ and every $m$-word $\ba$, it holds
\[
	\ell(\ba)
	\ge \frac1S\frac{1}{\kappa_{\rm max}}
		\Big(\log\rho_{\rm b}-T_0\tau+mR|\chi+\widehat\chi(1-\delta)|\Big).
\]	
Hence, in particular, $\ell (\ba)\to \infty$ as $|\ba|=m \to \infty$.
\end{claim}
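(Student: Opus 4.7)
The plan is to translate the maximal successor order $\ell(\ba)$ into the number of iterations of $g=f^S$ that can be performed before the Hausdorff diameter crosses the threshold $\rho_{\rm b}$. The starting point is the already established bound
\[
\diam_{\rm H}(\widetilde\bfK_\ba)\le D_0\eqdef e^{mR(\chi+\widehat\chi(1-\delta))+T_0\tau},
\]
where the factor $e^{mR(\chi+\widehat\chi(1-\delta))}$ comes from \eqref{finitetimeLyapexp} and $e^{T_0\tau}$ is a crude bound for the additional $T_0$-step transit from $\bfK_\ba$ to the blender. Since $\chi+\widehat\chi(1-\delta)<0$, for large $m$ we have $D_0<\rho_{\rm b}$ so the recursion can begin.

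The core estimate will be that each successor operation multiplies $\diam_{\rm H}$ by at most a factor $e^{S\kappa_{\rm max}}$. By Lemma~\ref{l.safeblender}, the successor $\bfK^{[1]}$ of a $\mathcal{B}$-cube $\bfK$ with base $D^\cs$ sits inside $g(\bfK)$ and admits $g(D^\cs)$ as a base. On $\bfC_1\cup\bfC_2$, the center bundle is expanded by at most $e^{S\kappa_{\rm max}}$ (from \eqref{eq3}) while $E^\ss$ is uniformly contracted, so the $\cs$-diameter of $g(D^\cs)$ is at most $e^{S\kappa_{\rm max}}$ times that of $D^\cs$. Uniform continuity of $E^\uu$ on the compact set $\bfC$ then identifies $\diam_{\rm H}$ with the $\cs$-diameter of the base up to a uniform multiplicative constant (which may be absorbed by slightly shrinking $\rho_{\rm b}$), giving $\diam_{\rm H}(\bfK^{[1]})\le e^{S\kappa_{\rm max}}\diam_{\rm H}(\bfK)$.

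Iterating yields $\diam_{\rm H}(\widetilde\bfK_\ba^{[k]})\le D_0\cdot e^{kS\kappa_{\rm max}}$ for every $0\le k\le \ell(\ba)+1$. Combining with $\diam_{\rm H}(\widetilde\bfK_\ba^{[\ell(\ba)+1]})\ge\rho_{\rm b}$ from Remark~\ref{r.lastsucessor}, we get $D_0\cdot e^{(\ell(\ba)+1)S\kappa_{\rm max}}\ge\rho_{\rm b}$ and hence
\[
\ell(\ba)\ge \frac{1}{S\kappa_{\rm max}}\bigl(\log\rho_{\rm b}-T_0\tau+mR\lvert\chi+\widehat\chi(1-\delta)\rvert\bigr)-1,
\]
where the $-1$ can be absorbed into $T_0\tau$ by enlarging a constant. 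That $\ell(\ba)\to\infty$ as $\lvert\ba\rvert=m\to\infty$ is then immediate, since $\lvert\chi+\widehat\chi(1-\delta)\rvert>0$ makes the $mR$-term dominate. The main technical point will be the comparison between the Grassmannian quantity $\diam_{\rm H}$ and the $\cs$-diameter of the base; once that comparison is set up via uniform continuity of $E^\uu$, the exponential growth factor $e^{S\kappa_{\rm max}}$ coming from \eqref{eq3} and the geometric-series bookkeeping are elementary.
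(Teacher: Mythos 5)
Your argument is exactly the one the paper intends: the paper records the bound $\diam_{\rm H}(\widetilde\bfK_\ba)\le e^{mR(\chi+\widehat\chi(1-\delta))+T_0\tau}$ immediately before the claim and then treats the claim as ``a consequence of the definition of $\ell(\ba)$'', that definition being precisely your mechanism of per-successor growth by at most $e^{S\kappa_{\rm max}}$ (via \eqref{eq3}) until the threshold $\rho_{\rm b}$ is crossed at step $\ell(\ba)+1$ (Remark~\ref{r.lastsucessor}). The two points you flag---the additive $-1$ and the identification of $\diam_{\rm H}$ with the $\cs$-diameter of the base---are harmless at the paper's own level of precision, since every later use of the claim (e.g.\ the comparison with $\ell_0$ defined by \eqref{e.defell0}) has slack of order $mR$ against a single extra $S\kappa_{\rm max}$ term.
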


For each $j=0,\ldots, \ell(\ba)$,  we consider the $j$th successor $\widetilde \bfK_{\ba}^{[j]}$ of $\widetilde \bfK_{\ba}$. Note that each $\widetilde \bfK_{\ba}^{[j]}$ is a $\mathcal{B}$-cube. This allows to define
the $\cs$-complete subcube $\bfK_{\ba}^{(j)}$ of $\bfK_\ba$,
\begin{equation}\label{e.hatKab}
	\bfK_{\ba}^{(j)}\eqdef f^{-(mR+T_0+jS)} (\widetilde \bfK_{\ba}^{[j]}).
\end{equation}
Since $\widetilde \bfK_\ba^{[j]}$ is a $\mathcal{B}$-cube, this proves item 2. By construction, $\bfK_{\ba}^{(j)}\subset \bfK_\ba$ and hence Corollary \ref{cl.cubeshorseshoe} implies item 1 of the proposition. 

Note that, by construction, the sequence is nested 
\[
	\bfK_{\ba}^{(j+1)} \subset \bfK_{\ba}^{(j)}.
\]
Moreover, 
\[
	\diam_{\rm H} (f^{mR+T_0+jS} ( \bfK_{\ba}^{(j)}))
	< \rho_{\rm b}
	< \theta_{\rm c} 
	\quad \mbox{for every $j=0,\ldots, \ell (\ba)$}. 
\]
Moreover, as $\rho_{\rm b}<\theta_\c$, applying Remark \ref{r.transitiontime} (d), we get that 
\[	
	f^{T_0} (\widetilde \bfK_\ba^{[j]})
	= f^{mR+T_0+jS+T_0}( \bfK_\ba^{(j)})
\]	 contains a $\cs$-complete subcube  whose fibers are in $\cD_{\rm c}$. Hence, in particular, this image $\uu$-covers $C$ and hence every $\bfK_i$, $i=1,\ldots,N$. This proves item 3  and finishes the proof of Proposition \ref{pl.completetour}.
\qed

\subsection{Choice of quantifiers in the proof of Theorem \ref{THEOp.key-estimate-center-Lyapunov-exponent}}\label{sec:choicequanti}

Fix $\chi\in(\chi_{\rm min}(f),0]$.
The constants $\xi$, $\chi_0$, $\rho$, and $\delta_0$ in the theorem are chosen as follows
\begin{align}
	&\xi  \eqdef\frac{1}{\kappa_{\rm max}},\label{d.defkappa}\\
	&\chi_0\eqdef-\frac{\kappa_{\rm max}}{6},\label{defchi0}\\	
	&\rho
	\eqdef 1 - \frac{\kappa_{\rm min}}{2\kappa_{\rm max}} \in (0,1),\label{fixgamma}\\
	&0
	<\delta_0
	<\min\big\{\frac{1}{12},\frac{3\kappa_{\rm min}}{28\kappa_{\rm max}} \big\}\le \frac{3}{14}(1-\rho). \label{eq:choice-of-delta}
\end{align}
Note that \eqref{defchi0} implies that for $\widehat\chi\in(\chi_0,0)$ one has
\begin{equation}\label{onehass}
	\frac{|\widehat\chi|}{\kappa_{\rm max}}
	<\frac16.
\end{equation}
Also observe that with these choices, for $\delta\in(0,\delta_0)$ one has 
\begin{equation}\label{chiineq}
	\chi+\widehat\chi(1-\delta)<\frac\delta2\widehat\chi.
\end{equation}	

\subsection{Control of Lyapunov exponents: fixing times in the blender and the horseshoe}
\label{ss.topologicalandcontrol}

In this section, we consider a $\csu$-horseshoe $\bfH=(C,\{\bfK_i\}_{i=1}^N,f^R)$ of contracting type such that 
\[
	-|\widehat\chi|\delta
	\le \frac1R\log\,\lVert D^\c f^R(x)\rVert-(\chi+\widehat\chi)
	\le |\widehat\chi|\delta,
	\quad\text{ for every } x\in\bigcup_{i=1}^N\bfK_i,\]
with   $\chi\in(\chi_{\rm min}(f),0)$, $\widehat\chi\in(\chi_0,0)$, and  $\delta\in(0,\delta_0)$. 
In particular, there exists $K=K(\bfH)>1$ such that 
\begin{equation}\label{eq:finite-time-LE-for-1st-horseshoe}
	\lVert D^\c f^k(x)\rVert
	\le K\cdot e^{k(\chi+\widehat\chi(1-\delta))} 
\quad\text{ for every $x\in\bigcup_{i=1}^N\bfK_i $ and $k=1,\ldots, R$}.
\end{equation}

For every $m\in\bN$ sufficiently large, define $R'=R'(m)\in\bN$ by
\begin{equation}\label{fixm}
	mR \frac{|\widehat\chi|(1-3\delta)+\kappa_{\rm max}}{\kappa_{\rm max}}
	\le R' 
	< mR \frac{|\widehat\chi|(1-3\delta)+\kappa_{\rm max}}{\kappa_{\rm max}}+1.
\end{equation}
The following is the main result of this section. Recall $\ell(\ba)$ in Proposition \ref{pl.completetour}.

\begin{lemma}[{Choice of $T_{\rm b}(\ba)$}]\label{l.proexistHprime} 
There are $K=K(\bfH)>0$ and $m_0=m_0(\bfH)\in\bN$ such that for every $m\ge m_0$ and every $\ba \in \{1, \dots, N\}^m$, there exist an  integer 
\[
	T_{\rm b}(\ba)<\frac 1S \min\big\{R'-mR-2T_0,\ell(\ba)S \big\} 
\]
and a $\cs$-complete subcube $\bfK'_{\ba}$ of $\bfK_\ba^{(T_{\rm b} (\ba))}$ such that 
\begin{itemize}[leftmargin=0.6cm ]
	\item[(1)] $f^{R'}(\bfK'_\ba)$ is a $\uu$-complete subcube of $C$;
	\item[(2)] for every $x\in\bfK_\ba'$, one has 
	\[ R'\rho\widehat\chi(1+\delta)< \log\|D^\c f^{R'}(x)\|-R'\chi\le R'\rho\widehat\chi(1-\delta).\]
	\item[(3)] for every $x\in\bfK_\ba'$ and every $k=1,\ldots,R'$, one has
	\[ \log\|D^\c f^k(x)\|\le K\exp(k\delta\widehat\chi/2).\]
\end{itemize}
\end{lemma}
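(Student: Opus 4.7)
The plan is to choose $T_\rmb(\ba)$ via a discrete intermediate-value argument along the chain $\{\bfK_\ba^{(j)}\}_{j=0}^{\ell(\ba)}$ produced by Proposition~\ref{pl.completetour}. First I would fix $m_0=m_0(\bfH)$ large enough so that Proposition~\ref{pl.completetour} applies for every $m\ge m_0$ and, by Claim~\ref{claimellba}, the index $\ell(\ba)$ exceeds $(R'-mR-2T_0)/S$ for every $\ba$; this way all candidate indices $j$ considered below automatically lie in the valid range.

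For each admissible pair of non-negative integers $(j,m')$ with $mR+2T_0+jS+m'R=R'$, I would construct a $\cs$-complete subcube $\bfK'_\ba(j)\subset \bfK_\ba^{(j)}$ whose forward orbit realises $R'$ iterations structured as: $mR$ iterations in $\bfH$ along $\ba$, transition of length $T_0$, blender time $jS$, transition of length $T_0$, and a residual block of $m'R$ extra horseshoe iterations along a chosen itinerary. Existence of $\bfK'_\ba(j)$ follows by successive refinement: Proposition~\ref{pl.completetour}(3) says $f^{mR+2T_0+jS}(\bfK_\ba^{(j)})$ is a $\uu$-complete subcube of $C$ and hence $\uu$-covers every rectangle $\bfK_i$, so recursive refinement along a prescribed itinerary produces the desired subcube with $f^{R'}(\bfK'_\ba(j))$ $\uu$-complete in $C$. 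The admissible pairs $(j,m')$ form an arithmetic progression; that at least one such pair matches the prescribed $R'$ from \eqref{fixm} is a bounded divisibility condition handled by a slight parameter adjustment.

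The selection of $T_\rmb(\ba)$ is then made by discrete IVT. Let $\Phi(j,x)\eqdef \log\|D^\c f^{R'}(x)\|$ for $x\in \bfK'_\ba(j)$. The hypothesis~\eqref{finitetimeLyapexp} gives $\Phi(0,x)/R'\approx \chi+\widehat\chi$, which is strictly below the target $\chi+\rho\widehat\chi$ since $\rho<1$ and $\widehat\chi<0$. Passing to the next admissible pair exchanges $\mathrm{lcm}(R,S)$ horseshoe iterations for the same number of blender iterations, so $\Phi$ changes by at most $\mathrm{lcm}(R,S)\cdot(\kappa_{\max}-\chi-\widehat\chi)$, which is bounded independently of $m$, whereas the target tolerance $R'\rho|\widehat\chi|\delta$ grows linearly in $m$. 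Taking $T_\rmb(\ba)$ to be the smallest admissible $j$ with $\Phi(j,x)/R'\ge \chi+\rho\widehat\chi$ then places $\Phi(T_\rmb(\ba),x)/R'$ inside $[\chi+\rho\widehat\chi(1+\delta), \chi+\rho\widehat\chi(1-\delta)]$, giving conclusion~(2). The precise relation~\eqref{fixm} is what ensures $T_\rmb(\ba)S<R'-mR-2T_0$ for this $j$.

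Conclusion~(1) holds by the construction of $\bfK'_\ba$. For~(3), along each horseshoe block the log-derivative $k\mapsto \log\|D^\c f^k(x)\|$ decreases at rate at most $\chi+\widehat\chi(1-\delta)<\delta\widehat\chi/2$ by~\eqref{chiineq}, while on the blender block it grows at rate at most $\kappa_{\max}$. Hence the maximum of $\log\|D^\c f^k(x)\|$ over $k=1,\ldots,R'$ is attained at the end of the blender phase, at time $mR+T_0+T_\rmb(\ba)S$, where it is controlled by~\eqref{finitetimeLyapexp} and the preceding choice of $T_\rmb(\ba)$; after this peak the log-derivative decays again in the residual horseshoe block. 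One obtains the envelope $\log\|D^\c f^k(x)\|\le K\exp(k\delta\widehat\chi/2)$ with $K=K(\bfH)$ absorbing uniformly bounded transient and transition errors. \textbf{The hardest step} is verifying that the step size of $\Phi$ lies strictly below the target tolerance; this forces the specific form of $\rho$ in~\eqref{fixgamma} and of $R'$ in~\eqref{fixm}, and requires $\delta<\delta_0$ as arranged in~\eqref{eq:choice-of-delta}.
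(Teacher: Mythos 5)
Your overall strategy (a discrete intermediate-value argument over the number $j$ of blender cycles along the nested subcubes $\{\bfK_\ba^{(j)}\}$ from Proposition~\ref{pl.completetour}, then realizing the itinerary and estimating exponents) is the same as the paper's. But there is a genuine gap in how you obtain conclusion (2) \emph{for every} $x\in\bfK'_\ba$. You run the intermediate-value argument on a quantity $\Phi(j,x)=\log\|D^\c f^{R'}(x)\|$ that depends on $x$, and you implicitly assume that choosing the smallest admissible $j$ crossing the threshold controls all points of the subcube at once. Knowing only the itinerary, the spread of $\log\|D^\c f^{R'}(\cdot)\|$ over the subcube during the blender phase is bounded only by $ST_{\rm b}(\ba)(\kappa_{\rm max}-\kappa_{\rm min})$, which by the construction is of order $mR|\widehat\chi|$ times a fixed constant; the allowed tolerance in (2) is $R'\rho|\widehat\chi|\delta$, which is smaller by the arbitrary factor $\delta$. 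So itinerary bounds alone cannot give (2) uniformly in $x$. The paper resolves this by fixing an auxiliary point $y\in\bfK_\ba^{(\ell_0)}$, defining $\omega^{\pm}(j)$ from $y$'s orbit, proving the crossing estimate in a narrower $\delta/3$-window (Claim~\ref{clc.find-time-in-blender}), and then transferring to all $x\in\bfK'_\ba$ via the shadowing/distortion control of Corollary~\ref{c.distortion-reference-cube} (the quantifiers $\eta$ and $L$, Claims~\ref{cl:etaclose} and \ref{clsubclaim-allx}): all points of the subcube stay $\eta$-close to $y$'s orbit except for $O(L)$ iterates, so per-step derivatives agree up to $\delta\rho|\widehat\chi|/3$. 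Note also the logical order: item (3) (your "afterthought") must be proved \emph{before} this transfer, because the bound $\|D^\c f^k(x)\|\le Ke^{k\delta\widehat\chi/2}$ is exactly the hypothesis needed to apply Corollary~\ref{c.distortion-reference-cube} to $\bfK'_\ba$. Without this ingredient your selection of $T_{\rm b}(\ba)$ does not yield (2).

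A secondary issue is your requirement that the residual block after the return have length $m'R$, which imposes the divisibility condition $mR+2T_0+jS+m'R=R'$. The $R'$ in the lemma is the one prescribed by \eqref{fixm}, uniform in $\ba$, and your "slight parameter adjustment" would in general move $R'$ by up to ${\rm lcm}(R,S)$, so as written you prove the statement for a nearby return time rather than the prescribed one (and it coarsens the intermediate-value step from $S$ to ${\rm lcm}(R,S)$, though that step remains bounded independently of $m$ and is harmless). The paper instead keeps $R'$ as in \eqref{fixm} and fills the leftover time inside $C$ after the return; if you want an argument that closes up exactly at time $R'$ with $f^{R'}(\bfK'_\ba)$ $\uu$-complete, the cleaner fix is to absorb the residue into a transition, which is permitted because the cyclic relation (Definition~\ref{d.trelation}) allows any transition time $T\ge T_0$, rather than to force the residual block to be a multiple of $R$.
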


Let us briefly describe the strategy to prove Lemma \ref{l.proexistHprime}. For every cube $\bfK_\ba$ indexed by some sufficiently long $m$-word $\ba$, we are going to choose the subcube $\bfK_\ba'$ as in Definition \ref{defsubhorse} (2a). For that, we choose the iteration times in the $\csu$-cube $C$, in the blender-horseshoe $\fB$, and back in $C$. The itinerary is split  as: 
\begin{itemize}[leftmargin=0.6cm ]
\item $mR$ iterations inside $C$, followed by 
\item the transition time $T_0$ from $C$ into $\fB$, followed by 
\item $ST_{\rm b}(\ba)$ iterations inside the superposition region of $\fB$, followed by  
\item the transition time $T_0$ from $\fB$ into $C$, followed by 
\item further $T_{\rm c}(\ba)$ iterations in $C$.
\end{itemize}
Compare also Figure \ref{fig.proof1}. These iterations are chosen to ensure that there is some auxiliary point $y\in\bfK_\ba$ which follows this itinerary and whose corresponding finite-time center Lyapunov exponent satisfies \eqref{finitetimeLyapexp}. This is done in such a way that the sum $R'=mR+T_0+ST_{\rm b}(\ba)+T_0+T_{\rm c}(\ba)$ is independent of $\ba$ and comparable to $mR$.

\begin{figure}[h] 
 \begin{overpic}[scale=.5]{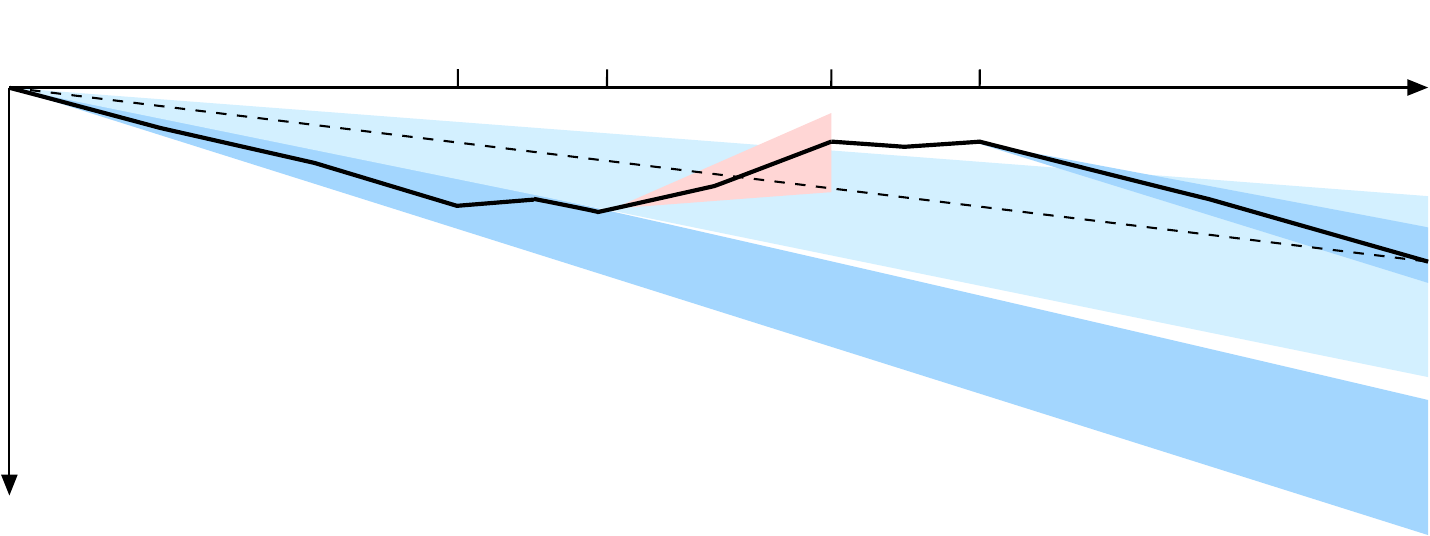}
	\put(98,28){$R'$}
	\put(14.5,35.5){$mR$}
	\put(35.5,35.5){$T_0$}
	\put(46,35.5){$ST_{\rm b}(\ba)$}
	\put(62,35.5){$T_0$}
	\put(75,35.5){$R'-2T_0-ST_{\rm b}(\ba)$}
	\put(0.5,33){$\overbrace{\hspace{3.8cm}}$}
	\put(32,33){$\overbrace{\hspace{1.23cm}}$}
	\put(42.4,33){$\overbrace{\hspace{1.85cm}}$}
	\put(58,33){$\overbrace{\hspace{1.26cm}}$}
	\put(68.6,33){$\overbrace{\hspace{3.7cm}}$}
	\put(63,12.5){{\rotatebox{-17}{\small{contraction by  $\exp\big(\chi+\widehat\chi(1\pm\delta)\big)$}}}}
	\put(60,20){{\rotatebox{-11}{\small{contraction by  $\exp\big(\chi+\rho\widehat\chi(1\pm\delta)\big)$}}}}
	\put(43.5,32){{\rotatebox{0}{\tiny{blender expansion}}}}
	\put(9,32){{\rotatebox{0}{\tiny{horseshoe contraction}}}}
	\put(75,32){{\rotatebox{0}{\tiny{horseshoe contraction}}}}
\end{overpic}
  \caption{Choice of $T_{\rm b}(\ba)$}
 \label{fig.proof1}
\end{figure}

\begin{proof}[Proof of Lemma \ref{l.proexistHprime}]	
We first fix some quantifiers and, in particular, the number $m_0$. 

By the uniform continuity of the center bundle, there exists $\eta>0$ such 	that
\begin{equation}\label{eq:choice-of-eta}
	\big|\log\|D^\c f(x)\|-\log\|D^\c f(y)\|\big|<\frac{\delta}{3}\rho|\widehat\chi|
	\quad\text{ for every }\quad
	d(x,y)<\eta. 
\end{equation}
Let $K=K(\bfH)$ be as in \eqref{eq:finite-time-LE-for-1st-horseshoe} and consider the constant $L=L(K, \delta\widehat\chi/2,\eta)\in\bN$ in Corollary \ref{c.distortion-reference-cube}. Hence, by \eqref{eq:finite-time-LE-for-1st-horseshoe} and \eqref{chiineq} applying Corollary \ref{c.distortion-reference-cube} we get  
\[
	\diam_{\rm H}(f^k(C))\le\eta \quad\text{ for every }\quad k=L,\ldots,R-L
\]
Let
\begin{equation}\label{defNormf}
	\tau 
	\eqdef \max_{x\in M}\big|\log\,\lVert D^\c f(x)\rVert\big|.
\end{equation}
Let $m_1=m_1(\bfH)$ be as in Proposition \ref{pl.completetour} applied to $\bfH$. 
Note that $1-\delta-\rho-\rho\delta/3>0$ due to \eqref{eq:choice-of-delta} and \eqref{fixgamma}. 
Choose an integer $m_0>m_1$ sufficiently large satisfying 
\begin{eqnarray}	
	\max\big\{\kappa_{\rm max},24L\tau,-\log\rho_{\rm b},18T_0\tau,6\log K\big\}
	&<&m_0R\rho|\widehat\chi|\delta<m_0R|\widehat\chi|\delta, 	\label{eqconditionsm01}\\
	S\tau+S\big(|\chi|+|\widehat\chi|(1-\delta)\big)+2T_0\tau
	&<&m_0R|\widehat\chi|\delta,  		\label{crazyS}\\
	6T_0\tau
	&<& m_0R|\widehat\chi|(1-\delta-\rho-\rho\delta/3),    \label{eqconditionsm01bis}\\
	2T_0\tau+2S\tau+S\kappa_{\rm max}&<&m_0R \frac{\delta^2|\widehat\chi|^2}{\kappa_{\rm max}}.	
	\label{eqconditionsm03}
\end{eqnarray}
Given $m\ge m_0\in\bN$, let $R'=R'(m)$ be as in \eqref{fixm}. Define $\ell_0\in \mathbb{N}$ as the largest integer such that 
\begin{equation}\label{e.defell0}
	mR\chi+mR\widehat\chi(1-\delta)+T_0\tau+\ell_0 S\kappa_{\rm max}\leq \log\rho_{\rm b}. 
\end{equation}

 For every $m\ge m_0$ and every $m$-word $\ba\in\{1,\ldots,N\}^m$, consider the number $\ell(\ba)\in\bN$ and the nested family of $\cs$-complete subcubes $\{\bfK_\ba^{(j)}\}_{j=0}^{\ell(\ba)}$  in Proposition \ref{pl.completetour}. Recall that by Claim \ref{claimellba}, we have 
 \[
 	\log\rho_{\rm b}\le mR\chi+mR\widehat\chi(1-\delta)+T_0\tau+\ell(\ba) S\kappa_{\rm max} . 	
 \]
 This immediately implies $\ell_0\le\ell(\ba)$.

\begin{claim}\label{cl.nmT0}
For every $\ba\in\{1,\ldots,N\}^m$, it holds $R'-mR-T_0\le\ell_0S \le \ell (\ba)S$.  
\end{claim}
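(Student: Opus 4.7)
The plan is to prove the two inequalities of Claim~\ref{cl.nmT0} separately. The right-hand inequality $\ell_0 S\le\ell(\ba)S$ is essentially immediate. Rearranging the defining inequality of $\ell_0$ gives
\[
\ell_0\le\frac{\log\rho_{\rm b}-T_0\tau-mR\chi-mR\widehat\chi(1-\delta)}{S\kappa_{\rm max}},
\]
and Claim~\ref{claimellba} furnishes exactly the same quantity as a lower bound for $\ell(\ba)$, once one observes that $\chi+\widehat\chi(1-\delta)<0$ (since $\chi\le 0$ and $\widehat\chi<0$), so that $|\chi+\widehat\chi(1-\delta)|=-\chi-\widehat\chi(1-\delta)$. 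Hence $\ell_0\le\ell(\ba)$.

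For the substantive left-hand inequality $R'-mR-T_0\le\ell_0 S$, the strategy is to compare the leading (linear in $m$) terms on the two sides and show that the resulting positive gap absorbs the bounded error terms for $m\ge m_0$. Concretely, \eqref{fixm} yields the upper bound $R'-mR-T_0 < mR|\widehat\chi|(1-3\delta)/\kappa_{\rm max}+1-T_0$, while the maximality of $\ell_0$ (i.e., failure of $\ell_0+1$ in the defining inequality) gives the lower bound
\[
\ell_0 S > \frac{\log\rho_{\rm b}+mR|\chi|+mR|\widehat\chi|(1-\delta)-T_0\tau}{\kappa_{\rm max}}-S,
\]
using $\chi\le 0$ and $\widehat\chi<0$ to unfold the signs. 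Subtracting and clearing $\kappa_{\rm max}$, it suffices to verify
\[
2mR|\widehat\chi|\delta+mR|\chi|\ge (S+1-T_0)\kappa_{\rm max}+T_0\tau+|\log\rho_{\rm b}|.
\]
The left side is bounded below by $2m_0R|\widehat\chi|\delta$ for $m\ge m_0$ (discarding the nonnegative $mR|\chi|$), while each summand on the right is controlled by one of the constraints \eqref{eqconditionsm01}, \eqref{crazyS}, \eqref{eqconditionsm03}.

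The main obstacle is the bookkeeping: the naive bounds from \eqref{eqconditionsm01} alone place $\kappa_{\rm max}$, $T_0\tau$ and $|\log\rho_{\rm b}|$ each below $m_0R\rho|\widehat\chi|\delta$, which is tight against $2m_0R|\widehat\chi|\delta$ on the left (the naive sum is $\approx 2\rho+\text{small}$, uncomfortably close to $2$). The sharper bound $S\kappa_{\rm max}<m_0R\delta^2|\widehat\chi|^2/\kappa_{\rm max}$ extracted from \eqref{eqconditionsm03}, combined with $|\widehat\chi|<\kappa_{\rm max}/6$ (forced by $\widehat\chi>\chi_0$ and \eqref{defchi0}), makes the $S$- and $T_0$-dependent contributions an order-$\delta$ smaller than the naive budget, and the strict inequality $\rho<1$ (provided by $\rho=1-\kappa_{\rm min}/(2\kappa_{\rm max})$) supplies the crucial slack absorbing the $\kappa_{\rm max}$ and $|\log\rho_{\rm b}|$ terms. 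Piecing these estimates together delivers the inequality $R'-mR-T_0\le\ell_0S$ for all $m\ge m_0$, completing the proof of the claim.
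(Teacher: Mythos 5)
Your proposal is correct and follows essentially the paper's own route: the right inequality comes, as in the paper, from comparing the defining inequality \eqref{e.defell0} with the lower bound of Claim~\ref{claimellba}, and the left inequality is the same comparison of the linear-in-$m$ terms coming from \eqref{fixm} against \eqref{e.defell0}, with the gap $2mR|\widehat\chi|\delta$ absorbing the bounded terms via \eqref{eqconditionsm01} and \eqref{eqconditionsm03}. The only cosmetic difference is that you use the failure of $\ell_0+1$ (which introduces the extra $S\kappa_{\rm max}$ you then absorb via \eqref{eqconditionsm03} and $|\widehat\chi|<\kappa_{\rm max}/6$), whereas the paper plugs $R'-mR-T_0$ directly into \eqref{e.defell0}; both are the same estimate rearranged, and your bookkeeping does close (note $T_0\ge1$, or the link $\delta<\delta_0\le\tfrac{3}{14}(1-\rho)$ from \eqref{eq:choice-of-delta}, makes the slack comfortable).
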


\begin{proof}
It remains to prove the first inequality. By  \eqref{e.defell0}, it is enough to check that
\[
	mR\chi+	mR\widehat\chi(1-\delta)+T_0\tau+(R'-mR-T_0)\cdot\kappa_{\rm max}
	<\log\rho_{\rm b}.
\]
For that, recall that \eqref{eqconditionsm01} implies $T_0\tau< mR|\widehat\chi|\delta$. Hence,
\begin{align*}
mR\chi+	mR\widehat\chi(1-\delta)&+T_0\tau+(R'-mR-T_0)\kappa_{\rm max}\\
	\text{\tiny by \eqref{eqconditionsm01}}\quad
	&\leq mR\chi+mR\widehat\chi(1-2\delta)+(R'-mR-1)\kappa_{\rm max}\\
	\text{\tiny by \eqref{fixm} }\quad
	&\leq mR\chi+mR\widehat\chi(1-2\delta)
		+mR\frac{|\widehat\chi|(1-3\delta)}{\kappa_{\rm max}} \kappa_{\rm max}\\
    	&\leq  -mR|\widehat\chi|\delta \\
	\text{\tiny by \eqref{eqconditionsm01} }\quad
	&<\log\rho_{\rm b}.
\end{align*}
This proves the claim.
\end{proof}

Claim \ref{cl.nmT0} ensures that the sequence of $\cs$-complete subcubes $\big\{\bfK_{\ba}^{(i)}\}_{i=0}^{\ell_0}$ of $\bfK_{\ba}$ given by Proposition~\ref{pl.completetour} is well defined. We finally are ready to determine the time $ST_{\rm b}(\ba)$ corresponding to the iterates in the blender-horseshoe. For that we fix an auxiliary point $y=y_\ba\in \bfK_{\ba}^{(\ell_0)}$. One should not expect this point to belong to the horseshoe we construct, it will just help us describe a part of trajectories of the points that will form the horseshoe. Let  
\begin{equation}\label{defkappay}
	\kappa_i(y)\eqdef \log\,\lVert D^\c f^i(y)\rVert.
\end{equation}
By construction, the following inequalities hold
\begin{equation}\label{properties-y}
	\begin{split}
	-iR|\widehat\chi|(1+\delta)\leq \kappa_{iR}(y)-iR\chi\leq -i|\widehat\chi|(1-\delta) 
	\quad&\text{ for all }  i=1,\ldots, m,\\
	S\kappa_{\rm min}\leq \kappa_{mR + T_0 + (i+1)S}(y)-\kappa_{mR +T_0 +iS}(y)\leq S\kappa_{\rm max}
	\quad&\text{ for all }i=0,\ldots,  \ell(\ba).
\end{split}\end{equation}
The first inequality follows from the iterates spent in the sets of the $\csu$-horseshoe $\bfH$ (recall the hypothesis \eqref{finitetimeLyapexp}) and the second one from the iterates in the blender region (recall \eqref{eq3}). For $j=0,\ldots,  \ell(\ba)$, consider now the auxiliary numbers
\begin{equation}\label{defTpm}
\begin{split}
	\omega^-(j)
	&\eqdef \kappa_{mR+T_0+jS}(y)+\big(R'-mR-jS-2T_0\big)\big(\chi+\widehat\chi(1+\delta)\big)-T_0\,\tau,\\
	\omega^+(j)
	&\eqdef \kappa_{mR+T_0+jS}(y)+\big(R'-mR-jS-2T_0\big)\big(\chi+\widehat\chi(1-\delta)\big)+T_0\,\tau.
\end{split}
\end{equation}

The meaning of those numbers is as follows. Assume a point follows the trajectory of $y$ up to the time $mR + T_0 + jS$, that is it first stays in $\bfH$ for $m$ full cycles, then goes to the blender-horseshoe for $j$ full cycles, and hence  spends there a total time $jS$, with approximately the same blender center expansion as the trajectory of $y$ has. Assume then that from this point on its trajectory comes back to $\bfH$ as fast as possible and stays in $\bfH$ until time $R'$. Then $\omega^+(j)$ and $\omega^-(j)$ will be the upper and lower bounds for the Lyapunov exponent at time $R'$. The exact calculation will be done in the proof of Claim \ref{clsubclaim-allx}.
	
\begin{claim}\label{clc.find-time-in-blender}
	There exists an integer $T_{\rm b}(\ba)\in\bN$,
\begin{equation}\label{propTba}
	1\leq ST_{\rm b}(\ba)<R'-mR-2T_0 \le \ell_0,
\end{equation}	 
such that
\begin{equation}\label{limits-Tis}
	R'\chi-R'\rho|\widehat\chi|(1+\delta/3)
	\le \omega^-(T_{\rm b}(\ba))
	< \omega^+(T_{\rm b}(\ba))
	\le R'\chi-R'\rho|\widehat\chi|(1-\delta/3).
\end{equation}
\end{claim}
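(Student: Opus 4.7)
The plan is a discrete intermediate-value argument for $j\mapsto\omega^\pm(j)$ on the admissible range $0\le j\le \ell_0$ (using $\ell_0\le\ell(\ba)$ from Claim \ref{cl.nmT0}). First, I would establish monotonicity: from the definition,
\[
\omega^\pm(j+1)-\omega^\pm(j)=\bigl[\kappa_{mR+T_0+(j+1)S}(y)-\kappa_{mR+T_0+jS}(y)\bigr]-S(\chi+\widehat\chi(1\mp\delta)),
\]
the first bracket lies in $[S\kappa_{\rm min},S\kappa_{\rm max}]$ by the second line of \eqref{properties-y}, and the second term is positive since $\chi\le 0$ and $\widehat\chi<0$. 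Thus each increment is strictly positive and bounded above by some explicit constant $C_0$ depending only on $S$, $\kappa_{\rm max}$, $|\chi|$, $|\widehat\chi|$, and $\delta$. I would also record the closed form
\[
\omega^+(j)-\omega^-(j)=2|\widehat\chi|\delta(R'-mR-jS-2T_0)+2T_0\tau,
\]
which is positive and strictly decreasing in $j$.

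Next come the boundary estimates. Plugging $j=0$ into the definition of $\omega^+$ and using the first line of \eqref{properties-y} at $i=m$ to bound $\kappa_{mR+T_0}(y)\le mR\chi-mR|\widehat\chi|(1-\delta)+T_0\tau$, a short calculation yields
\[
\omega^+(0)\le R'\chi-R'|\widehat\chi|(1-\delta)+2T_0\bigl(\tau+|\chi|+|\widehat\chi|(1-\delta)\bigr).
\]
The identity $(1-\delta)-\rho(1-\delta/3)=(1-\rho)-\delta(1-\rho/3)$, combined with \eqref{fixgamma} and \eqref{eq:choice-of-delta} (which force $\delta<3(1-\rho)/14$), gives $(1-\delta)-\rho(1-\delta/3)\ge (1-\rho)/2>0$, while \eqref{eqconditionsm01bis} ensures that $m_0R|\widehat\chi|(1-\rho)$ dominates all remaining constants. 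I would conclude $\omega^+(0)\le R'\chi-R'\rho|\widehat\chi|(1-\delta/3)-(C_0+1)$, so $\omega^+$ starts at least $C_0+1$ below the target upper threshold. A symmetric calculation at $j=\ell_0$, using the lower bound $S\kappa_{\rm min}$ and the lower estimate in \eqref{properties-y}, places $\omega^-(\ell_0)$ strictly above the target lower threshold.

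I would then define $T_{\rm b}(\ba)$ to be the largest $j\in[0,\ell_0]$ with $\omega^+(j)\le R'\chi-R'\rho|\widehat\chi|(1-\delta/3)$; the boundary step together with the bounded-increment property ensures $T_{\rm b}(\ba)\ge 1$ and $ST_{\rm b}(\ba)<R'-mR-2T_0$. The upper bound in \eqref{limits-Tis} is then immediate. For the lower bound, maximality gives $\omega^+(T_{\rm b}(\ba))>R'\chi-R'\rho|\widehat\chi|(1-\delta/3)-C_0$, so writing $\omega^-=\omega^+-(\omega^+-\omega^-)$ reduces the desired lower bound to the single inequality
\[
C_0+2|\widehat\chi|\delta\bigl(R'-mR-T_{\rm b}(\ba)S-2T_0\bigr)+2T_0\tau\le \tfrac{2\delta}{3}R'\rho|\widehat\chi|.
\]

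The main obstacle is this last inequality, and the crux is controlling the quantity $R'-mR-T_{\rm b}(\ba)S-2T_0$ from above. I would use the defining inequality $\omega^+(T_{\rm b}(\ba))+C_0> R'\chi-R'\rho|\widehat\chi|(1-\delta/3)$ together with the upper blender bound $\kappa_{mR+T_0+T_{\rm b}(\ba)S}(y)\le mR\chi-mR|\widehat\chi|(1-\delta)+T_0\tau+T_{\rm b}(\ba)S\kappa_{\rm max}$ to rearrange into $T_{\rm b}(\ba)S\kappa_{\rm max}\ge R'|\widehat\chi|(1-\rho)-O(1)$, which yields $R'-mR-T_{\rm b}(\ba)S-2T_0=O(R'|\widehat\chi|/\kappa_{\rm max})$. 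Consequently $2|\widehat\chi|\delta\cdot O(R'|\widehat\chi|/\kappa_{\rm max})=O(R'\delta|\widehat\chi|^2/\kappa_{\rm max})$ is small compared to $R'\rho|\widehat\chi|\delta$ because \eqref{defchi0} gives $|\widehat\chi|/\kappa_{\rm max}<1/6$ and $\rho$ is bounded away from $0$ by \eqref{fixgamma}. The additive constants $C_0$ and $2T_0\tau$ are absorbed into $R'\rho|\widehat\chi|\delta$ by the largeness conditions \eqref{eqconditionsm01} and \eqref{eqconditionsm03} on $m_0$, completing the verification of \eqref{limits-Tis}.
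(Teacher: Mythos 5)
Most of your outline tracks the paper's argument: monotonicity of $\omega^+$, the estimate at $j=0$, choosing $T_{\rm b}(\ba)$ as the last index at which $\omega^+$ is below the upper threshold, and then deducing the lower bound in \eqref{limits-Tis} from maximality plus the explicit formula for $\omega^+-\omega^-$; your final ``crux inequality'' and its verification via $|\widehat\chi|/\kappa_{\rm max}<1/6$ and $\rho\ge 1/2$ is exactly the paper's Subclaim~\ref{scl.endoftheend}. However, there is a genuine gap at the second boundary, i.e.\ in the proof of the strict inequality $ST_{\rm b}(\ba)<R'-mR-2T_0$ in \eqref{propTba}. You propose to check that $\omega^-(\ell_0)$ lies above the \emph{lower} threshold $R'\chi-R'\rho|\widehat\chi|(1+\delta/3)$ and assert that this, with bounded increments, caps $T_{\rm b}(\ba)$. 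It does not: since you define $T_{\rm b}(\ba)$ as the largest $j\in[0,\ell_0]$ with $\omega^+(j)$ below the \emph{upper} threshold, what must be shown is that $\omega^+$ already exceeds the upper threshold at $j_{\rm max}\eqdef\lfloor(R'-mR-2T_0)/S\rfloor$; knowing only that $\omega^-(\ell_0)$ exceeds the lower threshold is compatible with $\omega^+(j)$ staying below the upper threshold up to some $j\ge j_{\rm max}$, in which case $ST_{\rm b}(\ba)\ge S j_{\rm max}$ can reach or exceed $R'-mR-2T_0$ and \eqref{propTba} fails. Note also that $\ell_0$ (defined by \eqref{e.defell0}) is of order $mR(|\chi|+|\widehat\chi|)/(S\kappa_{\rm max})$ and can be far larger than $j_{\rm max}\approx mR|\widehat\chi|/(S\kappa_{\rm max})$ when $|\chi|\gg|\widehat\chi|$, so an estimate at $j=\ell_0$ is in any case the wrong place to evaluate; and if $T_{\rm b}(\ba)$ could equal $\ell_0$, your maximality step $\omega^+(T_{\rm b}(\ba)+1)>$ threshold would also be unavailable.

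The missing ingredient is precisely the paper's Subclaim~\ref{sc.T+nmT0}: a \emph{lower} bound for $\omega^+(j_{\rm max})$, proved using the blender lower rate $\kappa_{\rm min}$, the lower estimate in \eqref{properties-y}, and crucially the relation \eqref{fixm} between $R'$ and $mR$ together with the choice of $\rho$ in \eqref{fixgamma}; the computation reduces to the positivity of $(1-3\delta)\kappa_{\rm min}-(1+2\delta)\kappa_{\rm max}+(\kappa_{\rm max}-\kappa_{\rm min}/2)(1-\delta/3)$, which is exactly where the second constraint $\delta_0<3\kappa_{\rm min}/(28\kappa_{\rm max})$ in \eqref{eq:choice-of-delta} is consumed. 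Your sketch never performs (nor even states) this estimate, and the constraint $ST_{\rm b}(\ba)<R'-mR-2T_0$ is not decorative: it is part of the claim and is used later (e.g.\ in Claims~\ref{cl:choiceKaprime} and \ref{cl.preli-estimate-finite-center-LE}). A side remark: your derivation of the lower bound $T_{\rm b}(\ba)S\kappa_{\rm max}\ge R'|\widehat\chi|(1-\rho)-O(1)$ is unnecessary for the last step, since the trivial bound $R'-mR-T_{\rm b}(\ba)S-2T_0\le R'-mR=O(mR|\widehat\chi|/\kappa_{\rm max})$ from \eqref{fixm} (which is what the paper uses) already suffices; this is harmless, but it does not substitute for the missing upper control on $T_{\rm b}(\ba)$.
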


\begin{proof}
	The proof of this claim is split into several parts.
\begin{subclaim}\label{scl:increasing}
	 $\omega^+(j)$ increases with $j$. 
\end{subclaim}

\begin{proof}
By \eqref{properties-y} and \eqref{defTpm}, one has 
\begin{equation}\label{eq:growth-rate-of-T-plus}
	0
	<\kappa_{\rm min}-\big(\chi+\widehat\chi(1-\delta)\big)
	\leq\frac 1S ( \omega^+(j+1)-\omega^+(j))\leq \kappa_{\rm max}-\big(\chi+\widehat\chi(1-\delta)\big),
\end{equation}
proving that $\omega^+(j)$ is increasing.
\end{proof}

We continue by estimating $\omega^+(j)$ for $j=0$ and $j=j_{\rm max}\eqdef\lfloor(R'-mR-2T_0)/S\rfloor$. Note that $j_{\rm max}\leq\ell_0(\ba)$ due to Claim \ref{cl.nmT0}.

\begin{subclaim}\label{subclaim616}
$\omega^+(0)\leq   R'\chi-R'\rho|\widehat\chi|(1+\delta/3)$.
\end{subclaim}

\begin{proof}	
Note that
\[\begin{split}
\omega^+(0)
&=\kappa_{mR+T_0}(y)+
\big(R'-mR-2T_0\big)\big(\chi+\widehat\chi(1-\delta)\big)+T_0\,\tau
	\\
	\text{\tiny by \eqref{finitetimeLyapexp}}\quad
	&\leq mR\big(\chi+\widehat\chi(1-\delta)\big)+T_0\,\tau+
	\big(R'-mR-2T_0\big)\big(\chi+\widehat\chi(1-\delta)\big)+T_0\,\tau\\
	&= 2T_0\tau+(R'-2T_0)\big(\chi+\widehat\chi(1-\delta)\big)\\
	&\leq 6T_0\tau+R'\chi-R'|\widehat\chi|(1-\delta).
\end{split}\]
Check that \eqref{eqconditionsm01bis} implies
\[
	6T_0\tau - R'|\widehat\chi|(1-\delta)
	\le -R'\rho|\widehat\chi|(1+\delta/3)
\]
and hence Suclaim \ref{subclaim616} follows.
\end{proof}
Note that Subclaim \ref{subclaim616} already implies that, to get \eqref{limits-Tis}, we need $T_{\rm b}(\ba)\ge1$. 

\begin{subclaim} \label{sc.T+nmT0}
$\omega^+(j_{\rm max})>R'(\chi-\rho|\widehat\chi|(1-\delta/3)).$
\end{subclaim}

\begin{proof}
Check that $j_{\rm max}S=R'-mR-2T_0-s$ for some $s\in\{0,\ldots,S-1\}$.
Note that by \eqref{defTpm}
\[\begin{split}
	\omega^+(j_{\rm max})
	&=\kappa_{mR+T_0+j_{\rm max}S}(y)+(R'-mR-j_{\rm max}S-2T_0)(\chi+\widehat\chi(1-\delta))+T_0\,\tau\\
	\text{\tiny by \eqref{finitetimeLyapexp} and \eqref{defNormf}}\quad
	&\geq mR\big(\chi+\widehat\chi(1+\delta)\big)-T_0\,\tau+(R'-mR-2T_0)\, \kappa_{\rm min}-S\tau\\
	&\phantom{\ge\kappa_{R'-T_0}(y)}+S(\chi+\widehat\chi(1-\delta))+T_0\,\tau\\
	&\geq mR\big(\chi+\widehat\chi(1+\delta)\big)+(R'-mR)\, \kappa_{\rm min}-2T_0\tau\\
	&\phantom{\ge\kappa_{R'-T_0}(y)}-S\tau+S(\chi+\widehat\chi(1-\delta))\\
	\text{\tiny by \eqref{crazyS}}\quad
   	&\geq mR\big(\chi-|\widehat\chi|(1+2\delta)\big)+(R'-mR)\, \kappa_{\rm min}\\
  	\text{\tiny by \eqref{fixm}} \quad
	&\geq R'\chi-mR|\widehat\chi|(1+2\delta)+mR|\widehat\chi|\frac{(1-3\delta)\kappa_{\rm min}}{\kappa_{\rm max}}\\
   	&=R'\chi+mR|\widehat\chi| \bigg(\frac{(1-3\delta)\kappa_{\rm min}}{\kappa_{\rm max}}-(1+2\delta)\bigg).
\end{split}\]

Hence to prove the subclaim it is suffices to see that
\begin{equation}\label{todoso}
	mR|\widehat\chi|\, \bigg(\frac{(1-3\delta)\kappa_{\rm min}}{\kappa_{\rm max}}-(1+2\delta)\bigg)
	>-R'\rho|\widehat\chi|(1-\delta/3).
\end{equation}
To verify \eqref{todoso}, check that 
\begin{align*}
	&mR|\widehat\chi|\, \bigg(\frac{(1-3\delta)\kappa_{\rm min}}{\kappa_{\rm max}}-(1+2\delta)\bigg)
		+R'\cdot\rho|\widehat\chi|(1-\delta/3)\\
	\text{\tiny by \eqref{fixm}}\quad
	&\geq mR|\widehat\chi|\, \bigg(\frac{(1-3\delta)\kappa_{\rm min}}{\kappa_{\rm max}}-(1+2\delta)\bigg)
		+ mR\frac{|\widehat\chi|(1-3\delta)+\kappa_{\rm max}}{\kappa_{\rm max}}\cdot\rho|\widehat\chi|(1-\delta/3)\\
	&=\frac{mR|\widehat\chi|}{\kappa_{\rm max}}\bigg((1-3\delta)\kappa_{\rm min}-(1+2\delta)\kappa_{\rm max}
		+\big(\kappa_{\rm max}+|\widehat\chi|(1-3\delta)\big) \rho(1-\delta/3)\bigg)\\
	&>\frac{mR|\widehat\chi|}{\kappa_{\rm max}}\bigg((1-3\delta)\kappa_{\rm min}-(1+2\delta)\kappa_{\rm max}+\kappa_{\rm max} \rho(1-\delta/3)\bigg)
	\\
	\text{\tiny by \eqref{fixgamma}}\quad
	&\geq \frac{mR|\widehat\chi|}{\kappa_{\rm max}}\bigg((1-3\delta)\kappa_{\rm min}-(1+2\delta)\kappa_{\rm max}
		+(\kappa_{\rm max}-\kappa_{\rm min}/2)(1-\delta/3)\bigg)\\
	&> \frac{mR|\widehat\chi|}{\kappa_{\rm max}}\bigg( \kappa_{\rm min}(\frac{1}{2}-3\delta)- \kappa_{\rm max}\frac{7\delta}{3}\bigg)\\
\text{\tiny by \eqref{eq:choice-of-delta} and $\delta<\delta_0$}\quad	
&> \frac{mR|\widehat\chi|}{\kappa_{\rm max}}\bigg( \kappa_{\rm min}\frac{1}{4}- \kappa_{\rm max}\frac{7\delta}{3}\bigg)\\
	\text{\tiny by \eqref{eq:choice-of-delta}}\quad
	&>0.
\end{align*}
This proves that \eqref{todoso} and hence the subclaim.
\end{proof}

By Subclaim \ref{scl:increasing}, $\omega^+(j)$ is increasing with $j$. Hence, Subclaim~\ref{sc.T+nmT0} implies that there exists $1\le j<j_{\rm max}$  satisfying the last inequality in \eqref{limits-Tis}. Choose $T_{\rm b}(\ba)\in\bN$ so that
\begin{equation}\label{thisequation}
	\omega^+(T_{\rm b}(\ba))
	\leq R'\chi-R'\rho|\widehat\chi|(1-\delta/3)
	< \omega^+\big(T_{\rm b}(\ba)+1\big).
\end{equation}
As observed already, we have  $1\le T_{\rm b}(\ba)<(R'-mR-2T_0)/S$. Therefore, to finish the proof of Claim \ref{clc.find-time-in-blender}, it remains to show the following subclaim.

\begin{subclaim}\label{scl.endoftheend}
$\omega^-(T_{\rm b}(\ba))\geq R'\chi-R'\rho|\chi|(1+\delta/3).$
\end{subclaim}

\begin{proof}
By our choice in \eqref{thisequation}, we get 
\[\begin{split} 
	\omega^+(T_{\rm b}(\ba))
	&\leq R'\chi-R'\rho|\widehat\chi|(1-\delta/3)
	<\omega^+\big(T_{\rm b}(\ba)+1\big)\notag\\
	\text{\tiny  by \eqref{eq:growth-rate-of-T-plus} }\quad
	&\leq \omega^+(T_{\rm b}(\ba))+S\kappa_{\rm max}-S\big(\chi+\widehat\chi(1-\delta)\big)\notag,
\end{split}\]
which gives that 
\begin{equation}\label{eq:bounding-interval-for-T-ba}
	R'\chi-R'\rho|\widehat\chi|(1-\delta/3)-S\kappa_{\rm max}+S\big(\chi+\widehat\chi(1-\delta)\big)
	\leq \omega^+(T_{\rm b}(\ba))
	\le R'\chi-R'\rho|\widehat\chi|(1-\delta/3).
\end{equation}
By \eqref{defTpm}, one has 
\begin{equation}\label{eq:gap-of-Tplus-Tminus}
	\begin{split}
	\omega^+(T_{\rm b}(\ba))-\omega^-(T_{\rm b}(\ba))
	&=2T_0\tau+2\delta(R'-mR-T_{\rm b}(\ba)S-2T_0)|\widehat\chi|
	\\
	&<2T_0\tau+2\delta(R'-mR-1)|\widehat\chi|
	\\
	\text{\tiny by \eqref{fixm}}	
	&\leq 2T_0\tau+2\delta\frac{1-3\delta}{\kappa_{\rm max}} mR|\widehat\chi|^2.
\end{split}\end{equation}
Note that $T_0\ge1$, $\delta\in(0,1)$, and $\max\{|\chi|,|\widehat\chi|\}\le\tau$ imply
\begin{equation}\label{somethingtrivial}
	\chi+\widehat\chi(1-\delta)
	\ge -2\tau.
\end{equation}
We get
\[\begin{split}
	\omega^-(T_{\rm b}(\ba))
	&= -\Big(\omega^+(T_{\rm b}(\ba))-\omega^-(T_{\rm b}(\ba))\Big)+\omega^+(T_{\rm b}(\ba))\\
	\text{\tiny by \eqref{eq:gap-of-Tplus-Tminus}, \eqref{eq:bounding-interval-for-T-ba}}
	&> -\Big(2T_0\tau+2\delta\frac{1-3\delta}{\kappa_{\rm max}} mR|\widehat\chi|^2\Big) 
		+ \Big(R'\chi-R'\rho|\widehat\chi|(1-\delta/3)-S\kappa_{\rm max}+S\big(\chi+\widehat\chi(1-\delta)\big)\Big)\\
	\text{\tiny by \eqref{somethingtrivial}}\quad
	&\ge \Big(-2T_0\tau-2\delta\frac{1-3\delta}{\kappa_{\rm max}} mR|\widehat\chi|^2\Big) 
		+ \Big(R'\chi-R'\rho|\widehat\chi|\cdot(1-\delta/3)-S\kappa_{\rm max}-2S\tau\Big)\\
	&\ge R'\chi -R'\rho|\widehat\chi|\cdot(1+\delta/3) 
		+ \Big(- 2T_0\tau-2S\tau - 2\delta\frac{1-3\delta}{\kappa_{\rm max}} mR|\widehat\chi|^2\Big) 
		+ \Big( R'\rho|\widehat\chi|\cdot\frac23\delta-S\kappa_{\rm max}\Big).
\end{split}\]
Thus, to prove the subclaim, it is enough to check that the sum of latter two brackets is positive.
To see why this is so, observe first that \eqref{eqconditionsm03} together with $m\ge m_0$ implies
\[
	2T_0\tau+2S\tau+S\kappa_{\rm max}
	\le \frac{\delta^2}{\kappa_{\rm max}}mR|\widehat\chi|^2.
\]
Hence, we get
\begin{equation}\label{citefixm}
	0
	< \frac{5\delta^2}{\kappa_{\rm max}}mR|\widehat\chi|^2
	\le  \frac{6\delta^2}{\kappa_{\rm max}}mR|\widehat\chi|^2-2T_0\tau-2S\tau-S\kappa_{\rm max}. 
\end{equation}
Now we can check that the sum of the above two brackets is positive.
\[\begin{split}
	&\Big(-4T_0\tau-2\delta\frac{1-3\delta}{\kappa_{\rm max}} mR|\widehat\chi|^2\Big) 
		+ \Big(\frac23R'\cdot\rho|\widehat\chi|\delta-S\kappa_{\rm max}\Big)\\
	\text{\tiny  by \eqref{fixm}}\quad
	&> \frac{2}{3} mR \frac{|\widehat\chi|(1-3\delta)
		+\kappa_{\rm max}}{\kappa_{\rm max}}\cdot \rho|\widehat\chi|\delta
		-2\delta\frac{(1-3\delta)}{\kappa_{\rm max}} mR|\widehat\chi|^2
		-2T_0\tau-2S\tau-S\kappa_{\rm max} \\
		&>\frac{2}{3} mR \cdot \rho|\widehat\chi|\delta
		-\frac{2\delta}{\kappa_{\rm max}} mR|\widehat\chi|^2
		+\frac{6\delta^2}{\kappa_{\rm max}} mR|\widehat\chi|^2-2T_0\tau-2S\tau-S\kappa_{\rm max} \\
	\text{\tiny by \eqref{citefixm}}\quad
	&>mR|\widehat\chi|\bigg(\frac{2}{3}\rho \delta 
			-\frac{2\delta}{\kappa_{\rm max}} |\widehat\chi|\bigg) \\
	&= 2 \delta mR |\widehat\chi|\bigg(\frac{1}{3}\rho
			-\frac{1}{\kappa_{\rm max}} |\widehat\chi|\bigg)\\
	\text{\tiny by \eqref{defchi0} and \eqref{fixgamma}}\quad
	&>0.
\end{split}\]
This proves the subclaim.
\end{proof}
Subclaim \ref{scl.endoftheend} together with \eqref{thisequation} finishes the proof of Claim \ref{clc.find-time-in-blender}.	
\end{proof}

\begin{claim}\label{cl:choiceKaprime}
There exists a $\cs$-complete subcube  $\bfK'_\ba$ of $\bfK_\ba^{(T_{\rm b} (\ba))}$ such that
\begin{equation}\label{propKprime}
	f^k(\bfK'_\ba)\subset
	\begin{cases}
	\bigcup_{i=1}^N\bfK_i&\text{ for }0\leq k\le mR,\\
	\bfC_1\cup\bfC_2&\text{ for }mR+T_0\le k\le mR+T_0+ST_{\rm b}(\ba),\\
	\bigcup_{i=1}^N\bfK_i&\text{ for }mR+2T_0+ST_{\rm b}(\ba)\le k\le  R'.
	\end{cases}
\end{equation}
Moreover, $f^{R'}(\bfK'_\ba)$ is a $\uu$-complete subcube of $C$ and thus satisfies  (1)  in Lemma \ref{l.proexistHprime}.
\end{claim}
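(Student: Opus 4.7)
The plan is to take the nested subcube $\bfK_\ba^{(T_{\rm b}(\ba))}$ furnished by Proposition~\ref{pl.completetour} — which already realizes the itinerary during the horseshoe phase and during the time spent in the blender — and then refine it once more so that after returning from the blender the orbit visits prescribed rectangles of $\bfH$ and lands, at time $R'$, as a $\uu$-complete subcube of $C$. Note first that by Claim~\ref{clc.find-time-in-blender} we have $T_{\rm b}(\ba)\le (R'-mR-2T_0)/S\le \ell_0\le \ell(\ba)$ (using Claim~\ref{cl.nmT0}), so $\bfK_\ba^{(T_{\rm b}(\ba))}$ is well defined. Applying items (1)--(3) of Proposition~\ref{pl.completetour} with $j=T_{\rm b}(\ba)$ immediately gives the first two regimes of \eqref{propKprime}: the iterates $f^{kR}(\bfK_\ba^{(T_{\rm b}(\ba))})$ sit in $\bfK_{a_k}$ for $k=0,\dots,m-1$, and $f^{mR+T_0+kS}(\bfK_\ba^{(T_{\rm b}(\ba))})$ sits in a $\mathcal{B}$-cube, hence in $\bfC_1\cup\bfC_2$, for $k=0,\dots,T_{\rm b}(\ba)$. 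Moreover,
\[
C_{\rm ret}\eqdef f^{mR+2T_0+ST_{\rm b}(\ba)}\bigl(\bfK_\ba^{(T_{\rm b}(\ba))}\bigr)
\]
is a $\uu$-complete subcube of $C$ and therefore, by Remark~\ref{remcover}, $\uu$-covers every $\bfK_i$.

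Next I would handle the return phase. Set $T_{\rm c}(\ba)\eqdef R'-mR-2T_0-ST_{\rm b}(\ba)>0$. Using the slack in \eqref{limits-Tis}, whose width is of order $\tfrac23 R'\rho|\widehat\chi|\delta$ and hence, by \eqref{eqconditionsm01} and $m\ge m_0$, much larger than $\max(R,S)$, I would perturb $T_{\rm b}(\ba)$ by at most $\lceil R/S\rceil$ units so that the new value $T_{\rm c}(\ba)$ is a positive multiple of $R$, say $T_{\rm c}(\ba)=qR$; this adjustment only enlarges the error in \eqref{limits-Tis} by an amount bounded by $S(\kappa_{\rm max}+\tau)$, which is absorbed into the $\delta/3$ slack thanks to \eqref{eqconditionsm03}. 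Once $T_{\rm c}(\ba)=qR$, pick any $q$-word $\bfb=(b_0,\dots,b_{q-1})\in\{1,\dots,N\}^q$. Since $C_{\rm ret}$ $\uu$-covers $\bfK_{b_0}$ and, by Corollary~\ref{cl.cubeshorseshoe}, $f^{qR}(\bfK_\bfb)$ is a $\uu$-complete subcube of $C$, the connected component $V$ of $C_{\rm ret}\cap \bfK_\bfb$ is a $\cs$-complete subcube of $C_{\rm ret}$ contained in $\bfK_\bfb$, satisfying $f^{jR}(V)\subset \bfK_{b_j}$ for $j=0,\dots,q-1$ and $f^{qR}(V)$ is a $\uu$-complete subcube of $C$; this uses Lemma~\ref{lr.uucovering} together with Remark~\ref{r.uucompcover} iterated along $\bfb$.

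Finally I would pull $V$ back by setting
\[
\bfK'_\ba\eqdef f^{-(mR+2T_0+ST_{\rm b}(\ba))}(V).
\]
By Lemma~\ref{lemcubeit}, $\bfK'_\ba$ is a $\cs$-complete subcube of $\bfK_\ba^{(T_{\rm b}(\ba))}$, and by construction it realizes the full itinerary \eqref{propKprime}: the first $mR$ iterates inherit the behavior of $\bfK_\ba^{(T_{\rm b}(\ba))}$; the blender phase of length $ST_{\rm b}(\ba)$ is inherited likewise; and the last $qR=T_{\rm c}(\ba)$ iterates visit $\bfK_{b_j}$ at each multiple of $R$ by choice of $V$, with $f^{R'}(\bfK'_\ba)=f^{qR}(V)$ a $\uu$-complete subcube of $C$, yielding item~(1) of Lemma~\ref{l.proexistHprime}. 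The only subtle step is the adjustment of $T_{\rm b}(\ba)$ to make $T_{\rm c}(\ba)$ a multiple of $R$: one must check that this small shift does not spoil the bracketing of the auxiliary function $\omega^\pm$ in \eqref{limits-Tis}, since those bounds feed directly into items~(2) and (3) of Lemma~\ref{l.proexistHprime}; this is where the slack built into \eqref{eq:choice-of-delta}--\eqref{eqconditionsm03} is used.
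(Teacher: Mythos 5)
The first half of your argument is essentially the paper's own (very terse) construction: take $\bfK_\ba^{(T_{\rm b}(\ba))}$, read off the first two regimes of \eqref{propKprime} from Proposition~\ref{pl.completetour}, note that $C_{\rm ret}=f^{mR+2T_0+ST_{\rm b}(\ba)}(\bfK_\ba^{(T_{\rm b}(\ba))})$ is a $\uu$-complete subcube of $C$, refine it through the rectangles of $\bfH$ via Remark~\ref{remcover}, Lemma~\ref{lr.uucovering} and Corollary~\ref{cl.cubeshorseshoe}, and pull back with Lemma~\ref{lemcubeit}. The problem lies exactly in the step you flag as subtle: your device for making $T_{\rm c}(\ba)=R'-mR-2T_0-ST_{\rm b}(\ba)$ divisible by $R$ does not work. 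Changing $T_{\rm b}(\ba)$ by an integer changes $T_{\rm c}(\ba)$ only by multiples of $S$, so the residue of $T_{\rm c}(\ba)$ modulo $R$ can only move inside the coset $T_{\rm c}(\ba)+\gcd(R,S)\bZ$ modulo $R$; if for instance $R$ divides $S$ (or, in general, if $\gcd(R,S)$ does not divide $T_{\rm c}(\ba)\bmod R$), then no perturbation of $T_{\rm b}(\ba)$, however large, makes $T_{\rm c}(\ba)$ a multiple of $R$. In addition, your error bookkeeping is off: shifting $T_{\rm b}(\ba)$ by $\lceil R/S\rceil$ units moves $ST_{\rm b}(\ba)$ by roughly $R+S$, so $\omega^{\pm}$ moves by an amount of order $(R+S)(\kappa_{\rm max}+\tau)$, not $S(\kappa_{\rm max}+\tau)$, and such an error is not covered by \eqref{eqconditionsm03}, whose left-hand side does not scale with $R$. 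Finally, the claim is stated for the number $T_{\rm b}(\ba)$ already fixed in Claim~\ref{clc.find-time-in-blender}, and Claims~\ref{cl.preli-estimate-finite-center-LE} and \ref{clsubclaim-allx} then use \eqref{limits-Tis} for that value; redefining $T_{\rm b}(\ba)$ means you prove (and would have to re-derive the sequel for) a different statement.

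Your underlying worry, that the leftover time $T_{\rm c}(\ba)$ need not be a multiple of $R$, is legitimate: the paper's proof simply says ``iterating inside $C$ until completing $R'$ iterates'' and does not address the alignment. But the natural place to absorb the misalignment is not $T_{\rm b}(\ba)$: the cyclic relation (Definition~\ref{d.trelation}) allows any transition time $T\ge T_0$, so one can let the return transition from the blender to $C$ last $T_0+s$ iterates with $s\in\{0,\ldots,R-1\}$ chosen so that the remaining stretch in the horseshoe is an exact multiple of $R$ (this moves the leftover time in steps of $1$, not of $S$). The price is an extra transition error of at most $R\tau$ in $\omega^{\pm}$, which can be absorbed by strengthening the largeness requirement on $m_0$ (all of $R$, $\tau$, $\kappa_{\rm max}$ are fixed before $m_0$ is chosen), together with a version of item (d) of Remark~\ref{r.transitiontime} uniform over $T\in[T_0,T_0+R]$, which follows from the same continuity argument. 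As written, however, your proof has a genuine gap precisely at the point where it departs from the paper.
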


\begin{proof}
Consider the $\cs$-complete subcube $\bfK_\ba^{(T_{\rm b} (\ba))}$ of $\bfK_\ba$. It follows from item 3 in  Proposition \ref{pl.completetour} that its image $f^{mR+2T_0+ST_{\rm b}(\ba)}(\bfK_\ba^{(T_{\rm b} (\ba))})$ is a $\uu$-complete subcube of $C$. Since it holds $mR+2T_0+ST_{\rm b}(\ba)<R'$, iterating inside $C$  until completing $R'$ iterates, one finally gets a $\cs$-complete subcube  $\bfK'_\ba$ of $\bfK_\ba^{(T_{\rm b} (\ba))}$ such that $f^{R'}(\bfK'_\ba)$ is a $\uu$-complete subcube of $C$.
\end{proof}

\begin{claim}\label{cl.preli-estimate-finite-center-LE}
Let $K$ be as in \eqref{eq:finite-time-LE-for-1st-horseshoe}, then  assertion (3) in Lemma \ref{l.proexistHprime} holds, that is, for every $x\in \bfK'_\ba$ and $k=1,\ldots, R^\prime$,   
\[
	\|D^\c f^k(x)\|\leq K\cdot e^{k\delta\widehat\chi/2}  .
\]
\end{claim}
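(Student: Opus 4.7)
The plan is to follow the itinerary of $x \in \bfK'_\ba$ given in \eqref{propKprime}, splitting $\{0,\ldots,R'\}$ into five phases and bounding $\|D^\c f^k(x)\|$ multiplicatively on each: the initial horseshoe block $[0, mR]$, the first transition of length $T_0$, the blender block of length $ST_{\rm b}(\ba)$, the return transition of length $T_0$, and the final horseshoe block up to time $R'$. On each phase I would use the bound best suited to that regime: \eqref{finitetimeLyapexp} together with \eqref{eq:finite-time-LE-for-1st-horseshoe} on the two horseshoe blocks, the blender estimate \eqref{eq3} on full $S$-cycles inside the superposition region, and the global bound $\|D^\c f\| \leq e^\tau$ (with $\tau$ from \eqref{defNormf}) on the two transitions and on the partial $S$-cycle at the end of the blender phase.

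A bookkeeping observation I would exploit is that the factor $K$ from \eqref{eq:finite-time-LE-for-1st-horseshoe} only enters on partial $R$-cycles (length strictly less than $R$) and is therefore absent at the exact endpoint $k=mR$. Chaining the transitions and the blender contributes at most $e^{2T_0\tau + S\tau + (k-mR-T_0)^+\kappa_{\rm max}}$ without introducing any $K$, giving at $k_4:=mR+2T_0+ST_{\rm b}(\ba)$ the estimate $\|D^\c f^{k_4}(x)\| \leq e^{mR(\chi+\widehat\chi(1-\delta)) + 2T_0\tau + S\tau + ST_{\rm b}(\ba)\kappa_{\rm max}}$. On phase V, since $f^{k_4}(x) \in \bigcup_i \bfK_i$, applying \eqref{eq:finite-time-LE-for-1st-horseshoe} and \eqref{finitetimeLyapexp} to its orbit introduces exactly one factor of $K$ and produces the overall bound of the required form $K e^{\bullet}$. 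Since the exponent of the intermediate bound is monotone in $k$ on each phase (decreasing with slope $\chi+\widehat\chi(1-\delta)<0$ on phases I and V; increasing with slope at most $\kappa_{\rm max}$ on phases II, III, IV), it suffices to verify the target inequality at the phase endpoints.

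The main obstacle will be the algebraic verification that the resulting exponent is dominated by $k\delta\widehat\chi/2$, in particular on phase V,
\[(mR+k')(\chi+\widehat\chi(1-\delta)) + 2T_0\tau + S\tau + ST_{\rm b}(\ba)\kappa_{\rm max} \leq (mR+2T_0+ST_{\rm b}(\ba)+k')\,\delta\widehat\chi/2.\]
The heart of it is the cancellation between the horseshoe contraction $-(mR+k')|\widehat\chi|(1-3\delta/2)$ and the blender expansion bound $ST_{\rm b}(\ba)\kappa_{\rm max} \leq mR|\widehat\chi|(1-3\delta) + \kappa_{\rm max}$, the latter following from $ST_{\rm b}(\ba) < R'-mR-2T_0$ and the definition \eqref{fixm} of $R'$. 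The leading-order $mR|\widehat\chi|$ terms cancel and leave a strictly negative residue of order $\tfrac32 mR|\widehat\chi|\delta$, which dominates the quadratic correction $mR|\widehat\chi|^2\delta/(2\kappa_{\rm max})$ thanks to $|\widehat\chi|/\kappa_{\rm max}<1/6$ from \eqref{onehass}, and absorbs the bounded residual $2T_0\tau+S\tau+\kappa_{\rm max}$ provided $m\geq m_0$, which is the content of \eqref{eqconditionsm01}. This balancing is tight and relies on the factors $1-3\delta$ in \eqref{fixm}, $1-\delta$ in \eqref{finitetimeLyapexp}, and the target rate $\delta/2$ being chosen compatibly; I expect this to be the delicate step.
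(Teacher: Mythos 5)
Your plan is essentially the paper's own proof: the paper also splits the orbit at time $mR$ according to the itinerary \eqref{propKprime}, uses \eqref{finitetimeLyapexp}/\eqref{eq:finite-time-LE-for-1st-horseshoe} on the horseshoe stretches, \eqref{eq3} on the blender block and $\tau$ from \eqref{defNormf} on the transitions, and then closes the algebra with \eqref{propTba}, \eqref{fixm}, \eqref{eqconditionsm01} and the bound $R'<2mR$ coming from \eqref{onehass}; your five-phase/endpoint-monotonicity packaging and the observation that $K$ only enters on partial $R$-cycles are cosmetic refinements of the same argument.

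Two bookkeeping points in your sketched endgame need repair, though. First, $S\tau$ is not controlled by \eqref{eqconditionsm01} at all: the condition that absorbs it is \eqref{crazyS}, which gives $2T_0\tau+S\tau<mR|\widehat\chi|\delta$. Second, as you set it up the budget need not close: with the giveaway $ST_{\rm b}(\ba)\kappa_{\rm max}\le mR|\widehat\chi|(1-3\delta)+\kappa_{\rm max}$, the terms $S\tau$ and $\kappa_{\rm max}$ can each be as large as (nearly) $mR|\widehat\chi|\delta$ at $m=m_0$, while the residue you have after the cancellation and the quadratic correction is only about $\tfrac{17}{12}mR|\widehat\chi|\delta$, so the "absorption provided $m\ge m_0$" is not justified as stated. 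The fix is either to keep the $-2T_0\kappa_{\rm max}$ when passing from \eqref{propTba} to \eqref{fixm} (since $T_0\ge1$, the $+1$ in \eqref{fixm} is absorbed and no $+\kappa_{\rm max}$ survives), or, more simply, to finish as the paper does: after the cancellation the bound is $mR(\chi+2\delta\widehat\chi)$ plus the error terms, and since $\chi\le0$ and $R'<2mR$ one only needs $2T_0\tau+S\tau\le mR|\widehat\chi|\delta$ (exactly \eqref{crazyS}) to conclude $\log\|D^\c f^k(x)\|\le mR\delta\widehat\chi\le \tfrac{R'}{2}\delta\widehat\chi\le \tfrac{k}{2}\delta\widehat\chi$, with the factor $K$ then free of charge. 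With either repair your argument is complete and coincides with the paper's.
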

	
\begin{proof}
Note that by \eqref{fixm}, \eqref{defchi0}, and $\widehat\chi\in(\chi_0,0)$, one has 
\[
	R'
	< mR\Big(\frac{|\widehat\chi|(1-3\delta)}{\kappa_{\rm max}}+1\Big)+1
	= mR\Big(\frac{|\widehat\chi|(1-3\delta)}{6|\chi_0|}+1\Big)+1
	< mR\Big(\frac{1}{6}+\frac{1}{mR}+1\Big)
	< 2mR,
\]
where for the last inequality we used $1/(mR)<5/6$, which follows from \eqref{eqconditionsm01}. Using the above, let us first check the assertion for $k=1,\ldots, mR-1$. By \eqref{eq:finite-time-LE-for-1st-horseshoe}, \eqref{chiineq}, and \eqref{propKprime},  one has 
\[
	\|D^\c f^k(x)\|
	\leq K\cdot e^{k(\chi+\widehat\chi(1-\delta))}
	\leq K\cdot e^{k\delta\widehat\chi/2}. 
\]
We can then check that the assertion for $mR\le k\le R'$. Indeed, using \eqref{propKprime}, one has 
\[\begin{split}
	\log\|D^\c f^k(x)\|
 &	=\log \|D^\c f^{mR}(x)\| + \log \|D^\c f^{k-mR}(f^{mR}(x))\|\\
	\text{\tiny{by \eqref{finitetimeLyapexp}, \eqref{defNormf}, \eqref{eq3}, \eqref{eq:finite-time-LE-for-1st-horseshoe}}}\quad
	&\leq  mR(\chi+\widehat\chi(1-\delta))
	+2T_0\tau+ST_{\rm b}(\ba)\kappa_{\rm max}+\\
	&\phantom{\leq}+\max\{0,k-mR-ST_{\rm b}(\ba)-2T_0\}(\chi+\widehat\chi(1-\delta))+\log K\\
	\text{\tiny{by \eqref{propTba}}}	\quad
	&\leq  mR(\chi+\widehat\chi(1-\delta))+2T_0\tau+(R'-mR-2T_0)\kappa_{\rm max}+\log K\\
	\text{\tiny by \eqref{fixm}}\quad
	&\leq mR(\chi+\widehat\chi(1-\delta))+2T_0\tau
		+mR\frac{|\widehat\chi|(1-3\delta)}{\kappa_{\rm  max}}\kappa_{\rm max}+\log K\\
	&=mR(\chi+2\delta\widehat\chi)+2T_0\tau+\log K\\
	\text{\tiny by \eqref{eqconditionsm01}
	}\quad
	&<mR(\chi+2\delta\widehat\chi) - mR\widehat\chi\delta
	\leq mR\delta\widehat\chi
\end{split}\] 
Note that \eqref{onehass} and the choice of $R'$ in \eqref{fixm} implies that $R'<2mR$. Hence, we conclude 
\[
	\log \,\|D^\c f^k(x)\|
	\leq R^\prime\delta\widehat\chi/2\\
	\leq k\delta\widehat\chi/2.
\] 
This proves the claim.
\end{proof}

Recall the constant $L=L(K,\delta\widehat\chi/2,\eta)\in\bN$ in Corollary \ref{c.distortion-reference-cube}.

\begin{claim}\label{cl:etaclose}
	It holds $\diam(f^k(\bfK'_\ba))\le\eta$ for every $k=L,\ldots,R'-L$. Similarly, it holds $\diam (f^k(\bfK_\ba^{T_{\rm b}(\ba)})) \leq \eta$ for $k=L,\ldots,mR + T_0 + ST_{\rm b}(\ba)-L$.
\end{claim}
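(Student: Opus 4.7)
The plan is a direct, twofold application of the distortion Corollary~\ref{c.distortion-reference-cube}, which was prepared for precisely this use: the constants $L=L(K,\delta\widehat\chi/2,\eta)$ and the target diameter $\eta$ match the roles of $L$ and $\varepsilon$ there, with contraction rate $\chi=\delta\widehat\chi/2<0$. For each of the two cubes, I will verify hypotheses (1)--(3) of the corollary with the appropriate time horizon $n$.

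First, for $\bfK'_\ba$ with $n=R'$: hypothesis (1) (uniform upper bound $\|D^\c f^k(x)\|\le K e^{k\delta\widehat\chi/2}$ for $k=1,\ldots,R'$) is exactly the content of Claim~\ref{cl.preli-estimate-finite-center-LE}. Hypothesis~(2) holds since $\bfK'_\ba\subset C=C^\csu(\gamma,r)$ with $r<\varepsilon_1/3$, and $f^{R'}(\bfK'_\ba)$ is a $\uu$-complete subcube of $C$ by Claim~\ref{cl:choiceKaprime}, so both diameters are bounded by a uniform multiple of $r$ below $\varepsilon_1$. Hypothesis~(3) (a base of $\bfK'_\ba$ contained in $\bfK'_\ba$) is produced by the same standard device as in the proof of Corollary~\ref{cl.cubeshorseshoe}: the canonical base $\cW^\ss(\gamma,r)\subset C$ meets the $\uu$-complete subcube $f^{R'}(\bfK'_\ba)$ in a base of that subcube, whose $f^{R'}$-preimage is a base of $\bfK'_\ba$ lying inside $\bfK'_\ba$. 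The corollary then yields $\diam(f^k(\bfK'_\ba))\le\eta$ for $k=L,\ldots,R'-L$.

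For $\bfK_\ba^{T_{\rm b}(\ba)}$ with $n=mR+T_0+ST_{\rm b}(\ba)$ the strategy is identical. Hypothesis~(1) is obtained by rerunning, verbatim, the calculation of Claim~\ref{cl.preli-estimate-finite-center-LE} restricted to this shorter range: by Proposition~\ref{pl.completetour} and Corollary~\ref{cl.cubeshorseshoe}, the orbit of $\bfK_\ba^{T_{\rm b}(\ba)}$ spends $k\le mR$ steps inside $\bigcup_i\bfK_i$ (controlled by \eqref{eq:finite-time-LE-for-1st-horseshoe}), at most $2T_0$ transition steps controlled by $\tau$ in \eqref{defNormf}, and $ST_{\rm b}(\ba)$ steps in $\bfC_1\cup\bfC_2$ controlled by $\kappa_{\rm max}$ via \eqref{eq3}; the chain of inequalities from the final display of Claim~\ref{cl.preli-estimate-finite-center-LE}, together with \eqref{fixm} and \eqref{eqconditionsm01}, gives $\log\|D^\c f^k(x)\|\le mR\delta\widehat\chi\le (k/2)\delta\widehat\chi$ since $k\le mR+T_0+ST_{\rm b}(\ba)<R'<2mR$. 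Hypothesis~(3) is transferred from $\bfK'_\ba$: since $\bfK'_\ba$ is a $\cs$-complete subcube of $\bfK_\ba^{T_{\rm b}(\ba)}$ whose base (just constructed) lies in $\bfK'_\ba$, that same base meets every fiber of $\bfK_\ba^{T_{\rm b}(\ba)}$ and therefore bases $\bfK_\ba^{T_{\rm b}(\ba)}$ as well, inside it. Hypothesis~(2) at the initial time is as before; at the final time the image is a $\mathcal{B}$-cube, hence contained in the fixed blender rectangle $\bfC$, whose diameter is a uniform constant (which we may assume is $\le\varepsilon_1$, iterating the blender-horseshoe as in Remark~\ref{iteratesB} if necessary). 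The corollary gives the second asserted bound.

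The main obstacle is a bookkeeping one: verifying the finite-time contraction estimate for $\bfK_\ba^{T_{\rm b}(\ba)}$ on the range where its orbit ends inside the blender (rather than inside $C$, as for $\bfK'_\ba$). Once one observes that the argument of Claim~\ref{cl.preli-estimate-finite-center-LE} never uses the final return to the horseshoe beyond bounding by $\kappa_{\rm max}$ the center expansion during the blender visit, the same chain of estimates applies and the result follows.
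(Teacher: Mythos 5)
Your proof follows the paper's argument essentially verbatim: both parts are direct applications of Corollary~\ref{c.distortion-reference-cube}, with hypothesis (1) supplied by (the computation in) Claim~\ref{cl.preli-estimate-finite-center-LE}, hypothesis (3) by pulling back the intersection of the final image with the base $\cW^\ss(\gamma,r)\subset C$, and hypothesis (2) by the diameter bound $\diam(C)<\varepsilon_1$; the paper handles the second cube only with ``proven analogously'', which you flesh out correctly. The one place your bookkeeping is slightly off is the final-time diameter for $\bfK_\ba^{(T_{\rm b}(\ba))}$: rather than assuming $\diam(\bfC)\le\varepsilon_1$ (iterating the blender map as in Remark~\ref{iteratesB} does not shrink $\bfC$), it is cleaner to apply the corollary with $n=mR+2T_0+ST_{\rm b}(\ba)$, where by Proposition~\ref{pl.completetour}(3) the image is a $\uu$-complete subcube of $C$ and hence has diameter below $\varepsilon_1$, and the resulting range of $k$ contains the one claimed.
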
	

\proof
We apply Corollary \ref{c.distortion-reference-cube} to the cube  $\bfK'_\ba$ and the iteration time $R'$. First note that the above construction of $\bfK'_\ba$ together with our hypothesis $\diam (C)<\varepsilon_1$ already imply that property (2) in Corollary \ref{c.distortion-reference-cube} holds,
\[
	\max\big\{\diam(\bfK'_\ba),\diam(f^{R'}(\bfK'_\ba))\big\}
	<\varepsilon_1.
\]
By taking the intersection $f^{R'}(\bfK'_\ba)$ with the base $\cW^\ss(\gamma,r)\subset C$, one gets a base of $f^{R'}(\bfK'_\ba)$ which is contained in $f^{R'}(\bfK'_\ba)$, and this verifies property (3) in Corollary  \ref{c.distortion-reference-cube}. Hence, by Claim \ref{cl.preli-estimate-finite-center-LE}, we get also property  (1) in  Corollary \ref{c.distortion-reference-cube}. This finishes the proof of the first part of the assertion, the second part is proven analogously.
\endproof 
	
\begin{claim}\label{clsubclaim-allx}
	Every $x\in \bfK_{\ba}'$ satisfies assertion (2) in Lemma \ref{l.proexistHprime}.
\end{claim}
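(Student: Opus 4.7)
The plan is to bound the $R'$-time center expansion of an arbitrary $x\in\bfK_\ba'$ by the quantities $\omega^\pm(T_{\rm b}(\ba))$, thereby reducing the statement to Claim \ref{clc.find-time-in-blender}. Recall that $\omega^\pm(j)$ were constructed (see \eqref{defTpm}) to represent the Lyapunov exponent at time $R'$ of an idealized trajectory that follows the reference point $y=y_\ba\in\bfK_\ba^{(\ell_0)}$ for the first $mR+T_0+ST_{\rm b}(\ba)$ iterates and then returns to the horseshoe $\bfH$ at the expected rate $\chi+\widehat\chi(1\mp\delta)$. I therefore need to show that, up to absorbable errors, the actual orbit of $x$ realizes this idealization.

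The main step is a distortion comparison between the orbits of $x$ and $y$ over the first $mR+T_0+ST_{\rm b}(\ba)$ iterates. By Claim \ref{cl.nmT0} we have $T_{\rm b}(\ba)\le\ell_0$, so the nesting of the sequence $\{\bfK_\ba^{(j)}\}_j$ yields $y\in\bfK_\ba^{(\ell_0)}\subset\bfK_\ba^{(T_{\rm b}(\ba))}$, and by construction also $x\in\bfK_\ba'\subset\bfK_\ba^{(T_{\rm b}(\ba))}$. Applying Claim \ref{cl:etaclose} to this shared enveloping cube gives $d(f^k(x),f^k(y))\le\eta$ for every $k\in[L,mR+T_0+ST_{\rm b}(\ba)-L]$, and then \eqref{eq:choice-of-eta} summed termwise (bounding the first and last $L$ iterates crudely by $2\tau$ each, cf.~\eqref{defNormf}) produces
\[\bigl|\log\|D^\c f^{mR+T_0+ST_{\rm b}(\ba)}(x)\|-\kappa_{mR+T_0+ST_{\rm b}(\ba)}(y)\bigr|\le 4L\tau+R'\tfrac{\delta}{3}\rho|\widehat\chi|.\]
For the remaining $R'-mR-T_0-ST_{\rm b}(\ba)$ iterates, the intermediate transition of length $T_0$ contributes at most $T_0\tau$ in absolute value, and the final horseshoe stretch (spent in $\bigcup_i\bfK_i$ by \eqref{propKprime}) is controlled by the interval $\bigl[(R'-mR-2T_0-ST_{\rm b}(\ba))(\chi+\widehat\chi(1+\delta)),\,(R'-mR-2T_0-ST_{\rm b}(\ba))(\chi+\widehat\chi(1-\delta))\bigr]$ via \eqref{finitetimeLyapexp} applied to each full $f^R$-cycle. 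Matching these three ingredients with the definition \eqref{defTpm} of $\omega^\pm$ yields
\[\omega^-(T_{\rm b}(\ba))-4L\tau-\tfrac{\delta}{3}R'\rho|\widehat\chi|\le\log\|D^\c f^{R'}(x)\|\le\omega^+(T_{\rm b}(\ba))+4L\tau+\tfrac{\delta}{3}R'\rho|\widehat\chi|.\]

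Substituting the bounds on $\omega^\pm(T_{\rm b}(\ba))$ from Claim \ref{clc.find-time-in-blender} traps $\log\|D^\c f^{R'}(x)\|$ inside the interval $R'\chi-R'\rho|\widehat\chi|(1\pm\tfrac{2\delta}{3})\pm 4L\tau$. The only remaining technical obstacle is to absorb the additive constant $4L\tau$ into the $\tfrac{\delta}{3}R'\rho|\widehat\chi|$ reserve, which is precisely what the first inequality in \eqref{eqconditionsm01} ($24L\tau<m_0R\rho|\widehat\chi|\delta$) guarantees, once combined with $R'\ge mR\ge m_0R$. After this absorption the bounds collapse to $R'\chi-R'\rho|\widehat\chi|(1\pm\delta)$, which is precisely assertion (2) of Lemma \ref{l.proexistHprime}.
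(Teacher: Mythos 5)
Your proposal follows essentially the same route as the paper's proof: compare the orbit of $x$ with the auxiliary point $y$ on the shared cube $\bfK_\ba^{(T_{\rm b}(\ba))}$ via Claim \ref{cl:etaclose} and \eqref{eq:choice-of-eta}, reduce the estimate to $\omega^\pm(T_{\rm b}(\ba))$ and Claim \ref{clc.find-time-in-blender}, and absorb the remaining error with \eqref{eqconditionsm01}. The only bookkeeping difference is that the final horseshoe stretch need not be a multiple of $R$, so the paper carries an extra $\log K$ term from \eqref{eq:finite-time-LE-for-1st-horseshoe} for the partial cycle, absorbed by the $6\log K$ bound in \eqref{eqconditionsm01}; your reserve of $\tfrac{\delta}{3}R'\rho|\widehat\chi|$ accommodates this as well, so the argument goes through.
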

\begin{proof}
Let $x\in\bfK_\ba'$. 
\[\begin{split}
	\log\|D^\c f^{R'}(x)\|
	&=\log\|D^\c f^{L}(x)\|+ \log\|D^\c f^{mR+T_0+T_{\rm b}(\ba)S-2L}(f^{L}(x))\|\\
	&\phantom{=}
		+\log\|D^\c f^{T_0+L}(f^{mR+T_0+T_{\rm b}(\ba)S-L}(x))\|\\
	&\phantom{=}		
		+\log\|D^\c f^{R'-mR-2T_0-T_{\rm b}(\ba)S}(f^{mR+2T_0+T_{\rm b}(\ba)S}(x))\|\\
	\text{\tiny{by \eqref{defNormf}, \eqref{eq:choice-of-eta}, \eqref{eq:finite-time-LE-for-1st-horseshoe} and Claim \ref{cl:etaclose}}}\quad
	&\leq L\tau
		+(mR+T_0+T_{\rm b}(\ba)S)\frac{\delta}{3}\rho|\widehat\chi|\\
	&\phantom{\leq L\tau}	
		+\log\|D^\c f^{mR+T_0+T_{\rm b}(\ba)S-2L}(f^{L}(y))\|\\
	&\phantom{\leq L\tau}+(T_0+L)\tau+(R'-mR-T_{\rm b}(\ba)S-2T_0)(\chi+\widehat\chi(1-\delta))+\log K\\
	&\leq 3L\tau
		+(mR+T_0+T_{\rm b}(\ba)S)\frac{\delta}{3}\rho|\widehat\chi|
		+\log\|D^\c f^{mR+T_0+T_{\rm b}(\ba)S}(y)\|\\
	&\phantom{\leq L\tau}+(T_0+L)\tau+(R'-mR-T_{\rm b}(\ba)S-2T_0)(\chi+\widehat\chi(1-\delta))+\log K\\
	\text{\tiny{by \eqref{defTpm}}}\quad
	&<4L\tau+R'\frac{\delta}{3}\rho|\widehat\chi|+\omega^+(T_{\rm b}(\ba))+\log K\\
	\text{\tiny by Claim \ref{clc.find-time-in-blender}}\quad
	&\leq 4L\tau+\log K+R'\frac{\delta}{3}\rho|\widehat\chi|+R'\chi+R'\rho\widehat\chi(1-\delta/3)\\
	\text{\tiny by \eqref{eqconditionsm01}}\quad
	&<R'\frac{2\delta}{3}\rho|\widehat\chi|+R'\chi+R'\rho\widehat\chi(1-\delta/3)\\
	&=R'\chi+R'\rho\widehat\chi(1-\delta).
\end{split}\]
The other inequality is analogous.
\end{proof}
This finishes the proof of Lemma \ref{l.proexistHprime}.
\end{proof} 
 
\subsection{End of the proof of Theorem \ref{THEOp.key-estimate-center-Lyapunov-exponent}} 
 
We are now ready to define the $\csu$-horseshoe $\bfH'$ subordinated to $\bfH=(C,\{\bfK_i\}_{i=1}^N,f^R)$.  We start by defining its rectangles. 

Let $K>0$ and $m_0\in\bN$ be as in Lemma \ref{l.proexistHprime}. Given $m\ge m_0$, let $R'=R'(m)$ be as in Lemma \ref{l.proexistHprime}. Consider any $\ba \in \{1, \dots, N\}^m$. By Lemma \ref{l.proexistHprime}, there are a number $T_{\rm b}(\ba)\in\bN$ and a $\cs$-complete subcube $\bfK^\prime_\ba$ of  $\bfK_{\ba}^{(T_{\rm b}(\ba))}$ (and hence a  $\cs$-complete subcube of $\bfK_\ba$) such that 
\begin{itemize}[leftmargin=0.6cm ]
	\item[(1)] $f^{R'}(\bfK'_\ba)$ is a $\uu$-complete subcube of $C$;
	\item[(2)] for every $x\in\bfK_\ba'$, one has 
	\[ R'\rho\widehat\chi(1+\delta)< \log\|D^\c f^{R'}(x)\|-R'\chi\le R'\rho\widehat\chi(1-\delta),\]
	\item[(3)] for every $x\in\bfK_\ba'$ and every $k=1,\ldots,R'$, one has
	\[ \log\|D^\c f^k(x)\|\le K\exp(k\delta\widehat\chi/2).\]
\end{itemize}
Let
\begin{equation}
	\bfH'
	\eqdef (C, \{\bfK_{\ba}'\}_{\ba \in \{1,\ldots,N\}^m}, f^{R'}).
\end{equation}
Our choices of $R'$ in \eqref{fixm},    $m_0$ provided by Lemma  \ref{l.proexistHprime}, and $\xi$ in \eqref{d.defkappa} imply 
\[
	1
	< \frac{R'}{mR}
	\leq \frac{\kappa_{\rm max}+|\widehat\chi|(1-3\delta)}{\kappa_{\rm max}}+\frac{1}{mR}
	\leq 1+\frac{|\widehat\chi|(1-3\delta)}{\kappa_{\rm max}}+\frac{1}{m_0R}
	<1+\frac{|\widehat\chi|}{\kappa_{\rm max}}=1+\xi|\widehat\chi|
\]
proving \eqref{eqchecktimetheorem}.
The proof of Theorem \ref{THEOp.key-estimate-center-Lyapunov-exponent} is now complete.
\qed 

\subsection{Asymptotic distribution of trajectories}\label{sec.remweakstar}
	
For further use, we collect some facts about our construction in this section. In Theorem \ref{THEOp.key-estimate-center-Lyapunov-exponent}, we construct a horseshoe $\bfH'$ such that every trajectory spends the fraction of time $ST_{\rm b}(\ba)/R'$ in the superposition region of the blender-horseshoe (where the logarithmic expansion rate is between $\kappa_{\rm min}$ and $\kappa_{\rm max}$). Ignoring the transition times of length $T_0$, in the remain of time the trajectory stays in $\bfH$ (where the logarithmic contraction rate is roughly $-(|\chi|+|\widehat\chi|)$). Using the Landau notation, we can estimate that
\[
	 \frac{|\widehat\chi| (1-\rho)} {\kappa_{\rm max} + |\chi| + |\widehat\chi|}
	 \Big(1-O(\delta)\Big)-O\Big(\frac{1}{R'}\Big)
	\le \frac{ST_{\rm b}(\ba)}{R'}
	\le \frac{|\widehat\chi| (1-\rho)} {\kappa_{\rm min} + |\chi| + |\widehat\chi|}
	 \Big(1+O(\delta)\Big)+O\Big(\frac{1}{R'}\Big).
\]
Applying Theorem \ref{THEOp.key-estimate-center-Lyapunov-exponent} repeatedly, as described in Remark \ref{remCascade}, as the result for large $k$ every trajectory in $\bfH_k$ spends at a fraction of time $s_k\in(0,1)$ in the blender-horseshoe, where 
\[
	\frac{|\widehat\chi|(1-\rho^k)} {\kappa_{\rm max} +|\chi|+ |\widehat\chi|}
	 \Big(1-O(\delta)\Big)-O\Big(\sum_{i=1}^k\frac{1}{R_i}\Big)
	\le s_k 
	\le \frac{|\widehat\chi|(1-\rho^k)} {\kappa_{\rm min} +|\chi|+ |\widehat\chi|}
	 \Big(1+O(\delta)\Big)+O\Big(\sum_{i=1}^k\frac{1}{R_i}\Big).
\]
The remain of time, except for some fraction $O_k\eqdef O(\sum_{i=1}^k\frac{1}{R_i})$ of iterations,  the trajectory stays close to $\bfH$. Note that, by choosing the sequence $(m_k)_k$ growing sufficiently fast, the term $O_k$ can be made arbitrarily small. 

\section{$\bar d$- and Feldman-Katok distances}\label{sedbarFK}

In this section we introduce the topologies which are the main ingredients for our analysis. This will be done for a shift space and also in the framework of a general continuous map on a compact metric space $T\colon X\to X$. We continue to use the corresponding notations of the spaces of invariant and of ergodic probability measures. Besides the weak$\ast$ topology,  in this section we define further topologies and discuss their properties. The results deal with dependence of entropy on these distances which we import from \cite{KwiLac:}. The main consequence for the maps considered in this paper is the continuity of entropy in the Feldman-Katok distance, see Corollary \ref{c.continuity-of-entropy-under-FK-for PH-1D}. 

To define distances between measures, we use generic points. Given $x\in X$, consider the Borel probability measure
\[
	m(x,n)
	\eqdef \frac1n\sum_{k=0}^{n-1}\delta_{T^k(x)},
\]
where $\delta_y$ denotes the Dirac measure at $y$. A measure $\mu$ is \emph{generated} by $x$ if it is the weak$\ast$-limit of some subsequence of $(m(x,n))_n$. Recall that, by Bogolyubov-Krylov argument, any measure generated this way is a $T$-invariant Borel probability measure. A point $x$ is a \emph{generic point} of a measure $\mu\in\cM(T)$ if $\mu$ is the only measure generated by it, and we denote by $G(\mu)$ the set of all such points. Recall that $G(\mu)$ is nonempty if $\mu$ is ergodic.

\subsection{$\bar d$-distance}\label{secdbardistance}

In this subsection, consider a finite alphabet $\cA$ and the sequence space $\cA^\bZ$ whose elements we denote by $\ua=(\ldots,a_{-1}|a_0,a_1,\ldots)$. A \emph{word} over $\cA$ is a finite sequence of symbols in $\cA$ and its \emph{length} is the number of symbols, denoted by $|\cdot|_\cA$. Denote by $\cA^+\eqdef\bigcup_{n\in\bN}\cA^n$ the set of all words over the alphabet $\cA$. 

Given two $n$-words $(a_0,\ldots, a_{n-1})$ and $(b_0,\ldots,b_{n-1})$ over $\cA$, define their \emph{Hamming distance} by 
\begin{equation}\label{defHamming}
	\bar d_n\big((a_0,\ldots, a_{n-1}),(b_0,\ldots,b_{n-1})\big)
	\eqdef \frac1n\card\{j\in\{0,\ldots,n-1\}\colon a_j\ne b_j\}.
\end{equation}
For two sequence $\ua,\ub\in\cA^\bZ$, let
\[
	\bar d(\ua,\ub)
	\eqdef \limsup_{n\to\infty}\bar d_n\big((a_0,\ldots,a_{n-1}),(b_0,\ldots,b_{n-1})\big). 
\]
Note that this defines a pseudo-metric on $\cA^\bZ$.

We consider the left shift $\sigma_{\cA}\colon\cA^\bZ\to\cA^\bZ$. Let us now recall the definition of $\bar d$-distance on $\cM_{\rm erg}(\sigma_\cA)$. 

\begin{definition}[$\bar d$-distance between ergodic measures]\label{def:dconmea}
Given $\nu,\nu'\in\cM_{\rm erg}(\sigma_\cA)$ and $\ua\in\cA^\bZ$, the number
\[
	\inf\{\bar d(\ua,\ub)\colon\ub\in G(\nu')\}
\]
is equal to some constant $\nu$-almost surely; and we call this constant the \emph{$\bar d$-distance} between $\nu$ and $\nu'$, denoted by $\bar d(\nu,\nu')$. 
\end{definition}

See \cite[Section I.9]{Shi:96} for an equivalent definition of $\bar d$-distance.
The following fact is an immediate consequence of Definition \ref{def:dconmea}, see also \cite[Section I.9]{Shi:96}.

\begin{fact}\label{factdbar}
	For every $\varepsilon>0$ and $\nu_1,\nu_2\in\cM_{\rm erg}(\sigma_\cA)$ satisfying $\bar d(\nu_1,\nu_2)<\varepsilon$, there exist a $\nu_1$-generic point $\ua$ and a $\nu_2$-generic point $\ub$ such that $\bar d(\ua,\ub)<\varepsilon$.
\end{fact}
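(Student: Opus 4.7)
The plan is that this fact follows almost immediately by unpacking Definition~\ref{def:dconmea} and combining it with ergodicity. I will work in two short steps.

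First, I would use Definition~\ref{def:dconmea} to fix a $\nu_1$-full measure set $A \subset \cA^{\bZ}$ on which the function $\ua \mapsto \inf\{\bar d(\ua,\ub) : \ub \in G(\nu_2)\}$ is constantly equal to $\bar d(\nu_1,\nu_2)$. By the hypothesis, this common value is strictly less than $\varepsilon$. Next, since $\nu_1$ is ergodic, the Birkhoff ergodic theorem (applied to continuous functions, or equivalently the definition of generic points) ensures that the set $G(\nu_1)$ of $\nu_1$-generic points has full $\nu_1$-measure, so $A \cap G(\nu_1) \neq \emptyset$. Pick any point $\ua$ in this intersection; it is $\nu_1$-generic and satisfies $\inf\{\bar d(\ua,\ub) : \ub \in G(\nu_2)\} < \varepsilon$.

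Finally, since the infimum is strictly less than $\varepsilon$, by the definition of infimum there must exist some $\ub \in G(\nu_2)$ with $\bar d(\ua,\ub) < \varepsilon$. This $\ub$ is $\nu_2$-generic by construction, so the pair $(\ua,\ub)$ is the one required.

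There is no real obstacle here: the whole argument is a one-line consequence of the definition together with the fact that ergodic measures have generic points. The only conceptual point to highlight is that Definition~\ref{def:dconmea} gives an \emph{almost sure} constant value, which in combination with the full-measure set $G(\nu_1)$ allows us to pick a single point $\ua$ realizing both properties simultaneously. If one wished to be slightly more self-contained, one could replace the reference to $G(\nu_1)$ having full $\nu_1$-measure by an explicit appeal to Birkhoff's ergodic theorem, but this is standard enough to simply cite.
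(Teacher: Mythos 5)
Your argument is correct and matches the paper's intent: the paper states this fact as an immediate consequence of Definition~\ref{def:dconmea} (with a pointer to Shields), and your two steps—intersecting the $\nu_1$-full-measure set from the definition with the full-measure set $G(\nu_1)$ of generic points, then using that the infimum over $G(\nu_2)$ is strictly below $\varepsilon$—are exactly the unpacking the paper leaves implicit. Nothing further is needed.
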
 

\begin{remark}
	Note that the $\bar d$-distance on $\cA^\bZ$ is only a pseudo-metric. However, the $\bar d$-distance on $\cM_{\rm erg}(\sigma_\cA)$ is a metric. 
\end{remark}

As observed in the last paragraph in \cite[Section 2]{Orn:73}, one has the following fact. 

\begin{remark}[$\bar d$ \emph{versus} weak$\ast$]\label{r.space-of-Bernoulli-measures}
	On the simplex of all Bernoulli measures on $\cA^\bZ$, the $\bar d$-topology coincides with the weak$\ast$ topology. In particular, the simplex of all Bernoulli measures on $\cA^\bZ$ under the $\bar d$-topology is path-connected. 
\end{remark}

\begin{theorem}[{\cite[Theorems I.9.15 and I.9.16]{Shi:96}}]\label{thm.convergence-of-ergodic-measures-in-d-bar-metric}
Let $(\mu_n)_{n\in\mathbb{N}}$ be a sequence of ergodic measures in $\cM_{\rm erg}(\sigma_{\cA})$ which is a Cauchy sequence under the $\bar d$-metric. Then there exists an ergodic measure $\mu\in\cM_{\rm erg}(\sigma_\cA)$ such that 
\begin{itemize}[leftmargin=0.6cm ]
	\item $\lim_{n\rightarrow\infty}\bar d(\mu_n,\mu)=0;$
	\item $\lim_{n\rightarrow\infty} h_{\mu_n}(\sigma_\cA)=h_\mu(\sigma_\cA).$
\end{itemize} 
\end{theorem}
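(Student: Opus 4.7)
The plan is to prove both statements together using the Kantorovich/joining characterization of the $\bar d$-distance. Recall that for $\mu,\nu\in\cM_{\rm erg}(\sigma_\cA)$ one has
\[
	\bar d(\mu,\nu) = \inf_{\lambda}\lambda\big(\{(\ua,\ub) : a_0 \neq b_0\}\big),
\]
where the infimum runs over shift-invariant joinings of $\mu$ and $\nu$ and is attained by an ergodic joining. First I would pass to a fast-Cauchy subsequence $(\mu_{n_k})_k$ with $\bar d(\mu_{n_k},\mu_{n_{k+1}})<2^{-k}$, select optimal ergodic joinings $\lambda_k$ realizing these gaps, and stitch them together by Kolmogorov's extension theorem into a shift-invariant probability measure $\Lambda$ on $(\cA^\bZ)^{\bN}$ whose projection onto the $k$-th copy is $\mu_{n_k}$ and whose consecutive-pair marginal is $\lambda_k$.

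A Borel--Cantelli argument applied coordinate by coordinate then shows that $\Lambda$-almost every tuple $(\ua^{(k)})_k$ stabilizes pointwise, so $\ua^{(k)}\to\ua^\infty$ pointwise $\Lambda$-a.s. Pushing $\Lambda$ forward under this coordinate limit gives a shift-invariant probability measure $\mu$ that is jointly coupled with each $\mu_{n_k}$ by a joining whose disagreement rate at coordinate zero is bounded by $\sum_{j\geq k}2^{-j}=2^{-k+1}$; this proves $\bar d(\mu_{n_k},\mu)\to 0$, and the Cauchy assumption upgrades this to $\bar d(\mu_n,\mu)\to 0$ for the full sequence. Ergodicity of $\mu$ would follow by verifying that $\Lambda$ is ergodic under the diagonal shift (replacing the $\lambda_k$ by appropriate ergodic components if needed) and then pushing ergodicity forward to the factor generated by the coordinate limits.

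For the entropy statement I would use the classical Fano-type bound: the number of $n$-words in a Hamming ball of radius $\lceil\bar d(\mu,\nu)\,n\rceil$ around a given word is at most $\binom{n}{\lceil\bar d(\mu,\nu)\,n\rceil}(\#\cA)^{\lceil\bar d(\mu,\nu)\,n\rceil}$. Applying this to the marginal distributions on $n$-cylinders induced by the optimal joining of $\mu$ and $\nu$, and passing to Shannon entropy via the variational inequality $H(X)-H(Y)\le H(X|Y)$, yields
\[
	|h_\mu(\sigma_\cA)-h_\nu(\sigma_\cA)|\le H_2\!\big(\bar d(\mu,\nu)\big)+\bar d(\mu,\nu)\log\#\cA,
\]
where $H_2$ denotes the binary entropy. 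Taking $\nu=\mu_n$ and letting $n\to\infty$ gives $h_{\mu_n}(\sigma_\cA)\to h_\mu(\sigma_\cA)$.

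The main obstacle is the construction and ergodicity of $\Lambda$: Kolmogorov extension only produces finite-dimensional consistency, so shift-invariance of $\Lambda$ must be deduced from simultaneous invariance of each $\lambda_k$, and one must check that non-adjacent coordinate marginals are genuine self-consistent joinings. Ensuring that $\Lambda$ is ergodic under the diagonal shift likely requires either a careful choice of ergodic components of the $\lambda_k$ preserving the disagreement bounds (via a convex-combination argument) or bypassing ergodicity of $\Lambda$ entirely: produce $\mu$ only as an invariant limit and then argue that its ergodic decomposition is trivial, since a nontrivial decomposition would force two distinct ergodic components to both be $\bar d$-close to every $\mu_{n_k}$, hence $\bar d$-close to each other, while distinct ergodic measures admit a strictly positive lower bound on $\bar d$ via a diagonal-versus-off-diagonal argument using the ergodic theorem on the optimal joining.
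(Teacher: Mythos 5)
The paper does not actually prove this statement: it is imported verbatim from Shields \cite{Shi:96} (Theorems I.9.15 and I.9.16), so there is no in-paper argument to compare with, and your attempt should be measured against the standard proofs there. Your entropy part is essentially Shields' own argument: the coordinatewise Fano bound applied to an optimal stationary joining gives $|h_\mu(\sigma_\cA)-h_\nu(\sigma_\cA)|\le H_2(\bar d(\mu,\nu))+\bar d(\mu,\nu)\log\#\cA$, which is exactly the $\bar d$-continuity of entropy. Your completeness/ergodicity part, by contrast, takes a genuinely different route from Shields, who works with block distributions and the frequency-typicality characterization of ergodicity; you instead glue a fast-Cauchy chain of optimal joinings into one shift-invariant coupling on $(\cA^\bZ)^{\bN}$ and extract a pointwise-stabilized limit by Borel--Cantelli. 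This gluing is legitimate (relative products of shift-invariant couplings over a common marginal are shift-invariant, and Kolmogorov extension applies), and it buys a very transparent estimate $\bar d(\mu_{n_k},\mu)\le 2^{-k+1}$; what it does \emph{not} buy cheaply is ergodicity of the glued measure, as you correctly flag -- relative products of ergodic joinings need not be ergodic, so route (a) should be abandoned in favour of route (b).

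Two inputs must be supplied to close route (b). First, the step ``$\mu$ close to $\mu_{n_k}$ forces its ergodic components to be close to $\mu_{n_k}$'' is exactly the inequality $\bar d(\mu_{n_k},\mu)\ge\int\bar d(\mu_{n_k},\mu_\omega)\,dP(\omega)$ for the ergodic decomposition $\mu=\int\mu_\omega\,dP(\omega)$; it follows by decomposing any joining of the ergodic $\mu_{n_k}$ with $\mu$ into ergodic components, whose first marginals are all $\mu_{n_k}$ and whose second marginals realize the ergodic decomposition of $\mu$. With Markov's inequality, Borel--Cantelli along a subsequence, and the triangle inequality one then gets $\mu_\omega=\mu_{\omega'}$ for $P\times P$-a.e.\ pair, using that $\bar d$ is a genuine metric on ergodic measures (a zero-disagreement joining is supported on the diagonal). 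Second, since the paper defines $\bar d$ between ergodic measures via generic points (Definition \ref{def:dconmea}) while you work throughout with the joining characterization and with the existence of optimal (ergodic) joinings, you should cite or prove that equivalence (it is again in Shields, Section I.9). With these two standard facts made explicit, your outline is correct and gives a complete, somewhat more measure-theoretic alternative to Shields' frequency-based proof.
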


The following result is a consequence of \cite[Lemma I.9.11 items (a) and (f)]{Shi:96}.

\begin{lemma}
Consider  two Bernoulli measures $\mu_1,\mu_2$ of  $(\cA^\bZ,\sigma_\cA)$ given by probability vectors $(p^1_a)_{a\in\cA}$ and $(p^2_a)_{a\in\cA}$ respectively, then 
\[\bar d(\mu_1,\mu_2)=\frac{1}{2}\sum_{a\in\cA}|p^1_a-p^2_a|. \]
\end{lemma}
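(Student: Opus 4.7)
The plan is to invoke the joining characterization of the $\bar d$-distance between ergodic measures (the equivalence provided by \cite[Lemma~I.9.11(a)]{Shi:96}), which expresses
\[
\bar d(\mu_1,\mu_2)
=\inf_{\lambda}\lambda\big(\{(\ua,\ub)\in\cA^\bZ\times\cA^\bZ\colon a_0\ne b_0\}\big),
\]
where the infimum runs over all shift-invariant couplings (joinings) $\lambda$ of $\mu_1$ and $\mu_2$. Once this reformulation is available, the proof reduces to computing the minimum disagreement probability at a single coordinate among couplings of the two probability vectors $(p^1_a)_{a\in\cA}$ and $(p^2_a)_{a\in\cA}$.

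For the upper bound, I would exhibit an explicit joining attaining the value $\tfrac12\sum_a|p^1_a-p^2_a|$. Set $m_a\eqdef \min(p^1_a,p^2_a)$, $m\eqdef\sum_a m_a$, and $e^i_a\eqdef p^i_a-m_a$ for $i=1,2$, so that $\sum_a e^1_a=\sum_a e^2_a=1-m=\tfrac12\sum_a|p^1_a-p^2_a|$. Define a probability measure $\lambda_0$ on $\cA\times\cA$ by placing mass $m_a$ on each diagonal pair $(a,a)$ and, off the diagonal, the mass $e^1_a\,e^2_b/(1-m)$ on $(a,b)$. For each $a$, one of $e^1_a$ and $e^2_a$ vanishes, so this product is genuinely supported off the diagonal; a direct check shows that $\lambda_0$ has marginals $(p^1_a)$ and $(p^2_a)$ and assigns total mass $1-m$ to the off-diagonal. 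Its i.i.d.\ product $\lambda\eqdef\lambda_0^{\otimes\bZ}$ is then a Bernoulli (hence shift-invariant) joining of $\mu_1$ and $\mu_2$ whose probability of disagreement at coordinate $0$ equals $\tfrac12\sum_a|p^1_a-p^2_a|$, yielding the upper bound.

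For the lower bound, I would argue that \emph{any} shift-invariant coupling $\lambda$ of $\mu_1$ and $\mu_2$ satisfies $\lambda(\{(\ua,\ub)\colon a_0\ne b_0\})\ge\tfrac12\sum_a|p^1_a-p^2_a|$. Indeed, the zero-coordinate marginal of $\lambda$ is a coupling of $(p^1_a)$ and $(p^2_a)$; and for any coupling of these discrete distributions the mass on each diagonal point $(a,a)$ is bounded above by $\min(p^1_a,p^2_a)$, so the total off-diagonal mass is at least $1-\sum_a\min(p^1_a,p^2_a)=\tfrac12\sum_a|p^1_a-p^2_a|$. Taking the infimum over $\lambda$ and combining with the joining characterization gives the matching lower bound.

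The only genuinely nontrivial step is the joining reformulation: translating between the generic-point definition in Definition~\ref{def:dconmea} and the joining formula above is precisely the content of the items of \cite[Lemma~I.9.11]{Shi:96} cited in the statement. Once this equivalence is granted, what remains is the elementary optimal-transport computation sketched above.
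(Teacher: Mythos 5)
Your argument is correct and is essentially the same route as the paper's: the paper gives no independent proof, simply deducing the formula from the joining characterizations of $\bar d$ in \cite[Lemma I.9.11]{Shi:96}, which is exactly the reformulation you start from. Your explicit product coupling (upper bound) and the diagonal-mass bound $\lambda_0(\{(a,a)\})\le\min(p^1_a,p^2_a)$ (lower bound) just spell out the elementary total-variation computation that the paper delegates to the citation.
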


\begin{remark}[$\bar d$ \emph{versus} $\bar f$]
In view of the study of the Feldman-Katok pseudometric (see Definition \ref{defFK}) which we introduced in Section \ref{secFKdistance}, one may think that instead of the $\bar d$-pseudometric it would be more natural to consider the $\bar f$-pseudometric%
\footnote{The $\bar f$-pseudometric between sequences $\ua,\ub\in\cA^\bZ$ is essentially defined by replacing the Hamming distance $\bar d_n$ in \eqref{defHamming} by the weaker \emph{edit metric} $\bar f_n$ given by $k/n$, where $k$ is the minimal number of symbols which need to be removed from each $n$-word so that the words formed by the remaining symbols coincide. See \cite{ORW:82} for full details.}
 on the symbolic space $\cA^\bZ$. We use, however, the $\bar d$-pseudometric, for the following reason. Sequence spaces will be one of our main modeling tools. Starting with an alphabet $\cA$, we consider the symbolic spaces with alphabets $\cA_k = \cA^{M_k}$ for some increasing sequence of natural numbers $(M_k)_k$. By the natural identification of elements of $\cA_k^\bZ$ and $\cA^\bZ$ (we will denote this identification by $\Sub_k^0$), for a sequence $(\ua^{(n)})_n\subset \cA_k^\bZ$ we have
	\[
	\ua^{(n)}\to \ua \quad\text{ \big(in the space }(\cA_k^\bZ, \bar d)\text{\big)} 
	\quad\Longleftrightarrow\quad
	 \Sub_k^0 (\ua^{(n)}) \to \Sub_k^0(\ua) \quad\text{\big(in the space } (\cA^\bZ, \bar d) \text{\big)}.
	\]
It is straightforward to check that a similar statement for the $\bar f$-topology is not true.  
\end{remark}

\subsection{Feldman-Katok pseudometric}\label{secFKdistance}

In this subsection, we consider a compact metric space $(X,\rho)$ and a continuous map $T\colon X\to X$ and we want to introduce a concept analogous to the $\bar d$-distance (which can only be defined on the symbolic spaces and measures living there). The object we define, following \cite{KwiLac:}, resembles rather the $\bar f$-distance than the $\bar d$-distance. Like the $\bar d$-distance, we define it in two settings: we define the Feldman-Katok pseudometric on general sequences of points (in particular, on orbit) from $X$, and the Feldman-Katok metric on the space of ergodic measures $\cM_{\rm erg}(f)$. As in \cite{KwiLac:}, by a slight abuse of notation, we use the same notation $\bar F_\FK(\cdot,\cdot)$. 

\begin{definition}[Matches and the Feldman-Katok pseudometric]\label{defFK}
Consider two sequences $\ux=(x_j)_{j\in\bN_0}$ and $\uy=(y_j)_{j\in\bN_0}$ in $X^{\bN_0}$. Given $n\in\bN$ and $\delta>0$, an \emph{$(n,\delta)$-match} of $\ux,\uy$ is an order preserving bijection $\theta\colon \cD(\theta)\to\cR(\theta)$ such that
\begin{itemize}[leftmargin=0.6cm ]
	\item $\cD(\theta),\cR(\theta)\subset\{0,1,\ldots,n-1\}$;
	\item    for every $i\in\cD(\theta)$, it holds $\rho(x_i,y_{\theta(i)})<\delta$.
\end{itemize}  
In this case, we also call $(x_i)_{i\in\cD(\theta)}$ an \emph{$(n,\delta)$-match of} $(y_{\theta(i)})_{i\in\cD(\theta)}$.

Given an $(n,\delta)$-match $\theta$, we define it quality as 
\[q(\theta)\eqdef\frac{\card \cD(\theta)}{n}. \]
Let
\[
	\bar f_{n,\delta}(\ux,\uy)
	\eqdef \min\big\{1-q(\theta)\colon \theta\text{ is an }(n,\delta)\text{-match of }\ux\text{ with }\uy\big\}.
\]
The \emph{$\bar f_\delta$-distance} between $\ux$ and $\uy$ is defined by
\[
	\bar f_\delta(\ux,\uy)
	\eqdef \limsup_{n\to\infty}\bar f_{n,\delta}(\ux,\uy).
\]
The \emph{Feldman-Katok distance} on $X^{\bN_0}$ is defined by
\[
	\bar F_\FK(\ux,\uy)
	\eqdef \inf\big\{\delta\colon \bar f_\delta(\ux,\uy)<\delta\big\}.
\]

Having defined the distance on the space of all infinite sequences of points from $X$, we define a distance on $X$ by
\[
	\bar F_\FK(x,y)
	\eqdef \bar F_\FK\big((x,T(x),\ldots,T^{n-1}(x),\ldots),(y,T(y),\ldots,T^{n-1}(y),\ldots)\big),
\]
that is, the Feldman-Katok distance between two points in $X$ is defined as the Feldman-Katok distance between their forward orbits.
\end{definition}

Analogously to Definition \ref{def:dconmea}, we define the $\bar F_\FK$-distance between ergodic measures.
Recall that $G(\nu)$ denotes the set of generic points of a $T$-invariant measure $\nu$.

\begin{definition}[$\bar F_\FK$-distance between ergodic measures]\label{defFKerg}
Given two ergodic measures $\mu,\nu\in\cM_{\rm erg}(f)$, their $\bar F_\FK$-distance is defined by 
\[
	\bar F_\FK(\mu,\nu)
	\eqdef\inf\big\{\bar F_\FK(x,y) \colon x\in G(\mu), ~y\in G(\nu)\big\}.  
\]
\end{definition}

\begin{remark}
	Note that the $\bar F_\FK$-distances on $X^{\bN_0}$ and on $X$ are only pseudometrics. However, the $\bar F_\FK$-distance on $\cM_{\rm erg}(T)$ is a metric. 
\end{remark}

We now provide a powerful criterion from \cite{KwiLac:} for convergence of a sequence of ergodic measures in the $\bar F_\FK$-metric. 

\begin{remark}[Criterion for $\bar F_\FK$-convergence]\label{remCauchyFK}
A sequence of measures $(\mu_n)_n\subset\cM(f)$ \emph{$\bar F_\FK$-converges} to some $\mu\in\cM(f)$ if
there exist a  $\mu$-generic orbit $\ux$ and a $\mu_n$-generic orbit $\ux^{(n)}$ for each $n\in\bN$ such that
\[\lim_{n\rightarrow+\infty} \bar F_\FK(\ux^{(n)},\ux)=0.\] 
\end{remark}

\begin{remark}[Completeness]
In general, it is not clear whether the $\bar F_\FK$-distance induces a complete topology on $X$. But, on the space of ergodic measures, the $\bar F_\FK$-distance induces a very strong {\it complete} topology: a Feldman-Katok Cauchy sequence of ergodic measures converges to an ergodic measure, see Theorem \ref{thepro:ergentcon}. Moreover, it is a consequence of \cite[Theorem 33 and Corollary 34]{KwiLac:} that to prove $\bar F_\FK$-con\-vergence of a sequence of ergodic measures it is enough to prove that there are
sequences of generic orbits that form a $\bar F_\FK$-Cauchy sequence.
\end{remark}

Let us present some consequences of $\bar F_\FK$-convergence of measures. 

\begin{theorem}[{\cite[Theorems 33, 40, and 41]{KwiLac:}}]\label{thepro:ergentcon}
Let $T\colon X\to X$ be a continuous map on a compact metric space $X$. Let $(\mu_n)_n\subset\cM_{\rm erg}(T)$ be a $\bar F_\FK$-Cauchy sequence. Then there exists a unique  ergodic measure $\mu\in\cM_{\rm erg}(T)$ such that  
\begin{itemize}[leftmargin=0.6cm ]
	\item  $\lim_{n\to\infty}\bar F_{\FK}(\mu_n,\mu)=0$,
	\item $\mu_n$ converges to $\mu$ in the weak$\ast$-topology.
\end{itemize}
Moreover, 
\[
h_\mu(f)
\le \liminf_{n\to\infty}h_{\mu_n}(T).
\]	
\end{theorem}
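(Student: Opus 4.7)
The plan is to exploit that the Feldman--Katok distance between two ergodic measures is realized by generic orbits, and that near-full-quality $(N,\delta)$-matches between orbit prefixes force closeness of their empirical measures in the Wasserstein sense. First I would extract, for each $n$, a $\mu_n$-generic orbit $\ux^{(n)}\in G(\mu_n)$ so that the sequence $(\ux^{(n)})_n$ is itself $\bar F_\FK$-Cauchy in $X^{\bN_0}$; this is a routine diagonal passage using the infimum in Definition~\ref{defFKerg}. Since an $(N,\delta)$-match of quality $q$ between the first $N$ symbols of $\ux^{(n)}$ and $\ux^{(m)}$ bounds the Wasserstein distance between the empirical measures $m(\ux^{(n)},N)$ and $m(\ux^{(m)},N)$ by roughly $\delta + (1-q)\,\diam(X)$, the Cauchy property together with genericity of each $\ux^{(n)}$ yields that $(\mu_n)_n$ is Cauchy in the weak$\ast$ topology. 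Thus $\mu_n\to\mu$ weak$\ast$ for some $\mu\in\cM(T)$, giving the weak$\ast$-limit ``for free''.

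The hard step is ergodicity of $\mu$. I would pass to a subsequence with $\bar F_\FK(\ux^{(n_k)},\ux^{(n_{k+1})})<2^{-k}$, choose for each $k$ an order-preserving match $\theta_k$ of quality at least $1-2^{-k}$ on an increasing window $[0,N_k)$ with $N_k\to\infty$, and use these to build an orbit $\ux$ along which the Birkhoff averages of every continuous observable $\varphi$ converge simultaneously to $\lim_n \int\varphi\,d\mu_n=\int\varphi\,d\mu$. Concretely, a chain of matches $\theta_j\circ\cdots\circ\theta_k$ transports a $\mu_{n_k}$-typical time window inside $\ux^{(n_k)}$ into a time window inside $\ux^{(n_j)}$ for all $j\ge k$, the total ``mismatched'' coordinates having density $O(2^{-k})$. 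A Cantor/diagonal construction then produces a genuine point $\ux$ whose orbit coincides with the matched portions and along which $\tfrac{1}{N}\sum_{i<N}\varphi(T^i\ux)\to\int\varphi\,d\mu$ for every continuous $\varphi$. Consequently $\mu$ admits a generic point and is therefore ergodic; uniqueness of the limit is automatic since $\bar F_\FK$ is a metric on $\cM_{\rm erg}(T)$.

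For the upper semicontinuity of entropy I would use Katok's formula
\[
	h_\nu(T)=\lim_{\varepsilon\to 0}\limsup_{N\to\infty}\frac{1}{N}\log s_N(\varepsilon,\delta,\nu),
\]
where $s_N(\varepsilon,\delta,\nu)$ is the minimal number of $(N,\varepsilon)$-Bowen balls covering a set of $\nu$-measure at least $1-\delta$. Given a near-full-quality $(N,\varepsilon/2)$-match $\theta$ between a $\mu$-generic orbit and a $\mu_n$-generic orbit, every $(N,\varepsilon/2)$-Bowen ball around the $\mu_n$-orbit pulls back via $\theta$ to match a piece of the $\mu$-orbit on a subset of $[0,N)$ of density at least $1-\delta$; the remaining $\delta N$ coordinates can be filled in by enumerating over a fixed finite $\varepsilon$-cover of $X$, contributing a combinatorial factor whose log grows at most as $N\cdot H(\delta)+N\delta\log|X_\varepsilon|$, where $H$ is the binary entropy. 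Taking $\delta\to 0$ and then $\varepsilon\to 0$ kills this correction, and one concludes $h_\mu(T)\le\liminf_n h_{\mu_n}(T)$.

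The main obstacle is the ergodicity step. Near-full-quality matches give closeness only of empirical averages for each continuous observable on each pair of windows; to produce a single orbit along which all Birkhoff averages converge simultaneously requires choosing the window sizes $N_k$ and aligning the match diagonals coherently across $k$, ensuring that matched pieces fit together consistently. This is essentially the content of \cite[Theorem~33]{KwiLac:}, whose bookkeeping I would rely on for the final construction.
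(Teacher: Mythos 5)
First, a point of comparison: the paper does not prove this statement at all --- it is imported verbatim from \cite{KwiLac:} (Theorems 33, 40, 41), and the surrounding text only uses it as a black box. So your proposal is not paralleling an argument in the paper; it is an attempt to reprove the cited result, and it is against that standard that it has to be judged.

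The ergodicity step has a genuine gap, on two counts. First, your ``Cantor/diagonal construction'' of a point $\ux$ whose orbit ``coincides with the matched portions'' of the orbits $\ux^{(n_k)}$ is not available for a general continuous map $T$ on a compact metric space: the matched segments are pieces of orbits of \emph{different} points, and producing a genuine $T$-orbit that follows a prescribed concatenation of such segments requires a shadowing or specification-type property that is not assumed here (and is exactly what fails in the nonhyperbolic settings this theorem is designed for). What a limiting/diagonal argument gives you is an element of $X^{\bN_0}$, not an orbit of $T$. Second, and independently, the final inference ``$\mu$ admits a generic point and is therefore ergodic'' is false: ergodicity is equivalent to $\mu(G(\mu))=1$, not to $G(\mu)\neq\emptyset$, and nonergodic invariant measures can perfectly well have generic points (e.g.\ nontrivial convex combinations of ergodic measures in a full shift). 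The actual proof of ergodicity in \cite{KwiLac:} (as in the $\bar d$-analogue, \cite[Theorem I.9.15]{Shi:96}) is measure-theoretic: one assumes the weak$\ast$ limit has a nontrivial ergodic decomposition and derives a contradiction from the fact that typical orbits of distinct ergodic components would both have to be $\bar F_\FK$-well matched with the same approximating generic orbits, which forces them to generate the same measure. Your sketch does not contain this mechanism, so the central claim (ergodicity of the limit) is unproved.

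Two secondary remarks. Your first step (choosing one generic orbit per $\mu_n$ so that the resulting sequence is $\bar F_\FK$-Cauchy) silently uses that any two generic points of the same ergodic measure are at $\bar F_\FK$-distance zero; this is true but is itself a nontrivial lemma of \cite{KwiLac:}, not a ``routine diagonal passage''. And in the entropy step, Katok's formula requires covering a set of $\mu$-measure $1-\delta$ by Bowen balls, whereas the matching only controls a single $\mu$-generic orbit segment; bridging empirical coverage of one orbit and $\mu$-measure coverage needs an additional argument. These are repairable, but the ergodicity step as written is not.
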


We close this section returning to the partially hyperbolic context of this paper. 

\begin{remark}\label{rem:entropyexpansive}
Note that under the hypotheses of entropy expansiveness, the entropy map $\mu\mapsto h_\mu(f)$ is upper semi-continuous in the weak$\ast$ topology \cite{Bow:72}. Hence, Theorem \ref{thepro:ergentcon} implies convergence in entropy for entropy expansive maps. It has been  shown that $f\in\PH_{\c=1}^1(M)$ is entropy expansive \cite{LiViYa:13,DiFiPaVi:12}.
\end{remark}

Remark \ref{rem:entropyexpansive} and Theorem \ref{thepro:ergentcon} together imply the following.

\begin{corollary}\label{c.continuity-of-entropy-under-FK-for PH-1D}
	Let $f\in\PH_{\c=1}^1(M)$, and $(\mu_n)_n\subset\cM_{\rm erg}(f)$ be a sequence of ergodic measures converging to $\mu$ under the $\bar F_{\FK}$-metric. Then 
	\[h_\mu(f)=\lim_{n\rightarrow+\infty}h_{\mu_n}(f). \] 
\end{corollary}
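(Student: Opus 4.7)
The plan is to combine the two ingredients already assembled in Remark~\ref{rem:entropyexpansive} and Theorem~\ref{thepro:ergentcon}, which together pin the entropy from both sides. More precisely, the argument will proceed by verifying the two inequalities $\liminf_{n\to\infty} h_{\mu_n}(f) \geq h_\mu(f)$ and $\limsup_{n\to\infty} h_{\mu_n}(f) \leq h_\mu(f)$ separately, each from one of these results.

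First, I would invoke Theorem~\ref{thepro:ergentcon} directly: since $(\mu_n)_n\subset\cM_{\rm erg}(f)$ is $\bar F_\FK$-convergent to $\mu$ (in particular $\bar F_\FK$-Cauchy), that theorem yields both the weak$\ast$ convergence $\mu_n\to\mu$ and the lower semi-continuity estimate
\[
	h_\mu(f)\le\liminf_{n\to\infty}h_{\mu_n}(f).
\]
This takes care of the lower bound.

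For the upper bound, I would appeal to Remark~\ref{rem:entropyexpansive}: because every $f\in\PH^1_{\c=1}(M)$ is entropy expansive (by \cite{LiViYa:13,DiFiPaVi:12}), Bowen's result \cite{Bow:72} ensures that the entropy map $\nu\mapsto h_\nu(f)$ is upper semi-continuous in the weak$\ast$ topology. Applying this to the weak$\ast$ convergence $\mu_n\to\mu$ already obtained in the previous step gives
\[
	\limsup_{n\to\infty}h_{\mu_n}(f)\le h_\mu(f).
\]
Combining the two inequalities yields the claimed equality. Since both cited facts are quoted verbatim in the preceding discussion, there is no genuine obstacle; the only point worth emphasizing is that the weak$\ast$ convergence used for upper semi-continuity is itself a consequence of $\bar F_\FK$-convergence via Theorem~\ref{thepro:ergentcon}, so entropy expansiveness is the essential additional input beyond the abstract results of \cite{KwiLac:}.
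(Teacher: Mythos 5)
Your argument is correct and coincides with the paper's: the corollary is stated there precisely as the combination of the lower bound $h_\mu(f)\le\liminf_n h_{\mu_n}(f)$ from Theorem~\ref{thepro:ergentcon} and the upper semi-continuity of entropy under weak$\ast$ convergence coming from entropy expansiveness (Remark~\ref{rem:entropyexpansive}). Your additional remark that the weak$\ast$ convergence used in the second step is itself supplied by Theorem~\ref{thepro:ergentcon} is exactly the intended reading.
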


\subsection{Factors of horseshoes and $\bar d$-/$\bar F_{\rm FK}$-distances}

Let us return to considering the factor $\Pi$ associated to a $\csu$-horseshoe $\bfH=(C,\{\bfK_i\}_{i=1}^N,f^R)$ introduced in Section \ref{ss.symbolic-horseshoe}. As in this section, we deal with several metrics, for clearness, in what is below we write $\dist_M$ for the metric on the manifold $M$.
Consider the ``modulus of continuity''-function associated to $\Pi$ given by
\begin{equation}\label{eqModcon}
	\Mod\colon\bR_{>0}\to\bN,
\end{equation}
where $\Mod(\varepsilon)$ is the minimal $k\in\bN$ such that for every $\ua,\ub\in\cA^\bZ$ satisfying that $a_i=b_i$ for all  $i=-k,\ldots,k$,  we have $\dist_M(\Pi(\ua,s),\Pi(\ub,s))<\varepsilon$, for all $s=0,\ldots,R-1$.

The following lemma deals with the regularity of $\Pi$ relative to the $\bar d$-  and the $\bar F_{\rm FK}$-pseudometric, respectively.

\begin{lemma} \label{suspmetr}
	For every $\varepsilon>0$ and $\ua,\ub\in\cA^\bZ$ satisfying 
\[
	\bar d(\ua,\ub)< \frac{\varepsilon}{2\Mod(\varepsilon)+1},
\]
one has 
\[
	\bar F_\FK\big(\Pi(\ua,0), \Pi(\ub,0)\big) \leq \varepsilon .
\]
\end{lemma}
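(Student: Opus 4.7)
The plan is to compare the forward $f$-orbits $\ux=(x_j)_{j\ge 0}$ of $\Pi(\ua,0)$ and $\uy=(y_j)_{j\ge 0}$ of $\Pi(\ub,0)$ by means of the identity match between their \emph{good} time indices. By Lemma~\ref{r.factor}, $x_j=\Pi(\Phi_{\cA,R}^j(\ua,0))$, and writing $j=kR+s$ with $0\le s<R$, one has $\Phi_{\cA,R}^j(\ua,0)=(\sigma_\cA^k\ua,s)$; analogously for $\uy$. Thus comparing $x_j$ with $y_j$ reduces to comparing the shifts $\sigma_\cA^k\ua$ and $\sigma_\cA^k\ub$.

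Set $M_\varepsilon\eqdef\Mod(\varepsilon)$. I will call a shift $k\ge M_\varepsilon$ \emph{good} if $a_{k+i}=b_{k+i}$ for every $i\in[-M_\varepsilon,M_\varepsilon]$, and \emph{bad} otherwise. By the very definition of $\Mod$, if $k$ is good, then $\dist_M(x_{kR+s},y_{kR+s})<\varepsilon$ for every $s=0,\ldots,R-1$. Now set $B\eqdef\{i\in\bN_0: a_i\neq b_i\}$, so by hypothesis $\limsup_n |B\cap[0,n)|/n=\bar d(\ua,\ub)<\varepsilon/(2M_\varepsilon+1)$. Every bad shift $k\in[M_\varepsilon,K)$ lies within $M_\varepsilon$ of some $j\in B\cap[0,K+M_\varepsilon)$, so the number of bad shifts in $[0,K)$ is at most $M_\varepsilon+(2M_\varepsilon+1)\,|B\cap[0,K+M_\varepsilon)|$. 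Passing to time indices $j=kR+s$, the set of bad time indices (those with $k$ bad, together with the initial block $[0,RM_\varepsilon)$) has upper density bounded by $(2M_\varepsilon+1)\,\bar d(\ua,\ub)<\varepsilon$.

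The identity map $\theta(j)=j$, restricted to good time indices in $[0,N)$, is an order-preserving bijection satisfying $\dist_M(x_j,y_{\theta(j)})<\varepsilon$ on its domain, hence an $(N,\varepsilon)$-match of $\ux$ and $\uy$. Its quality is at least $1$ minus the density of bad time indices in $[0,N)$, so $\bar f_{N,\varepsilon}(\ux,\uy)$ is bounded above by this density. Taking the $\limsup$ yields $\bar f_\varepsilon(\ux,\uy)<\varepsilon$, whence $\bar F_\FK(\Pi(\ua,0),\Pi(\ub,0))\le\varepsilon$, as desired. The argument is entirely combinatorial; the only delicate point is the book-keeping that translates the density bound from shift indices to time indices through the roof $R$, and I do not foresee any serious obstacle.
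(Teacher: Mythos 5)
Your proposal is correct and follows essentially the same route as the paper's proof: you bound the upper density of shift indices whose $(2\Mod(\varepsilon)+1)$-window contains a mismatch by $(2\Mod(\varepsilon)+1)\,\bar d(\ua,\ub)$, use the definition of $\Mod$ together with $\Pi\circ\Phi_{\cA,R}=f\circ\Pi$ to get $\varepsilon$-closeness at all the corresponding times $kR+s$, and conclude via the identity match. Your bookkeeping (the explicit treatment of the initial block $k<\Mod(\varepsilon)$ and the passage from shift indices to time indices through the roof $R$) is just a more detailed version of what the paper does with the set $D_n(\ua,\ub)$.
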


\begin{proof}
Given $\ua,\ub$ as in the statement and let $\delta\eqdef\bar d(\ua,\ub)$. By definition, $\delta$  is  the upper density of the set $\{j\in\bN_0\colon a_j\neq b_j\}$. Given $n\in\mathbb{N}$,  the upper density of the set 
$$
	D_n(\ua, \ub) \eqdef\{j\in \bN_0\colon \exists k\in \{-n,\ldots,n\}, a_{j+k}\neq b_{j+k}\}
$$ 
is asymptotically at most $(2n+1)\delta$. 

Substituting $n\eqdef \Mod(\varepsilon)$ and applying the definition of $\Mod$, we see that if $j\notin D_n(\ua,\ub)$ then 
\[
	\dist_M\big(\Pi(\sigma_\cA^j (\ua),s),\Pi(\sigma^j(\ub),s)\big) 
	\leq \varepsilon, \quad \mbox{for every $s\in\{0,\ldots, R-1 \}$.} 
\]	 
This gives rise to 
\[
	\dist_M(f^{jR+s}(\Pi(\ua),0)),f^{jR+s}(\Pi((\ub),0)))\leq\varepsilon.
\]
By definition of the $\bar F_{\rm FK}$-pseudometric, one has 
\[ 
	\bar F_\FK\big(\Pi(\ua,0), \Pi(\ub,0)\big)
	\leq (2\Mod(\varepsilon)+1)\delta\le  \varepsilon.
\]
This proves the lemma.
\end{proof}

Given $\nu\in\cM_{\rm erg}(\sigma_{\cA})$, consider the measure $\lambda_{\cA,R,\nu}\in\cM_{\rm erg}(\cS_{\cA,R})$ defined in \eqref{eqtildelambda}. 

\begin{proposition}\label{proepsdel}
	For every $\nu,\nu'\in\cM_{\rm erg}(\sigma_\cA)$ and $\varepsilon>0$ such that
\[
	\bar d(\nu,\nu')
	< \delta\eqdef \frac{\varepsilon}{2\Mod(\varepsilon)+1},
\]
there exist generic points $x,x'$ for the measures $\Pi_\ast\lambda_{\cA,R,\nu}$ and $\Pi_\ast\lambda_{\cA,R,\nu'}$, respectively, such that $\bar F_\FK (x,x') \leq \varepsilon$. In particular, one has 
\[
	\bar F_\FK \big(\Pi_\ast\lambda_{\cA,R,\nu},\Pi_\ast\lambda_{\cA,R,\nu'}\big) 
	< \varepsilon.
\]
\end{proposition}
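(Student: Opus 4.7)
The strategy is to unfold the hypothesis on ergodic measures down to generic points, transport it through the factor map $\Pi$, and fold it back up to a statement about measures on $M$. Concretely, set $\delta\eqdef \varepsilon/(2\Mod(\varepsilon)+1)$ and proceed as follows.

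First, apply Fact~\ref{factdbar} to the assumption $\bar d(\nu,\nu')<\delta$: this yields a $\nu$-generic sequence $\ua\in\cA^\bZ$ and a $\nu'$-generic sequence $\ub\in\cA^\bZ$ with $\bar d(\ua,\ub)<\delta$. Feeding this pair into Lemma~\ref{suspmetr}, I immediately obtain the metric estimate
\[
\bar F_\FK\bigl(\Pi(\ua,0),\Pi(\ub,0)\bigr)\le\varepsilon.
\]

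The main technical step is to verify that $x\eqdef\Pi(\ua,0)$ is an $f$-generic point for $\Pi_\ast\lambda_{\cA,R,\nu}$, and analogously $x'\eqdef\Pi(\ub,0)$ for $\Pi_\ast\lambda_{\cA,R,\nu'}$. I would do this in two moves. First, I claim that $(\ua,0)$ is $\Phi_{\cA,R}$-generic for $\lambda_{\cA,R,\nu}$. Indeed, for any continuous $\varphi\colon\cS_{\cA,R}\to\bR$ and $m\in\bN$,
\[
\frac{1}{mR}\sum_{k=0}^{mR-1}\varphi\bigl(\Phi_{\cA,R}^{k}(\ua,0)\bigr)
=\frac{1}{R}\sum_{s=0}^{R-1}\frac{1}{m}\sum_{j=0}^{m-1}\varphi\bigl(\sigma_\cA^{j}(\ua),s\bigr),
\]
and since $\ua$ is $\nu$-generic, for each fixed $s$ the inner average converges to $\int\varphi(\cdot,s)\,d\nu$; averaging over $s$ gives exactly $\int\varphi\,d\lambda_{\cA,R,\nu}$ by definition \eqref{eqtildelambda}. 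The convergence along the full subsequence of indices $n=mR+s_0$ follows since the tail contribution is $O(1/m)$. Second, by the semiconjugacy $\Pi\circ\Phi_{\cA,R}=f\circ\Pi$ from Lemma~\ref{r.factor}(1), for any continuous $\psi\colon M\to\bR$ applied to $\varphi=\psi\circ\Pi$ we get
\[
\frac{1}{n}\sum_{k=0}^{n-1}\psi\bigl(f^{k}(x)\bigr)
=\frac{1}{n}\sum_{k=0}^{n-1}\varphi\bigl(\Phi_{\cA,R}^{k}(\ua,0)\bigr)
\longrightarrow \int\varphi\,d\lambda_{\cA,R,\nu}
=\int\psi\,d(\Pi_\ast\lambda_{\cA,R,\nu}),
\]
which is precisely the assertion that $x\in G(\Pi_\ast\lambda_{\cA,R,\nu})$. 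The same argument applies verbatim to $x'$.

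Finally, combining the two parts: $x$ and $x'$ are generic points for $\Pi_\ast\lambda_{\cA,R,\nu}$ and $\Pi_\ast\lambda_{\cA,R,\nu'}$ respectively, with $\bar F_\FK(x,x')\le\varepsilon$. Taking infimum over generic points in Definition~\ref{defFKerg} yields $\bar F_\FK(\Pi_\ast\lambda_{\cA,R,\nu},\Pi_\ast\lambda_{\cA,R,\nu'})\le\varepsilon$; the strict inequality follows by noting that the hypothesis $\bar d(\nu,\nu')<\delta$ is strict, which through the proof of Lemma~\ref{suspmetr} produces a bound strictly smaller than $\varepsilon$. The only genuinely delicate step is the genericity transfer in the middle paragraph; the rest is bookkeeping with the semiconjugacy.
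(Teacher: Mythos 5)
Your proposal is correct and follows essentially the same route as the paper: Fact~\ref{factdbar} to get $\bar d$-close generic sequences, Lemma~\ref{suspmetr} for the $\bar F_\FK$-estimate on their projections, and the observation that $(\ua,0)$, $(\ub,0)$ are generic for the suspended measures so that $\Pi$ carries them to generic points of the pushforwards. The only difference is that you spell out the genericity transfer (Birkhoff averages over the suspension decomposed floor by floor), which the paper simply asserts; that computation is correct and a welcome addition.
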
 

\begin{proof}
	It follows from Fact \ref{factdbar} that there are generic sequences $\ua$ and $\ua'$ for $\nu$ and $\nu'$, respectively, with $\bar d(\ua,\ua')<\delta$. Hence, by Lemma \ref{suspmetr}, $\bar F_\FK(\Pi(\ua,0),\Pi(\ua',0))\le \varepsilon$. Notice that $(\ua,0)$ and $(\ua',0)$ are generic points for $\lambda_{\cA,R,\nu}$ and $\lambda_{\cA,R,\nu'}$,   and thus 
	$\Pi(\ua,0)$ and $\Pi(\ua',0)$  are generic points for $\Pi_\ast\lambda_{\cA,R,\nu}$ and $\Pi_\ast\lambda_{\cA,R,\nu'}$, respectively. 
\end{proof}

\section{Ergodic measures with prescribed Lyapunov exponents}\label{sec:existence}

In this section, we construct arcs of ergodic measures with a prescribed center Lyapunov exponent. 
We continue to denote by $\dist_M$ the metric on the manifold $M$.

Recall our choice of $\varepsilon_1$ in Remark \ref{remweaint}. 

\begin{theorem}\label{thm.horseshoe-to-path-of-measure}
	Let $f\in\PH_{\c=1}^1(M)$. Assume that the strong unstable foliation of $f$ is minimal and $f$ has an unstable blender-horseshoe. Then there exist numbers $c_0>0$, $\delta_0\in(0,1)$, and $\chi_0<0$ such that the following holds.  
	
	Let $\delta\in(0,\delta_0)$, $\widehat\chi\in(\chi_0,0)$, and $\chi\in(\chi_{\rm min}(f),0]$. Let  $C=C^\csu(\gamma,r)$ be an $\csu$-cube with $r<\varepsilon_1/3$. Let $\bfH=(C,\{\bfK_i\}_{i=1}^N,f^R)$ be a $\csu$-horseshoe relative to $C$ such that 
	\begin{equation}\label{hypotheseses}
		\widehat\chi(1+\delta)
		\le \frac{1}{R}\log\,\lVert D^\c f^{R}(x)\rVert-\chi
		\le \widehat\chi(1-\delta),
		\quad\text{ for every } x\in\bigcup_{i=1}^N\bfK_i. 
	\end{equation}
	Then there exists a continuous path $\{\mu_t\}_{t\in[0,1]}\subset \cM_{\rm erg}(f)$ such that 
	\begin{itemize}[leftmargin=0.6cm ]
		\item[(1)] $\chi^\c(\mu_t)=\chi$ for any $t\in[0,1]$,
		\item[(2)] $t\mapsto h_{\mu_t}(f)$ is continuous,
		\item[(3)] $h_{\mu_0}(f)=0$, and 
		\[ h_{\mu_1}(f)\geq \big(1+c_0\widehat\chi\big) \cdot \frac{\log N}{R}.\]
	\end{itemize}
\end{theorem}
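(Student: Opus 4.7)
The plan is to apply Theorem~\ref{THEOp.key-estimate-center-Lyapunov-exponent} inductively (as described in Remark~\ref{remCascade}) to obtain a cascade of subordinated $\csu$-horseshoes $\bfH_0=\bfH$, $\bfH_1, \bfH_2,\ldots$ whose finite-time center exponents scale as $\chi+\rho^k\widehat\chi$. The cyclic relation required at each step is automatic from Lemma~\ref{lemr.transitiontime}, using minimality of $\cW^{\uu}$ and the unstable blender-horseshoe. Fix a sufficiently fast growing sequence $(m_k)_k$ with $m_k\ge m_0(\bfH_{k-1})$; set $M_k\eqdef m_1\cdots m_k$, so that $\bfH_k$ has alphabet $\cA_k=\{1,\ldots,N\}^{M_k}$ and return time $R_k$ satisfying $R_k\le R\cdot M_k\cdot\prod_{j=0}^{k-1}(1+\xi\rho^j|\widehat\chi|)$. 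Denote by $\Pi_k$ the projection from the suspension of $(\cA_k^\bZ,\sigma_{\cA_k})$ with roof $R_k$ onto $\widetilde\Lambda_{\bfH_k}$, as in Section~\ref{ss.symbolic-horseshoe}.

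For each $k$, choose the $\bar d$-continuous path of Bernoulli measures $\{\nu_t^{(k)}\}_{t\in[0,1]}\subset\cM_{\rm erg}(\sigma_{\cA_k})$ with probability vector $p_a(t)=(1-t)\bone_{a=a_0}+t/|\cA_k|$, interpolating between a Dirac (zero entropy) and the uniform Bernoulli (entropy $M_k\log N$). Set
\[
	\mu_t^{(k)}\eqdef (\Pi_k)_\ast\lambda_{\cA_k,R_k,\nu_t^{(k)}}\in\cM_{\rm erg}(f).
\]
By Remark~\ref{r.space-of-Bernoulli-measures} the path $t\mapsto\nu_t^{(k)}$ is $\bar d$-continuous, so Proposition~\ref{proepsdel} gives $\bar F_\FK$-continuity of $t\mapsto\mu_t^{(k)}$, and hence (by Corollary~\ref{c.continuity-of-entropy-under-FK-for PH-1D} and upper semi-continuity of entropy) continuity in both the weak$\ast$ topology and in entropy for each fixed $k$.

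The heart of the argument is showing that for each $t\in[0,1]$ the sequence $(\mu_t^{(k)})_k$ is $\bar F_\FK$-Cauchy uniformly in $t$, so that by Theorem~\ref{thepro:ergentcon} it converges to an ergodic measure $\mu_t\in\cM_{\rm erg}(f)$, and the limit path $t\mapsto\mu_t$ inherits continuity in weak$\ast$ and entropy. The matching of generic orbits is built into the cascade: because $\bfH_{k+1}$ is subordinated to $\bfH_k$, any trajectory realizing $\mu_t^{(k+1)}$ follows for each block of $R_k$ iterates the itinerary of some point of $\bfH_k$, except for the ``tailing'' time spent transiting to and inside the blender-horseshoe. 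By the computation of Section~\ref{sec.remweakstar}, the fraction of tailing iterates at level $k$ is $O(\rho^k|\widehat\chi|)$, giving a matching of quality $1-O(\rho^k|\widehat\chi|)$ with accuracy proportional to $\diam(C)$, hence $\bar F_\FK(\mu_t^{(k+1)},\mu_t^{(k)})\le C\rho^k|\widehat\chi|$ for some $C>0$ independent of $t$.

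Once $\mu_t$ is constructed, properties (1)--(3) follow:  the center exponents on $\bfH_k$ lie in $\chi+\rho^k\widehat\chi\pm\rho^k|\widehat\chi|\delta$ by induction, so weak$\ast$ convergence together with continuity of $x\mapsto\log\|D^\c f(x)\|$ forces $\chi^\c(\mu_t)=\chi$; $h_{\mu_0^{(k)}}(f)=0$ for every $k$ so $h_{\mu_0}(f)=0$; and
\[
	h_{\mu_1^{(k)}}(f)
	=\frac{h_{\nu_1^{(k)}}(\sigma_{\cA_k})}{R_k}
	=\frac{M_k\log N}{R_k}
	\ge \frac{\log N}{R}\prod_{j=0}^{k-1}\bigl(1+\xi\rho^j|\widehat\chi|\bigr)^{-1},
\]
which in the limit $k\to\infty$ gives $h_{\mu_1}(f)\ge\frac{\log N}{R}\exp(-\xi|\widehat\chi|/(1-\rho))\ge\frac{\log N}{R}(1+c_0\widehat\chi)$ for $c_0\eqdef\xi/(1-\rho)$ and $\widehat\chi$ sufficiently close to $0$. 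The main obstacle is the quantitative $\bar F_\FK$-Cauchy estimate: matching generic points of $\mu_t^{(k+1)}$ with those of $\mu_t^{(k)}$ requires careful use of the ``shadowing'' provided by subordination (orbits in $\bfK'_\ba\subset\bfK_\ba$), of the uniform distortion bound from Corollary~\ref{c.distortion-reference-cube}, and of the uniform smallness of the cubes of $\bfH_k$ away from the transition intervals, together with the bound on the proportion of time spent on transitions derived in Section~\ref{sec.remweakstar}.
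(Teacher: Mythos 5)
Your overall architecture (the cascade of subordinated horseshoes from Theorem~\ref{THEOp.key-estimate-center-Lyapunov-exponent}, suspensions and projections $\Pi_k$, a $\bar d$-path of Bernoulli measures pushed to $f$-ergodic measures, and an $\bar F_\FK$-Cauchy limit whose exponent, entropy and continuity are read off as you indicate) is the paper's strategy, and your endpoint computations (exponent pinching, $h_{\mu_0}=0$, the Abramov-type bound giving $(1+c_0\widehat\chi)\log N/R$) are correct. But there is a genuine gap at the heart of the argument, namely in the claimed estimate $\bar F_\FK(\mu_t^{(k+1)},\mu_t^{(k)})\le C\rho^k|\widehat\chi|$. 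You choose, at \emph{each} level $k$, the interpolating Bernoulli measure on $\cA_k$ with vector $(1-t)\bone_{a=a_0}+t/|\cA_k|$. These choices are not coherent under the respelling maps: for $t\in(0,1)$, the image of $\nu_t^{(k+1)}$ under $\underline\Sub_{k+1}^k$ is \emph{not} $\nu_t^{(k)}$ (it is a block process in which the distinguished level-$k$ word occurs in runs of length $m_{k+1}$, whereas under $\nu_t^{(k)}$ it occurs in i.i.d.\ fashion). The subordination/shadowing argument you invoke matches a $\mu_t^{(k+1)}$-generic orbit, up to the tail fraction $O(\rho^k)$, with the orbit $\Pi_k(\underline\Sub_{k+1}^k(\ua^{(k+1)}),0)$ — that is, with a generic orbit of the \emph{respelled} measure, not of $\mu_t^{(k)}$. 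Since Definition~\ref{defFKerg} requires matching generic orbits of the two measures being compared, your argument only bounds $\bar F_\FK$ between $\mu_t^{(k+1)}$ and the pushforward of the respelled measure; the remaining discrepancy between that measure and $\mu_t^{(k)}$ is left uncontrolled, and for intermediate $t$ it is not small (a measure whose near-distinguished stretches come in runs of length $m_{k+1}R_k$ cannot be matched with high quality, by an order-preserving match, to one whose near-distinguished stretches have typical length $O(R_k)$). So the uniform Cauchy claim, and with it the existence and continuity of the limit path for $t\in(0,1)$, does not follow.

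The fix is exactly what the paper does: fix \emph{one} $\bar d$-continuous path $\{\nu_t\}$ of Bernoulli (hence totally ergodic) measures on the base shift $(\cA^\bZ,\sigma_\cA)$, and at every level use the \emph{same} measure viewed through the respelling, $\nu_{t,k}=((\underline\Sub_k^0)^{-1})_\ast\nu_t$, so that the level-$k$ measure is by construction the respelled one. Then the orbit produced by your shadowing argument is generic for $\mu_{t,k}$ itself, and the Cauchy estimate is precisely Proposition~\ref{p.Cauchy} (Claim~\ref{cl.mu-k-Cauchy}); continuity in $t$ of the limit path follows not from the level-$k$ moduli $\Mod_k$ (which degenerate as $k\to\infty$) but from the uniform continuity across levels in Proposition~\ref{p.uniform-continuity-for-lift-orbit}, packaged as uniform continuity of the map $\bH$ in Proposition~\ref{p.existence-of-uniform-continuous-map-between-measure-spaces}, after which your entropy and exponent computations go through as in Proposition~\ref{p.entropy-bound-for-map-bH}.
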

  
To prove the above theorem, we will apply inductively Theorem~\ref{THEOp.key-estimate-center-Lyapunov-exponent} getting a sequence of $\csu$-horseshoes $(\bfH_k)_{k\in\bN_0}$ as in Remark \ref{remCascade}. Here each horseshoe $\bfH_{k+1}$  has center Lyapunov exponents which are each time closer to the ``target exponent $\chi$'' by some factor. 

In Section \ref{s.cascades-of-horseshoes}, we first derive a cascade of horseshoes $(\bfH_k)_k$ and collect their quantifiers. In Sections \ref{news.symbolic-space} and \ref{s.cascade-of-suspension-space-symbolic}, we fix the corresponding cascade of alphabets and suspension spaces, respectively, where we pay particular attention to the control of added tails.  In Section \ref{secFK-Cauchy}, we prove the existence of Feldman-Katok Cauchy sequences. In Section \ref{ssecUniformFK}, we show that our construction provides uniform $\FK$-convergence across all horseshoes. Finally, in Section \ref{secfinal}, we prove Theorem \ref{thm.horseshoe-to-path-of-measure}.

\subsection{Cascades of horseshoes}\label{s.cascades-of-horseshoes}

Let $f\in\PH^1_{\c=1}(M)$ be as in the assumption of Theorem~\ref{thm.horseshoe-to-path-of-measure}. 
Let $\xi>0$, $\delta_0\in(0,1)$, $\chi_0<0$, and $\rho\in(0,1)$  be the constants provided by Theorem \ref{THEOp.key-estimate-center-Lyapunov-exponent}. 
Let  $C=C^\csu(\gamma,r)$ be an $\csu$-cube with $r<\varepsilon_1/3$.

Let $\delta\in(0,\delta_0)$, $\widehat\chi\in(\chi_0,0)$, and $\chi\in(\chi_{\rm min}(f), 0]$. Let us consider a $\csu$-horseshoe $\bfH=(C,\{\bfK_i\}_{i=1}^N,f^R)$ relative to  $C$ and satisfying \eqref{hypotheseses}. Consider the number $m_0(\bfH)$ provided by Theorem \ref{THEOp.key-estimate-center-Lyapunov-exponent} applied to $\bfH$.

Invoke now Remark \ref{remCascade}. Letting $\bfH_0=\bfH$, consider the cascades of horseshoes $(\bfH_k)_{k\in\bN_0}$, alphabets $(\cA_k)_{k\in\bN_0}$, constants $(K_k)_{k\in\bN_0}$, repeating times $(m_k)_{k\in\bN}$, and tailing times $(\bt_k)_{k\in\bN}$. For each horseshoe $\bfH_k=(C,\{\bfK_\ba'\}_{\ba\in\cA_k},f^{R_k})$, for $k\in\bN$ we get
\begin{itemize}[leftmargin=0.6cm ]
\item $\cA_0=\cA$ and $\cA_{k}=\cA_{k-1}^{m_{k}}$;
\item $m_{k} R_{k-1}\le R_{k}< \big(1+\xi\cdot \rho^{k}|\widehat\chi|\big) m_{k} R_{k-1}$ and hence 
\begin{equation}\label{eqdefRk}
	\bt_{k}<\xi |\widehat\chi|\cdot \rho^{k} \cdot m_{k} R_{k-1},
\end{equation}
\item for every $\ba\in\cA_k$ and every $x\in \bfK_\ba'$,   
\begin{equation}\label{eq:bound-center-LE-for-kth-horseshoe} 
		- \rho^k|\widehat\chi|(1+\delta)
		\le \frac{1}{R_k} \log\,\lVert D^\c f^{R_k}(x)\rVert- \chi
		\le - \rho^k|\widehat\chi|(1-\delta).
\end{equation}
\item for every $\ba\in\cA_k$, every $x\in \bfK_\ba'$, and every $i=1,\ldots,R_k$,
\begin{equation}
	 \log\|D^\c f^i(x)\|\le K_{k-1}e^{i\delta\rho^{k-1}\widehat\chi/2}.	
\end{equation}  	
\end{itemize}

The following result is Corollary \ref{c.distortion-reference-cube} applied to the horseshoe $\bfH_k$.

\begin{corollary}\label{corinitial}
For every $k\in\bN$, there exists $L_k\in\bN$ such that for every $\ba\in \cA_k$ and every $x,y\in \bfK_\ba$ we have
\[
	\dist_M(f^i(x), f^i(y)) < 2^{-k},
	\quad\text{ for every }\quad
	i=L_{k},\ldots, R_k  - L_{k},
\]
where the number $L_{k}=L(K_{k-1},\delta\rho^{k-1}\widehat\chi/2,2^{-k})$ is as in Corollary \ref{c.distortion-reference-cube} applied to the horseshoe $\bfH_k$, which hence does not depend on $m_k$ and hence on $R_k$.
\end{corollary}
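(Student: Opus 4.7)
The plan is to obtain this corollary as a direct application of Corollary \ref{c.distortion-reference-cube} to the $\csu$-cube $\bfK_\ba$ (for each $\ba \in \cA_k$) with the iteration time $n = R_k$. Concretely, I would set
\[
K := K_{k-1}, \qquad \chi := \tfrac{\delta\rho^{k-1}\widehat\chi}{2} \;<\; 0, \qquad \varepsilon := 2^{-k}, \qquad n := R_k,
\]
and take $L_k := L(K_{k-1},\,\delta\rho^{k-1}\widehat\chi/2,\,2^{-k})$ to be the integer produced by Corollary \ref{c.distortion-reference-cube} for these parameters. By construction this number depends only on $K_{k-1}$, $\delta$, $\rho$, $\widehat\chi$, and $k$, and in particular not on $m_k$ nor on $R_k$, which is the independence claim at the end of the statement.

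It remains to check the three hypotheses of Corollary \ref{c.distortion-reference-cube}. Hypothesis (1) is exactly the last bullet in Section \ref{s.cascades-of-horseshoes}: for every $x \in \bfK_\ba$ and every $i = 1, \ldots, R_k$ one has $\|D^\c f^i(x)\| \le K_{k-1}\,e^{i\delta\rho^{k-1}\widehat\chi/2}$; this was obtained by applying item (3) of Lemma \ref{l.proexistHprime} when passing from $\bfH_{k-1}$ to $\bfH_k$ via Theorem \ref{THEOp.key-estimate-center-Lyapunov-exponent}. Hypothesis (2) follows from the fact that $\bfK_\ba \subset C = C^\csu(\gamma,r)$ with $r < \varepsilon_1/3$, so $\diam(\bfK_\ba) \le \diam(C) < \varepsilon_1$, and that $f^{R_k}(\bfK_\ba)$ is a $\uu$-complete subcube of $C$, so $\diam(f^{R_k}(\bfK_\ba)) < \varepsilon_1$ as well. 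Hypothesis (3) is supplied by the cascade construction: at each step of Theorem \ref{THEOp.key-estimate-center-Lyapunov-exponent} the cylinders are built as $\cs$-complete subcubes of $C$, and, exactly as in the last paragraph of the proof of Corollary \ref{cl.cubeshorseshoe}, intersecting $f^{R_k}(\bfK_\ba)$ with the base $\cW^\ss(\gamma,r) \subset C$ and pulling back by $f^{-R_k}$ yields a base $D^\cs_\ba \subset \bfK_\ba$.

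With the three hypotheses verified, Corollary \ref{c.distortion-reference-cube} immediately gives $\diam(f^i(\bfK_\ba)) < 2^{-k}$ for all $i = L_k, \ldots, R_k - L_k$, and in particular $\dist_M(f^i(x), f^i(y)) < 2^{-k}$ for every $x, y \in \bfK_\ba$ in this range. No further work is required and there is no substantive obstacle in the argument; the only subtlety is purely bookkeeping, namely to remember that the ``effective'' distortion constant for the $k$th horseshoe is $K_{k-1}$ rather than $K_k$, because the finite-time center estimate used in hypothesis (1) is the one produced when $\bfH_k$ is constructed from $\bfH_{k-1}$.
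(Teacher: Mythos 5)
Your proposal is correct and is essentially the paper's own argument: the paper states Corollary \ref{corinitial} as a direct application of Corollary \ref{c.distortion-reference-cube} to $\bfH_k$ with exactly the parameters $(K_{k-1},\delta\rho^{k-1}\widehat\chi/2,2^{-k})$, and your verification of hypotheses (1)--(3) matches the justification already carried out in the cascade construction (cf.\ the bullet list in Section \ref{s.cascades-of-horseshoes} and the proof of Claim \ref{cl:etaclose}). The observation that $L_k$ therefore depends only on $K_{k-1}$, $\delta$, $\rho$, $\widehat\chi$, and $2^{-k}$, and not on $m_k$ or $R_k$, is exactly the point the paper makes.
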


For the following, we assume that the sequence $(m_k)_k$ was chosen to grow sufficiently fast such that 
\begin{equation}\label{extrassumption}
	L_{k}
	\le 2^{-k}m_kR_{k-1}\quad\textrm{for every $k$}.
\end{equation}
 
With the above, letting $c=\xi|\widehat\chi|$, the following assumption is satisfied. 
\begin{assumption}[Control of tail lengths]\label{asstailen}
There exists a constant  $c>0$ such that for the sequences $(m_k)_{k\in\bN}$, $(R_k)_{k\in\bN_0}$, and $(\bt_k)_{k\in\bN}$ the following holds. For every $k\in\bN$ 
\[
	R_{k} =  {m_{k}} R_{k-1} +\bt_{k} ,
	\quad\text{ where }\quad
	0
	\le \bt_{k}
	\le c\cdot \rho^{k} \cdot {m_{k}} R_{k-1} .
\]	
\end{assumption}

\subsection{Cascade of alphabets}\label{news.symbolic-space}

Recall that
\[
	\cA_0\eqdef\cA,\quad
	\cA_k\eqdef\cA_{k-1}^{m_k}.
\]
Note that each $\cA_k$ is a finite collection of $(m_1\cdot m_2\cdots m_k)$-words over the initial alphabet $\cA$. However, we want to look at it as a new abstract alphabet. This point of view comes with naturally associated ``respelling maps'' for any $\ell>k$
\[\begin{split}
	\Sub_\ell^{\ell-1}&\colon\cA_\ell\to\cA_{\ell-1}^{m_\ell},\\
	\Sub_\ell^{k}&\colon\cA_\ell\to\cA_k^{m_{k+1}\cdots m_\ell},\quad 
	\Sub_\ell^{k}\eqdef\Sub_{k+1}^{k}\circ\Sub_{k+2}^{k+1}\circ\cdots\circ\Sub_\ell^{\ell-1}.
\end{split}\]
We extend these maps to bijections between the associated sequence spaces. For that, denote by $(\ldots|a_0^{(\ell)},a_1^{(\ell)},\ldots)$ the elements in $\cA_\ell^\bZ$. Let
\begin{equation}\label{eqSub}\begin{split}
	&\underline\Sub_\ell^k\colon \cA_\ell^\bZ\to \cA_k^\bZ,\\
	&\underline\Sub_\ell^k\big(\ldots|a_0^{(\ell)},a_1^{(\ell)},\ldots\big)
	\eqdef (\ldots|\Sub_\ell^k(a_0^{(\ell)}),\Sub_\ell^k(a_1^{(\ell)}),\ldots).
\end{split}\end{equation}
Note that the map $\underline\Sub_\ell^k$  is a homeomorphism. 

Denote by 
\begin{equation}\label{defsigmak}
	\sigma_k\eqdef\sigma_{\cA_k}\colon\cA_k^\bZ\to\cA_k^\bZ
\end{equation}
the left shift on $\cA_k^\bZ$. 

\begin{remark}\label{remtopconjugate}
The maps $(\cA_k^\bZ,\sigma_k)$ and $(\cA^\bZ,\sigma_\cA^{m_1\cdots m_k})$ are topologically conjugate by $\underline\Sub_k^0$, that is, the following diagram commutes.
\begin{displaymath}
	\xymatrix@C=6em@R=3em{
		\cA_{k}^\bZ\ar[r]^{\sigma_k} \ar[d]_{\underline\cL_k^0} & 
		\cA_k^\bZ \ar[d]^{\underline\cL_k^0} \\
		\cA^\bZ \ar[r]^{\sigma_\cA^{m_1\cdots m_k}}   &  \cA^\bZ }
\end{displaymath}
\end{remark}

\begin{remark}
Recall the definition of $\bar d$-distance in Section \ref{secdbardistance} and denote by $\bar d_k$ the corresponding distance on $\cA_k^\bZ$. Then for every $k\in\bN$ and every $\ua,\ub\in\cA^\bZ$, it holds
\[
	m_1\cdots m_k\cdot \bar d(\ua,\ub)
	\ge \bar d_k\big((\underline\Sub_k^0)^{-1}(\ua),(\underline\Sub_k^0)^{-1}(\ub)\big)
	\ge \bar d (\ua,\ub).
\]
\end{remark}

\subsection{Cascade of suspension spaces}\label{s.cascade-of-suspension-space-symbolic}

Consider now the associated sequence of suspension spaces and the maps
\[
	\cS_k\eqdef\cS_{\cA_k,R_k}, \quad \Phi_k\eqdef\Phi_{\cA_k,R_k}
\]
with base $\cA_k^\bZ$ and constant roof function $R_k$. 

Given $\ell>k$, we now introduce inductively homeomorphisms between the suspension space $\cS_k$ and certain subsets of $\cS_\ell$ which we will call ``strips''. We start with some notations.

\begin{notation}[Representation with varying base]\label{notaddress}
Given $\ell>k$, we call 
\[	
	\vr=(\mathrm r_k,\ldots,\mathrm r_{\ell-1})
	\quad\text{ with }\quad \mathrm r_i\in\{0,\ldots,m_{i+1}-1\}, \quad i=k,\ldots,\ell-1,
\]	 
a \emph{$(k,\ell)$-admissible tuple}. As we only consider admissible tuples, for simplicity we will drop this adjective. In the following, we use the indices ${}_\ell$ and ${}^{k,\vr}$, where $\ell\in\bN$, $k\in\{0,\ldots,\ell-1\}$, and $\vr$ is as above, to ``localize the intermediate floors'' in higher-level suspension spaces. The index ${}_\ell$ indicates that the object is contained in the suspension space $\cS_\ell$. The index ${}^{k,\vr}$ indicates which previous levels are taken into account (from the suspension space $\cS_k$). The length of the tuple $\vr$ indicates the difference of levels. Compare Figure \ref{fig.1}.
\end{notation}

We now proceed with the inductive definition. 

\begin{definition}[$k$-strips and intermediate floors]\label{defintfloors}
Given $k\in\bN$, for every $\mathrm r=0,\ldots, m_{k+1}-1$, let
\[
	P_k^{(k+1,\mathrm r)}\colon\cS_k \to \cS_{k+1},\quad
	P_k^{(k+1,\mathrm r)}(\ua^{(k)},s) 
	\eqdef \Big(\big(\underline\Sub_{k+1}^k\big)^{-1}
		\big(\sigma_k^{-\mathrm r}( \ua^{(k)})\big), s+\mathrm   r\cdot R_k\Big)
\]
(check that for $s\in\{0,\ldots,R_k-1\}$ both arguments are indeed in their canonical presentations in $\cS_k$ and $\cS_{k+1}$, respectively).

Given $\ell>k$ and any $(k,\ell)$-tuple $\vr=(\mathrm r_k,\ldots, \mathrm r_{\ell-1})$, we define inductively the map: 
\begin{equation}\label{eqproj}
	P_{k}^{(\ell,\vr)} \colon\cS_k \to \cS_\ell,\quad
	P_{k}^{(\ell,\vr)} 
	\eqdef P_{\ell-1}^{(\ell,\mathrm r_{\ell-1})} \circ \cdots \circ P_{k}^{(k+1,\mathrm r_k)}.
\end{equation}
The sets $P_k^{(\ell,\vr)}(\cS_k)$ are called \emph{$k$-strips} and $P_k^{(\ell,\vr)}(\cA_k^\bZ\times\{0\})$ their \emph{intermediate floors}.
\end{definition}
\begin{fact}\label{fac:commuting-P-and-Phi}
Given $\ell>k$ and  any $(k,\ell)$-tuple $\vr$, one has 
\[ \Phi_\ell\circ 	P_{k}^{(\ell,\vr)}=	P_{k}^{(\ell,\vr)}\circ \Phi_k. \]
\end{fact}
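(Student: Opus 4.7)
The plan is to proceed by induction on $\ell - k \ge 1$, with the base case $\ell = k+1$ verified directly from the definitions and the inductive step handled via the composition formula \eqref{eqproj}.

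For the base case, the statement reads $\Phi_{k+1} \circ P_k^{(k+1,\mathrm r)} = P_k^{(k+1,\mathrm r)} \circ \Phi_k$, and I would verify it by direct computation on a canonical representative $(\ua^{(k)}, s)$ with $s \in \{0, \ldots, R_k - 1\}$. The argument splits according to whether $s < R_k - 1$ or $s = R_k - 1$. In the first case both sides act by incrementing the height coordinate, and one checks that the height $s + \mathrm r R_k$ remains strictly below $R_{k+1}-1$, so the two expressions agree on the nose. In the second case, $\Phi_k$ on the left resets the height to $0$ and applies $\sigma_k$ to the base, while on the right $\Phi_{k+1}$ advances the height from $(\mathrm r+1)R_k - 1$ to $(\mathrm r+1)R_k$; matching these expressions in $\cS_{k+1}$ requires the equivalence relation $(\ub, t) \sim (\sigma_{k+1}\ub, t - R_{k+1})$ together with the conjugacy $\sigma_{k+1} \circ (\underline\Sub_{k+1}^k)^{-1} = (\underline\Sub_{k+1}^k)^{-1} \circ \sigma_k^{m_{k+1}}$ recorded in Remark~\ref{remtopconjugate}, which is the precise statement that one shift in $\cA_{k+1}$ corresponds to $m_{k+1}$ shifts in $\cA_k$.

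For the inductive step, assuming the statement for $\ell - 1 - k$, the factorization \eqref{eqproj} gives
\[
\Phi_\ell \circ P_k^{(\ell, \vr)}
= \Phi_\ell \circ P_{\ell-1}^{(\ell, \mathrm r_{\ell-1})} \circ P_k^{(\ell-1, (\mathrm r_k, \ldots, \mathrm r_{\ell-2}))}.
\]
Applying the base case with $k$ replaced by $\ell - 1$ allows us to commute $\Phi_\ell$ past $P_{\ell-1}^{(\ell, \mathrm r_{\ell-1})}$, replacing it with $\Phi_{\ell-1}$. The inductive hypothesis then lets us commute $\Phi_{\ell-1}$ past $P_k^{(\ell-1, (\mathrm r_k, \ldots, \mathrm r_{\ell-2}))}$, replacing it with $\Phi_k$. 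Recomposing via \eqref{eqproj} yields $P_k^{(\ell,\vr)} \circ \Phi_k$, completing the step.

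The main delicate point is the bookkeeping in the base case at the sub-block boundary $s = R_k - 1$, where a shift of the base sequence in $\cS_k$ must be correctly reconciled with the heights and bases in $\cS_{k+1}$; all of the subsequent induction is purely formal once this is in place.
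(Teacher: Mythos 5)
Your inductive step is formally fine, but the base case breaks down at exactly the boundary point you flag as delicate, and no bookkeeping can repair it, because the asserted identity actually fails there. Take a canonical representative $(\ua^{(k)},R_k-1)$ and any $\mathrm r$ with $(\mathrm r+1)R_k<R_{k+1}$ (the generic situation, e.g.\ any $\mathrm r<m_{k+1}-1$). Then
\[
	\Phi_{k+1}\circ P_k^{(k+1,\mathrm r)}\big(\ua^{(k)},R_k-1\big)
	=\Big(\big(\underline\Sub_{k+1}^k\big)^{-1}\big(\sigma_k^{-\mathrm r}\ua^{(k)}\big),\,(\mathrm r+1)R_k\Big),
\]
whereas
\[
	P_k^{(k+1,\mathrm r)}\circ\Phi_k\big(\ua^{(k)},R_k-1\big)
	=P_k^{(k+1,\mathrm r)}\big(\sigma_k\ua^{(k)},0\big)
	=\Big(\big(\underline\Sub_{k+1}^k\big)^{-1}\big(\sigma_k^{1-\mathrm r}\ua^{(k)}\big),\,\mathrm r R_k\Big).
\]
Both right-hand sides are already canonical representatives in $\cS_{k+1}$, and their heights differ by $R_k$, which is not a multiple of $R_{k+1}$; hence the identification $(\ub,t)\sim(\sigma_{k+1}\ub,t-R_{k+1})$ cannot reconcile them, and the conjugacy $\sigma_{k+1}\circ(\underline\Sub_{k+1}^k)^{-1}=(\underline\Sub_{k+1}^k)^{-1}\circ\sigma_k^{m_{k+1}}$ never enters, since no roof of $\cS_{k+1}$ is crossed. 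There is also a structural obstruction: if the equivariance held everywhere, the image of $P_k^{(k+1,\mathrm r)}$ (the $\mathrm r$-th strip, heights $\mathrm r R_k,\ldots,(\mathrm r+1)R_k-1$ over all of $\cA_{k+1}^\bZ$) would be forward $\Phi_{k+1}$-invariant, which it is not: its top floor is mapped into the next strip (or into the tail).

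So the commutation is only valid before the $\cS_k$-roof is hit, i.e.\ in the restricted form $\Phi_\ell^{\,i}\circ P_k^{(\ell,\vr)}(\ua^{(k)},s)=P_k^{(\ell,\vr)}(\ua^{(k)},s+i)=P_k^{(\ell,\vr)}\circ\Phi_k^{\,i}(\ua^{(k)},s)$ whenever $s+i\le R_k-1$; this follows from your Case~1 computation together with the factorization \eqref{eqproj} (one checks that the height $s+i+\sum_{j=k}^{\ell-1}\mathrm r_j R_j$ stays below $R_\ell$ because $\sum_j(m_{j+1}-1)R_j\le R_\ell-R_k$). The paper states the Fact without proof, and only this no-wraparound version is ever used: in the proof of Proposition~\ref{p.Cauchy} it is applied to $\Phi_\ell^{\,i}\circ P_k^{(\ell,\vr)}(\cdot,0)$ with $i\le R_k-L_k<R_k$. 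The correct write-up of your argument is therefore to prove the restricted identity (your Case~1 plus composition), and to note explicitly that the literal statement cannot hold at the top floor of a $k$-strip; your Case~2 "reconciliation" is the step that would fail.
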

\begin{fact}\label{facPs}
	It is an immediate consequence of its definition that each map $P_k^{(k+1,\mathrm r)}$, $\mathrm r=0,\ldots, m_{k+1}-1$, is an injection of $\cS_k$ into $\cS_{k+1}$. The images of those maps, for different $\mathrm r$, are pairwise disjoint.
\end{fact}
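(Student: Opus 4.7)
The plan is to unpack the definitions of canonical representatives in $\cS_k$ and $\cS_{k+1}$, then verify both assertions by direct inspection; no nontrivial dynamics or hyperbolicity input is required, and the main point is simply bookkeeping on the ``layered'' structure of the taller suspension.

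First I would write each point of $\cS_k$ uniquely in canonical form $(\ua^{(k)},s)$ with $s\in\{0,\ldots,R_k-1\}$, and similarly in $\cS_{k+1}$ with second coordinate in $\{0,\ldots,R_{k+1}-1\}$. For $\mathrm r\in\{0,\ldots,m_{k+1}-1\}$ and $s\in\{0,\ldots,R_k-1\}$, the integer $s+\mathrm r\cdot R_k$ lies in $\{0,\ldots,m_{k+1}R_k-1\}$, which is contained in $\{0,\ldots,R_{k+1}-1\}$ by the relation $R_{k+1}\ge m_{k+1}R_k$ from Assumption \ref{asstailen}. Hence the right-hand side of the definition of $P_k^{(k+1,\mathrm r)}$ is already in canonical form in $\cS_{k+1}$, so one can compare representatives coordinate-wise without worrying about the suspension identification $\sim$.

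For injectivity, suppose $P_k^{(k+1,\mathrm r)}(\ua^{(k)},s)=P_k^{(k+1,\mathrm r)}(\ub^{(k)},t)$. Matching second coordinates in canonical form yields $s+\mathrm r R_k=t+\mathrm r R_k$, so $s=t$. Matching first coordinates and using that $\underline\Sub_{k+1}^k$ is a homeomorphism (in particular a bijection, by Section \ref{news.symbolic-space}) together with the invertibility of $\sigma_k$, we conclude $\ua^{(k)}=\ub^{(k)}$. For pairwise disjointness of images, observe that every point of $P_k^{(k+1,\mathrm r)}(\cS_k)$ has canonical second coordinate lying in the interval $\{\mathrm r R_k,\ldots,(\mathrm r+1)R_k-1\}$, and these intervals are pairwise disjoint as $\mathrm r$ ranges over $\{0,\ldots,m_{k+1}-1\}$. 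Since canonical representatives are unique, disjointness of second-coordinate ranges gives disjointness of the images.

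There is no genuine obstacle here; the only thing to be a bit careful about is that the second coordinate on the right-hand side of the definition really does not exceed $R_{k+1}-1$, which is exactly where Assumption \ref{asstailen} enters (the ``tailing time'' $\bt_{k+1}\ge0$ ensures $R_{k+1}\ge m_{k+1}R_k$). Once that is secured, both claims follow at once from the bijectivity of $\underline\Sub_{k+1}^k$ and $\sigma_k$ and from the layered decomposition of $\{0,\ldots,R_{k+1}-1\}$ into $m_{k+1}$ consecutive blocks of length $R_k$.
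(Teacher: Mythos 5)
Your proof is correct and is exactly the verification the paper leaves implicit when it calls the fact an immediate consequence of the definition: the key point, which you identify, is that $s+\mathrm r R_k\le m_{k+1}R_k-1\le R_{k+1}-1$ (since $\bt_{k+1}\ge 0$), so the image is already in canonical form and injectivity/disjointness follow from uniqueness of canonical representatives together with the bijectivity of $\underline\Sub_{k+1}^k$ and $\sigma_k$. Nothing further is needed.
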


\begin{notation}\label{not-jr}
Given any $\ell>k$ and any $(k,\ell)$-tuple $\vr=(\mathrm r_k,\ldots, \mathrm r_{\ell-1})$, let
\[
j
= j_k^\ell(\vr) 
\eqdef \mathrm r_k+\sum_{i=k+1}^{\ell-1} m_{k+1} \cdots m_i \mathrm r_i 
\in\Big\{0,\ldots,\sum_{i=k}^{\ell-1}m_{k+1}\cdots m_{i+1}\Big\}
\]
(when $k$ and $\ell$ are clear from the context, then we will omit them in the notation of $j$).
\end{notation}

\begin{figure}[h] 
 \begin{overpic}[scale=.43]{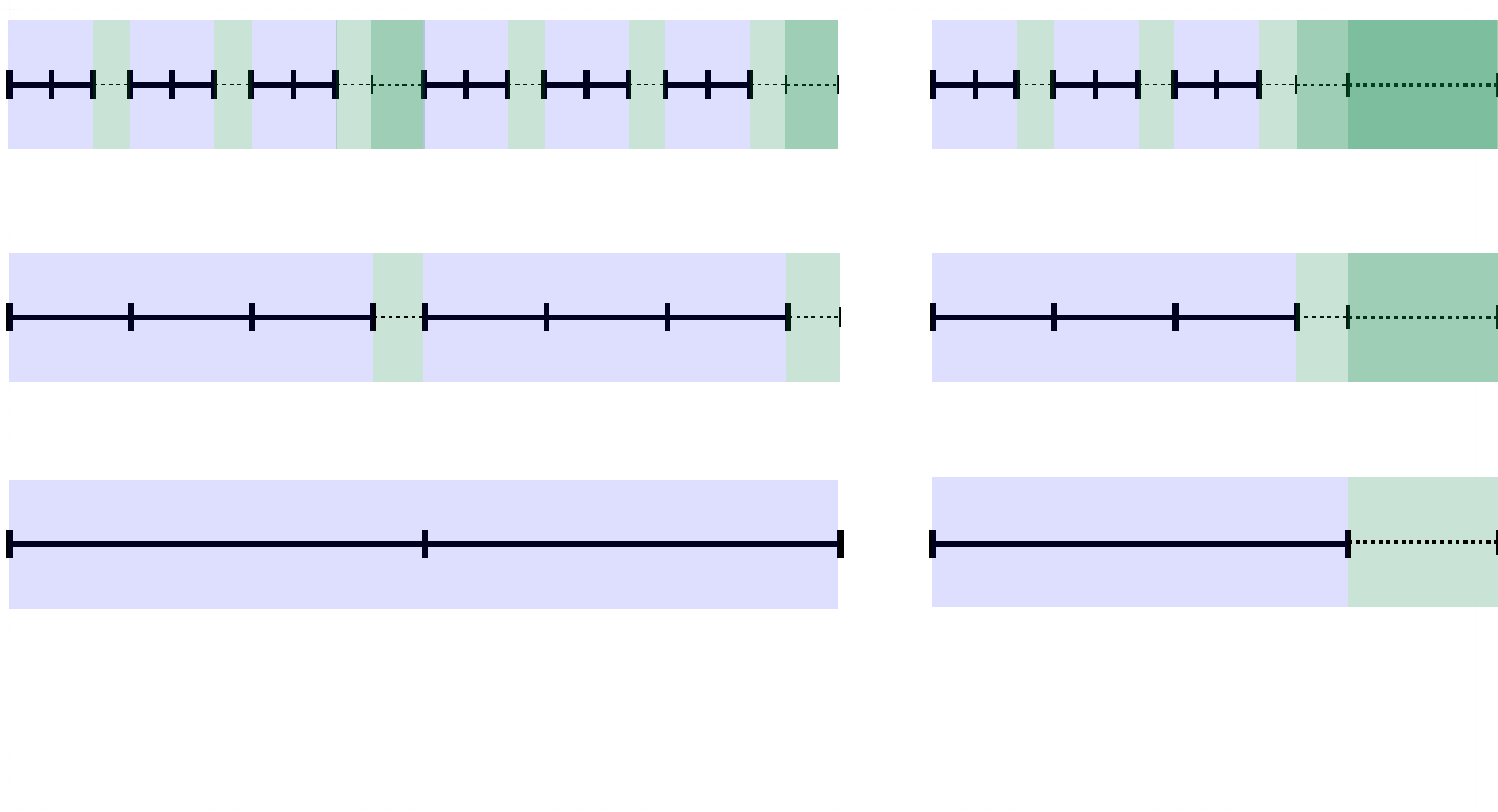}
 	\put(-1,0){ {\rotatebox{90}{\small$\cA_n^\bZ\times\{0\}$}}}
 	\put(-1,26){ {\rotatebox{90}{\tiny$(0,0)$}}}
 	\put(7,26){ {\rotatebox{90}{\tiny$(1,0)$}}}
 	\put(15,26){ {\rotatebox{90}{\tiny$(2,0)$}}}
 	\put(27,26){ {\rotatebox{90}{\tiny$(0,1)$}}}
 	\put(35.3,26){ {\rotatebox{90}{\tiny$(1,1)$}}}
 	\put(43.5,26){ {\rotatebox{90}{\tiny$(2,1)$}}}
 	\put(54,22){ {\rotatebox{45}{\tiny$(0,m_{k+2}-1)$}}}
 	\put(62,22){ {\rotatebox{45}{\tiny$(1,m_{k+2}-1)$}}}
 	\put(70,22){ {\rotatebox{45}{\tiny$(2,m_{k+2}-1)$}}}
 	\put(-1,13){ {\rotatebox{90}{\tiny$(0)$}}}
 	\put(27,13){ {\rotatebox{90}{\tiny$(1)$}}}
 	\put(56,9){ {\rotatebox{45}{\tiny$(m_{k+1}-1)$}}}	
	\put(-3.2,12){{\rotatebox{90}{\tiny$\vr=\mathrm r$}}}
	\put(-4,25){{\rotatebox{90}{\tiny$(\mathrm r_{k},\mathrm r_{k+1})$}}}
	\put(-4,37){{\rotatebox{90}{\tiny$(\mathrm r_{k},\mathrm r_{k+1},\mathrm r_{k+2})$}}}
 	\put(-1,40){ {\rotatebox{90}{\tiny$(0,0,0)$}}}	
 	\put(1.7,40){ {\rotatebox{90}{\tiny$(1,0,0)$}}}	
	\put(7.3,40){ {\rotatebox{90}{\tiny$(0,1,0)$}}}	
 	\put(10,40){ {\rotatebox{90}{\tiny$(1,1,0)$}}}	
 	\put(52,37){ {\rotatebox{45}{\tiny$(0,0,m_{k+3}-1)$}}}	
 	\put(55,37){ {\rotatebox{45}{\tiny$(1,0,m_{k+3}-1)$}}}	
	\put(17,40){{\small$\ldots$}}
	\put(57.5,48){{\tiny$\ldots$}}
	\put(57.5,32.6){{\tiny$\ldots$}}
	\put(57.5,18){{\tiny$\ldots$}}
	\put(68,40){{\tiny$\ldots$}}
 \end{overpic}
  \caption{The internal structure of $\cS_{k+1}$ (bottom), $\cS_{k+2}$ (middle), and $\cS_{k+3}$ (top) and the $k$-strips inside them. Light green is $t_{k+1}$, medium green $t_{k+2}$, dark green $t_{k+3}$.}
 \label{fig.1}
\end{figure}

It turns out that, like in Fact \ref{facPs}, the images $P_k^{(\ell, \vr)}(\cS_k)$ are also pairwise disjoint for any $\ell>k$; moreover, their union covers a large subset of $\cS_\ell$:

\begin{lemma}\label{lemcovmost}
For every $\ell>k$, and every  $\ua^{(\ell)}\in\cA_\ell^\bZ$, there are at most 
\[c\cdot \frac{\rho^{k+1}-\rho^{\ell+1}}{1-\rho}\cdot  R_\ell\]
points in the $\cS_\ell$-orbit segment $\{(\ua^{(\ell)},i)\colon i=0,\ldots,R_\ell-1\}$ that are not contained in 
\[
	\bigcup_{\vr}P_k^{(\ell,\vr)}\Big(\big\{\sigma_k^{j(\vr)}(\underline\Sub_\ell^k(\ua^{(\ell)}))\big\} \times \big\{0,\ldots,  R_k-1\big\}\Big),
\]
where the union is taken over all $(k,\ell)$-tuples $\vr$.
\end{lemma}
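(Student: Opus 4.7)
\smallskip
\noindent\textbf{Proof plan for Lemma~\ref{lemcovmost}.}

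The plan is to proceed by induction on $\ell-k\ge 1$, using the nested decomposition $P_k^{(\ell,\vr)}=P_{\ell-1}^{(\ell,\mathrm{r}_{\ell-1})}\circ P_k^{(\ell-1,\vr')}$ coming from \eqref{eqproj}, where $\vr'=(\mathrm{r}_k,\ldots,\mathrm{r}_{\ell-2})$. Throughout, I will exploit Fact~\ref{facPs} (pairwise disjointness of the adjacent-level strips $P_{i}^{(i+1,\mathrm{r})}(\cS_i)$, $\mathrm{r}=0,\ldots,m_{i+1}-1$) and the basic identity $R_i=m_i R_{i-1}+\bt_i$ with $\bt_i\le c\rho^i m_i R_{i-1}$ from Assumption~\ref{asstailen}.

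\emph{Base case $\ell=k+1$.} By the explicit formula defining $P_k^{(k+1,\mathrm{r})}$, the image $P_k^{(k+1,\mathrm{r})}(\cS_k)$ intersects the fiber over $\ua^{(k+1)}$ in precisely the set $\{(\ua^{(k+1)},s)\colon s\in[\mathrm{r}R_k,(\mathrm{r}+1)R_k)\}$. Letting $\mathrm{r}$ range over $\{0,\ldots,m_{k+1}-1\}$, these $m_{k+1}$ time-disjoint blocks cover exactly the indices $i\in\{0,\ldots,m_{k+1}R_k-1\}$ of the orbit segment. The uncovered indices form the ``tail'' $\{m_{k+1}R_k,\ldots,R_{k+1}-1\}$, of cardinality $\bt_{k+1}\le c\rho^{k+1}m_{k+1}R_k\le c\rho^{k+1}R_{k+1}=c\cdot\frac{\rho^{k+1}-\rho^{k+2}}{1-\rho}\,R_{k+1}$. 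This matches the claimed bound.

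\emph{Inductive step.} Assume the estimate holds for the pair $(k,\ell-1)$. Apply the base case at level $\ell$ to the decomposition into $(\ell-1)$-strips: the $m_\ell$ pairwise disjoint images $P_{\ell-1}^{(\ell,\mathrm{r}_{\ell-1})}(\cS_{\ell-1})$ cover $m_\ell R_{\ell-1}$ indices of the orbit segment of length $R_\ell$, leaving at most $\bt_\ell$ indices uncovered at this step. Inside each such image, which is a faithful time-$R_{\ell-1}$ copy of $\cS_{\ell-1}$ (by Fact~\ref{fac:commuting-P-and-Phi}), the inductive hypothesis supplies at most $c\cdot\frac{\rho^{k+1}-\rho^{\ell}}{1-\rho}\,R_{\ell-1}$ indices that are not covered by level-$k$ strips $P_k^{(\ell-1,\vr')}(\cS_k)$; composing with $P_{\ell-1}^{(\ell,\mathrm{r}_{\ell-1})}$ and summing the contributions from $\mathrm{r}_{\ell-1}=0,\ldots,m_\ell-1$ identifies these with the level-$k$ strips $P_k^{(\ell,\vr)}(\cS_k)$ at level $\ell$.

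The total uncovered count is therefore at most
\[
	\bt_\ell + m_\ell\cdot c\cdot\frac{\rho^{k+1}-\rho^{\ell}}{1-\rho}\,R_{\ell-1}
	\;\le\; c\,m_\ell R_{\ell-1}\left[\rho^\ell+\frac{\rho^{k+1}-\rho^{\ell}}{1-\rho}\right]
	\;=\; c\,m_\ell R_{\ell-1}\cdot\frac{\rho^{k+1}-\rho^{\ell+1}}{1-\rho},
\]
where the first inequality uses $\bt_\ell\le c\rho^\ell m_\ell R_{\ell-1}$ and the equality is a one-line telescoping. Bounding $m_\ell R_{\ell-1}\le R_\ell$ yields the claimed estimate. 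Finally, the identification of each covered sub-segment with a canonical translate $\sigma_k^{j(\vr)}(\underline{\mathcal{L}}_\ell^k(\ua^{(\ell)}))\times\{0,\ldots,R_k-1\}$ is a direct unwinding of the definitions of $P_k^{(\ell,\vr)}$ and $j_k^\ell(\vr)$ in Notation~\ref{not-jr}. The main (minor) obstacle is purely bookkeeping, namely aligning the $\sigma_k$-shift exponents at each composition step so that the canonical representatives match; this is handled once and then copied across the induction.
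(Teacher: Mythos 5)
Your proposal is correct and follows essentially the same route as the paper's proof: induction on the level gap, with the base case coming from the disjointness of the adjacent-level strips plus Assumption~\ref{asstailen}, and the inductive step splitting off the outermost factor $P_{\ell-1}^{(\ell,\mathrm r_{\ell-1})}$ and telescoping the geometric sum (the paper bounds the tail by $c\rho^{\ell}R_\ell$ and sums exactly as you do). The only stylistic difference is that you identify the covered indices explicitly as the initial block $\{0,\ldots,m_{k+1}R_k-1\}$ in the base case, which the paper leaves implicit.
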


\begin{proof}
We argue by induction. Consider first the case $\ell=k+1$ and let $\mathrm r\in \{0,\ldots,m_{k+1}-1\}$. By Fact \ref{facPs}, $P_k^{(k+1, \mathrm r)}(\cS_k)$ are disjoint for different $\mathrm r$'s. Thus, their union covers exactly $m_{k+1}\cdot R_k$ elements of the sequence $(\ua^{(k+1)},0),\ldots, (\ua^{(k+1)},  R_{k+1}-1)$. By Assumption \ref{asstailen}, there are at most $c\rho^{k+1}  m_{k+1}R_k$ and hence at most $c\rho^{k+1}R_{k+1}$ elements which are not covered. This proves the assertion for $\ell=k+1$.

Consider now any $\ell>k+1$ and assume that the assertion of the lemma holds for every $k+1,\ldots,\ell-1$ and every $(k,\ell)$-tuple $\vr=(\mathrm r_k,\ldots,\mathrm r_{\ell-2},\mathrm r_{\ell-1})$. Note that $\vr$ can be considered as the juxtaposition of the $(k,\ell-1)$-tuple $\vr_1=(\mathrm r_k,\ldots, \mathrm r_{\ell-2})$ and the $(\ell-1,\ell)$-tuple $\vr_2=(\mathrm r_{\ell-1})$. By the definition of $P_{\ell-1}^{(\ell, \vr_2)}$ and the argument above, after removing $c\rho^\ell R_\ell$- points, the orbit segment $\{(\ua^{(\ell)},i)\colon i=0,\ldots, R_\ell-1\}$ is covered by the set   
\[
	\Big\{ P_{\ell-1}^{(\ell, \vr_2)}
	\big(\sigma_{\ell-1}^{\vr_2}\circ\underline \Sub_\ell^{\ell-1}(\ua^{(\ell)}),i\big)\colon
		i=0,\ldots, R_{\ell-1}-1, ~\vr_2=0,\ldots, m_\ell-1\Big\}.
\]	 
By the induction assumption, after removing $c(\rho^{k+1}+\cdots+\rho^{\ell-1})R_{\ell-1}$-many points, each of those orbit segments, is in turn covered by images under $P_k^{(\ell-1, \vr_1)}$ of orbit segments from 
\[
	\Big\{\big(\sigma_k^{j_k^{\ell-1}(\vr_1)} \circ\big(\underline\Sub_\ell^k\big)^{-1} (\ua^{(\ell)}),i\big)\colon 
	i=0,\ldots, R_{\ell-1}\Big\}.
\]	 
Observing that
\[
P_k^{(\ell, \vr)} = P_{\ell-1}^{(\ell, \vr_2)}\circ P_k^{(\ell-1, \vr_1)}  
\]
and that
\[
	c(\rho^{k+1}+\cdots+\rho^{\ell-1})R_{\ell-1}m_{\ell}
	+c\rho^{\ell}R_\ell
	\leq  cR_\ell\frac{\rho^{k+1}-\rho^{\ell+1}}{1-\rho},
\]
this finishes the proof.	
\end{proof}

\subsection{Occurrence of $\bar F_{\FK}$-Cauchy sequence}\label{secFK-Cauchy}

For each horseshoe $\bfH_k=(C,\{\bfK_\ba'\}_{\ba\in\cA_k},f^{R_k})$ given in Section \ref{s.cascades-of-horseshoes}, the associated set 
\[
	\Lambda_k
	=\Lambda_{\bfH_k}
	\eqdef\bigcap_{n\in\bZ}f^{nR_k}\big(\bigcup_{\ba\in\cA_k}\bfK_\ba'\big)
\]
is $f^{R_k}$-invariant and $(\Lambda_k,f^{R_k})$ is topologically conjugate to $(\cA_k^\bZ,\sigma_k)$ (recall Lemma \ref{r.factor}). Let us now invoke the suspension system $(\cS_k,\Phi_k)$ of $(\cA_k^\bZ,\sigma_k)$ with constant roof function $R_k$ given in Section \ref{s.cascade-of-suspension-space-symbolic} and the projection map given in Definition \ref{d.projeciton-of-suspension-space},
\[
	\Pi_{k}\colon\cS_{k}\to\bigcup_{i=0}^{R_k-1}f^i(\Lambda_k).
\]
Note that $\Pi_k$ gives the semi-conjugacy between $(\cS_k,\Phi_k)$ and $(\bigcup_{i=0}^{R_k-1}f^i(\Lambda_k),f)$ (recall item 1. in Lemma \ref{r.factor}). Recall also the maps $P_k^{(\ell,\vr)}$ across suspension spaces in \eqref{eqproj} and the ``respelling'' maps $\underline\Sub_\ell^k$ in \eqref{eqSub}.

Given $\underline{a}^{(k+1)}\in\cA_{k+1}^{\bZ}$, consider the subcube $\bfK(\underline{a}^{(k+1)})$ defined in \eqref{defKai}. One has 
\begin{equation}\label{eq:iterating-K-a-by-predecessor}
	f^{\mathrm r R_k}(\bfK(\underline{a}^{(k+1)}))
	\subset \bfK\big(\sigma_k^{\mathrm r}(\underline{\Sub}_{k+1}^k(\underline{a}^{(k+1)}))\big)
	\quad\text{ for every }\quad
	\mathrm r=0,\ldots, m_{k+1}-1.
\end{equation}
Informally speaking, by construction, the iterates of points in $\bfK(\ua^{(k+1)})$ which correspond to a visit to the rectangle $\bfK(\sigma_k^i \circ\underline\Sub_{k+1}^k (\ua^{(k+1)}))$ are the images of the corresponding intermediate floors. This is made precise in the following lemma. Recall Notation \ref{not-jr}.

\begin{lemma}\label{lemwhatever}
Let $\ell > k$. For every $(k,\ell)$-tuple $\vr$ and every $\ua^{(\ell)}\in \cA_\ell^\bZ$,  we have
\[
\Pi_\ell \circ P_k^{(\ell, \vr)}\big(\sigma_k^{j(\vr)}\circ\underline\Sub_\ell^k( \ua^{(\ell)}),0\big)
\in \bfK(\sigma_k^{j(\vr)}\circ \underline\Sub_\ell^k( \ua^{(\ell)})). 
\]
\end{lemma}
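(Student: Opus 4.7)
My plan is to reduce Lemma \ref{lemwhatever} to two auxiliary identities, proved jointly by induction on $\ell - k \geq 1$, and then combine them via the semi-conjugacy of $\Pi_\ell$ (item 1 of Lemma \ref{r.factor}). Since by construction $\Pi_\ell(\ua^{(\ell)}, 0) \in \bfK(\ua^{(\ell)})$, it suffices to establish the symbolic identity
\[
	P_k^{(\ell, \vr)}\bigl(\sigma_k^{j(\vr)}\underline\Sub_\ell^k(\ua^{(\ell)}), 0\bigr)
	= \Bigl(\ua^{(\ell)}, \sum_{i=k}^{\ell-1} \mathrm r_i R_i\Bigr)
	\text{ in } \cS_\ell \quad \text{(A)}
\]
and the dynamical inclusion
\[
	f^{\sum_{i=k}^{\ell-1} \mathrm r_i R_i}\bigl(\bfK(\ua^{(\ell)})\bigr)
	\subset \bfK\bigl(\sigma_k^{j(\vr)}\underline\Sub_\ell^k(\ua^{(\ell)})\bigr). \quad \text{(B)}
\]
Iterating the semi-conjugacy in item 1 of Lemma \ref{r.factor} yields $\Pi_\ell(\cdot, s) = f^s \circ \Pi_\ell(\cdot, 0)$, so (A) and (B) together imply the lemma. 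A preliminary verification that $\sum_{i=k}^{\ell-1}\mathrm r_i R_i \leq R_\ell - R_k$ (using $\mathrm r_i \leq m_{i+1}-1$ and the bound $m_{i+1} R_i \leq R_{i+1}$ coming from Assumption \ref{asstailen}) ensures the right-hand side of (A) is a legitimate canonical representative in $\cS_\ell$.

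For (A) I would argue by induction on $\ell - k$. The base case $\ell = k+1$ is a direct unpacking of the definition of $P_k^{(k+1, \mathrm r)}$: substituting $\ua^{(k)} = \sigma_k^{\mathrm r}\underline\Sub_{k+1}^k(\ua^{(k+1)})$ collapses the compositions $\sigma_k^{-\mathrm r}$ and $(\underline\Sub_{k+1}^k)^{-1}$, producing $(\ua^{(k+1)}, \mathrm r R_k)$. The inductive step decomposes $P_k^{(\ell, \vr)} = P_{\ell-1}^{(\ell, \mathrm r_{\ell-1})} \circ P_k^{(\ell-1, \vr')}$ with $\vr' = (\mathrm r_k,\dots,\mathrm r_{\ell-2})$, applies the inductive hypothesis to the sequence $\ub^{(\ell-1)} \eqdef \sigma_{\ell-1}^{\mathrm r_{\ell-1}}\underline\Sub_\ell^{\ell-1}(\ua^{(\ell)})$, and matches labels using the combinatorial commutation $\underline\Sub_{\ell-1}^k \circ \sigma_{\ell-1}^{\mathrm r} = \sigma_k^{m_{k+1}\cdots m_{\ell-1}\mathrm r}\circ \underline\Sub_{\ell-1}^k$ (shifting one letter at level $\ell-1$ corresponds to shifting $m_{k+1}\cdots m_{\ell-1}$ letters at level $k$) together with the decomposition $j_k^\ell(\vr) = j_k^{\ell-1}(\vr') + m_{k+1}\cdots m_{\ell-1}\mathrm r_{\ell-1}$ from Notation \ref{not-jr}.

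For (B) the base case $\ell = k+1$ is precisely \eqref{eq:iterating-K-a-by-predecessor}. In the inductive step I would first apply \eqref{eq:iterating-K-a-by-predecessor} at level $\ell - 1$ to obtain $f^{\mathrm r_{\ell-1}R_{\ell-1}}(\bfK(\ua^{(\ell)})) \subset \bfK(\ub^{(\ell-1)})$, then invoke the inductive hypothesis at level $k$ applied to $\ub^{(\ell-1)}$; the same combinatorial commutation used for (A), now paired with $\underline\Sub_{\ell-1}^k \circ \underline\Sub_\ell^{\ell-1} = \underline\Sub_\ell^k$, identifies the resulting label as $\sigma_k^{j(\vr)}\underline\Sub_\ell^k(\ua^{(\ell)})$. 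The main obstacle is purely notational: one must carefully track how the respelling maps $\underline\Sub_\ell^k$ interact with the shifts at different alphabet levels across the hierarchy $k < \cdots < \ell$. Once the single commutation $\underline\Sub_{k+1}^k \circ \sigma_{k+1} = \sigma_k^{m_{k+1}} \circ \underline\Sub_{k+1}^k$ is applied systematically, the argument is a multi-scale unfolding of the single-scale inclusion \eqref{eq:iterating-K-a-by-predecessor} across all levels between $k$ and $\ell$.
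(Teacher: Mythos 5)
Your proposal is correct and follows essentially the same route as the paper's proof: first the symbolic identity $P_k^{(\ell,\vr)}\big(\sigma_k^{j(\vr)}\underline\Sub_\ell^k(\ua^{(\ell)}),0\big)=\big(\ua^{(\ell)},\sum_{i=k}^{\ell-1}\mathrm r_i R_i\big)$ from the inductive definition of $P_k^{(\ell,\vr)}$, then pushing through the semi-conjugacy of $\Pi_\ell$, and finally the multi-scale unfolding of \eqref{eq:iterating-K-a-by-predecessor} via the shift--respelling commutation, exactly as in the paper. Your added check that $\sum_{i=k}^{\ell-1}\mathrm r_i R_i$ stays below $R_\ell$ (so the representative is canonical) is a minor extra precaution, not a different argument.
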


\begin{proof}
Let $\ell>k$. Let $\vr=(\mathrm r_k,\ldots, \mathrm r_{\ell-1})$ be a $(k,\ell)$-tuple. Applying Definition \ref{defintfloors} inductively,  one obtains that 
\[
	P_k^{(\ell, \vr)}\big(\sigma_k^{j(\vr)}\circ\underline\Sub_\ell^k( \ua^{(\ell)}),0\big)
	=\big(\ua^{(\ell)}, \sum_{i=k}^{\ell-1}\mathrm r_i\cdot R_{i}\big).
\] 
Note that 
\[
	\big(\ua^{(\ell)}, \sum_{i=k}^{\ell-1}\mathrm r_i\cdot R_{i}\big)
	=\Phi_k^{\sum_{i=k}^{\ell-1}\mathrm r_i\cdot R_{i}}(\ua^{(\ell)}, 0).
\]
By Lemma \ref{r.factor} for  $\Pi_{\ell}$, one has  
\[\begin{split}
	\Pi_{\ell}\big(\ua^{(\ell)}, \sum_{i=k}^{\ell-1}\mathrm r_i\cdot R_{i}\big)
	&=\Pi_{\ell}\circ \Phi_k^{\sum_{i=k}^{\ell-1}\mathrm r_i\cdot R_{i}}(\ua^{(\ell)}, 0)\\
	&= f^{\sum_{i=k}^{\ell-1}\mathrm r_i\cdot R_{i}}\circ \Pi_{\ell}(\ua^{(\ell)}, 0)
		\in f^{\sum_{i=k}^{\ell-1}\mathrm r_i\cdot R_{i}}(\bfK(\ua^{(\ell)})) .
\end{split}\]
By \eqref{eq:iterating-K-a-by-predecessor},  one has 
\[
	f^{r_{\ell-1}R_{\ell-1}}(\bfK(\ua^{(\ell)}))
	\subset \bfK(\sigma_{\ell-1}^{\mathrm r_{\ell-1}}\circ\underline\Sub_\ell^{\ell-1}(\ua^{(\ell)})).
\]
Then 
\[
	\Pi_{\ell}\big(\ua^{(\ell)}, \sum_{i=k}^{\ell-1}\mathrm r_i\cdot R_{i}\big)
	\in f^{\sum_{i=k}^{\ell-1}\mathrm r_i\cdot R_{i}}
		\big(\bfK(\sigma_{\ell-1}^{\mathrm r_{\ell-1}}\circ \underline\Sub_\ell^{\ell-1}(\ua^{(\ell)}))\big).
\]
Once again, using \eqref{eq:iterating-K-a-by-predecessor},  one gets  that 
\[
	f^{\mathrm r_{\ell-2}\cdot R_{\ell-2}}
		\big(\bfK\big(\sigma_{\ell-1}^{\mathrm r_{\ell-2}}\circ\underline\Sub_\ell^{\ell-1}(\ua^{(\ell)})\big)\big)
	\in \bfK\big(\sigma_{\ell-2}^{\mathrm r_{\ell-2}}\circ\underline\Sub_{\ell-1}^{\ell-2}\circ\sigma_{\ell-1}^{\mathrm r_{\ell-1}}\circ\underline\Sub_\ell^{\ell-1}(\ua^{(\ell)})\big), 
\]
and  note that 
\[\begin{split}
	\sigma_{\ell-2}^{\mathrm r_{\ell-2}}\circ\underline\Sub_{\ell-1}^{\ell-2}\circ\sigma_{\ell-1}^{\mathrm r_{\ell-2}}\circ\underline\Sub_\ell^{\ell-1}(\ua^{(\ell)})
	&=\sigma_{\ell-2}^{\mathrm r_{\ell-2}}\circ\sigma_{\ell-2}^{\mathrm r_{\ell-1}m_{\ell-1}}\circ\underline\Sub_{\ell-1}^{\ell-2}\circ\underline\Sub_\ell^{\ell-1}(\ua^{(\ell)})\\
	&=\sigma_{\ell-2}^{\mathrm r_{\ell-2}+\mathrm r_{\ell-1}m_{\ell-1}}\circ\underline\Sub_\ell^{\ell-2}(\ua^{(\ell)}).
\end{split} \]
Inductively apply the arguments above, one gets that 
\[
	f^{\sum_{i=k}^{\ell-1}\mathrm r_i\cdot R_{i}}(\bfK(\ua^{(\ell)}))
	\subset \bfK\big(\sigma_{k}^{\mathrm r_k+\sum_{i=k+1}^{\ell-1}m_{k+1}\cdots m_i\mathrm r_i}	
		\circ\underline\Sub_\ell^k(\ua^{(\ell)})\big)
	= \bfK\big(\sigma_{k}^{j(\vr)}\circ\underline\Sub_\ell^k(\ua^{(\ell)})\big),
\]
where the last equality comes from   the definition of $j(\vr)$ in Notation \ref{not-jr}.

Summarize above and one gets  that 
\[
\begin{split}
\Pi_{\ell}\circ P_k^{(\ell, \vr)}\big(\sigma_k^{j(\vr)}\circ\underline\Sub_\ell^k( \ua^{(\ell)}),0\big)
&= \Pi_{\ell}\big(\ua^{(\ell)}, \sum_{i=k}^{\ell-1}\mathrm r_i\cdot R_{i}\big)
\\
&\in f^{\sum_{i=k}^{\ell-1}\mathrm r_i\cdot R_{i}}(\bfK(\ua^{(\ell)}))\subset \bfK\big(\sigma_{k}^{j(\vr)}\circ\underline\Sub_\ell^k(\ua^{(\ell)})\big).
\end{split}\]  
This proves the lemma.	
\end{proof}

For the following, recall the concept of matches in Definition \ref{defFK}. We obtain that the orbits of the projection of a point in $\cS_\ell$ and the orbits of the projection of its respelling in $\cS_{k}$, $\ell>k$, are matched in large quality. 

\begin{proposition}\label{p.Cauchy}
There exists a constant $c^\prime>0$ such that for  every $k\in\bN$, $\ell>k$, and $\ua^{(\ell)}\in \cA_\ell^\bZ$, one has 
\[\bar F_{\FK}\big(\Pi_\ell(\ua^{(\ell)},0),\Pi_k\big(\underline\Sub_\ell^k(\ua^{(\ell)}),0\big)\big)<c^\prime (\rho^k+2^{-k}). \]
\end{proposition}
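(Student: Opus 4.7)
The plan is to construct, for each large $N\in R_\ell\bN$, an explicit $(N,\delta)$-match between the forward orbits of $x_\ell\eqdef\Pi_\ell(\ua^{(\ell)},0)$ and $x_k\eqdef\Pi_k(\underline\Sub_\ell^k(\ua^{(\ell)}),0)$ of quality at least $1-\delta$, for $\delta\eqdef c'(\rho^k+2^{-k})$. Once this is in hand, Definition \ref{defFK} yields the claimed Feldman-Katok estimate, and by $R_\ell$-periodicity it suffices to design the construction on a single window $\{0,\ldots,R_\ell-1\}$ and concatenate.

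Inside such a window, the main input is Lemma \ref{lemcovmost}: up to a ``bad'' set of at most $c\rho^{k+1}/(1-\rho)\cdot R_\ell$ indices, every $i\in\{0,\ldots,R_\ell-1\}$ admits a unique representation $i=s+\sum_{j=k}^{\ell-1}\mathrm r_j R_j$ with $\vr=(\mathrm r_k,\ldots,\mathrm r_{\ell-1})$ a $(k,\ell)$-tuple and $s\in\{0,\ldots,R_k-1\}$, so that $(\ua^{(\ell)},i)=P_k^{(\ell,\vr)}(\sigma_k^{j(\vr)}\circ\underline\Sub_\ell^k(\ua^{(\ell)}),s)$. Using the semi-conjugacy $\Pi_\ell\circ\Phi_\ell=f\circ\Pi_\ell$ (Lemma \ref{r.factor}) with Fact \ref{fac:commuting-P-and-Phi}, one checks that $f^i(x_\ell)=f^s\circ\Pi_\ell\circ P_k^{(\ell,\vr)}(\cdot,0)$ while the candidate matched point is $f^{j(\vr)R_k+s}(x_k)=f^s\circ\Pi_k(\sigma_k^{j(\vr)}\circ\underline\Sub_\ell^k(\ua^{(\ell)}),0)$. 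By Lemma \ref{lemwhatever} both base points lie in the same rectangle $\bfK(\sigma_k^{j(\vr)}\circ\underline\Sub_\ell^k(\ua^{(\ell)}))$ of the horseshoe $\bfH_k$, so Corollary \ref{corinitial} forces $\dist_M(f^i(x_\ell),f^{j(\vr)R_k+s}(x_k))<2^{-k}$ provided $s\in[L_k,R_k-L_k]$.

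Defining the match $\theta(i)\eqdef j(\vr)R_k+s$ on this ``good'' subset of indices, I will verify that $\theta$ is order preserving (both $i$ and $\theta(i)$ encode the same mixed-radix lexicographic order on $(\mathrm r_{\ell-1},\ldots,\mathrm r_k,s)$, since $R_{j+1}\ge m_{j+1}R_j$) and injective (from disjointness of the images of distinct $P_k^{(\ell,\vr)}$'s, Fact \ref{facPs}). The total number of discarded indices per $R_\ell$-period is bounded by the uncovered amount from Lemma \ref{lemcovmost} plus a boundary contribution: in each of at most $R_\ell/R_k$ full strips, one loses the $2L_k$ indices with $s\notin[L_k,R_k-L_k]$, and assumption \eqref{extrassumption} together with $R_k\ge m_kR_{k-1}$ controls this by $2L_kR_\ell/(m_kR_{k-1})\le 2^{1-k}R_\ell$. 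Choosing $c'$ so that $\delta=c'(\rho^k+2^{-k})$ dominates both $2^{-k}$ and the combined unmatched proportion completes the argument. The main obstacle will be the bookkeeping for the nested-strip decomposition of the window and checking order preservation cleanly across many concatenated periods so that the $\limsup$ in $\bar f_\delta$ does not inflate the bound; everything quantitative is then directly imported from the cited lemmas and from \eqref{extrassumption}.
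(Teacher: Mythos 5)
Your proposal is correct and follows essentially the same route as the paper's proof: decomposing the orbit window of length $R_\ell$ into $k$-strips via Lemma \ref{lemcovmost}, matching indices within each strip through Lemma \ref{lemwhatever} and Corollary \ref{corinitial} (losing $2L_k$ boundary indices per strip, controlled by \eqref{extrassumption}), and summing the unmatched proportions to fix $c'$. Your extra care with order preservation and injectivity of the match is a detail the paper leaves implicit, but it does not change the argument.
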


\begin{proof}
Consider two integers  $\ell > k$ and fix any $\ua^{(\ell)}\in \cA_\ell^\bZ$. To shorten notation, write 
\[
	\ua=\ua^{(\ell)}, \quad
	\ub=\underline\Sub_\ell^k(\ua)\in\cA_k^\bZ, \quad\text{ and }\quad  
	\varepsilon_k=2^{-k}.
\]	 
To prove the assertion, we need to construct a high quality match between the sequences of points $\big(f^i\circ \Pi_\ell (\ua,0)\big)_i$ and $\big(f^i\circ\Pi_k (\ub,0)\big)_i$. For that it is enough to only consider $0\leq i < R_\ell$ and then repeat the argument for $\sigma_\ell (\ua)$.

Recall that  by item 1. in Lemma \ref{r.factor}, $f^i\circ\Pi_\ell=\Pi_{\ell}\circ\Phi_{\ell}^i$ and $f^i\circ\Pi_k=\Pi_{k}\circ\Phi_{k}^i$, then one has 
\[\big\{ f^i\circ \Pi_\ell (\ua,0): 0\le i\le R_\ell-1\big\}= \big\{  \Pi_\ell (\ua,i): 0\le i\le R_\ell-1\big\} \]
and 
\[\big\{ f^i\circ \Pi_\ell (\ub,0): 0\le i\le R_\ell-1\big\}\supset\bigcup_{\vr} \big\{  \Pi_\ell (\sigma_k^{j(\vr)}(\ub),i): 0\le i\le R_k-1\big\},  \]
where the union is taken over all $(k,\ell)$-tuple $\vr$ and $j(\vr)$ is given in Notation \ref{not-jr}.
By Lemma \ref{lemcovmost}, the subset
\[
\bigcup_\vr  P_k^{(\ell, \vr)} \left(\bigcup_{i=0}^{R_k-1} (\sigma_k^{-j(\vr)}(\ub),i)\right) 
\subset\bigcup_{i=0}^{R_\ell-1}(\ua,i)
\]
has cardinality at least 
\[ R_\ell- c\cdot\frac{\rho^{k+1}-\rho^{\ell+1}}{1-\rho}\cdot R_\ell. \]
In the following, we will find a match between 
\[\bigcup_\vr \Pi_\ell\circ P_k^{(\ell, \vr)} \left(\bigcup_{i=0}^{R_k-1} (\sigma_k^{-j(\vr)}(\ub),i)\right) \]
and 
\[ \bigcup_{\vr} \big\{  \Pi_\ell (\sigma_k^{j(\vr)}(\ub),i): 0\le i\le R_k-1\big\}\]
with large quality, and it suffices to compare them for each  $(k,\ell)$-tuple $\vr$.

Fix a $(k,\ell)$-tuple $\vr$. By Lemma \ref{lemwhatever}, one has 
\[
	\Pi_\ell\circ   P_k^{(\ell, \vr)}\big(\sigma_k^{-j(\vr)}(\ub),0\big) \in \bfK\big(\sigma_k^{j(\vr)} (\ub)\big). 
\]
By item 2 in Lemma \ref{r.factor}, one also has  
\[ \Pi_k  \big(\sigma_k^{j(\vr)} (\ub),0\big)\big)\in \bfK\big(\sigma_k^{j(\vr)} (\ub)\big).\] 
Thus, by Corollary \ref{corinitial}, for every $i=L_{k},\ldots,R_k-L_{k}$, we have
\[
	\dist_M\big(f^i\circ\Pi_\ell\circ  P_k^{(\ell, \vr)} 
		\big(\sigma_k^{-j(\vr)}(\ub),0\big), f^i \circ\Pi_k  \big(\sigma_k^{j(\vr)} (\ub),0\big)\big)< \varepsilon_k.
\]
Once again, by item 1. in Lemma \ref{r.factor}, $f^i\circ\Pi_\ell=\Pi_{\ell}\circ\Phi_{\ell}^i$ and $f^i\circ\Pi_k=\Pi_{k}\circ\Phi_{k}^i$. Then using Fact \ref{fac:commuting-P-and-Phi}, for every $i=L_{k},\ldots,R_k-L_{k}$, one has  
\[
\dist_M\big(\Pi_\ell\circ  P_k^{(\ell, \vr)} 
\big(\sigma_k^{-j(\vr)}(\ub),i\big), \Pi_k  \big(\sigma_k^{j(\vr)} (\ub),i\big)\big)< \varepsilon_k.
\]
This gives us a $(R_k,\varepsilon_k)$-match between 
\[
	 \Pi_\ell \circ P_k^{(\ell, \vr)} \left(\bigcup_{i=0}^{R_k-1} \big(\sigma_k^{-j(\vr)}(\ub),i\big)\right)
	\quad\text{ and }\quad
	 \Pi_k   \left(\bigcup_{i=0}^{R_k-1} \big(\sigma_k^{j(\vr)}(\ub),i\big)\right) 
\]
with quality at least $1-2L_{k}/R_k>1-2^{-k+1}$ due to \eqref{extrassumption}. 

 Summing over all the $(k,\ell)$-tuples, we obtain a $(R_\ell,2^{-k})$-match of $\Pi_\ell(\ua^{(\ell)},0)$ and $\Pi_k\big(\underline\Sub_\ell^k(\ua^{(\ell)}),0\big)$ with quality at least 
\begin{align*}
	 \big(R_\ell- c\cdot\frac{\rho^{k+1}-\rho^{\ell+1}}{1-\rho}\cdot R_\ell\big)\cdot R_\ell^{-1}  \cdot (1-2^{-k+1})
	&>\big(1- c\cdot\frac{\rho^{k+1}-\rho^{\ell+1}}{1-\rho}\big) \cdot (1-2^{-k+1})
	\\
	&>1-c\frac{\rho^{k+1}}{1-\rho}-2^{-k+1}.
	\end{align*}
Taking $c^\prime=c\rho/(1-\rho)+2$, this proves the proposition.
\end{proof}

\begin{proposition}\label{p.uniform-continuity-for-lift-orbit}
For every $\varepsilon>0$, there exist $\delta>0$ and $k\in\bN$ such that for every $\ua,\ub\in\cA^\bZ$ with 
$\bar d(\ua,\ub)<\delta$ and $\ell>k$, one has 
\[
	\bar F_\FK\Big(\Pi_\ell\Big(\big(\underline\Sub_\ell^0\big)^{-1}(\ua),0\Big),\Pi_\ell\Big(\big(\underline\Sub_\ell^0\big)^{-1}(\ub),0\Big)\Big)
	<\varepsilon.
	\]
\end{proposition}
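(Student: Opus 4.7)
The plan is to use Proposition~\ref{p.Cauchy} to ``descend'' from the floating level $\ell$ down to a fixed level $k$ for both $\ua$ and $\ub$ separately, and then invoke the fixed-level continuity statement of Lemma~\ref{suspmetr} applied to $\Pi_k$ to compare the resulting points. Since $\bar F_\FK$ is a pseudometric, the triangle inequality glues these three estimates together. For each $k\in\bN$, let $\Mod_k\colon\bR_{>0}\to\bN$ denote the modulus of continuity in \eqref{eqModcon} associated to the factor map $\Pi_k\colon\cS_k\to M$ of the horseshoe $\bfH_k$.

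Given $\varepsilon>0$, first choose $k=k(\varepsilon)\in\bN$ large enough that
\[
    2c'(\rho^k + 2^{-k}) < \varepsilon/2,
\]
where $c'$ is the constant from Proposition~\ref{p.Cauchy}. Note $k$ depends only on $\varepsilon$, not on $\ell$. Having fixed $k$, set
\[
    \delta \eqdef \frac{\varepsilon/2}{(m_1 m_2\cdots m_k)\,(2\Mod_k(\varepsilon/2)+1)}.
\]
Now take any $\ua,\ub\in\cA^\bZ$ with $\bar d(\ua,\ub)<\delta$ and any $\ell>k$. The factorization $\underline\Sub_\ell^0 = \underline\Sub_k^0\circ\underline\Sub_\ell^k$ yields the identity $\underline\Sub_\ell^k\circ(\underline\Sub_\ell^0)^{-1} = (\underline\Sub_k^0)^{-1}$, so Proposition~\ref{p.Cauchy} applied to $(\underline\Sub_\ell^0)^{-1}(\ua)\in\cA_\ell^\bZ$ and to $(\underline\Sub_\ell^0)^{-1}(\ub)\in\cA_\ell^\bZ$ gives
\[
    \bar F_\FK\Bigl(\Pi_\ell\bigl((\underline\Sub_\ell^0)^{-1}(\ast),0\bigr),\Pi_k\bigl((\underline\Sub_k^0)^{-1}(\ast),0\bigr)\Bigr) < c'(\rho^k+2^{-k}),\quad \ast\in\{\ua,\ub\}.
\]
Meanwhile, the inequality relating $\bar d$ at levels $0$ and $k$,
\[
    \bar d_k\bigl((\underline\Sub_k^0)^{-1}(\ua),(\underline\Sub_k^0)^{-1}(\ub)\bigr) \le (m_1\cdots m_k)\,\bar d(\ua,\ub) < \frac{\varepsilon/2}{2\Mod_k(\varepsilon/2)+1},
\]
combined with Lemma~\ref{suspmetr} applied to $\Pi_k$ (with parameter $\varepsilon/2$), yields
\[
    \bar F_\FK\Bigl(\Pi_k\bigl((\underline\Sub_k^0)^{-1}(\ua),0\bigr),\Pi_k\bigl((\underline\Sub_k^0)^{-1}(\ub),0\bigr)\Bigr) \le \varepsilon/2.
\]
Adding the three bounds via the triangle inequality gives the desired estimate $2c'(\rho^k+2^{-k})+\varepsilon/2<\varepsilon$.

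The only conceptual subtlety is the order of quantifiers: $\delta$ depends not only on $\varepsilon$ but also on $k$ and hence on the cascade parameters $m_1,\ldots,m_k$ together with $\Mod_k$; but since $k$ is chosen first based solely on $\varepsilon$ (using the geometric decay $\rho^k+2^{-k}\to0$), every dependence is absorbed into a single $\delta=\delta(\varepsilon)$ that works uniformly for all $\ell>k$. No substantive obstacle is expected; the proposition is essentially a packaged triangle-inequality consequence of the two earlier results.
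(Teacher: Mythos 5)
Your proposal is correct and follows essentially the same route as the paper's proof: split by the triangle inequality into two ``vertical'' comparisons handled by Proposition~\ref{p.Cauchy} (via the identity $\underline\Sub_\ell^k\circ(\underline\Sub_\ell^0)^{-1}=(\underline\Sub_k^0)^{-1}$) and one fixed-level comparison handled by the respelling bound on $\bar d$ together with Lemma~\ref{suspmetr}, choosing $k$ first from $\varepsilon$ and then $\delta$ from $k$. The only differences are cosmetic ($\varepsilon/2$ versus the paper's $\varepsilon/3$ split, and your use of the factor $m_1\cdots m_k$ in the choice of $\delta$, which is if anything the cleaner bookkeeping).
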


\begin{proof}
Given $\varepsilon>0$, take $k\in\bN$ such that $c^\prime(\rho^k+2^{-k})<\varepsilon/3$, where $c^\prime$ is given by Proposition \ref{p.Cauchy}. Take $\delta>0$ small enough such that 
\begin{equation}\label{someest}
	k\cdot\delta
	<\frac{\varepsilon/3}{2\Mod_k(\varepsilon/3)+1},
\end{equation}
where $\Mod_k$ is the modulus continuity function for $\Pi_k$ defined in \eqref{eqModcon}.

By the triangle inequality, we can estimate
\begin{equation}\label{summands}
	\begin{split}
	&\bar F_\FK\big(\Pi_\ell\big((\underline\Sub_\ell^0)^{-1}(\ua),0\big),\Pi_{\ell}\big((\underline\Sub_\ell^0)^{-1}(\ub),0)\big)\\
	&\leq \bar F_\FK\big(\Pi_\ell\big((\underline\Sub_\ell^0)^{-1}(\ua),0\big),
		\Pi_k\big((\underline\Sub_k^0)^{-1}(\ua),0\big)\big)
		+\bar F_\FK\big(\Pi_k\big((\underline\Sub_k^0)^{-1}(\ua),0\big),\Pi_k\big((\underline\Sub_k^0)^{-1}(\ub),0\big)\big)\\
	&\phantom{\leq}+\bar F_\FK\big(\Pi_k\big((\underline\Sub_k^0)^{-1}(\ub),0\big),\Pi_{\ell}\big((\underline\Sub_\ell^0)^{-1}(\ub),0\big)\big).
\end{split}
\end{equation}

Let us estimate the first and the third terms in \eqref{summands}. Note that  for every $\ell>k$ 
\[
	\underline\Sub_\ell^k\circ  \big(\underline\Sub_\ell^0\big)^{-1}(\underline{e})
	=\big(\underline\Sub_k^0\big)^{-1}(\underline{e}) \quad\text{ for }\underline{e}\in\{\ua,\ub\}.
\]
By Proposition~\ref{p.Cauchy}, one has that 
\begin{equation}\label{eqfirsst}
	\bar F_{\FK} \big(\Pi_\ell\big((\underline\Sub_\ell^0)^{-1}(\underline{e}),0\big), 
		\Pi_k\big((\underline\Sub_k^0)^{-1}(\underline{e}),0\big)\big)
	\leq c^\prime(\rho^k+2^{-k})
	<\frac\varepsilon3
	\quad\text{ for }\underline{e}\in\{\ua,\ub\}.
\end{equation}

Let us now estimate the second term in \eqref{summands}. 

\begin{claim}
	If $\bar d(\ua,\ub)<\delta$, then $\bar d\big((\underline\Sub_k^0)^{-1}(\ua),(\underline\Sub_k^0)^{-1}(\ub)\big)<k\delta$. 
\end{claim}

\begin{proof}
Consider the set $D(\ua,\ub)\eqdef\{i\in\bN_0\colon a_i\neq b_i \}$. Assume that $\bar d(\ua,\ub)<\delta$. Hence, there exists $N\in\mathbb{N}$ such that for every $n\geq N$, 
\[
	\frac{1}{n}\card \big([0,n-1]\cap D(\ua,\ub)\big)<\delta. 
\]
Consider the cardinality of intervals of the form $[ik,(i+1)k-1]$ which intersect the set $D(\ua,\ub)$. For every $n\geq N$, one has 
\[	
	\frac{1}{n}\card\big\{i\in\{0,\ldots,n-1\} 
		\colon [ik,(i+1)k-1]\cap D(\ua,\ub)\neq\emptyset \big\} 
	<\frac{nk\delta}{n}=k\delta.
\]
In particular, together with \eqref{someest}, one has 
\[
	\bar d\big((\underline\Sub_k^0)^{-1}(\ua),(\underline\Sub_k^0)^{-1}(\ub)\big)
	<k\delta
	<\frac{\varepsilon/3}{2\Mod_k(\varepsilon/3)+1},
\]
proving the claim.
\end{proof}

Finally, applying Lemma~\ref{suspmetr} to the alphabet $\cA_k$, the factor $\Pi_k$, and the sequences $\big(\underline\Sub_k^0\big)^{-1}(\ua),\big(\underline\Sub_k^0\big)^{-1}(\ub)$, we get  
\begin{equation}\label{eqfirsstbis}
	\bar F_\FK\big(\Pi_k\big((\underline\Sub_k^0)^{-1}(\ua),0\big),\Pi_{k}\big((\underline\Sub_k^0)^{-1}(\ub),0\big)\big)
	<\frac{\varepsilon}{3}.
\end{equation}
Hence, by \eqref{eqfirsst} and \eqref{eqfirsstbis} together with \eqref{summands}, one can conclude.
\end{proof}

\subsection{Uniform $\FK$-convergence across the cascade of horseshoes}\label{ssecUniformFK}

Using the cascades introduced above, we now ``push'' a totally ergodic measures for $(\cA^\bZ,\sigma_\cA)$ to an $f$-ergodic measure by a map $\bH$ which has very convenient properties.  Recall that a measure $\nu\in\cM_{\rm erg}(\sigma_\cA)$ is \emph{totally ergodic} if it is $\sigma_\cA^k$-ergodic for any $k\in\bN$;  denote by $\cM_{\rm TE}(\sigma_\cA)$ the space of all such measures.

\subsubsection{The map $\bH$}

Consider a measure $\nu\in\cM_{\rm TE}(\sigma_\cA)$. Given any $k\in\bN$, by Remark \ref{remtopconjugate}, $(\cA_k^{\bZ},\sigma_k)$ is topologically conjugate to $(\cA^\bZ,\sigma_\cA^{m_1\cdots m_k})$ via $\underline\Sub_k^0$. Consider the measure
\begin{equation}\label{deflambdaketc}
 	\nu_k\eqdef\big((\underline\Sub_k^0)^{-1}\big)_\ast\nu,
	\quad
	\lambda_k\eqdef\lambda_{\cA_k,R_k, \nu_k },
	\quad\text{ and }\quad
	\mu_k
	\eqdef(\Pi_k)_\ast(\lambda_k).
\end{equation}
As $\nu\in\cM_{\rm erg}(\sigma_\cA)$ is totally ergodic, the measure $\nu_k$ is ergodic for $\sigma_k$.  Recall the definition of $\lambda_k$ in \eqref{eqtildelambda}. By Remark \ref{rem:some}, $\lambda_k$ is an ergodic measure for $(\cS_{\cA_k,R_k},\Phi_{\cA_k, R_k})$. Hence, the measure $\mu_k$ is $f$-ergodic.

\begin{claim}\label{cl.mu-k-Cauchy}
The sequence $(\mu_k)_{k\in\bN}$ of $f$-ergodic measures is $\bar F_\FK$-Cauchy. Moreover, for every $k\in\bN$,
\begin{equation}\label{eqexponent}
	\rho^k(1+\delta)\widehat\chi
	<\chi^\c(\mu_k)-\chi
	\le \rho^k(1-\delta)\widehat\chi.
\end{equation}
\end{claim}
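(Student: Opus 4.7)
The plan is to prove the two assertions separately: the $\bar F_{\FK}$-Cauchy property is essentially a direct corollary of Proposition~\ref{p.Cauchy}, while the exponent estimate follows from integrating the finite-time Lyapunov bound \eqref{eq:bound-center-LE-for-kth-horseshoe} against $\nu_k$.

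First I would use the total ergodicity of $\nu$ to produce simultaneous generic points across the cascade. Fix any $\nu$-generic point $\ua\in\cA^\bZ$. Since $\nu$ is $\sigma_\cA^{m_1\cdots m_k}$-ergodic for every $k$, the topological conjugacy $(\cA_k^\bZ,\sigma_k)\cong(\cA^\bZ,\sigma_\cA^{m_1\cdots m_k})$ in Remark~\ref{remtopconjugate} implies that $\ua^{(k)}\eqdef\big(\underline\Sub_k^0\big)^{-1}(\ua)$ is $\sigma_k$-generic for $\nu_k$. Lifting to the suspension, the point $(\ua^{(k)},0)$ is $\Phi_k$-generic for $\lambda_k$ (the orbit visits each of the $R_k$ floors in equal frequency while its base projection distributes according to $\nu_k$). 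By item~1 of Lemma~\ref{r.factor}, the point $x_k\eqdef\Pi_k(\ua^{(k)},0)$ is then $f$-generic for $\mu_k=(\Pi_k)_\ast\lambda_k$. The compatibility identity $\big(\underline\Sub_k^0\big)^{-1}=\underline\Sub_\ell^k\circ\big(\underline\Sub_\ell^0\big)^{-1}$ (for $\ell>k$), which follows from the very definition of the respelling maps, yields $\underline\Sub_\ell^k(\ua^{(\ell)})=\ua^{(k)}$. Applying Proposition~\ref{p.Cauchy} to $\ua^{(\ell)}$ then gives
\[
\bar F_{\FK}(\mu_\ell,\mu_k)
\le \bar F_{\FK}(x_\ell,x_k)
= \bar F_{\FK}\big(\Pi_\ell(\ua^{(\ell)},0),\,\Pi_k(\underline\Sub_\ell^k(\ua^{(\ell)}),0)\big)
< c'(\rho^k+2^{-k}),
\]
which tends to $0$ as $k\to\infty$, proving the Cauchy property.

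For the exponent estimate, I would use that $\mu_k=(\Pi_k)_\ast\lambda_k$ together with the explicit form \eqref{eqtildelambda} of the suspension measure and the identity $\Pi_k(\ua,s)=f^s(\Pi_k(\ua,0))$ to compute
\[
\chi^\c(\mu_k)
=\int\log\|D^\c f\|\,d\mu_k
=\frac{1}{R_k}\int\log\|D^\c f^{R_k}(\Pi_k(\ua,0))\|\,d\nu_k(\ua).
\]
Since $\Pi_k(\ua,0)\in\bfK'_{a_0}\subset\bigcup_{\ba\in\cA_k}\bfK'_\ba$ for every $\ua\in\cA_k^\bZ$, integrating the pointwise bound~\eqref{eq:bound-center-LE-for-kth-horseshoe} against $\nu_k$ immediately yields \eqref{eqexponent}.

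The only subtle step is establishing $f$-genericity of $x_k$ for $\mu_k$. The argument proceeds in three stages (total ergodicity of $\nu$ promotes $\ua$ to a $\sigma_\cA^{m_1\cdots m_k}$-generic point, the suspension structure lifts this to a $\Phi_k$-generic point for $\lambda_k$, and the continuity of $\Pi_k$ pushes genericity forward to $\mu_k$), each of which is standard; the compatibility across levels is then forced by the construction of the maps $\underline\Sub_\ell^k$, so that a single initial sequence $\ua$ produces a coherent family $(x_k)_k$ to which Proposition~\ref{p.Cauchy} applies uniformly.
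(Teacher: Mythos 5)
Your proposal is correct and follows essentially the same route as the paper: take a $\nu$-generic point, push it through the maps $(\underline\Sub_k^0)^{-1}$ and $\Pi_k$ to obtain $\mu_k$-generic points $x_k$, apply Proposition~\ref{p.Cauchy} via the compatibility $\underline\Sub_\ell^k(\ua^{(\ell)})=\ua^{(k)}$ to get the $\bar F_\FK$-Cauchy property, and obtain \eqref{eqexponent} by integrating \eqref{eq:bound-center-LE-for-kth-horseshoe} over the support of $\mu_k$. You merely spell out (via the suspension formula and total ergodicity) details that the paper's proof leaves implicit.
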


\proof 
Take a generic point $\ua\in\cA^\bZ$ of $\nu$.  For every $k\in\bN$, let 
\[
	x_k\eqdef \Pi_k\big((\underline\Sub_k^0)^{-1}(\ua),0\big).
\]	 
By the definition of $\mu_k$,  the point $x_k$ is $\mu_k$-generic. By Proposition \ref{p.Cauchy}, the sequence $(x_k)_k$ is $\bar F_\FK$-Cauchy. Hence by Definition \ref{defFKerg}, the sequence of measures $(\mu_k)_k$ is a $\bar F_\FK$-Cauchy sequence. 

As $\mu_k$ is an ergodic measure supported on $\bigcup_{i=0}^{R_k-1}f^i(\Lambda_k)$, from \eqref{eq:bound-center-LE-for-kth-horseshoe}, we get \eqref{eqexponent}.
\endproof	

By Theorem \ref{thepro:ergentcon}, there exists a unique ergodic measure $\mu\in\cM_{\rm erg}(f)$ which is the $\bar F_\FK$-limit of the sequence $(\mu_k)_k$. As, also by Theorem \ref{thepro:ergentcon}, the $\bar F_\FK$-convergence implies convergence in the weak$\ast$ topology, together with the continuity of the center bundle,   the center Lyapunov exponent of $\mu$ is $\chi$.   Letting $\bH(\nu)\eqdef\mu$, this defines a map
\begin{equation}\label{defbH}
	\bH\colon \big(\cM_{\rm TE}(\sigma_\cA),\bar d\big)\to \big(\cM_{\rm erg,\chi}(f),\bar F_\FK\big).
\end{equation}
 
\begin{proposition}\label{p.existence-of-uniform-continuous-map-between-measure-spaces}
	The map $\bH$ in \eqref{defbH} is uniformly continuous.
\end{proposition}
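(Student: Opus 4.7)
The proof plan is to reduce uniform continuity of $\bH$ to the orbit-level estimate already established in Proposition~\ref{p.uniform-continuity-for-lift-orbit}, and then to transfer the estimate to the limit measures using the fact (from Claim~\ref{cl.mu-k-Cauchy} together with Theorem~\ref{thepro:ergentcon}) that $\mu_\ell \to \bH(\nu)$ in $\bar F_\FK$.

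More precisely, given $\varepsilon > 0$, first apply Proposition~\ref{p.uniform-continuity-for-lift-orbit} to obtain $\delta > 0$ and $k\in\bN$ such that for any $\ua,\ub \in \cA^\bZ$ with $\bar d(\ua,\ub)<\delta$ and any $\ell>k$,
\[
	\bar F_\FK\Big(\Pi_\ell\big((\underline\Sub_\ell^0)^{-1}(\ua),0\big),\Pi_\ell\big((\underline\Sub_\ell^0)^{-1}(\ub),0\big)\Big) < \varepsilon.
\]
Now fix $\nu,\nu' \in \cM_{\rm TE}(\sigma_\cA)$ with $\bar d(\nu,\nu') < \delta$. By Fact~\ref{factdbar}, we can choose a $\nu$-generic point $\ua$ and a $\nu'$-generic point $\ub$ with $\bar d(\ua,\ub) < \delta$. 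Because total ergodicity of $\nu$ implies that $\nu_\ell = ((\underline\Sub_\ell^0)^{-1})_\ast \nu$ is $\sigma_\ell$-ergodic, the point $(\underline\Sub_\ell^0)^{-1}(\ua)$ is $\nu_\ell$-generic, hence $\big((\underline\Sub_\ell^0)^{-1}(\ua),0\big)$ is $\lambda_\ell$-generic, and its image $\Pi_\ell\big((\underline\Sub_\ell^0)^{-1}(\ua),0\big)$ is $\mu_\ell$-generic. The same applies to $\ub$, yielding a generic point for the analogous measure $\mu'_\ell$ associated with $\nu'$. By Definition~\ref{defFKerg}, the previous display implies
\[
	\bar F_\FK(\mu_\ell,\mu'_\ell) \le \varepsilon \quad \text{for every } \ell > k.
\]

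Finally, to pass to the limit, invoke Claim~\ref{cl.mu-k-Cauchy} and Theorem~\ref{thepro:ergentcon}: the sequences $(\mu_\ell)_\ell$ and $(\mu'_\ell)_\ell$ are $\bar F_\FK$-Cauchy and converge in $\bar F_\FK$ to $\bH(\nu)$ and $\bH(\nu')$, respectively. Since $\bar F_\FK$ is a metric on $\cM_{\rm erg}(f)$, the triangle inequality gives
\[
	\bar F_\FK(\bH(\nu),\bH(\nu'))
	\le \bar F_\FK(\bH(\nu),\mu_\ell) + \bar F_\FK(\mu_\ell,\mu'_\ell) + \bar F_\FK(\mu'_\ell,\bH(\nu')),
\]
and sending $\ell \to \infty$ yields $\bar F_\FK(\bH(\nu),\bH(\nu')) \le \varepsilon$, which is precisely uniform continuity with modulus $\delta \mapsto \varepsilon$.

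There is no real obstacle here: all the nontrivial work has been packaged into Proposition~\ref{p.uniform-continuity-for-lift-orbit} (which controls the $\bar F_\FK$-distance between the projected suspension orbits in terms of the symbolic $\bar d$-distance, uniformly in the suspension level $\ell$) and Claim~\ref{cl.mu-k-Cauchy} (which gives the convergence $\mu_\ell \to \bH(\nu)$). The only point that deserves a word of care is the use of total ergodicity of $\nu$, which is precisely what guarantees that respelling a generic point $\ua$ of $\nu$ produces a generic point of the shifted measure $\nu_\ell$ at each level $\ell$; without total ergodicity the $\nu_\ell$'s would in general only be $\sigma_\ell$-invariant, and the identification of generic points (and hence the conclusion $\bar F_\FK(\mu_\ell,\mu'_\ell)\le\varepsilon$) would break down.
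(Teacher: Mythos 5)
Your proof is correct and follows essentially the same route as the paper: both arguments take the $\delta$ from Proposition~\ref{p.uniform-continuity-for-lift-orbit}, pick $\bar d$-close generic points of $\nu$ and $\nu'$, note that their respelled projections are generic for $\mu_k$ and $\mu_k'$ so that $\bar F_\FK(\mu_k,\mu_k')<\varepsilon$ for large $k$, and then pass to the $\bar F_\FK$-limits $\bH(\nu)$, $\bH(\nu')$. The only difference is cosmetic: you spell out the final limit step via the triangle inequality and the role of total ergodicity in the genericity claim, which the paper leaves implicit.
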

 
 \begin{proof}
Given $\varepsilon>0$, let $\delta>0$ be given by Proposition~\ref{p.uniform-continuity-for-lift-orbit}.
Consider two totally ergodic measures $\nu,\nu^\prime\in\cM_{\rm TE}(\sigma_\cA)$ with $\bar d(\nu,\nu^\prime)<\delta$. By definition of $\bar d(\cdot,\cdot)$, there exist $\ua\in G(\nu)$ and $\ua^\prime\in G(\nu^\prime)$ such that $\bar d(\ua,\ua^\prime)<\delta$.
Consider  the sequence of $f$-ergodic measures $(\mu_k)_k$ and $(\mu_k')_k$ in the definitions of $\bH(\nu)$ and $\bH(\nu')$. Then $\Pi_k\big((\underline\Sub_k^0)^{-1}(\ua),0\big)$ and $\Pi_k\big((\underline\Sub_k^0)^{-1}(\ua^\prime),0\big)$ are $\mu_k$-generic and $\mu_k^\prime$-generic, respectively.
By Proposition \ref{p.uniform-continuity-for-lift-orbit}, there exists $k_0\in\bN$ such that for each $k\geq k_0$, one has 
\[
	\bar F_\FK\bigg(\Pi_k\big((\underline\Sub_k^0)^{-1}(\ua),0\big),
				\Pi_k\big((\underline\Sub_k^0)^{-1}(\ua^\prime),0\big)\bigg)
	<\varepsilon
\]
This implies $\bar F_\FK\big(\mu_k,\mu_k^\prime\big)<\varepsilon$. By the definition of $\bH(\cdot)$, we get that $\bar F_\FK\big(\bH(\nu),\bH(\nu^\prime)\big)\leq\varepsilon$, ending the proof of the proposition.
\end{proof}

\subsubsection{Comparing entropies}
We now compare the entropy of $\nu$ with the one of $\bH(\nu)$. 

\begin{proposition}\label{p.entropy-bound-for-map-bH}
There exists a constant $c_0>0$ such that for every $\nu\in\cM_{\rm TE}(\sigma_\cA)$, one has 
\[
	\frac{h_\nu(\sigma_\cA)}{R_0} 
	\geq 
	h_{\bH(\nu)}(f)\geq (1+c_0\widehat\chi) \frac{h_\nu(\sigma_\cA)}{R_0}.  
\]
\end{proposition}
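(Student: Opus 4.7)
The plan is to compute $h_{\mu_k}(f)$ explicitly in terms of $h_\nu(\sigma_\cA)$ and $R_k$, then to use a telescoping estimate on $R_k$ to bound the ratio, and finally to pass to the limit. The key ingredient that makes the argument work is that, on top of the weak$\ast$ convergence $\mu_k\to\bH(\nu)$, we actually have $\bar F_\FK$-convergence, which combined with entropy expansiveness of $f\in\PH^1_{\c=1}(M)$ yields \emph{continuity} of entropy along the sequence (Corollary \ref{c.continuity-of-entropy-under-FK-for PH-1D}); that is, $h_{\bH(\nu)}(f)=\lim_k h_{\mu_k}(f)$.

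First I would compute the entropy of each approximating measure. By item 3 of Lemma \ref{r.factor}, $h_{\mu_k}(f)=h_{\lambda_k}(\Phi_{\cA_k,R_k})$, and since $\lambda_k$ is the constant-roof-$R_k$ discrete suspension of $\nu_k$ (so that $\Phi_{\cA_k,R_k}^{R_k}$ restricted to any floor is conjugate to $\sigma_k$), Abramov's formula gives
\[
	h_{\lambda_k}(\Phi_{\cA_k,R_k})
	=\frac{h_{\nu_k}(\sigma_k)}{R_k}.
\]
By Remark \ref{remtopconjugate}, the map $\underline\Sub_k^0$ conjugates $(\cA_k^\bZ,\sigma_k)$ to $(\cA^\bZ,\sigma_\cA^{M_k})$, where $M_k\eqdef m_1\cdots m_k$; since $\nu_k=((\underline\Sub_k^0)^{-1})_\ast\nu$, we get $h_{\nu_k}(\sigma_k)=h_\nu(\sigma_\cA^{M_k})=M_k\,h_\nu(\sigma_\cA)$. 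Thus
\[
	h_{\mu_k}(f)
	=\frac{M_k}{R_k}\cdot h_\nu(\sigma_\cA).
\]

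Next I would control the ratio $R_k/(M_k R_0)$. Set $\alpha_k\eqdef R_k/(M_kR_0)$. From Assumption \ref{asstailen}, $R_k=m_kR_{k-1}+\bt_k$ with $0\le\bt_k\le c\rho^k m_k R_{k-1}$, where $c=\xi|\widehat\chi|$. Dividing by $M_kR_0=m_kM_{k-1}R_0$ gives the recursion
\[
	\alpha_{k-1}\le\alpha_k
	=\alpha_{k-1}+\frac{\bt_k}{m_kM_{k-1}R_0}
	\le(1+c\rho^k)\,\alpha_{k-1},
\]
with $\alpha_0=1$. Iterating yields
\[
	1\le\alpha_k\le\prod_{j=1}^{k}(1+c\rho^j)
	\le\exp\Big(\tfrac{c\rho}{1-\rho}\Big),
\]
hence
\[
	\exp\Big(-\tfrac{c\rho}{1-\rho}\Big)\cdot\frac{h_\nu(\sigma_\cA)}{R_0}
	\le h_{\mu_k}(f)
	\le\frac{h_\nu(\sigma_\cA)}{R_0}.
\]

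Finally, Corollary \ref{c.continuity-of-entropy-under-FK-for PH-1D} applied to the $\bar F_\FK$-Cauchy sequence $(\mu_k)_k$ (already established in Claim \ref{cl.mu-k-Cauchy}) gives $h_{\bH(\nu)}(f)=\lim_k h_{\mu_k}(f)$, and the same two-sided bound is inherited in the limit. Using the elementary inequality $e^{-t}\ge 1-t$ together with $c=\xi|\widehat\chi|=-\xi\widehat\chi$,
\[
	\exp\Big(-\tfrac{c\rho}{1-\rho}\Big)
	\ge 1-\tfrac{\xi\rho}{1-\rho}|\widehat\chi|
	=1+\tfrac{\xi\rho}{1-\rho}\widehat\chi,
\]
so setting $c_0\eqdef\xi\rho/(1-\rho)$ yields the claim. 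The only ``soft'' part of the argument is the appeal to continuity of entropy under $\bar F_\FK$-convergence, but this is already in hand; the rest is the telescoping estimate above, which is routine once the bookkeeping for $R_k$ and $M_k$ is set up.
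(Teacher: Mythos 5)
Your proposal is correct and follows essentially the same route as the paper: compute $h_{\mu_k}(f)=\frac{m_1\cdots m_k}{R_k}h_\nu(\sigma_\cA)$ via the conjugacy $\underline\Sub_k^0$, Abramov's formula, and the entropy-preserving factor $\Pi_k$, then bound $R_k/(m_1\cdots m_k R_0)$ by the telescoping product $\prod_j(1+\xi\rho^j|\widehat\chi|)\le e^{\xi\rho|\widehat\chi|/(1-\rho)}$, and pass to the limit using $\bar F_\FK$-convergence plus entropy continuity (Corollary \ref{c.continuity-of-entropy-under-FK-for PH-1D}), arriving at the same constant $c_0=\xi\rho/(1-\rho)$. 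The only cosmetic difference is your bookkeeping via $\alpha_k=R_k/(m_1\cdots m_kR_0)$, which repackages the paper's inequality \eqref{someothernumberbis} as a recursion.
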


\proof 
Consider the measures $\nu_k$, $\lambda_k$, and $\mu_k$ in \eqref{deflambdaketc}.   
As, by Remark \ref{remtopconjugate}, $\underline\Sub_k^0$ is a conjugacy between $(\cA^\bZ,\sigma_\cA^{m_1\cdots m_k})$ and $(\cA_k^\bZ,\sigma_k)$, one gets 
\[
	h_{\nu_k}(\sigma_k)
	= h_{\nu}(\sigma_\cA^{m_1\cdots m_k})
	= m_1\cdots m_k\cdot h_\nu(\sigma_\cA).  
\]
By the Abramov's formula \cite{Abr:59}, one gets 
\[
	h_{\lambda_k}(\Phi_{k})
	=\frac{1}{R_k}h_{\nu_k}(\sigma_k)
	=\frac{m_1\cdots m_k}{R_k}h_\nu(\sigma_\cA). 
\]
As, by item 3 in Lemma \ref{r.factor}, the map $\Pi_k$ preserves the entropy and hence we get
\begin{equation}\label{somenumber}
	h_{\lambda_k}(\Phi_{k})
	= h_{(\Pi_k)_\ast\lambda_k}(f)
	= h_{\mu_k}(f)
	=\frac{m_1\cdots m_k}{R_k}h_\nu(\sigma_\cA).
\end{equation}

Recall that $R_k=m_k\cdot R_{k-1}+t_k$.
By \eqref{eqdefRk}, for each $i\in\bN$ one has 
\[
	0\le t_i\leq \xi \cdot m_i\cdot R_{i-1}\cdot \rho^i\cdot |\widehat\chi| ,
\]
which implies that 
\begin{equation}\label{someothernumberbis}
	R_k
	=m_k\cdot R_{k-1}+t_k
	\leq m_k\cdot R_{k-1}\cdot (1+\xi  \cdot \rho^k\cdot |\widehat\chi|)
	\leq m_1\cdots m_k\cdot R_0 \prod_{i=1}^k(1+\xi  \cdot \rho^i\cdot |\widehat\chi|)
\end{equation}
and
\[
	R_k=m_k\cdot R_{k-1}+t_k\geq m_k\cdot R_{k-1}\geq m_1\cdots m_k\cdot R_0.
\]
This together with \eqref{somenumber} implies 
\[
	\frac{h_\nu(\sigma_\cA)}{R_0}
	\geq h_{\mu_k}(f).
\]
Moreover, we get
\begin{align*}
	h_{\mu_k}(f)
	&= \frac{m_1\cdots m_k}{R_k}\cdot h_\nu(\sigma_\cA)\\
	\text{\tiny by \eqref{someothernumberbis}}\quad
	&\geq \frac{1}{\prod_{i=1}^k(1+\xi  \cdot \rho^i\cdot |\widehat\chi|)}\cdot \frac{h_\nu(\sigma_\cA)}{R_0}
	\ge \Big(e^{-\sum_{i=1}^k \xi  \cdot \rho^i\cdot |\widehat\chi|}\Big)\frac{h_\nu(\sigma_\cA)}{R_0}\\
	&>\big(e^{-\xi \rho\cdot |\widehat\chi|/(1-\rho)}\big) \frac{h_\nu(\sigma_\cA)}{R_0}
	>\big(1+\frac{\xi \rho}{1-\rho}\widehat\chi\big) \frac{h_\nu(\sigma_\cA)}{R_0}.
\end{align*}	
Finally, letting $c_0=\xi \rho/(1-\rho)$, we get 
\[
	\frac{h_\nu(\sigma_\cA)}{R_0}
	\geq h_{\mu_k}(f)
	>\big(1+c_0\widehat\chi\big) \frac{h_\nu(\sigma_\cA)}{R_0}.
\]
As, by definition, $\bfH(\nu)$ is the $\bar F_\FK$-limit of $\mu_k$, by  Corollary \ref{c.continuity-of-entropy-under-FK-for PH-1D} the proposition follows.
\endproof 

\subsection{Proof of Theorem \ref{thm.horseshoe-to-path-of-measure}}

By Remark \ref{r.space-of-Bernoulli-measures}, there is a continuous path $\{\nu_t \}_{t\in[0,1]}$ in the space of Bernoulli measures on $(\cA^\bZ,\sigma_\cA)$ (with respect to the topology $\bar d$) such that 
\[
	h_{\nu_0}(\sigma_\cA)=0,
	\quad 
	h_{\nu_1}(\sigma_\cA)=\log\card(\cA). 
\]
Recall that any Bernoulli measure is totally ergodic. By Proposition \ref{p.existence-of-uniform-continuous-map-between-measure-spaces}, $\big\{\bH(\nu_t) \big\}_{t\in[0,1]}$ is a continuous path in $\cM_{\rm erg, \chi}(f)$ under the $\bar F_\FK$-metric. By Corollary  \ref{c.continuity-of-entropy-under-FK-for PH-1D}, the map $t\mapsto h_{\bH(\nu_t)}(f)$ is continuous. 
Let $c_0>0$ be the constant given by Proposition \ref{p.entropy-bound-for-map-bH}. Then one has
\[ 
	\frac{h_{\nu_t}(\sigma_\cA)}{R_0}
	\geq h_{\bH(\nu_t)}(f)\geq (1+c_0\widehat\chi)  \frac{h_{\nu_t}(\sigma_\cA)}{R_0}
\]
which implies that 
\[ 
	h_{\bH(\nu_0)}(f)=0, \quad
	h_{\bH(\nu_1)}(f)\geq (1+c_0\widehat\chi) \frac{\log\card(\cA)}{R_0}. 
\]
Recall that $\card\cA=N, R_0=R$, and this ends the proof of the theorem.
\qed

\section{Proofs of the main results}\label{secfinal}

In this section, we complete the proofs of Theorems \ref{thm.maingeneral} and \ref{thm.main}. 
Indeed, we start by proving the following slightly more technical version of Theorem  \ref{thm.maingeneral}.

\begin{theorem}\label{rthm.ergodic-measure-to-path-of-nonhyperbolic-ergodic-measure}
	Let $f\in\PH_{\c=1}^1(M)$. Assume that
\begin{itemize}[leftmargin=0.6cm ]
\item[(1)] $f$ has an unstable blender-horseshoe,
\item[(2)] the foliation $\cW^\uu$ is  minimal,
\item[(3)] $f$ has a saddle of contracting type.
\end{itemize} 
	 Then  there exist numbers  $\chi_0<0$ and $c_0>0$  such that for any $\chi\in(\chi_{\rm min}(f),0]$, any   $\nu\in\cM_{\rm erg}(f)$ with $\chi^\c(\nu)\in(\chi_0+\chi,\chi)$ and any $\varepsilon>0$, there exists a continuous path $\{\mu_t\}_{t\in[0,1]}\subset \cM_{\rm erg}(f)$ such that 
	\begin{itemize}[leftmargin=0.6cm ]
		\item $\chi^\c(\mu_t)=\chi$ for any $t\in[0,1]$;
		\item $t\mapsto h_{\mu_t}(f)$ is continuous;
		\item $h_{\mu_0}(f)=0$, and 
		\[ h_{\mu_1}(f)\geq \big(1+c_0(\chi^\c(\nu)-\chi)\big) (h_\nu(f)-\varepsilon).\]
	\end{itemize}
\end{theorem}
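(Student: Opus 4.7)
The plan is to chain two earlier results: apply Theorem~\ref{t.smarthorseshoes} to build a contracting-type $\csu$-horseshoe that approximates $\nu$ in both entropy and finite-time center Lyapunov exponent, and then feed this horseshoe into Theorem~\ref{thm.horseshoe-to-path-of-measure} to obtain the required arc at the target exponent $\chi$. Accordingly, I take the constants $c_0>0$, $\delta_0\in(0,1)$, $\chi_0<0$ in the statement to be those provided by Theorem~\ref{thm.horseshoe-to-path-of-measure} (applied using the blender-horseshoe from hypothesis (1)), possibly after shrinking $\chi_0$ so that $1+c_0\widehat\chi>0$ whenever $\widehat\chi\in(\chi_0,0)$.

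Fix $\chi$, $\nu$, $\varepsilon$ as in the statement and set $\widehat\chi\eqdef\chi^\c(\nu)-\chi\in(\chi_0,0)$. Fix any $\delta\in(0,\delta_0)$. Assuming $h_\nu(f)>0$, apply Theorem~\ref{t.smarthorseshoes} to $\nu$ with tolerance $\varepsilon'\eqdef\min\{\varepsilon,\delta|\widehat\chi|\}$ and with $r<\varepsilon_1/3$ sufficiently small. This produces a center curve $\gamma$, an integer $R\in\bN$, and a contracting-type $\csu$-horseshoe $\bfH=(C,\{\bfK_i\}_{i=1}^N,f^R)$ with reference cube $C=C^\csu(\gamma,r)$ satisfying $N>\exp(R(h_\nu(f)-\varepsilon'))$ and
\[
	e^{R(\chi^\c(\nu)-\varepsilon')}<\|D^\c f^R(x)\|<e^{R(\chi^\c(\nu)+\varepsilon')}
\]
on the rectangles. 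Using $\chi^\c(\nu)=\chi+\widehat\chi$ and $\varepsilon'\le-\delta\widehat\chi$, this rewrites as
\[
	\widehat\chi(1+\delta)\le\frac{1}{R}\log\|D^\c f^R(x)\|-\chi\le\widehat\chi(1-\delta),
\]
which is exactly hypothesis~\eqref{hypotheseses} of Theorem~\ref{thm.horseshoe-to-path-of-measure}. By hypothesis (2), minimality of $\cW^\uu$ and Lemma~\ref{lemr.transitiontime} ensure that $C$ is cyclically related to the unstable blender-horseshoe. Therefore Theorem~\ref{thm.horseshoe-to-path-of-measure} yields a continuous path $\{\mu_t\}_{t\in[0,1]}\subset\cM_{\rm erg,\chi}(f)$ with continuous entropy, $h_{\mu_0}(f)=0$, and
\[
	h_{\mu_1}(f)\ge(1+c_0\widehat\chi)\frac{\log N}{R}
	>(1+c_0\widehat\chi)(h_\nu(f)-\varepsilon'),
\]
which, together with $\varepsilon'\le\varepsilon$ and $1+c_0\widehat\chi>0$, gives the claimed bound. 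For the degenerate case $h_\nu(f)=0$, the right-hand side becomes nonpositive and the assertion reduces to producing any continuous arc in $\cM_{\rm erg,\chi}(f)$ starting at a zero-entropy measure, which is supplied by applying the same construction to some auxiliary positive-entropy ergodic measure of center exponent in $(\chi_0+\chi,\chi)$ (whose existence follows from Remark~\ref{r.BBDlevels} and the restricted variational principle of Remark~\ref{r.DGZ}, plus the fact that hypothesis (3) provides contracting saddles yielding ergodic measures of exponent arbitrarily close to a target value in $(\chi_{\rm min}(f),0)$).

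The one nontrivial quantitative point is the \emph{matching} of constants: the Lyapunov approximation $\varepsilon'$ from Theorem~\ref{t.smarthorseshoes} must be tight enough (comparable to $\delta|\widehat\chi|$, and in particular vanishing with $|\widehat\chi|$) to place the finite-time exponent of $\bfH$ inside the admissible window $[\widehat\chi(1+\delta),\widehat\chi(1-\delta)]$ of Theorem~\ref{thm.horseshoe-to-path-of-measure}, while simultaneously being small enough ($\varepsilon'\le\varepsilon$) so that the resulting entropy bound $(1+c_0\widehat\chi)(h_\nu(f)-\varepsilon')$ survives as $(1+c_0\widehat\chi)(h_\nu(f)-\varepsilon)$. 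Since both conditions are achieved by setting $\varepsilon'=\min\{\varepsilon,\delta|\widehat\chi|\}$, this step is essentially bookkeeping, and the proof reduces to invoking Theorems~\ref{t.smarthorseshoes} and~\ref{thm.horseshoe-to-path-of-measure} in sequence.
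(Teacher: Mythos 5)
Your proposal is correct and follows essentially the same route as the paper: take $c_0,\delta_0,\chi_0$ from Theorem~\ref{thm.horseshoe-to-path-of-measure}, set $\widehat\chi=\chi^\c(\nu)-\chi$, apply Theorem~\ref{t.smarthorseshoes} with tolerance of size $\min\{\varepsilon,\delta|\widehat\chi|\}$ to get a horseshoe satisfying \eqref{hypotheseses}, and then invoke Theorem~\ref{thm.horseshoe-to-path-of-measure} to produce the arc and the entropy bound. Your extra remarks (ensuring $1+c_0\widehat\chi>0$ and the $h_\nu(f)=0$ case, which the paper does not discuss) are reasonable, though note that Remarks~\ref{r.BBDlevels} and \ref{r.DGZ} are stated for $\mathrm{BM}^1(M)$ rather than the weaker hypotheses (1)--(3), so that auxiliary citation should be adjusted.
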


\begin{proof}
Let $c_0>0$, $\delta_0>0$, and $\chi_0<0$ be given by Theorem \ref{thm.horseshoe-to-path-of-measure}. 
Fix $\chi\in(\chi_{\rm min}(f),0]$, $\varepsilon>0$ and an ergodic measure $\nu$ with $\chi^\c(\nu)\in(\chi_0+\chi,\chi)$.

Define $\widehat\chi=\chi^\c(\nu)-\chi\in(\chi_0,0).$
Take 
\[\delta<\min\{\delta_0,1\} \quad \textrm{and} \quad\varepsilon_{\rm L}\in\big(0,\min\{\delta|\widehat\chi|,\varepsilon\}\big) .\]
Then one has 
\[\chi-(1+\delta)|\widehat\chi| <\chi^\c(\nu)-\varepsilon_{\rm L}<\chi^\c(\nu)+\varepsilon_{\rm L}<\chi-(1-\delta)|\widehat\chi|. \]
 
Applying Theorem \ref{t.smarthorseshoes} to the ergodic measure $\nu$,  $\varepsilon_{\rm L}$ and $r<\varepsilon_1/3$, one obtains   $R\in\bN$, a center curve $\gamma$ and  a $\csu$-horseshoe $\bfH=(C,\{\bfK_i \}_{i=1}^N, f^R)$ relative to an $\csu$-cube $C=C^\csu(\gamma,r)$ such that 
\begin{equation}\label{TheoremBentropy}
	N
	\geq \exp\big(R(h_\nu(f)-\varepsilon_{\rm L})\big) > \exp\big(R(h_\nu(f)-\varepsilon)\big) 
\end{equation}
and for each $i=1,\ldots, N$, and every $x\in\bfK_i$, 
\[\begin{split}
	-(1+\delta)|\widehat\chi|
	<  \chi^\c(\nu)-\varepsilon_{\rm L}-\chi
	<\frac{1}{R}\|D^\c f^R(x)\|-\chi
	< \chi^\c(\nu)+\varepsilon_{\rm L}-\chi
	<-(1-\delta)|\widehat\chi|.
\end{split}\]
Applying Theorem \ref{thm.horseshoe-to-path-of-measure} to the horseshoe $\bfH=(C,\{\bfK_i \}_{i=1}^N, f^R)$,  one gets  a continuous path $\{\mu_t \}_{t\in[0,1]}\subset\cM_{\rm erg}(f)$ such that 
\begin{itemize}[leftmargin=0.6cm ]
	\item $\chi^\c(\mu_t)=\chi$ for any $t\in[0,1]$,
	\item $t\mapsto h_{\mu_t}(f)$ is continuous,
	\item $h_{\mu_0}(f)=0$, and using also \eqref{TheoremBentropy} we get
	\[ 
	h_{\mu_1}(f)
	\geq \big(1+c_0\widehat\chi\big) \cdot \frac{\log N}{R}
	\geq \big(1+c_0(\chi^\c(\nu)-\chi)\big) \cdot \big(h_{\nu}(f)-\varepsilon\big).
	\]
\end{itemize}
This finished the proof of the theorem.
\end{proof}

\begin{proof}[{Proof of Theorem~\ref{thm.maingeneral}}]
	This is now an immediate consequence of Theorem \ref{rthm.ergodic-measure-to-path-of-nonhyperbolic-ergodic-measure}.
\end{proof}

\begin{proof}[{Proof of Theorem~\ref{thm.main}}]
	The assertion is an immediate consequence of Theorem \ref{thm.maingeneral} together with Remark \ref{r.DGZ}.
\end{proof}

\section{Nonergodic limit measures: Proof of Theorem \ref{nonerglims}}\label{secnonerglims}

Recall the definition of the \emph{Wasserstein metric} $W(\cdot,\cdot)$ on the space $\cM(f)$. For that, let
\[
	\cL
	\eqdef \big\{\varphi\colon M\to\bR\colon	|\varphi|\leq 1,\Lip(\varphi)\leq 1\big\}
\]
and define	
\begin{equation}\label{eqWasser}
	W(\mu, \mu') 
	\eqdef \sup\left\{ \left| \int \varphi \,d\mu - \int \varphi \,d\mu' \right|\colon \varphi\in\cL\right\}.
\end{equation}

\begin{remark}\label{remWasser}
The above indeed is a metric and induces the weak$\ast$ topology on $\cM(f)$. Observe that the space of functions $\cL$ is compact. Hence, for any $\varepsilon$ there is a finite subcollection $\varphi_1,\ldots, \varphi_{N(\varepsilon)}$ which allow us to estimate $W(\mu_1, \mu_2)$ with an error less than $\varepsilon$ for {\it any} pair of measures $\mu_1, \mu_2\in\cM(f)$.
\end{remark}

\begin{proof}[Proof of Theorem \ref{nonerglims}]
	Let $\vartheta$ be as in the theorem and consider some neighborhood $U$ of $\vartheta$. First observe that, by \cite[Theorem 1]{DiaGelSan:20} or \cite[Theorem 1.1]{YanZha:20}, there exists $\vartheta'\in\cM_{\rm erg,<0}(f)\cap U$ which is arbitrarily close to $\vartheta$ in  weak$\ast$-topology and in entropy. By Remark \ref{remWasser}, there is a finite collection $\varphi_1,\ldots,\varphi_\ell\in\cL$ and $\delta>0$ such that the (open) set
\begin{equation}\label{eqexponvartheta}
	\Big\{\mu\colon \mu\in\cM(f), \Big| \int \varphi_i\,d\mu - \int \varphi_i \,d\vartheta' \Big|<\delta \text{ for every } i=1,\ldots,\ell\Big\}
	\subset U.
\end{equation}
By Theorem \ref{t.smarthorseshoes}, there exist an $\csu$-cube $C$ and a $\csu$-horseshoe $\bfH=(C,\{\bfK_i\}_{i=1}^N,f^R)$ of contracting type such that  
\[
	\cM(f|_{\Lambda_\bfH})
	\subset U.
\]
Moreover, using \eqref{eqexponvartheta}, we get that for every $i=1,\ldots,N$ and $x\in\bfK_i$,
\[
	e^{R(\chi^\c(\vartheta')-\delta)}< \|D^\c f^R(x)\|<e^{R(\chi^\c(\vartheta')+\delta)}.	
\]
Assume, for simplicity, that we have the same quantifiers $T_0$, $\rho_{\rm b}$, $\kappa_{\rm min}$, and $\kappa_{\rm max}$ associated to the cube $C$ and to the unstable blender-horseshoe $\fB =\fB(\bfC,f^S)$ as in Section \ref{s.connecting}.

Let $\mu_\bfH$ be the measure of maximal entropy of $f|_{\widetilde\Lambda_\bfH}$. To conclude the proof of the theorem, it is enough to prove the following lemma.

\begin{lemma}
	For every neighborhood $V$ of $\mu_\bfH$ in the weak$\ast$ topology and entropy, there are $\mu_\infty\in\cM(f)\cap V$ and a sequence $(\mu_k)_k\subset\cM_{\rm erg}(f)\cap V$ having the following properties
\begin{enumerate}
\item $(\mu_k)_k$  converges to $\mu_\infty$ in entropy and in the weak$\ast$ topology,
\item $\mu_\infty$ is a nontrivial convex combination of $\mu_\bfH$ and some measure $\mu^+\in\cM(f|_{\widetilde\Lambda_\fB})$.
\end{enumerate}
\end{lemma}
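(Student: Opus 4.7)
The plan is to construct the measures $\mu_k$ as measures of maximal entropy on $\csu$-horseshoes subordinated to $\bfH$, where each orbit spends a fraction $1-\eta$ of its return time following the Bernoulli dynamics of $\bfH$ and a fraction $\eta$ inside the superposition region of $\fB$. Fixing $\eta>0$ sufficiently small will force every such measure, and its weak$\ast$ limit, to lie in the prescribed neighborhood $V$, and the limit will be a nontrivial convex combination $(1-\eta)\mu_\bfH+\eta\mu^+$ with $\mu^+\in\cM(f|_{\widetilde\Lambda_\fB})$.

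Concretely, fix $\eta>0$ so that every measure of the form $(1-\eta)\mu_\bfH+\eta\mu^+$ with $\mu^+\in\cM(f|_{\widetilde\Lambda_\fB})$ lies in $V$; this is possible because $\mu_\bfH\in V$, $V$ is open in the weak$\ast$ plus entropy topology, and $\cM(f|_{\widetilde\Lambda_\fB})$ is compact. For each $m\ge m_1(\bfH)$ and every $\ba\in\{1,\ldots,N\}^m$, Proposition~\ref{pl.completetour} supplies a nested family $\{\bfK_\ba^{(j)}\}_{j=0}^{\ell(\ba)}$ of $\cs$-complete subcubes of $\bfK_\ba$, with $\ell(\ba)$ bounded below uniformly in $\ba$ and growing linearly in $m$ by Claim~\ref{claimellba}. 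I pick an integer $j_m\le\min_{\ba}\ell(\ba)$ with
\[
\frac{j_m S}{mR+j_m S+2T_0}\xrightarrow[m\to\infty]{}\eta,
\]
and set $R_m'\eqdef mR+j_m S+2T_0$. By item~(3) of Proposition~\ref{pl.completetour}, $f^{R_m'}(\bfK_\ba^{(j_m)})$ is a $\uu$-complete subcube of $C$, so the triple $\bfH_m\eqdef(C,\{\bfK_\ba^{(j_m)}\}_{\ba},f^{R_m'})$ is a $\csu$-horseshoe with $N^m$ pairwise disjoint rectangles. I let $\mu_m$ be its (Bernoulli) measure of maximal entropy, which is $f$-ergodic.

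For the weak$\ast$ convergence of $(\mu_m)_m$, I will split each $R_m'$-long cycle into three phases: (i) the $mR$ iterates inside the $\bfH$-rectangles, whose averaged contribution against any continuous $\varphi\colon M\to\bR$ tends to $(1-\eta)\int\varphi\,d\mu_\bfH$, using the distortion estimates of Corollary~\ref{c.distortion-reference-cube} together with the uniform Bernoulli weights over $\ba\in\{1,\ldots,N\}^m$ (which identify the $\bfH$-phase restriction with $\mu_\bfH$ as $m\to\infty$); (ii) the $j_m S$ iterates inside the superposition region $\bfC_1\cup\bfC_2$, which, after passing to a subsequence, converge to $\eta\int\varphi\,d\mu^+$ for some $\mu^+\in\cM(f|_{\widetilde\Lambda_\fB})$ by compactness and the fact that the successor cubes $\widetilde\bfK_\ba^{[j]}$ have diameter uniformly bounded by $\rho_\rmb$; (iii) the $2T_0$ transition iterates, whose contribution vanishes as $R_m'\to\infty$ while $T_0$ stays fixed. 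Summing the three phases, $\mu_m\to\mu_\infty=(1-\eta)\mu_\bfH+\eta\mu^+$ in the weak$\ast$ topology along the extracted subsequence, and by the choice of $\eta$ both $\mu_\infty$ and (for $m$ sufficiently large) every $\mu_m$ lie in $V$.

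For the entropy, Abramov's formula applied to the full-shift conjugacy of $\bfH_m$ yields $h_{\mu_m}(f)=m\log N/R_m'$, which tends to $(1-\eta)\log N/R=(1-\eta)h_{\mu_\bfH}(f)$. The affine ergodic-decomposition formula gives $h_{\mu_\infty}(f)=(1-\eta)h_{\mu_\bfH}(f)+\eta h_{\mu^+}(f)$; because the blender phase in the construction follows the deterministic successor iterates of Lemma~\ref{l.safeblender} (one sequence per $\ba$, the combinatorial freedom being absorbed into the symbolic space of $\bfH$), the limit measure $\mu^+$ carries zero entropy and the two quantities match. The main obstacle will be verifying the weak$\ast$ convergence of the blender-phase empirical distributions to a genuine $f$-invariant measure supported in $\widetilde\Lambda_\fB$: this reduces to extracting a weak$\ast$-convergent subsequence via compactness, but one must carefully exploit the uniform diameter bound $\rho_\rmb$ on the successor cubes to show that the support of any such limit is indeed contained in $\widetilde\Lambda_\fB$, rather than merely in the forward orbit of $\bfC$.
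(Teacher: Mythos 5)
Your construction is essentially the paper's: subordinated horseshoes obtained from Proposition~\ref{pl.completetour} with a fixed fraction of time spent in the blender, Bernoulli measures of maximal entropy $\mu_m$, a tower decomposition into horseshoe/blender/transition phases, weak$\ast$ convergence of the horseshoe part to $(1-\eta)\mu_\bfH$ via Corollary~\ref{c.distortion-reference-cube}, compactness for the blender part, and Abramov's formula for $h_{\mu_m}(f)$. The genuine gap is your entropy endgame: the claim that the limit $\mu^+$ of the blender-phase pieces has zero entropy ``because the blender phase follows the deterministic successor iterates'' is unjustified, and the argument given for it does not work. Determinism of the successor algorithm only says that each of the $N^m$ blender segments is individually determined by its symbol $\ba$; the measure $\mu^+$ is a weak$\ast$ limit of the \emph{average over all} $\ba$ of the empirical distributions along these segments, and such an averaged limit can perfectly well have positive entropy on $\widetilde\Lambda_\fB$ (already in a full $2$-shift, averaging empirical measures along deterministically chosen orbit segments that realize all words of a given length converges to the $(1/2,1/2)$-Bernoulli measure, of entropy $\log 2$). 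So the identity $h_{\mu_\infty}(f)=\lim_m h_{\mu_m}(f)$ does not follow from your argument, and the upper semicontinuity of entropy gives only the wrong-direction inequality.

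The paper does not attempt to show $h_{\mu^+}=0$. Instead it uses affinity of entropy to get the two-sided bound $(1-\varepsilon)h_{\mu_\bfH}(f)\le h_{\mu_\infty}(f)\le (1-\varepsilon)h_{\mu_\bfH}(f)+\varepsilon\, h_{\rm top}(f,\widetilde\Lambda_\fB)$, together with $h_{\mu_m}(f)\approx(1-\varepsilon)h_{\mu_\bfH}(f)$, and lets the smallness of the blender fraction $\varepsilon$ (chosen relative to $V$) absorb the possible entropy of $\mu^+$; this is the fix you should adopt in place of the zero-entropy claim. Two smaller points: your $\eta$ must satisfy not only the $V$-containment condition but also the constraint of \eqref{followsfrom}, roughly $\eta/(1-\eta)<|\chi^\c(\vartheta')-\delta|/\kappa_{\rm max}$, since otherwise Claim~\ref{claimellba} does not guarantee an integer $j_m\le\min_\ba\ell(\ba)$ realizing the ratio $j_mS/R_m'\to\eta$; and the support issue you flag at the end is indeed handled exactly by the shrinking-diameter/distortion estimates (the successor cubes stay in $\bfC_1\cup\bfC_2$ with diameters controlled by $\rho_\rmb$ and contract under backward iteration), as in the paper's ``arguing as above'' step.
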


\begin{proof}
We start by following the steps of the proof of Proposition \ref{ss.connecting} until Equation \eqref{e.hatKab}. We choose a sequence of subordinated horseshoes $(\bfH_m)_m$. Fix first $\varepsilon_0>0$ small so that
\begin{equation}\label{followsfrom}
	\frac{\varepsilon_0}{1-\varepsilon_0}
	< \frac{1}{\kappa_{\rm max}}|\chi^\c(\vartheta')-\delta|.
\end{equation}
Note that every $\varepsilon\in(0,\varepsilon_0)$ also satisfies this inequality. Fix $\varepsilon\in(0,\varepsilon_0)$. 

For every $m\in\bN$, every $m$-word $\ba=(a_0, \ldots, a_{m-1}) \in \{1, \ldots, N\}^m$, and $j=0,\ldots,\ell(\ba)$, we get a $\cs$-complete subcube $\bfK_{\ba}^{(j)}$ of $\bfK_\ba$. Recall that, by Claim \ref{claimellba}, we get the estimate 
\[
	\ell(m)
	\eqdef \min_{\ba\in\{1,\ldots,N\}^m}\ell(\ba)
	\ge \frac{1}{\kappa_{\rm max}}
		\Big(\log\rho_{\rm b}-\log\kappa_{\rm max}+mR|\chi^\c(\vartheta')-\delta|\Big).
\]
As a consequence of \eqref{followsfrom}, there is $m_0\in\bN$ so that for every $m\ge m_0$, it holds
\begin{equation}\label{usedd}
	\frac{\varepsilon}{1-\varepsilon}mR+1
	<\ell(m).
\end{equation}

Given $m\ge m_0$, let 
\begin{equation}\label{choiceeps0}
	R_m\eqdef 
	mR+T_{\rm b}(m,\varepsilon),
	\quad\text{ where }\quad
	T_{\rm b}(m,\varepsilon)
	\eqdef \Big\lfloor \frac{\varepsilon}{1-\varepsilon}mR\Big\rfloor
	<\ell(m),
\end{equation}
where we used \eqref{usedd}.
The proof of the following result is analogous to the one of Claim \ref{cl:choiceKaprime}, taking into account that with \eqref{choiceeps0}, we indeed stay in the blender domain $\bfC_1\cup\bfC_2$.

\begin{claim}
For every $m\ge m_0$ and every $m$-word $\ba\in\{1,\ldots,N\}^m$, there exists a $\cs$-complete subcube  $\bfK'_\ba=\bfK'_\ba(m)$ of $\bfK_\ba$ such that
\[	f^k(\bfK'_\ba)\subset
	\begin{cases}
	\bigcup_{i=1}^N\bfK_i&\text{ for }k=0,\ldots, mR,\\
	\bfC_1\cup\bfC_2&\text{ for }k=mR+T_0,\ldots, mR+T_{\rm b}(m,\varepsilon)-T_0,\\
	\bigcup_{i=1}^N\bfK_i&\text{ for } k=R_m.
	\end{cases}
\]
Moreover, $f^{R_m}(\bfK'_\ba)$ is a $\uu$-complete subcube of $C$.
\end{claim}

Let
\begin{equation}\label{defHm}
	\bfH_m
	\eqdef (C,\{\bfK_\ba'\}_{\ba\in\{1,\ldots,N\}^m},f^{R_m}).
\end{equation}
As in \eqref{defLambdatilde}, let 
\[
	\Lambda_m
	\eqdef \Lambda_{\bfH_m},
	\quad
	\widetilde\Lambda_m
	\eqdef \widetilde\Lambda_{\bfH_m}.
\]
Let $\mu_m$ be the measure of maximal entropy of $f^{R_m}|_{\Lambda_m}$. Recall that $\mu_m$ is the Bernoulli measure in which every rectangle $\bfK_\ba'$ has probability $p=N^{-m}$. 
Analogously, as in \eqref{basicset}, consider $\Lambda_\fB$ and $\widetilde\Lambda_\fB$ associated to the unstable blender-horse\-shoe.

Consider now the full shift $\sigma_m\colon\cA_m^\bZ\to\cA_m^\bZ$, where $\cA_m\eqdef\{1,\ldots,N\}^m$.
Applying Lemma \ref{r.factor} to the $\csu$-horseshoe $\bfH_m$ in \eqref{defHm}, the roof function $R_m$ in \eqref{choiceeps0}, and the alphabet $\cA_m$, we get a topological conjugacy between the discrete-time suspension $(\cS_m,\Phi_m)$  of $(\cA_m^\bZ,\sigma_m)$ with roof $R_m$ and the map $f|_{\widetilde\Lambda_m}$. Like in the proof of Lemma \ref{r.factor}, using the ``tower structure'' of the suspension space and the topological conjugation, any $f$-invariant probability measure $\mu$ on $\widetilde\Lambda_m$ can be written as the sum
\begin{equation}\label{decompos}
	\mu
	= \sum_{s=0}^{R_m-1}\mu^{(s)},
\end{equation}
where each measure $\mu^{(s)}$ is $f^{R_m}$-invariant, satisfies $f_\ast\mu^{(s)}=\mu^{(s+1)}$, for $s=0,\ldots,R_m-2$, and $|\mu^{(0)}| = \mu^{(0)}(\bfK_\ba')=1/R_m$. 

We apply now \eqref{decompos} to $\mu_m$ to get the family of measures $\{\mu_m^{(s)}\}_{s=0}^{R_m-1}$. Define
\begin{equation}
	\mu_m^-
	\eqdef \sum_{s=0}^{mR-1}\mu_m^{(s)},
	\quad
	\mu_m^+
	\eqdef \sum_{s=mR}^{R_m-1}\mu_m^{(s)}
	= \mu_m-\mu_m^-.
\end{equation}

Check  that by \eqref{choiceeps0}, we have
\begin{equation}\label{relRmmR}\begin{split}
	\frac{mR}{R_m}
	= \frac{mR}{mR+T_{\rm b}(m,\varepsilon)}
	&= 1-\varepsilon_m
	\quad\text{ and }\,\\
	\frac{T_{\rm b}(m,\varepsilon)}{mR+T_{\rm b}(m,\varepsilon)}
	&=\varepsilon_m,
	\quad\text{ where }
	\varepsilon_m\to\varepsilon
	\text{ as }m\to\infty.
\end{split}\end{equation}
Noting that $T_{\rm b}(m,\varepsilon)\to\infty$ as $m\to\infty$, we get that $(\Lambda_m)_m$ accumulates on  $\Lambda_\bfH$ in the Hausdorff distance, as $m\to\infty$. Hence, the weak$\ast$ limit of $\mu_m^-$ is some measure supported on $\Lambda_\bfH$. By the Bogolyubov-Krylov argument, this limit measure is $f$-invariant. Using Corollary \ref{c.distortion-reference-cube} the same way as in Section \ref{s.connecting}, we can choose some $\delta_m\searrow 0$ and $L_m/R_m \searrow 0$ such that almost every trajectory of  
\[
\mu_m^{- -} \eqdef \sum_{s=L_m}^{mR-L_m-1} \mu^{(s)}
\]
is $\delta_m$-close to some generic trajectory of $\mu_\bfH$, and that $|\mu_m^-|-|\mu_m^{- -}| = 2L_m/R_m$. Thus,   
\[
	\mu_m^-\to(1-\varepsilon)\mu_{\bfH}
		\quad\text{ as }\quad
	m\to\infty
\]
in the weak$\ast$ topology.

Note that the sequence $(\mu_m^+)_m$ may fail to converge. However, up to passing to a subsequence, we can assume that it indeed converges, and we denote by $\mu^+$ this limit measure. By the above arguments, this weak$\ast$ limit is an $f$-invariant Borel measure. Arguing as above, we get $\mu^+(\widetilde \Lambda_\fB)=\varepsilon$.

It remains to check the assertion about the entropy. By construction, by Abramov's formula, we have
\[
	h_{\mu_m}(f)=(1-\varepsilon) h_{\mu_\bfH}(f).
\]
As
\[
	\mu_\infty
	\eqdef \lim_{m\to\infty} \mu_m 
	= \lim_{m\to\infty} \mu_m^- + \lim_{m\to\infty} \mu_m^+ = (1-\varepsilon) \mu_\bfH + \varepsilon \mu^+,
\]
by affinity of entropy we have
\[
(1-\varepsilon) h_{\mu_\bfH}(f) \leq h_{\mu_\infty}(f) \leq (1-\varepsilon) h_{\mu_\bfH}(f) + \varepsilon h_{\rm top}(f,\widetilde\Lambda_\fB),
\]
and the assertion follows.
\end{proof}
This proves the theorem.
\end{proof}

\end{document}